\newcolumntype{M}[1]{>{\centering\arraybackslash}m{#1}}
\renewcommand{\epsilon}{\varepsilon}
\renewcommand{\phi}{\varphi}
\newcommand\R{\mathbb{R}}
\newcommand\C{\mathbb{C}}
\newcommand\Z{\mathbb{Z}}
\newcommand\T{\mathbb{T}}
\newcommand{\inv}[1]{\frac{1}{#1}}
\renewcommand{\hat}{\widehat}
\renewcommand{\tilde}{\widetilde}
\renewcommand{\bar}{\overline}
\renewcommand{\leq}{\leqslant}
\renewcommand{\geq}{\geqslant}
\renewcommand{\Im}{\operatorname{Im}}
\newcommand{\norme}[1]{\left\Vert #1\right\Vert}
\newcommand{\normetriple}[1]{{\left\vert\kern-0.25ex\left\vert\kern-0.25ex\left\vert #1 \right\vert\kern-0.25ex\right\vert\kern-0.25ex\right\vert}}
\newcommand{\prodscalbis}[1]{\left\langle #1\right\rangle}
\newcommand{\ensemble}[1]{\left\lbrace #1\right\rbrace}
\newcommand{\indicatrice}{\mathds{1}}
\newcommand{\privede}[1]{\setminus\ensemble{#1}}
\DeclareMathOperator{\Ima}{Im}
\DeclareMathOperator{\diag}{diag}
\DeclareMathOperator{\Span}{Span}
\DeclareMathOperator{\GL}{GL}
\newcommand{\CK}{Cauchy-Kovalevskaya }
\newcommand\F{\mathcal{F}}
\newcommand\E{\mathbf{E}}
\newcommand{\J}{\mathbb{J}}
\renewcommand{\P}{\mathcal{P}}
\newcommand\B{\mathcal{B}}
\newcommand{\X}{\mathbf{X}}
\newcommand{\Y}{\mathbf{Y}}
\renewcommand{\a}{\mathbf{a}}
\renewcommand{\b}{\mathbf{b}}
\newcommand{\freq}{\mathbf{n}\cdot\boldsymbol{\zeta}}
\DeclareMathOperator{\osc}{osc}
\DeclareMathOperator{\ev}{ev}
\DeclareMathOperator{\app}{app}
\DeclareMathOperator{\sign}{sign}
\DeclareMathOperator{\nc}{nc}
\DeclareMathOperator{\Lop}{Lop}
\DeclareMathOperator{\per}{per}
\definecolor{altblue}{RGB}{0, 0, 180}
\definecolor{altred}{RGB}{200, 0, 0}
\definecolor{altorange}{RGB}{243, 136, 0}
\definecolor{altorange2}{RGB}{190, 100, 0}
\definecolor{altgreen}{RGB}{0, 130, 0}
\definecolor{altpurple}{RGB}{169, 0, 243}
\definecolor{altgrey}{RGB}{40, 55, 71}
\definecolor{altpink}{RGB}{255, 48, 145}
\definecolor{rouge1}{RGB}{128, 0, 0}
\definecolor{rouge2}{RGB}{134, 45, 45}
\definecolor{rouge3}{RGB}{77, 0, 0}
\definecolor{orange1}{RGB}{153, 38, 0}
\definecolor{marron1}{RGB}{102, 51, 0}
\definecolor{jaune1}{RGB}{204, 122, 0}
\definecolor{violet1}{RGB}{153, 0, 77}
\definecolor{violet2}{RGB}{128, 0, 128}
\definecolor{rose1}{RGB}{255, 0, 128}
\definecolor{violet3}{RGB}{77, 0, 153}
\definecolor{bleu1}{RGB}{0, 0, 102}
\definecolor{bleu2}{RGB}{0, 45, 179}
\definecolor{bleu3}{RGB}{0, 122, 153}
\definecolor{vert1}{RGB}{32, 96, 64}
\definecolor{vert2}{RGB}{0, 102, 0}
\definecolor{vert3}{RGB}{77, 77, 0}
\newtheorem{theorem}{Theorem}[section]
\crefname{theorem}{Theorem}{Theorems}
\newtheorem{lemma}[theorem]{Lemma}
\crefname{lemma}{Lemma}{Lemmas}
\newtheorem{proposition}[theorem]{Proposition}
\crefname{proposition}{Proposition}{Propositions}
\crefname{corollary}{Corollary}{Corollaries}
\newtheorem{definition}[theorem]{Definition}
\crefname{definition}{Definition}{Definitions}
\newtheorem{assumption}{Assumption}
\crefname{assumption}{Assumption}{Assumptions}
\theoremstyle{remark}
\newtheorem{remark}[theorem]{Remark}
\crefname{remark}{Remark}{Remarks}
\crefname{example}{Example}{Examples}
\numberwithin{equation}{section}
\title[Transverse instability of weakly stable quasilinear boundary value problems]{Transverse instability of high frequency weakly stable quasilinear boundary value problems}
\author{Corentin Kilque}
\address{Institut de Mathématiques de Toulouse ; UMR5219 \\ Université de Toulouse ; CNRS \\
		UPS, F-31062 Toulouse Cedex 9, France}
\email{corentin.kilque@math.univ-toulouse.fr}
\begin{document}
	
	\maketitle
	
	\begin{abstract}
			This work intends to prove that strong instabilities may appear for high order geometric optics expansions of weakly stable quasilinear hyperbolic boundary value problems, when the forcing boundary term is perturbed by a small amplitude oscillating function, with a transverse frequency. Since the boundary frequencies lie in the locus where the so-called Lopatinskii determinant is zero, the amplifications on the boundary give rise to a highly coupled system of equations for the profiles. A simplified model for this system is solved in an analytical framework using the \CK theorem as well as a version of it ensuring analyticity in space and time for the solution. Then it is proven that, through resonances and amplification, a particular configuration for the phases may create an instability, in the sense that the small perturbation of the forcing term on the boundary interferes at the leading order in the asymptotic expansion of the solution. Finally we study the possibility for such a configuration of frequencies to happen for the isentropic Euler equations in space dimension three.
	\end{abstract}
	
	\tableofcontents

This work takes interest into the (in)-stability of multiphase geometric optics expansions for weakly stable quasilinear hyperbolic boundary value problems. The formal construction of such geometric optics expansions goes back to Majda, Artola, and Rosales, in \cite{MajdaRosales1983Machstem,MajdaRosales1984Machstem,ArtolaMajda1987Instabilities,MajdaArtola1988Mixed}. In this paper, we prove that, for a simplified model, infinitely accurate approximate solutions can be unstable, in the sense that a small perturbation of the boundary forcing term interferes at the leading order in the asymptotic expansion. 

For uniformly stable problems, the construction of a multiphase asymptotic expansion is performed, for Cauchy problems, notably in \cite{HunterMajdaRosales1986Resonantly} for the linear case, and in \cite{JolyMetivierRauch1995Coherent} for the quasilinear one. In the case of boundary value problems, \cite{Williams1996Boundary} studies the semilinear case with multiple frequencies on the boundary, and the quasilinear case is treated in \cite{CoulombelGuesWilliams2011Resonant} for one phase on the boundary. The case of multiple phases on the boundary is addressed in a previous work of the author, \cite{Kilque2021Weakly}. In the weakly stable case, that is, when the weak Kreiss-Lopatinksii condition is satisfied, an amplification phenomenon occurs, as shown in works of Coulombel, Guès and Williams. 
Following the pioneering works by Majda and his collaborators, the first rigorous construction of a geometric optics expan\-sion in the weakly stable case is performed in \cite{CoulombelGues2010Geometric} for linear boundary value problems. Nonlinear problems are treated in \cite{CoulombelGuesWilliams2014Semilinear,CoulombelWilliams2014Pulses} for the semilinar case, and \cite{CoulombelWilliams2017Mach} for the quasilinear one. In \cite{CoulombelGuesWilliams2014Semilinear,CoulombelWilliams2014Pulses,CoulombelWilliams2017Mach}, the authors consider one phase on the boundary, and the present w work intends to address the extension of \cite{CoulombelWilliams2017Mach} to the multiphase case. 
Here, allowing multiple phases on the boundary permits us to consider a particular configuration of frequencies on the boundary, which, thanks to the amplification phenomenon, will lead to an instability for the asymptotic expansion. We show however that, fixing a locus of breaking of the Kreiss-Lopatinskii condition, this configuration of frequencies creating an instability cannot happen for the example of gas dynamics. This leaves hope to justify the validity of the geometric optics expansions with one amplification for the gas dynamics.

This work is divided in three main parts: (i) the derivation of the equations satisfied by the profiles in the multiphase case, following \cite{CoulombelWilliams2017Mach}; (ii) the proof of existence of solutions to the obtained system in an analytical framework; and (iii) the proof of instability for this system, namely, that there exists a perturbation of the boundary forcing term interfering at the leading order in the expansion. The general system being out of our reach for the moment, both for existence and for the instability mechanism, we deal with simplify models of the general system of equations for the profiles.

For the boundary value problem considered in this paper, the boundary condition is assumed to satisfy the weak Kreiss-Lopatinskii condition, namely that the Kreiss-Lopatinskii condition breaks on a certain locus of the frequency space. More precisely, we assume here that the locus where the Kreiss-Lopatinskii condition is not satisfied lies in the hyperbolic region (see \cite[Definition 2.1]{BenzoniSerre2007Multi}). For boundary frequencies for which the Kreiss-Lopatinskii condition is not satisfied, an amplification phenomenon occurs on the boundary. The idea is to consider a particular configuration of frequencies on the boundary which will turn this amplification into a strong instability. For this purpose, we consider a boundary forcing term $ G $ oscillating at a frequency $ \phi $, belonging to the locus where the Kreiss-Lopatinskii condition is not satisfied, and we perturb this boundary forcing term $ G $ with a perturbation term $ H $, oscillating at a \emph{transverse} frequency $ \psi $ also belonging to the locus where the Kreiss-Lopatinskii condition is not satisfied, of small amplitude compared to the one of $ G $. In \cite{CoulombelWilliams2017Mach}, the boundary frequency $ \phi $ is assumed to be non-resonant, in the sense that two interior frequencies lifted from $ \phi $ cannot resonate with each other. We make the same assumption here, as well as for the boundary frequency $ \psi $. We assume however that two well chosen resonance relations between $ \phi $ and $ \psi $ exists, which will allow the perturbation $ H $ to ascend towards the leading order, through repeated amplification and resonances. We study in this article the possibility for such a configuration of frequencies to happen for the isentropic compressible Euler equations in space dimension $ 3 $. We prove that for a particular choice of locus where the Kreiss-Lopatinskii condition is not satisfied, the configuration of boundary frequencies considered here is (thankfully) impossible for the Euler system. 

The derivation of equations for the amplitudes from the BKW cascade follows the one detailed in \cite{CoulombelWilliams2017Mach}. The main difference with the iterative process in the uniformly stable case, see e.g.  \cite{JolyMetivierRauch1995Coherent,Williams1996Boundary,CoulombelGuesWilliams2011Resonant,Kilque2021Weakly}, is that, for boundary frequencies lying in the locus where the Kreiss-Lopatinski condition is not satisfied, we cannot a priori determine a boundary condition for incoming profiles. Indeed, because of the weak Kreiss-Lopatinskii condition, for such boundary frequencies, the traces of incoming profiles are expressed through an unknown scalar function. For a given order, the evolution equations satisfied by these boundary terms are derived using equations on profiles of the next order. This is where amplification occurs. The main difference with \cite{CoulombelWilliams2017Mach} is that, because of resonances, equations for each profile and for boundary terms are coupled with each other. Also, in comparison with \cite{CoulombelWilliams2017Mach}, in equations for the boundary term of a given order, there is a term involving the trace of a profile of the next order, which was proven to be zero in \cite{CoulombelWilliams2017Mach}, because resonances were absent in that work. This results into a highly coupled system. Nevertheless we discuss two points about this system, which are the existence of a solution to it and the creation of an instability. 

The existence of a solution to the system of equations for the profiles is proven here in an analytical setting. The aim is to use the abstract \CK theorem, whose proof can be found in \cite{Nirenberg1972CauchyKowalewski} and \cite{Nishida1977Nirenberg}. We use in this work the formulation of \cite{BaouendiGoulaouic1978Nishida}. The system of equations for the profiles is made of incoming and outgoing equations for interior profiles, whose traces of incoming profiles are expressed with boundary terms that in turn satisfy coupled evolution equations on the boundary. As already mentioned, the general system is difficult to treat, so we consider two simplified models of increasing difficulty for the study of existence. Both retain only a few profiles (which are the ones of interest), and remove some couplings between the equations. 
The first is only constituted by coupled equations on the boundary, and we make this first simplified model more complex into a second one by incorporating interior equations, whose traces on the boundary are given by the solutions to the equations on the boundary. For the first simplified model, containing only equations on the boundary, the formulation of \cite{BaouendiGoulaouic1978Nishida} can be applied, using a chain of spaces quantifying analyticity by means of the Fourier transform. The only difficulty is to show that a certain bilinear operator appearing in the equations is semilinear in the considered spaces of functions, and this result is obtained adapting a result of \cite{CoulombelWilliams2017Mach}. 
For the second simplified model, incorporating interior equations, the aim is to apply the \CK theorem to the interior equations, seen as propagation equations in the normal direction. Therefore, we need the boundary terms, which are solutions to boundary equations, to be analytical with respect to all their variables: both tangential space variables and \emph{time}. However, if we apply the classical \CK theorem to the boundary equations, we obtain a solution analytical only with respect to tangent space variables, and not with respect to time. We therefore need to adapt the \CK theorem to obtain analyticity with respect to all variables for solutions to the boundary equations. This is done using the method of majoring series, and the phenomenon of regularization by integration in time introduced in \cite{Ukai2001CauchyKovalevskaya}, see also \cite{Metivier2009Optics,Morisse2020Elliptic}. We define for this purpose a chain of spaces of analytic functions, with a formulation adapted from \cite{BaouendiGoulaouic1978Nishida} to the framework of majoring series, and prove the result using a fixed point theorem. We also define a chain of functional spaces suited to apply the \CK theorem for interior equations, once we have constructed the analytic boundary terms. Applying the \CK theorem to interior equations in this chain of functional spaces then presents no difficulty.

To prove that there is an instability, namely, that a small perturbation $ H $ of the boundary forcing term $ G $ interferes at a leading order, since the perturbation $ H $ is small compared to $ G $, we consider the linearized version of the general system, around the particular solution of this system when the perturbation $ H $ is zero. We obtain a linearized system with a small boundary forcing term given by $ H $, and we prove that there exists a boundary term $ H $ such that this system admits a solution whose first order profiles are not all zero. It shows that the small perturbation $ H $ interferes at the leading order for the linearized system, which constitutes an instability. The existence of $ H $ is proven by contradiction: we assume that for all boundary terms $ H $, all leading profiles are zero, and we contradict a certain condition by constructing the second order correctors. As for the part about existence, we work here with simplified models, as the coupling of the general system of equations is too difficult to handle. The first simplified model allows us to construct explicitly the solution to the linearized system, solving the considered transport equations by the method of characteristics. For the second one, the coupling is more complex, preventing us to apply the latter method, and we use a perturbation method and solve equations with a fixed point theorem.

This article is organized as follows. First we state the problem that we study here, make structural assumptions about it, and specify some assumptions and preliminary results about the oscillations at stake. Then, in a second part, the general system of equations for the profiles is derived, by detailing the iterative process for the leading profile and then the first corrector, and by writing down the general system satisfied by higher order correctors. We proceed in a third part with the proof of existence of a solution to simplified models of this general system. We start by detailing the obtaining of a first simplified model, then defining the functional framework which will be used, specifying the simplified model according to this functional framework, and applying the \CK theorem for boundary equations. Then we detail how this first simplified model is made more complex into a second one, we define additional functional spaces and specify the second simplified model accordingly, and finally we show existence and analyticity of solutions to boundary equations by proving a new version of the \CK theorem, leading to existence of solutions to interior equations, using a classical \CK theorem. The fifth part is devoted to the proof of instability, first by deriving the linearization of the general system around the particular solution where the perturbation $ H $ is zero, and then by proving, for two different simplified models, that an instability is created. Finally, in a sixth part, the example of isentropic compressible Euler equations in space dimension $ 3 $ is studied.

In all the article the letter $ C $ denotes a positive constant that may vary during the analysis, possibly without any mention being made.

\bigskip

\emph{Acknowledgments.} The author is particularly grateful to Jean-François Coulombel, whose brilliant idea is at the origin of this work, and for his numerous advice and proofreading. 

\section{Notation and assumptions}

\subsection{Position of the problem}

Given a time $ T>0 $ and an integer $ d\geq 2 $, let $ \Omega_T $ be the domain $\Omega_T:=(-\infty,T]\times \R^{d-1}\times \R_+$ and $\omega_T:=(-\infty,T]\times \R^{d-1}$ its boundary.
We denote as $ t\in(-\infty,T] $ the time variable, $ x=(y,x_d)\in\R^{d-1}\times\R_+ $ the space variable, with $ y\in\R^{d-1} $ the tangential variable and $ x_d\in\R_+ $ the normal variable, and at last $ z=(t,x)=(t,y,x_d) $. We also denote by $ z'=(t,y)\in\omega_T $ the variable of the boundary $ \ensemble{x_d=0} $. For $ i=1,\dots,d $, we denote by $ \partial_i $ the partial derivative operator with respect to $ x_i $. Finally we denote as $ \alpha\in\R^{d+1} $ and $ \zeta\in\R^d $ the dual variables of $ z\in\Omega_T $ and $ z'\in\omega_T $.  We consider the following problem
\begin{equation}\label{eq systeme 1}
	\left\lbrace \begin{array}{lr}
		L(u^{\epsilon},\partial_z)\,u^{\epsilon}:=\partial_tu^{\epsilon}+\displaystyle\sum_{i=1}^dA_i(u^{\epsilon})\,\partial_iu^{\epsilon}=0&\qquad \mbox{in } \Omega_T, \\[5pt]
		B\,u^{\epsilon}_{|x_d=0}=\epsilon^2\, g^{\epsilon}+\epsilon^M\,h^{\epsilon}&\qquad \mbox{on } \omega_T,  \\[10pt]
		u^{\epsilon}_{|t\leq 0}=0,&
	\end{array}
	\right.
\end{equation}
where the unknown $ u^{\epsilon} $ is a function from $ \Omega_T $ to an open set $ \mathcal{O} $ of $ \R^N $ containing zero, $ N\geq 1 $, the matrices $A_j$ are smooth functions of $\mathcal{O}$ with values in $\mathcal{M}_N(\R)$, the matrix $B$ belongs to $\mathcal{M}_{\tilde{p},N}(\R)$ and is of maximal rank (integer $ \tilde{p}\geq 1 $ will be made precise below). 

The boundary term is a superposition of a reference forcing oscillating term $ \epsilon^2\,g^{\epsilon} $ (of characteristic wavelength $ \epsilon $) and a smaller, transverse, oscillating term $ \epsilon^M\,h^{\epsilon} $ with $ M\geq 3 $, namely, for $ z'\in\omega_T $,
\begin{subequations}
\begin{align}\label{eq def g epsilon 1}
	g^{\epsilon}(z')&=G\left( z',\frac{z'\cdot\phi}{\epsilon}\right),\\[5pt]
	h^{\epsilon}(z')&=H\left( z',\frac{z'\cdot\psi}{\epsilon}\right),
\end{align}
\end{subequations}
where $ G,H $ are functions of the Sobolev space of infinite reguarity $ H^{\infty}(\R^d\times\T) $, are zero for negative time $ t $, and with boundary frequencies $ \phi,\psi $ given in $ \R^d\privede{0} $. Frequencies $ \phi $ and $ \psi $ are taken linearly independent over $ \R $, that is, $ \psi\notin\R\phi $. We denote by $ \boldsymbol{\zeta} $ the couple $ \boldsymbol{\zeta}:=(\phi,\psi) $. In this paper we wish to place ourselves in the framework of weakly nonlinear geometric optics. Usually to obtain this framework the amplitude of the boundary forcing term must be of order $ O(\epsilon) $. Here, because we will assume that the Kreiss-Lopatinskii condition is not satisfied for $ \phi $ (and $ \psi $), an amplification phenomenon will happen at the boundary for this frequency, so a forcing term of amplitude of order $ O(\epsilon^2) $ should be chosen on the boundary. This scaling has been studied in \cite{ArtolaMajda1987Instabilities,MajdaRosales1983Machstem,MajdaRosales1984Machstem,CoulombelWilliams2017Mach}. Note that if we set $ h^{\epsilon}=0 $ in system \eqref{eq systeme 1}, we obtain the system studied in \cite{CoulombelWilliams2017Mach}.

To simplify the equations and computations we assume that the coefficients are affine maps, that is, for $ j=1,\dots,d $,
\begin{equation*}
	A_j(u)=A_j(0)+dA_j(0)\cdot u.
\end{equation*}
We make the following structural and classical assumption on the boundary.
\begin{assumption}[noncharacteristic boundary]\label{hypothese bord non caract}
	The boundary is noncharacteristic, that is, matrix $ A_d(0) $ is invertible.
\end{assumption}

To simplify the equations and the computations we will study here the case $ M=3 $, but there is no apparent obstacle to generalize this analysis to any integer $ M\geq 4 $. For the same purpose we choose to work with the particular case of 3-dimensional vectors ($ N=3 $) since it is sufficient in this analysis to create instabilities.

In this paper we study a geometric optics asymptotic expansion for system \eqref{eq systeme 1}, namely,  we look for an approximate solution to \eqref{eq systeme 1} in the form of a formal series
\begin{equation}\label{eq ansatz provisoire}
	u^{\epsilon,\app}(z)=\sum_{n\geq 1} \epsilon^n\, U_n\Big(z,\frac{\Phi(z)}{\epsilon}\Big),
\end{equation}
where the collection of phases $ \Phi $ will be made precise later. The approximate solution is expected to be of order $ O(\epsilon) $ because of the weakly nonlinear framework.
The aim is to show that, with a well chosen configuration of frequencies, there is an instability in this asymptotic expansion, in the sense that, despite its small amplitude order $ O\big(\epsilon^3\big) $, perturbation $ \epsilon^3\,h^{\epsilon} $ interferes at the leading order, i.e. in the construction of the leading profile $ U_1 $. In addition to this instability, we will study well-posedness for a simplified model associated with the equations for the profiles, and the possibility for such a frequency configuration to occur in the case of Euler equations in space dimension 3. 

We start by making a series of structural assumptions on system \eqref{eq systeme 1} and detailing the configuration of frequencies considered here.

\bigskip

The following definition introduces the notion of characteristic frequency.

\begin{definition}
	For $\alpha=(\tau,\eta,\xi)\in\R\times\R^{d-1}\times\R$, the symbol $L(0,\alpha)$ associated with $L(0,\partial_z)$ is defined as
	\begin{equation*}
		L(0,\alpha):=\tau I+\sum_{i=1}^{d-1}\eta_iA_i(0)+\xi A_d(0).
	\end{equation*}
	Then we define its characteristic polynomial as $ p(\tau,\eta,\xi):=\det L\big(0,(\tau,\eta,\xi)\big) $. We say that $\alpha\in\R^{1+d}$ is a \emph{characteristic frequency} if it is a root of the polynomial $p$.
\end{definition}

The following assumption, called \emph{strict hyperbolicity} (see  \cite[Definition 1.2]{BenzoniSerre2007Multi}), is made. Assumptions of hyperbolicity, whether strict or with constant multiplicity, are very usual, see e.g. \cite{Williams1996Boundary,CoulombelGuesWilliams2011Resonant,JolyMetivierRauch1995Coherent}, and related to the structure of the problem. Assumption of hyperbolicity with constant multiplicity, which is more general than Assumption \ref{hypothese stricte hyp} of strict hyperbolicity below, is sometimes preferred like in \cite{CoulombelGuesWilliams2011Resonant,JolyMetivierRauch1995Coherent}. We chose here to work with the latter for technical reasons. Recall that we placed ourselves in the particular case where the size of the system is $ N=3 $.

\begin{assumption}[strict hyperbolicity]\label{hypothese stricte hyp}
	There exist real functions $ \tau_1<\tau_2<\tau_3 $, analytic with respect to $ (\eta,\xi) $ in $\R^d\setminus\{0\}$, such that for all
	$(\eta,\xi)\in\R^d\setminus\ensemble{0}$ and for all 
	$\tau\in\R$, the following factorisation is verified
	\[p(\tau,\eta,\xi)=\det\Big(\tau I+\sum_{i=1}^{d-1}\eta_iA_i(0)+\xi A_d(0)\Big)=\prod_{k=1}^3\big(\tau-\tau_k(\eta,\xi)\big),\]
	where the eigenvalues $-\tau_k(\eta,\xi)$ of the matrix $A(\eta,\xi):=\sum_{i=1}^{d-1}\eta_iA_i(0)+\xi A_d(0)$ are therefore simple. 
\end{assumption}

\subsection{Weak Kreiss-Lopatinskii condition}

We define the following space of frequencies
\begin{align*}
	\Xi&:=\{\zeta=(\sigma=\tau-i\gamma,\eta)\in(\C\times\R^{d-1})\backslash\{0\} \mid \gamma\geq 0\},\\
	\Sigma&:=\ensemble{\zeta\in\Xi\mid \tau^2+\gamma^2+|\eta|^2=1},\\
	\Xi_0&:=\{\zeta\in\Xi \mid \gamma=0\},\\
	\Sigma_0&:=\Xi_0\cap\Sigma.
\end{align*}
We also define the matrix valued symbol which we get when applying the Laplace-Fourier transform to the operator $L(0,\partial_z)$. For all $\zeta=(\sigma,\eta)\in\Xi$, let
\[\mathcal{A}(\zeta):=-i\,A_d(0)^{-1}\Big(\sigma I+\sum_{i=1}^{d-1}\eta_j\, A_j(0)\Big).\]

The Hersh lemma (\cite{Hersh1963Mixed}) ensures that for $ \zeta $ in $ \Xi\backslash\Xi_0 $, the matrix $\mathcal{A}(\zeta)\in\mathcal{M}_3(\C) $ has no eigenvalue of zero real part, and that the stable subspace associated with the eigenvalues of negative real part, denoted by $E_-(\zeta)$, is of constant dimension, denoted $p$. Furthermore, the integer $ p $ is  obtained as the number of positive eigenvalues of the matrix $ A_d(0) $.
We denote by $E_+(\zeta)$ the unstable subspace $\mathcal{A}(\zeta)$ associated with eigenvalues of positive real part, which is of dimension $3-p$.

In \cite{Kreiss1970Initial} (see also \cite{ChazarainPiriou1982Introduction} and \cite{BenzoniSerre2007Multi}) it is shown that the stable and unstable subspaces $E_{\pm}$ extend continuously to the whole space $\Xi$ in the strictly hyperbolic case (Assumption \ref{hypothese stricte hyp}).
We still denote by $ E_{\pm} $ the extensions to $ \Xi $. 
The \emph{hyperbolic region}, denoted by $ \mathcal{H} $, is defined as the set of frequencies $ \zeta $ such that matrix $ \mathcal{A}(\zeta) $ has only purely imaginary eigenvalues.

The following assumption is very structural to the problem, and is the one which allows amplification on the boundary, and thus instability.

\begin{assumption}[weak Kreiss-Lopatinskii condition]\label{hypothese weak lopatinskii}
	\begin{itemize}[label=$\bullet$,leftmargin=20pt]
		\item For all $ \zeta\in\Xi\setminus\Xi_0 $, $ \ker B\cap E_-(\zeta)=\ensemble{0} $.
		\item The set $ \Upsilon:=\ensemble{\zeta\in\Sigma_0 \,\middle|\, \ker B\cap E_-(\zeta)\neq \ensemble{0}} $ is nonempty and included in the hyperbolic region $ \mathcal{H} $.
		\item There exist a neighborhood $ \mathcal{V} $ of $ \Upsilon $ in $ \Sigma $, a real valued $ \mathcal{C}^{\infty} $ function $ \kappa $ defined on $ \mathcal{V} $, a basis $ E_1(\zeta),\dots,E_p(\zeta) $ of $ E_-(\zeta) $ 
		and a matrix $ P(\zeta)\in\GL_p(\C) $ which are of class $ \mathcal{C}^{\infty} $ with respect to $ \zeta\in\mathcal{V} $ such that, for all $ \zeta $ in $ \mathcal{V} $,
		\begin{equation*}
			B\big(E_1(\zeta)\cdots E_p(\zeta)\big)=P(\zeta)\,\diag\big(\gamma+i\kappa(\zeta),1,\dots,1\big).
		\end{equation*}
 	\end{itemize}
\end{assumption}

\begin{remark}
	First point of Assumption \ref{hypothese weak lopatinskii}, requiring that  $ \ker B\cap E_-(\zeta)=\ensemble{0} $ for all $ \zeta\in\Xi\setminus\Xi_0 $, implies in particular that $ \tilde{p} $, the rank of $ B $, equals $ p $, the dimension of $ E_-(\zeta) $. These two equal integers will be denoted by $ p $ in the following. Assumption \ref{hypothese ensemble frequences} below sets furthermore the integer $ p=\tilde{p} $ to be equal to 2.
\end{remark}

The so-called \emph{Kreiss-Lopatinskii condition} is the first point of Assumption \ref{hypothese weak lopatinskii} that stands in $ \Xi\setminus \Xi_0 $, and the next two points detail how this condition breaks on the boundary $ \Xi_0 $ of $ \Xi $ (for the \emph{uniform Kreiss-Lopatinskii condition} to hold, equality $ \ker B \cap E_-(\zeta)=\ensemble{0} $ is assumed to be satisfied everywhere in $ \Xi $, see \cite{Kreiss1970Initial}). The second point asserts that the Kreiss-Lopatinskii condition breaks only in the hyperbolic region $ \mathcal{H} $, and the third one ensures that when it breaks, the space $ \ker B \cap E_-(\zeta) $ is of dimension $ 1 $, and that the default of injectivity of $ B $ on $ E_-(\zeta) $ is parameterize by the $ \mathcal{C}^{\infty} $ function $ \kappa $. In particular, $ \kappa $ must be zero on $ \Upsilon $, and nonzero on $ \Sigma_0\setminus\Upsilon $.

Together with Assumptions \ref{hypothese bord non caract} and \ref{hypothese stricte hyp}, Assumption \ref{hypothese weak lopatinskii} ensures that for all $ \epsilon>0 $, system \eqref{eq systeme 1} is weakly well-posed locally in time (which depends on $ \epsilon $). A proof of a similar result, for characteristic free boundary problems can be found in \cite{CoulombelSecchi2008Vortex}.
Indeed, the three assumptions  \ref{hypothese bord non caract}, \ref{hypothese stricte hyp} and \ref{hypothese weak lopatinskii} are stable under small perturbations around the equilibrium, see \cite[Section 8.3]{BenzoniSerre2007Multi}.

\subsection{Oscillations}

The notion of incoming, outgoing and glancing frequencies is now introduced.

\begin{definition}\label{def sortant rentrant alpha X alpha}
	Let $\alpha=(\tau,\eta,\xi)\in\R^{d+1}\backslash\ensemble{0}$ be a characteristic frequency, and $ k $ the integer between $ 1 $ and $ 3 $ such that $\tau=\tau_k(\eta,\xi)$. The group velocity $ \mathbf{v}_{\alpha}\in\R^d $ associated with $ \alpha $ is defined as
	\begin{equation*}
		\mathbf{v}_{\alpha}:=\nabla_{\eta,\xi}\,\tau_k(\eta,\xi).
	\end{equation*}
	We shall say that $\alpha$ is glancing (resp. incoming, outgoing) if $\partial_{\xi}\tau_k(\eta,\xi)$ is zero (resp. negative, positive).  
	Then the vector field $X_{\alpha}$ associated with $\alpha$ is defined as
	\begin{equation}\label{eq champ vecteur X_alpha}
		X_{\alpha}:=\frac{-1}{\partial_{\xi}\tau_k(\eta,\xi)}\Big(\partial_t-\mathbf{v}_{\alpha}\cdot\nabla_{x}\Big)=\frac{-1}{\partial_{\xi}\tau_k(\eta,\xi)}\Big(\partial_t-\nabla_{\eta}\tau_k(\eta,\xi)\cdot\nabla_{y}-\partial_{\xi}\tau_k(\eta,\xi)\,\partial_{x_d}\Big).
	\end{equation}
\end{definition}

Lax lemma, see Lemma \ref{lemme Lax} below, ensures that these constant coefficients scalar transport operators $ X_{\alpha} $ appear naturally in the equations satisfied by the profiles arising in weakly nonlinear asymptotic expansions (see \cite{Rauch2012Hyperbolic}).

We describe now a decomposition of the stable subspace $ E_-(\zeta) $ for $ \zeta\in\Xi_0 $, that uses strict hyperbolicity (Assumption \ref{hypothese stricte hyp}).

\begin{proposition}[{\cite{Williams1996Boundary}, Proposition 3.4}]\label{prop decomp E_-}
	Consider $ \zeta=(\tau,\eta)\in\Xi_0 $. We denote by $ i\,\xi_j(\zeta) $ for $ j=1,\dots,\mathcal{M}(\zeta) $ the distinct complex eigenvalues of the matrix $ \mathcal{A}(\zeta) $, and if $ \xi_j(\zeta) $ is real, we shall denote by $ \alpha_j(\zeta):=(\tau,\eta,\xi_j(\tau,\eta)) $ the associated real characteristic frequency. If $ \xi_j(\zeta) $ is real, we also denote by $ k_j $ the integer between $ 1 $ and $ 3 $ such that $\tau=\tau_{k_j}(\eta,\xi_j(\zeta))$. Then the set $\ensemble{1,2,\dots,\mathcal{M}(\zeta)}$ decomposes as the disjoint union
	\begin{equation}\label{eq union disjointe M(zeta)}
		\ensemble{1,2,\dots,\mathcal{M}(\zeta)}=\mathcal{G}(\zeta)\cup\mathcal{I}(\zeta)\cup\P(\zeta)\cup\mathcal{O}(\zeta)\cup\mathcal{N}(\zeta),
	\end{equation}
	where the sets $\mathcal{G}(\zeta)$, $\mathcal{I}(\zeta)$, $\P(\zeta)$, $\mathcal{O}(\zeta)$ and $\mathcal{N}(\zeta)$ correspond to indexes $j$ such that respectively $\alpha_j(\zeta)$ is glancing, $\alpha_j(\zeta)$ is incoming, $\Im(\xi_j(\zeta))$ is positive, $\alpha_j(\zeta)$ is outgoing and $\Im(\xi_j(\zeta))$ is negative.
	
	Then the following decomposition of $E_-(\zeta)$ holds
	\begin{equation}\label{eq decomp E_-(zeta)}
		E_-(\zeta)=\bigoplus_{j\in\mathcal{G}(\zeta)}E^j_-(\zeta)\oplus \bigoplus_{j\in\mathcal{R}(\zeta)}E^j_-(\zeta) \oplus \bigoplus_{j\in\P(\zeta)}E^j_-(\zeta),
	\end{equation}
	where for each index $j$, the subspace $E_-^j(\zeta)$ is precisely described as follows.
	\begin{enumerate}[label=\roman*)]
		\item If $j\in \P(\zeta)$, the space $E^j_-(\zeta)$ is the generalized eigenspace $\mathcal{A}(\zeta)$ associated with the eigenvalue $i\,\xi_j(\zeta)$.
		\item If $j\in\mathcal{R}(\zeta)$, we have $E^j_-(\zeta)=\ker L\big(0,\alpha_j(\zeta)\big)$, 
		which is of dimension 1.
		\item If $j\in\mathcal{G}(\zeta)$, we denote by $n_j$ the algebraic multiplicity of the imaginary eigenvalue $i\xi_j(\zeta)$. For small positive $ \gamma $, the multiple eigenvalue $i\,\xi_j(\tau,\eta)$ splits into $n_j$ simple eigenvalues, denoted by $i\,\xi_j^k(\tau-i\gamma,\eta)$, $k=1,\dots,n_j$, all of nonzero real part. We denote by $\mu_j$ the number (independent of $\gamma>0$) of the eigenvalues $i\,\xi_j^k(\tau-i\gamma,\eta)$ of negative real part. Then $E_-^j(\zeta)$ is of dimension $ \mu_j $ and is generated by the vectors $w$ satisfying $[\mathcal{A}(\zeta)-i\xi_j(\zeta)]^{\mu_j}w=0$. Furthermore, if $n_j$ is even, $\mu_j=n_j/2$ and if $n_j$ is odd, $\mu_j$ is equal to $(n_j-1)/2$ or $(n_j+1)/2$.
	\end{enumerate}
	
	Likewise, the unstable subspace $E_+(\zeta)$ decomposes as
	\begin{equation}\label{eq decomp E_+(zeta)}
		E_+(\zeta)=\bigoplus_{j\in\mathcal{G}(\zeta)}E^j_+(\zeta)\oplus \bigoplus_{j\in\mathcal{S}(\zeta)}E^j_+(\zeta) \oplus \bigoplus_{j\in \mathcal{N}(\zeta)}E^j_+(\zeta),
	\end{equation}
	with similar description of the subspaces $E_+^j(\zeta)$. In particular, if the set $\mathcal{G}(\zeta)$ is empty, then
	\begin{equation*}
	\C^3=E_-(\zeta)\oplus E_+(\zeta).
	\end{equation*}
\end{proposition}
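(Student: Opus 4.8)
The plan is to realize $E_-(\zeta)$, for $\zeta=(\tau,\eta)\in\Xi_0$, as the limit when $\gamma\downarrow0$ of the genuinely stable subspace of $\mathcal{A}(\zeta_\gamma)$, where $\zeta_\gamma:=(\tau-i\gamma,\eta)$, and to read off the decomposition from the way the spectrum of $\mathcal{A}$ crosses the imaginary axis. The continuous extension of $E_\pm$ to $\Xi_0$ recalled above (from \cite{Kreiss1970Initial}) guarantees that this limit exists and equals $E_-(\zeta)$, so the whole content is to identify it explicitly. Two structural facts will be the backbone. First, for $\zeta'=(\sigma,\eta)$ one has $w\in\ker\big(\mathcal{A}(\zeta')-i\xi I\big)$ if and only if $L\big(0,(\sigma,\eta,\xi)\big)\,w=0$, so $\det\big(\mathcal{A}(\zeta')-i\xi I\big)$ equals, up to a nonzero constant, the polynomial $p(\sigma,\eta,\xi)=\prod_k\big(\sigma-\tau_k(\eta,\xi)\big)$; in particular the eigenvalues of $\mathcal{A}(\zeta)$ on the imaginary axis are exactly the $i\xi_j(\zeta)$ with $\xi_j(\zeta)\in\R$, that is, those indexed by $\mathcal{G}(\zeta)\cup\mathcal{R}(\zeta)\cup\mathcal{S}(\zeta)$, and for such $j$ the algebraic multiplicity $n_j$ of $i\xi_j(\zeta)$ equals the order of vanishing of $\xi\mapsto\tau_{k_j}(\eta,\xi)-\tau$ at $\xi_j(\zeta)$ (so $n_j=1$ away from glancing, $n_j\geq2$ at a glancing index). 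Second, strict hyperbolicity (Assumption~\ref{hypothese stricte hyp}) makes $\tau$ a simple root of $p(\cdot,\eta,\xi_j(\zeta))$, hence $\ker L\big(0,\alpha_j(\zeta)\big)$ is one-dimensional: $i\xi_j(\zeta)$ has geometric multiplicity $1$, so $\mathcal{A}(\zeta)$ carries a single Jordan block of size $n_j$ at $i\xi_j(\zeta)$. For $j\in\P(\zeta)$ (resp.\ $j\in\mathcal{N}(\zeta)$) the eigenvalue $i\xi_j(\zeta)$ already lies in the open left (resp.\ right) half-plane.

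For the elliptic indices the argument is soft: by continuity of the spectrum and of the attached Riesz projectors, the generalized eigenspace of $\mathcal{A}(\zeta_\gamma)$ associated with the cluster near $i\xi_j(\zeta)$, $j\in\P(\zeta)$, stays in the open left half-plane for small $\gamma>0$ and converges, as $\gamma\downarrow0$, to the generalized eigenspace of $\mathcal{A}(\zeta)$ at $i\xi_j(\zeta)$; this is a full summand of $E_-(\zeta)$, proving~(i), and symmetrically for $\mathcal{N}(\zeta)$ and $E_+$. The real modes — glancing or not — I would treat in one stroke. Fix $j$ with $\xi_j(\zeta)\in\R$. Since the branches $\tau_k(\eta,\cdot)$ with $k\neq k_j$ stay uniformly away from $\tau$ near $\xi_j(\zeta)$, the eigenvalues of $\mathcal{A}(\zeta_\gamma)$ close to $i\xi_j(\zeta)$ are precisely the $i\xi$ with $\xi$ near $\xi_j(\zeta)$ solving $\tau_{k_j}(\eta,\xi)=\tau-i\gamma$. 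Expanding $\tau_{k_j}(\eta,\xi)=\tau+c_j\big(\xi-\xi_j(\zeta)\big)^{n_j}+O\big((\xi-\xi_j(\zeta))^{n_j+1}\big)$ with $c_j\in\R\setminus\{0\}$, these roots are $\xi=\xi_j(\zeta)+z_\ell$, $\ell=1,\dots,n_j$, with $z_\ell^{n_j}=-i\gamma/c_j+o(\gamma)$; the corresponding eigenvalue $i\xi_j(\zeta)+iz_\ell$ lies in the open left half-plane exactly when $\Im z_\ell>0$, and (as it must, by Hersh's lemma) none lies on the axis for small $\gamma>0$. Counting the $2\pi/n_j$-spaced arguments of the $n_j$-th roots, one sees that for $n_j=1$ the single eigenvalue moves left precisely when $c_j=\partial_\xi\tau_{k_j}(\eta,\xi_j(\zeta))<0$, i.e.\ exactly for incoming frequencies — which identifies $\mathcal{R}(\zeta)$, resp.\ $\mathcal{S}(\zeta)$, as the incoming, resp.\ outgoing, real non-glancing indices — while for $n_j\geq2$ the symmetry $z\mapsto-z$ of $z^{n_j}$ forces $\mu_j=n_j/2$ left-movers when $n_j$ is even and leaves $\mu_j$ equal to $(n_j-1)/2$ or $(n_j+1)/2$ when $n_j$ is odd, the middle root being decided by the lower-order terms.

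It then remains to identify the limiting subspaces and to assemble. The generalized eigenspace of $\mathcal{A}(\zeta_\gamma)$ spanned by the $\mu_j$ left-moving eigenvalues near $i\xi_j(\zeta)$ has constant dimension $\mu_j$ and sits inside the cluster's generalized eigenspace $G_j(\gamma)$, which converges as $\gamma\downarrow0$ to the generalized eigenspace $G_j(0)$ of $\mathcal{A}(\zeta)$ at $i\xi_j(\zeta)$ — a single Jordan block of size $n_j$; since any subsequential limit (taken in the compact Grassmannian) is $\mathcal{A}(\zeta)$-invariant and a single Jordan block of size $n_j$ has a unique invariant subspace of each dimension, the limit is forced to be $\ker\big[\mathcal{A}(\zeta)-i\xi_j(\zeta)I\big]^{\mu_j}$. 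This is $\ker L\big(0,\alpha_j(\zeta)\big)$ when $n_j=1$, giving~(ii), and the asserted $\mu_j$-dimensional space when $n_j\geq2$, giving~(iii); the symmetric statement for the $n_j-\mu_j$ right-moving eigenvalues describes $E_+^j(\zeta)$. Now for small $\gamma>0$, $E_-(\zeta_\gamma)$ is the sum of the generalized eigenspaces of $\mathcal{A}(\zeta_\gamma)$ for eigenvalues of negative real part, of constant dimension $p$ (Hersh's lemma); decomposing it along the spectral clusters near $i\xi_1(\zeta),\dots,i\xi_{\mathcal{M}(\zeta)}(\zeta)$ and letting $\gamma\downarrow0$ summand by summand with the three cases above yields~\eqref{eq decomp E_-(zeta)} with the stated summands, the sum being direct because the limiting summands sit in distinct generalized eigenspaces of $\mathcal{A}(\zeta)$ (their dimensions then automatically totalling $p$); the same argument applied to the unstable subspace gives~\eqref{eq decomp E_+(zeta)}. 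If moreover $\mathcal{G}(\zeta)=\emptyset$, then for $\gamma>0$ the spectrum of $\mathcal{A}(\zeta_\gamma)$ avoids the imaginary axis, so $E_-(\zeta_\gamma)\oplus E_+(\zeta_\gamma)=\C^3$; passing to the limit, the $\P(\zeta)$-summands of $E_-(\zeta)$ and the $\mathcal{N}(\zeta)$-summands of $E_+(\zeta)$ are the generalized eigenspaces of $\mathcal{A}(\zeta)$ at its non-real eigenvalues, while the $\mathcal{R}(\zeta)$- and $\mathcal{S}(\zeta)$-summands are the distinct eigenlines at its real ones, so these subspaces are in direct sum and fill $\C^3$, whence $\C^3=E_-(\zeta)\oplus E_+(\zeta)$.

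The one genuinely delicate part is the bookkeeping near the glancing clusters: pinning down the number $\mu_j$ of left-moving eigenvalues with the correct parity, and checking that their invariant subspace converges to exactly the truncated Jordan chain $\ker[\mathcal{A}(\zeta)-i\xi_j(\zeta)I]^{\mu_j}$ — which is where the single-Jordan-block structure coming from strict hyperbolicity is essential. Everything else is routine spectral perturbation theory, given Assumptions~\ref{hypothese bord non caract} and~\ref{hypothese stricte hyp} together with the continuity of $E_\pm$ on $\Xi$.
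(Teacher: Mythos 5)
The paper gives no proof of this proposition: it is cited verbatim from \cite{Williams1996Boundary}, Proposition 3.4, and the author states it as a known result. Your argument is therefore not being measured against a proof in the paper itself, but it is a correct and self-contained reconstruction of the standard argument from the literature (Williams 1996, and in closely related form in Kreiss, Chazarain--Piriou, and Benzoni-Serre--Serre).

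The main steps all check out. The identification $\ker\big(\mathcal{A}(\zeta')-i\xi I\big)=\ker L\big(0,(\sigma,\eta,\xi)\big)$ is exactly $\mathcal{A}(\zeta')-i\xi I=-iA_d(0)^{-1}L\big(0,(\sigma,\eta,\xi)\big)$, which under Assumption~\ref{hypothese bord non caract} gives both the spectral correspondence with $p$ and, under Assumption~\ref{hypothese stricte hyp}, geometric multiplicity $1$, hence the single-Jordan-block structure at each $i\xi_j(\zeta)$. Your computation $z_\ell^{n_j}=-i\gamma/c_j+o(\gamma)$ with $c_j\in\R\setminus\{0\}$ is right; in the simple case $n_j=1$ one gets $z_1\approx -i\gamma/c_j$ so $\operatorname{Re}\big(i\xi_j+iz_1\big)=\gamma/c_j<0$ precisely when $c_j=\partial_\xi\tau_{k_j}<0$, matching the paper's definition of incoming; for $n_j$ even the $z\mapsto -z$ symmetry of $z^{n_j}$ plus Hersh's lemma (no real $z_\ell$) forces $\mu_j=n_j/2$; for $n_j$ odd the equal $2\pi/n_j$ spacing forces $\mu_j\in\{(n_j-1)/2,(n_j+1)/2\}$. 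The Grassmannian limit argument is also sound: $V_\gamma\subset G_j(\gamma)\to G_j(0)$ by continuity of Riesz projectors, any subsequential limit is $\mathcal{A}(\zeta)$-invariant, and in a single nilpotent Jordan block of size $n_j$ the unique invariant subspace of dimension $\mu_j$ is $\ker\big[\mathcal{A}(\zeta)-i\xi_j(\zeta)I\big]^{\mu_j}$; dimensions then total $p$ and directness follows because distinct clusters live in distinct generalized eigenspaces of $\mathcal{A}(\zeta)$. The last statement $\C^3=E_-(\zeta)\oplus E_+(\zeta)$ when $\mathcal{G}(\zeta)=\emptyset$ falls out as you describe.

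Two small remarks, neither a gap. First, you implicitly use that the perturbed eigenvalues of $\mathcal{A}(\zeta_\gamma)$ near the cluster at $i\xi_j(\zeta)$ correspond exactly (with multiplicity) to the branches of $\xi$ solving $\tau_{k_j}(\eta,\xi)=\tau-i\gamma$ near $\xi_j(\zeta)$; this is the right reading of $p(\tau-i\gamma,\eta,\xi)=0$ and deserves the half-sentence you give it about the other branches $\tau_k$, $k\neq k_j$, staying bounded away. Second, your identification of the $\mu_j$-dimensional left-moving spectral subspace of $\mathcal{A}(\zeta_\gamma)$ uses invariance under the full operator $\mathcal{A}(\zeta_\gamma)$, not only under its restriction to $G_j(\gamma)$ — this is what you need to conclude $\mathcal{A}(\zeta)$-invariance of the limit, and it holds since spectral subspaces of $\mathcal{A}(\zeta_\gamma)$ are $\mathcal{A}(\zeta_\gamma)$-invariant; fine as written, just worth being explicit that you are not merely using block-restricted invariance.
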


For $ \zeta\in\Xi_0 $, we denote by $ \mathcal{C}(\zeta) $ the set of indices such that $ \alpha_j(\zeta) $ is real characteristic, that is
\begin{equation*}
	\mathcal{C}(\zeta):=\mathcal{I}(\zeta)\cup\mathcal{O}(\zeta)\cup \mathcal{G}(\zeta).
\end{equation*}

\begin{definition}
	A frequency $ \zeta $ in $ \Xi_0 $ is said to be \emph{glancing} if there exists $ j=1,\dots,\mathcal{M}(\zeta)$ such that $ \alpha_j(\zeta) $ is glancing, i.e. if $ \mathcal{G}(\zeta) $ is nonempty, \emph{hyperbolic} if $ \mathcal{A}(\zeta) $ has only purely imaginary eigenvalues, that is if $ \mathcal{P}(\zeta)\cup\mathcal{N}(\zeta) $ is empty and \emph{mixed} if $ \mathcal{P}(\zeta)\cup\mathcal{N}(\zeta) $ is nonempty. We shall denote by $ \mathcal{G} $ (resp. $ \mathcal{H} $, $ \mathcal{EH} $) the set of glancing (resp. hyperbolic, mixed) frequencies.
\end{definition}

\begin{definition}
		For $ \zeta\in\Xi_0 $ not glancing, according to Proposition \ref{prop decomp E_-}, we have the following decomposition of $ \C^3 $:
	\begin{equation*}
		\C^3=
		\bigoplus_{j\in\mathcal{O}(\zeta)}E^j_+(\zeta) \oplus \bigoplus_{j\in \mathcal{N}(\zeta)}E^j_+(\zeta)\oplus
		\bigoplus_{j\in\mathcal{I}(\zeta)}E^j_-(\zeta) \oplus \bigoplus_{j\in\P(\zeta)}E^j_-(\zeta).
	\end{equation*}
	In that case we denote by $ \Pi^{e}(\zeta) $ the projection from $ \C^3 $ on the stable elliptic component $ E^e_-(\zeta):=\oplus_{j\in\P(\zeta)}E^j_-(\zeta) $ according to this decomposition.
\end{definition}

The following result is adapted from \cite[Lemma 3.2]{CoulombelGues2010Geometric} to the case of mixed frequencies.

\begin{lemma}\label{lemme def Pj}
	For all $ \zeta\in\Xi_0 $ nonglancing, the following decompositions hold
\begin{subequations}
		\begin{align}
		\C^3&=\bigoplus_{j\in \mathcal{C}(\zeta)}\ker L\big(0,\alpha_j(\zeta)\big)\oplus F_{\zeta}\label{eq decomp C N }\\
		\C^3&=\bigoplus_{j\in \mathcal{C}(\zeta)}A_d(0)\,\ker L\big(0,\alpha_j(\zeta)\big)\oplus A_d(0)\,F_{\zeta},\label{eq decomp C N A d 0}
	\end{align} 
\end{subequations}
where $ F_{\zeta} $ is the generalized eigenspace of $ \mathcal{A}(\zeta) $ associated with the eigenvalues of nonzero real part. Furthermore, if we denote by $ P_j(\zeta) $ and $ P_{F_{\zeta}} $ (resp. $ Q_j(\zeta) $ and $ Q_{F_{\zeta}} $) the projectors associated with the decomposition \eqref{eq decomp C N } (resp. \eqref{eq decomp C N A d 0}), then we have
\begin{equation}\label{eq relation Qj}
	\Ima L\big(0,\alpha_j(\zeta)\big)=\ker Q_j(\zeta),
\end{equation}
for all $ j $.
\end{lemma}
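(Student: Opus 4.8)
The strategy is to first establish the two direct-sum decompositions \eqref{eq decomp C N } and \eqref{eq decomp C N A d 0}, and then to identify the image of $L(0,\alpha_j(\zeta))$ with the kernel of the projector $Q_j(\zeta)$. For the first decomposition, I would start from the spectral decomposition of $\mathcal{A}(\zeta)$ for nonglancing $\zeta\in\Xi_0$: since $\zeta$ is nonglancing, $\mathcal{A}(\zeta)$ has no glancing (i.e.\ nonsemisimple purely imaginary) eigenvalues, so $\C^3$ splits as the direct sum of the (semisimple) eigenspaces $\ker(\mathcal{A}(\zeta)-i\xi_j(\zeta))$ for real $\xi_j(\zeta)$, $j\in\mathcal{C}(\zeta)$, together with $F_\zeta$, the generalized eigenspace for the eigenvalues of nonzero real part. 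The key observation is that for a real characteristic frequency $\alpha_j(\zeta)=(\tau,\eta,\xi_j(\zeta))$ one has $\ker L(0,\alpha_j(\zeta))=\ker(\mathcal{A}(\zeta)-i\xi_j(\zeta))$: indeed, $L(0,\alpha_j)v=0$ means $(\sigma I+\sum\eta_kA_k(0)+\xi_j A_d(0))v=0$, and multiplying by $-iA_d(0)^{-1}$ (legitimate by Assumption \ref{hypothese bord non caract}) gives exactly $(\mathcal{A}(\zeta)-i\xi_j(\zeta))v=0$. Moreover, since the eigenvalues of $A(\eta,\xi)$ are simple by strict hyperbolicity (Assumption \ref{hypothese stricte hyp}), each such eigenspace is one-dimensional, and the purely imaginary eigenvalues $i\xi_j(\zeta)$, $j\in\mathcal{C}(\zeta)$, are pairwise distinct; hence these eigenspaces are in direct sum and their complement in $\C^3$ is precisely $F_\zeta$. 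This yields \eqref{eq decomp C N }.

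For \eqref{eq decomp C N A d 0}, I would simply apply the invertible matrix $A_d(0)$ to the decomposition \eqref{eq decomp C N }; since $A_d(0)$ is an isomorphism it maps a direct sum decomposition to a direct sum decomposition, giving $\C^3=\bigoplus_{j\in\mathcal{C}(\zeta)}A_d(0)\ker L(0,\alpha_j(\zeta))\oplus A_d(0)F_\zeta$ at once. The associated projectors are then $Q_j(\zeta)=A_d(0)\,P_j(\zeta)\,A_d(0)^{-1}$ and $Q_{F_\zeta}=A_d(0)\,P_{F_\zeta}\,A_d(0)^{-1}$.

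It remains to prove \eqref{eq relation Qj}, the relation $\Ima L(0,\alpha_j(\zeta))=\ker Q_j(\zeta)$. The inclusion $\ker Q_j(\zeta)\supseteq\Ima L(0,\alpha_j(\zeta))$ (equivalently, after conjugating by $A_d(0)^{-1}$, that $\ker P_j(\zeta)\supseteq A_d(0)^{-1}\Ima L(0,\alpha_j(\zeta))$): write $L(0,\alpha_j(\zeta))=-A_d(0)(\mathcal{A}(\zeta)-i\xi_j(\zeta))\cdot(\text{const})$, so $A_d(0)^{-1}\Ima L(0,\alpha_j(\zeta))=\Ima(\mathcal{A}(\zeta)-i\xi_j(\zeta))$; since $\mathcal{A}(\zeta)$ restricted to the decomposition \eqref{eq decomp C N } is block diagonal with the block $i\xi_j(\zeta)\,\mathrm{Id}$ on the one-dimensional summand $\ker L(0,\alpha_j(\zeta))$ and invertible blocks $(\mathcal{A}(\zeta)-i\xi_j(\zeta))$ elsewhere, the image of $\mathcal{A}(\zeta)-i\xi_j(\zeta)$ is exactly the sum of all the summands except the $j$-th, i.e.\ $\ker P_j(\zeta)$. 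Equality then follows by a dimension count: $\Ima L(0,\alpha_j(\zeta))$ has dimension $3-\dim\ker L(0,\alpha_j(\zeta))=3-1=2$ by the rank-nullity theorem, and $\ker Q_j(\zeta)$ also has dimension $2$ since $Q_j(\zeta)$ projects onto the one-dimensional space $A_d(0)\ker L(0,\alpha_j(\zeta))$. I expect the main (though modest) obstacle to be bookkeeping the equivalence between the spectral language of $\mathcal{A}(\zeta)$ and the symbol language of $L(0,\alpha)$ — in particular being careful that $\Ima L(0,\alpha_j(\zeta))$ is $A_d(0)\Ima(\mathcal{A}(\zeta)-i\xi_j(\zeta))$ rather than $\Ima(\mathcal{A}(\zeta)-i\xi_j(\zeta))$, which is exactly why \eqref{eq relation Qj} is phrased with $Q_j$ (the projectors from \eqref{eq decomp C N A d 0}) and not $P_j$; once this is tracked correctly, everything reduces to elementary linear algebra on a block-diagonal operator, using only Assumptions \ref{hypothese bord non caract} and \ref{hypothese stricte hyp} and Proposition \ref{prop decomp E_-}.
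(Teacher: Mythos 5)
Your proof is correct, and it takes a genuinely different and in fact leaner route for the key identity \eqref{eq relation Qj}. The paper constructs a sequence of diagonalizable matrices $\mathcal{A}^k(\zeta)$ converging to $\mathcal{A}(\zeta)$, writes out the corresponding modified symbols $L_k\big(0,\alpha_{j_0}(\zeta)\big)$, shows by an explicit computation with the projectors $P_{\pm}^{k,j}(\zeta)$ that $L_k\big(0,\alpha_{j_0}(\zeta)\big)X\in\ker Q_{j_0}(\zeta)$ for all $X$, and then passes to the limit $k\to\infty$ using closedness of $\ker Q_{j_0}(\zeta)$, finishing with a dimension count. You instead observe that the regularization is unnecessary: since $\mathcal{A}(\zeta)$ is block-diagonal with respect to the decomposition \eqref{eq decomp C N }, acts as $i\xi_{j'}(\zeta)\,\mathrm{Id}$ on each $\ker L\big(0,\alpha_{j'}(\zeta)\big)$, and has spectrum with nonzero real part on $F_\zeta$, the restriction of $\mathcal{A}(\zeta)-i\xi_j(\zeta)\,I$ is the zero map on the $j$-th summand and an isomorphism on every other summand (including $F_\zeta$, precisely because the purely imaginary number $i\xi_j(\zeta)$ cannot belong to a spectrum contained in $\C\setminus i\R$). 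Invertibility, not diagonalizability, is all that is needed on $F_\zeta$, and that gives $\Ima\big(\mathcal{A}(\zeta)-i\xi_j(\zeta)\,I\big)=\ker P_j(\zeta)$ directly; conjugating by the invertible $A_d(0)$ and using $L\big(0,\alpha_j(\zeta)\big)=iA_d(0)\big(\mathcal{A}(\zeta)-i\xi_j(\zeta)\,I\big)$ then gives \eqref{eq relation Qj} at once. Your approach avoids the limiting argument entirely, which is a clarity gain; the paper's approximation has the cosmetic benefit of staying formally parallel to the diagonalizable (hyperbolic) case treated in \cite{CoulombelGues2010Geometric}, but buys no mathematical content you have not captured.
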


In \cite{CoulombelGues2010Geometric}, the result is proven only for frequencies $ \zeta $ hyperbolic, and the proof is slightly simpler using directly the diagonalizability of matrix $ \mathcal{A}(\zeta) $. Here the matrix is only block-diagonalizable, and we have to deal with eigenvalues of nonzero real part. 

\begin{proof}
	The two decompositions come from the block-diagonalizability of matrix $ \mathcal{A}(\zeta) $, the fact that $ \zeta $ is not glancing and the invertibility of matrix $ A_d(0) $. Indeed, for any nonglancing frequency $ \zeta\in\Xi_0 $, there exists therefore an invertible matrix $ T(\zeta) $ such that $ T(\zeta)\,\mathcal{A}(\zeta)\,T(\zeta)^{-1} $ is the block diagonal matrix
	\begin{equation*}
		T(\zeta)\,\mathcal{A}(\zeta)\,T(\zeta)^{-1}=\diag\big(i\xi_1(\zeta) ,\dots,i\xi_{m_{\zeta}}(\zeta) ,\mathcal{A}_{\pm}(\zeta)\big)
	\end{equation*}
where the $ \xi_j(\zeta) $ are real scalars, and the spectrum of the block $ \mathcal{A}_{\pm}(\zeta) $ is contained in $ \C\setminus i\R $. The proof decomposes in two main steps. First we construct a sequence of diagonalizable matrix converging toward $ \mathcal{A}(\zeta) $, in order to be able to adapt the method used in \cite{CoulombelGues2010Geometric}. Then using projectors defined for this sequence of matrix, analogous to $ P_j(\zeta) $ and $ Q_j(\zeta) $, we are able to prove relation \eqref{eq relation Qj}, using diagonalizability. 

\emph{Step 1.} We consider a sequence $ (\mathcal{A}^k_{\pm}(\zeta))_{k\geq 0} $ of diagonalizable matrices converging toward $ \mathcal{A}_{\pm}(\zeta) $. For $ k\geq 0 $, we denote by $ \tilde{T}_k(\zeta) $ the invertible matrix such that
\begin{equation*}
	\tilde{T}_k(\zeta)\,\mathcal{A}^k_{\pm}(\zeta)\,\tilde{T}_k(\zeta)^{-1}=\diag (i\lambda_1,\dots,i\lambda_{3-m_{\zeta}}).
\end{equation*}
We also denote by $ T_k(\zeta) $ the block diagonalizable matrix
\begin{equation*}
	T_k(\zeta):=\diag(I_{m_{\zeta}},\tilde{T}_k(\zeta)),
\end{equation*}
and we finally define, for $ k\geq 0 $ the matrix $ \mathcal{A}^k(\zeta) $ as
\begin{equation*}
	\mathcal{A}^k(\zeta):=T(\zeta)\,T_k(\zeta)\,\diag\big(i\xi_1(\zeta) ,\dots,i\xi_{m_{\zeta}}(\zeta) ,i\lambda_1,\dots,i\lambda_{3-m_{\zeta}}\big)\,T_k(\zeta)^{-1}\,T(\zeta)^{-1}.
\end{equation*}
Note that the sequence $ \big(\mathcal{A}^k(\zeta)\big)_{k\geq 0} $ is by definition a sequence of diagonalizable matrices which converges toward $ \mathcal{A}(\zeta) $. Using this diagonalizability we get the two following decompositions of $ \C^3 $, for $ k\geq 0 $:
\begin{subequations}
	\begin{align}
	\C^3&=\bigoplus_{j=1}^{m_{\zeta}}\ker \big(\mathcal{A}^k(\zeta)-i\xi_j(\zeta) I\big)\oplus\bigoplus_{j=1}^{3-m_{\zeta}}\ker \big(\mathcal{A}^k(\zeta)-i\lambda_jI\big)\label{eq Lop decomp C 3 k}\\
	&=\bigoplus_{j=1}^{m_{\zeta}}A_d(0)\ker \big(\mathcal{A}^k(\zeta)-i\xi_j(\zeta) I\big)\oplus\bigoplus_{j=1}^{3-m_{\zeta}}A_d(0)\ker \big(\mathcal{A}^k(\zeta)-i\lambda_jI\big).\label{eq Lop decomp C 3 k Ad}
\end{align}
\end{subequations}
First we note that, by definition of the matrix $ \mathcal{A}^k(\zeta) $, the eigenspace $ \ker \big(\mathcal{A}^k(\zeta)-i\xi_j(\zeta) I\big) $ is equal to $ \ker L\big(0,\alpha_j(\zeta)\big) $ and that
\begin{equation*}
	\bigoplus_{j=1}^{3-m_{\zeta}}\ker \big(\mathcal{A}^k(\zeta)-i\lambda_jI\big)=F_{\zeta}.
\end{equation*}
Thus we define the projectors $ P_{\pm}^{k,j}(\zeta) $ (resp. $ Q_{\pm}^{k,j}(\zeta) $) on $ \ker \big(\mathcal{A}^k(\zeta)-i\lambda_jI\big) $ (resp. $ A_d(0)\allowbreak\ker \big(\mathcal{A}^k(\zeta)-i\lambda_jI\big) $) associated with the decomposition \eqref{eq Lop decomp C 3 k} (resp. \eqref{eq Lop decomp C 3 k Ad}). According to the previous remark we then have
\begin{subequations}
	\begin{align}
	I&=P_1(\zeta)+\cdots+P_{m_{\zeta}}(\zeta)+P_{\pm}^{k,1}(\zeta)+\cdots+P_{\pm}^{k,3-m_{\zeta}}(\zeta)\label{eq Lop decomp I P j}\\
	&=Q_1(\zeta)+\cdots+Q_{m_{\zeta}}(\zeta)+Q_{\pm}^{k,1}(\zeta)+\cdots+Q_{\pm}^{k,3-m_{\zeta}}(\zeta).
\end{align}
\end{subequations}

\emph{Step 2.} For $ j_0 $ between $ 1 $ and $ m_{\zeta} $, analogously to $ L\big(0,\alpha_{j_0}(\zeta)\big) $, we define
\begin{equation*}
	L_k\big(0,\alpha_{j_0}(\zeta)\big):=iA_d(0)\big(\mathcal{A}_k(\zeta)-i\xi_{j_0}(\zeta)I\big).
\end{equation*}
By definition and since the following relation is satisfied
\begin{equation*}
	L\big(0,\alpha_{j_0}(\zeta)\big)=iA_d(0)\big(\mathcal{A}(\zeta)-i\xi_{j_0}(\zeta)I\big),
\end{equation*}
the sequence $ \big(L_k\big(0,\alpha_{j_0}(\zeta)\big)\big)_{k\geq 0} $ converges to $ L\big(0,\alpha_{j_0}(\zeta)\big) $. We consider $ L_k\big(0,\alpha_{j_0}(\zeta)\big)\,X $ an element of $ \Ima L_k\big(0,\alpha_{j_0}(\zeta)\big) $ with $ X\in\C^3 $, and the aim is to prove that it belongs to $ \ker Q_{j_0}(\zeta) $. The latter is a closed space, so, since the sequence $ \big(L_k\big(0,\alpha_{j_0}(\zeta)\big)\,X\big)_{k\geq 0} $ converges to $ L\big(0,\alpha_{j_0}(\zeta)\big)\,X $, it will follow that $ \Ima  L\big(0,\alpha_{j_0}(\zeta)\big)\subset \ker Q_{j_0}(\zeta) $ and the conclusion then infers because of equality of dimension of the two spaces. 

We have, by definition of the projectors $ P_j(\zeta) $ and $ P^{k,j}_{\pm}(\zeta) $ and because of the decomposition \eqref{eq Lop decomp I P j},
\begin{align*}
	L_k\big(\alpha_{j_0}(\zeta)\big)\,X&=iA_d(0)\big(\mathcal{A}_k(\zeta)-i\xi_{j_0}(\zeta)I\big)\Big\lbrace\sum_{j=1}^{m_{\zeta}}P_j(\zeta)\,X+\sum_{j=1}^{3-m_{\zeta}}P^{k,j}_{\pm}(\zeta)\,X\Big\rbrace\\
	&=iA_d(0)\sum_{\substack{j=1\\j\neq j_0}}^{m_{\zeta}}\big(i\xi_j(\zeta)-i\xi_{j_0}(\zeta)\big)P_j(\zeta)\,X+iA_d(0)\sum_{j=1}^{3-m_{\zeta}}\big(i\lambda_j(\zeta)-i\xi_{j_0}(\zeta)\big)P^{k,j}_{\pm}(\zeta)\,X,
\end{align*}
and the last term belongs to
\begin{equation*}
		\bigoplus_{\substack{j=1\\j\neq j_0}}^{m_{\zeta}}A_d(0)\ker \big(\mathcal{A}^k(\zeta)-i\xi_j(\zeta) I\big)\oplus\bigoplus_{j=1}^{3-m_{\zeta}}A_d(0)\ker \big(\mathcal{A}^k(\zeta)-i\lambda_jI\big)=\ker Q_{j_0}(\zeta),
\end{equation*}
concluding the proof.
\end{proof}

The interest is now made on the frequencies created on the boundary and then lifted inside the domain. Recall that we considered a quasi-periodic boundary forcing term of frequencies $ \phi/\epsilon $ and $ \psi/\epsilon $, with $ \phi,\psi\in\R^{d}\privede{0} $. In the following we will make restricting assumptions on $ \phi $ and $ \psi $ in order to obtain a particular frequency configuration, eventually creating an instability. 

By nonlinear interaction, frequencies $ \phi $ and $ \psi $ on the boundary create the following lattice of frequencies on the boundary:
\begin{equation*}
	\F_b:=\phi\,\Z\oplus\psi\,\Z.
\end{equation*}
To avoid the complications induced by the glancing modes, we assume that there is no glancing frequency in $ \F_{b}\privede{0} $. This is a common assumption, see \cite{CoulombelGues2010Geometric,CoulombelGuesWilliams2011Resonant}.

\begin{assumption}\label{hypothese pas de glancing}
	We have 
	\begin{equation*}
		\big(\F_{b}\privede{0}\big)\cap \mathcal{G}=\emptyset.
	\end{equation*}
\end{assumption}

To parameterize $ \F_b $ we introduce the following subset of $ \Z^2\privede{0} $:
\begin{equation*}
	\mathcal{B}_{\Z^2}:=\ensemble{(n_1,n_2)\in\Z^2\privede{0}\,\middle|\,\begin{aligned}
			&n_2\wedge n_2=1,\\
			&n_1>0 \ \ \mbox{or}\ \  n_1=0,n_2>0
	\end{aligned}},
\end{equation*}	
of couples of coprime integers of which the first  nonzero term is positive. Then, each frequency $ \zeta:=n_1\,\phi+n_2\,\psi $ of $ \F_b\privede{0} $ is parameterized in a unique way by $ \mathbf{n}_0:=(n^0_1,n^0_2)\in\B_{\Z^2} $ and $ \lambda\in\Z^* $ such that $ (n_1,n_2)=\lambda\,(n^0_1,n^0_2) $. 

In the following, we will allow ourselves to alternate without mentioning it between the following representations of a frequency of $ \zeta\in\F_{b}\privede{0} $: $ \mathbf{n}=(n_1,n_2) $ in $ \Z^2\privede{0} $ such that $ \zeta=n_1\,\phi+n_2\,\psi $ and $ \mathbf{n}_0=(n_0^1,n_0^2) $ in $ \B_{\Z^2} $ and $ \lambda $ in $ \Z^* $ such that $ \mathbf{n}=\lambda\,\mathbf{n}_0 $.

Because of the hyperbolicity of the system, boundary frequencies $ \zeta $ of $ \F_{b} $ are lifted into frequencies $ (\zeta,\xi) $ inside the domain, which must be characteristic frequencies due to polarization conditions. Furthermore, frequencies $ (\zeta,\xi) $ with $ \Im \xi <0 $ are excluded to obtain bounded solutions, and we have already discarded glancing frequencies by Assumption \ref{hypothese pas de glancing}. Therefore, the set $ \F $ of frequencies inside the domain is given by
\begin{equation*}
	\F:=\ensemble{0}\cup\ensemble{\big(\zeta,\xi_j(\zeta)\big) \mid\zeta\in \F_b\privede{0},j\in \mathcal{C}(\zeta)\cup\mathcal{P}(\zeta)}.
\end{equation*}

The following assumption details the configuration of frequencies which is assumed to hold in order to create an instability. It is a generalization to our case of \cite[Assumptions 1.7 and 1.9]{CoulombelWilliams2017Mach}, where the only frequency of the problem, $ \phi $, was supposed to be nonresonant, hyperbolic, and in $ \Upsilon $. In \cite{CoulombelWilliams2017Mach}, the authors explain that allowing the boundary frequency $ \phi $ to be resonant could lead to an over-determination of the system. Assumption \ref{hypothese ensemble frequences} below requires in particular that frequencies $ \phi $ and $ \psi $ are nonresonant\footnote{In the sense that two frequencies lifted from $ \phi $ cannot resonate with each other, and the same for $ \psi $.}, hyperbolic, and in $ \Upsilon $. We additionally assume two resonances between frequencies lifted from $ \phi $ and $ \psi $ to hold, which will eventually allow us to create an instability. 

\begin{assumption}\label{hypothese ensemble frequences}
	There exists a frequency $ \nu $ in $ \F_{b}\privede{0} $ defined by
	\begin{equation*}
		\lambda_{\phi}\,\phi+\lambda_{\psi}\,\psi+\nu=0
	\end{equation*}
with coprime integers $ \lambda_{\phi},\lambda_{\psi} $ such that $ (-\lambda_{\phi},-\lambda_{\psi}) $ is in $ \B_{\Z^2} $, and such that the following conditions hold.
\begin{enumerate}[label=\roman*.),leftmargin=30pt,itemsep=5pt]
	\item Frequencies $ \phi $, $ \psi $ and $ \nu $ are in the hyperbolic region $ \mathcal{H} $.
	\item Frequencies lifted from $ \phi,\psi,\nu $, denoted by $ \phi_j,\psi_j,\nu_j $, $ j=1,2,3 $ are such that $ \phi_j,\psi_j,\nu_j $, $ j=1,3 $ are incoming frequencies and $ \phi_2,\psi_2,\nu_2 $ are outgoing frequencies.
	\item We have $ \F_b\cap\Upsilon=\ensemble{\phi,-\phi,\psi,-\psi} $ (so in particular we have $ \phi,\psi\in\Upsilon $ and $ \nu\in\Xi_0\setminus\Upsilon $).
	\item The following two resonances hold:
	\begin{subequations}\label{eq hyp res phi psi nu}
		\begin{align}
			\lambda_{\phi}\,\phi_1+\lambda_{\psi}\,\psi_1+\nu_2&=0\label{eq hyp res phi psi nu 1},\\
			\lambda_{\phi}\,\phi_3+\lambda_{\psi}\,\psi_2+\nu_2&=0\label{eq hyp res phi psi nu 2}.
		\end{align}
	\end{subequations}
	\item There is no other resonance between frequencies inside the domain. More precisely, if there exists a resonance relation of the form
	\begin{equation*}
		\lambda_1\,\alpha_1+\lambda_2\,\alpha_2+\lambda_3\,\alpha_3=0,
	\end{equation*}
with $ \lambda_1,\lambda_2,\lambda_3\in\Z^* $ and $ \alpha_1,\alpha_2,\alpha_3\in\F\privede{0} $ noncolinear, then, there exists $ \lambda\in\Z^* $, such that up to a renumbering, $ \lambda_1=\lambda\lambda_{\phi} $, $ \lambda_2=\lambda\lambda_{\psi} $, $ \lambda_3=\lambda $ and ($ \alpha_1=\phi_1 $, $ \alpha_2=\psi_1 $ and $ \alpha_3=\nu_2 $) or ($ \alpha_1=\phi_3 $, $ \alpha_2=\psi_2 $ and $ \alpha_3=\nu_2 $).
\end{enumerate}
\end{assumption}

Frequencies lifted inside from frequencies $ \phi $, $ \psi $ and $ \nu $ are depicted in Figure \ref{figure phi psi nu}. There is an amplification in the lifting of $ \phi $ and $ \psi $ because these frequencies are in the region $ \Upsilon $ where the Kreiss-Lopatinskii condition is not satisfied, in contrast to $ \nu $. Amplification arise since, for a frequency in $ \Upsilon $, there is an ascent of small amplitudes toward higher one, namely, a boundary source term of order $ O\big(\epsilon^{n+1}\big) $ occurs in the equations for the profile of order $ O\big(\epsilon^{n}\big) $. Therefore, when amplification occurs, inside profiles lifted from boundary terms of order $ O\big(\epsilon^{n+1}\big) $ are one order higher, namely $ O\big(\epsilon^{n}\big) $.

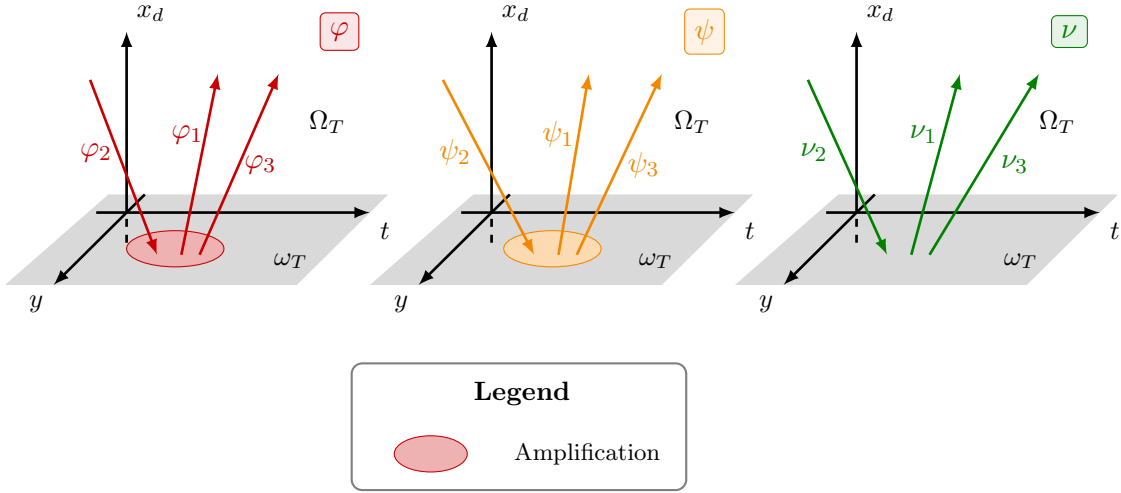
\begin{figure}[h]
	\centering
	\begin{tikzpicture}[scale=0.8]
		\begin{scope}
		\fill[black!15] (-0.3,0.3) -- (4.3,0.3) -- (2.8,-1.2) -- (-2,-1.2)--cycle;
		\draw[line width = 1pt,->,>=latex] (-0.5,0) -- (4,0);
		\draw[line width = 1pt,dashed] (0,-0.5) -- (0,0);
		\draw[line width = 1pt,->,>=latex] (0,0) -- (0,3);
		\draw[line width = 1pt,->,>=latex] (0.3,0.3) -- (-1.2,-1.2);
		\draw[altred,fill=altred!30] (0.8,-0.6) ellipse (0.8cm and 0.3cm);
		\draw[above right] (0,3) node{\small $ x_d $};
		\draw[below left] (-1.2,-1.2) node{\small $ y $};
		\draw[below right] (4,0) node{\small $ t $};
		\draw[altred,->,line width = 1pt,>=latex] (-0.6,2.2) -- (0.5,-0.7); 
		\draw[altred] (-0.5,1) node{$ \phi_2 $};
		\draw[altred,->,line width = 1pt,>=latex] (0.9,-0.7) -- (1.5,2.3); 
		\draw[altred] (1,1.3) node{$ \phi_1 $};
		\draw[altred,->,line width = 1pt,>=latex] (1.2,-0.7) -- (2.5,2.3); 
		\draw[altred] (2.2,0.8) node{$ \phi_3 $};
		\node[draw,altred,rounded corners=2pt,fill=altred!10] (phi) at (3.5,3) {$ \phi $};
		\draw (3.3,1.5) node{\small $ \Omega_T $};
		\draw (2.7,-0.8) node {\small $ \omega_T $};
	\end{scope}
\begin{scope}[shift={(6,0)}]
	\fill[black!15] (-0.3,0.3) -- (4.3,0.3) -- (2.8,-1.2) -- (-2,-1.2)--cycle;
	\draw[line width = 1pt,->,>=latex] (-0.5,0) -- (4,0);
	\draw[line width = 1pt,dashed] (0,-0.5) -- (0,0);
	\draw[line width = 1pt,->,>=latex] (0,0) -- (0,3);
	\draw[line width = 1pt,->,>=latex] (0.3,0.3) -- (-1.2,-1.2);
	\draw[altorange,fill=altorange!30] (1,-0.6) ellipse (0.8cm and 0.3cm);
	\draw[above right] (0,3) node{\small $ x_d $};
	\draw[below left] (-1.2,-1.2) node{\small $ y $};
	\draw[below right] (4,0) node{\small $ t $};
	\draw[altorange,->,line width = 1pt,>=latex] (-0.8,2.2) -- (0.7,-0.7); 
	\draw[altorange] (-0.6,1) node{$ \psi_2 $};
	\draw[altorange,->,line width = 1pt,>=latex] (1.1,-0.7) -- (1.6,2.3); 
	\draw[altorange] (1.1,1.3) node{$ \psi_1 $};
	\draw[altorange,->,line width = 1pt,>=latex] (1.4,-0.7) -- (2.8,2.3); 
	\draw[altorange] (2.5,0.8) node{$ \psi_3 $};
	\node[draw,altorange,rounded corners=2pt,fill=altorange!10] (phi) at (3.5,3) {$ \psi $};
	\draw (3.3,1.5) node{\small $ \Omega_T $};
	\draw (2.7,-0.8) node {\small $ \omega_T $};
\end{scope}
\begin{scope}[shift={(12,0)}]
\fill[black!15] (-0.3,0.3) -- (4.3,0.3) -- (2.8,-1.2) -- (-2,-1.2)--cycle;
\draw[line width = 1pt,->,>=latex] (-0.5,0) -- (4,0);
\draw[line width = 1pt,dashed] (0,-0.5) -- (0,0);
\draw[line width = 1pt,->,>=latex] (0,0) -- (0,3);
\draw[line width = 1pt,->,>=latex] (0.3,0.3) -- (-1.2,-1.2);
\draw[above right] (0,3) node{\small $ x_d $};
\draw[below left] (-1.2,-1.2) node{\small $ y $};
\draw[below right] (4,0) node{\small $ t $};
\draw[altgreen,->,line width = 1pt,>=latex] (-0.8,2.2) -- (0.5,-0.7); 
\draw[altgreen] (-0.7,1) node{$ \nu_2 $};
\draw[altgreen,->,line width = 1pt,>=latex] (0.9,-0.7) -- (1.7,2.3); 
\draw[altgreen] (1.1,1.3) node{$ \nu_1 $};
\draw[altgreen,->,line width = 1pt,>=latex] (1.2,-0.7) -- (3,2.3); 
\draw[altgreen] (2.6,0.8) node{$ \nu_3 $};
\node[draw,altgreen,rounded corners=2pt,fill=altgreen!10] (phi) at (3.5,3) {$ \nu $};
\draw (3.3,1.5) node{\small $ \Omega_T $};
\draw (2.7,-0.8) node {\small $ \omega_T $};
\end{scope}
\begin{scope}[shift={(5,-4)}]
	\draw[altred,fill=altred!30] (0,0) ellipse (0.6cm and 0.3cm);
	\draw[right] (1.2,0) node{\footnotesize Amplification};
	\draw (1.5,1) node{\small \textbf{Legend}};
	\draw[rounded corners,thick,black!50] (-1.3,1.5) rectangle (4.2, -0.6) {};
\end{scope}
	\end{tikzpicture}
	\caption{Frequencies lifted from $ \phi $, $ \psi $ and $ \nu $.}
	\label{figure phi psi nu}
\end{figure}

\begin{remark}\label{remark numbering phi psi}
	\begin{itemize}[leftmargin=20pt]
		\item Point i.) of Assumption \ref{hypothese ensemble frequences} asserts that each frequency $ \phi $, $ \psi $ and $ \nu $ is lifted into three real characteristic frequencies inside the domain.
		\item Point ii.) of Assumption \ref{hypothese ensemble frequences} implies in particular that the integer $ p $, which is the rank of $ B $ and the dimension of the stable subspace $ E_-(\zeta) $ for $ \zeta $ in $ \Xi $, is equal to 2.
	\item In relations \eqref{eq hyp res phi psi nu}, the numeration of the frequencies occurring in the resonances \eqref{eq hyp res phi psi nu} is arbitrary. For the first resonance \eqref{eq hyp res phi psi nu 1}, each of the two incoming frequencies lifted from $ \phi $ and $ \psi $ can be chosen. It sets the numbering of the frequencies lifted from $ \phi $ and $ \psi $. Next, for the second resonance \eqref{eq hyp res phi psi nu 2}, there is no choice, the incoming frequency lifted from $ \phi $ which occurs in the resonance must be the one which did not occur in the first one, $ \phi_3 $ in our fixed numbering, since we already required that $ \lambda_{\phi}\,\phi_1+\nu_2=-\lambda_{\psi}\,\psi_1 $, and $ \psi_2 $ is the only outgoing frequency associated with $ \psi $. 
	\item We choose a numbering of $ \alpha_j(\zeta) $ for $ \zeta=\phi,\psi,\nu $ such that, for any $ j=1,2,3 $, we have
	\begin{equation*}
		\alpha_j(\zeta)=\zeta_j,
	\end{equation*}
where the $ \zeta_j $ are the hyperbolic frequencies defined in Assumption \ref{hypothese ensemble frequences}.
\item The condition $ (-\lambda_{\phi},-\lambda_{\psi}) \in \B_{\Z^2} $ is not restrictive and only relies on permuting the notation for $ \phi $ and $ \psi $ or $ -\phi $ and $ \phi $. It is made to simplify notation in the following.
\end{itemize}
\end{remark}

A useful notation is now introduced for the resonances. 

\begin{definition}
	For $ \zeta\in\ensemble{\phi,\psi,\nu} $, and $ j=1,2,3 $, the set $ \mathcal{R}(\zeta,j) $ is defined as the set of quadruples $ (\zeta_1,\zeta_2,j_1,j_2) $ in $ \ensemble{\phi,\psi,\nu}^2\times\ensemble{1,2,3}^2 $ such that the following resonance holds
	\begin{equation*}
		\lambda_{\zeta}\,\alpha_j(\zeta)+\lambda_{\zeta_1}\,\alpha_{j_1}(\zeta_1)+\lambda_{\zeta_2}\,\alpha_{j_2}(\zeta_2)=0,
	\end{equation*}
	where we have denoted $ \lambda_{\nu}:=1 $.
\end{definition}

For example, we have, according to Assumption \ref{hypothese ensemble frequences},
\begin{align*}
	\mathcal{R}(\phi,1)&=\ensemble{(\psi,\nu,1,2),(\nu,\psi,2,1)},\quad \mathcal{R}(\phi,2)=\emptyset,\quad\mathcal{R}(\phi,3)=\ensemble{(\psi,\nu,3,2),(\nu,\psi,2,3)},\quad \text{and}\quad\\[5pt]
	\mathcal{R}(\nu,2)&=\ensemble{(\phi,\psi,1,1),(\phi,\psi,3,2),(\psi,\phi,1,1),(\psi,\phi,2,3)}.
\end{align*}

\bigskip

We conduct now a formal discussion on how the configuration of frequencies $ \phi_j $, $ \psi_j $ and $ \nu_j $, $ j=1,2,3 $ and the two resonances \eqref{eq hyp res phi psi nu} are expected to create an instability, as represented in Figure \ref{figure configuration resonance}. First, the boundary profiles $ \epsilon^2\,g^{\epsilon} $ and $ \epsilon^3\,h^{\epsilon} $ of frequencies $ \phi $ and $ \psi $ in \eqref{eq systeme 1} create, because of the amplification due to the breaking of Kreiss-Lopatinskii condition for those frequencies, incoming interior profiles of frequencies $ \phi_1 $, $ \phi_3 $, and $ \psi_1 $, $ \psi_3 $ of orders respectively $ O\big(\epsilon\big) $ and $ O\big(\epsilon^2\big) $. Then because of the resonance relation \eqref{eq hyp res phi psi nu 1}, the profiles associated with $ \phi_1 $ and $ \psi_1 $ resonate to create a profile of outgoing frequency $ \nu_2 $ and of order $ O\big(\epsilon^2\big) $\footnote{One of the quadratic term in the equations has a factor $ 1/\epsilon $ in front of it, because the product $ A_i(u^{\epsilon})\,\partial_iu^{\epsilon} $ involves a derivative which counts as $ 1/\epsilon $ for oscillating wave packets at frequency of order $ 1/\epsilon $.}. 
This profile interacts, through resonance relation \eqref{eq hyp res phi psi nu 2}, with the one of frequency $ \phi_3 $ and of order $ O\big(\epsilon\big) $, which is lifted from the boundary forcing term $ \epsilon^2\,g^{\epsilon} $. This resonance leads to a profile of frequency $ \psi_2 $ and amplitude $ O\big(\epsilon^2\big) $, which is an outgoing profile, so a reflection and thus an amplification occur. Indeed, it creates a boundary profile of frequency $ \psi $ and order $ O\big(\epsilon^2\big) $: we obtain instability. Indeed, this boundary profile creates, through amplification on the boundary for $ \psi $, a profile of frequency $ \psi_1 $ and order $ O\big(\epsilon\big) $, which is one order higher than the profile of frequency $ \psi_1 $ we started with. Iterating this process leads to an explosion. 

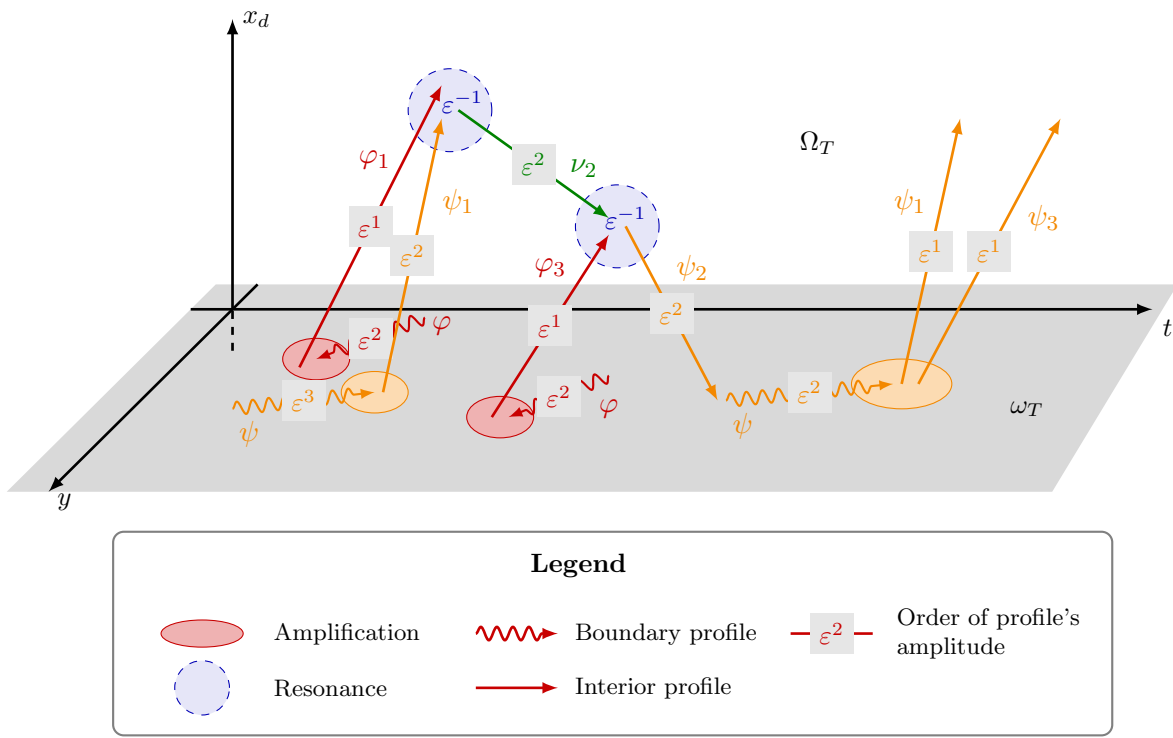
\begin{figure}[h]
	\centering
	\begin{tikzpicture}
		\tikzstyle{etiquette}=[midway,fill=black!10]
		\tikzstyle{spring}=[thick,decorate,decoration={snake,pre length=0.0cm,post
			length=0.2cm,segment length=6}]
		\begin{scope}[scale=1.1]
			\fill[black!15] (-1.2,0.3) -- (10.3,0.3) -- (8.8,-2.2) -- (-3.7,-2.2)--cycle;
			\draw[line width = 1pt,->,>=latex] (-1.5,0) -- (10,0);
			\draw[line width = 1pt,dashed] (-1,-0.5) -- (-1,0);
			\draw[line width = 1pt,->,>=latex] (-1,0) -- (-1,3.5);
			\draw[line width = 1pt,->,>=latex] (-0.7,0.3) -- (-3.2,-2.2);
			\draw[altblue,dashed,fill=altblue!10] (1.6,2.4) circle (0.5cm);
			\node[altblue] (a) at (1.75,2.5) {\small $ \epsilon^{-1} $};
			\draw[altblue,dashed,fill=altblue!10] (3.6,1) circle (0.5cm);
			\node[altblue] (a) at (3.7,1.1) {\small $ \epsilon^{-1} $};
			\draw[altorange,fill=altorange!30] (7,-0.9) ellipse (0.6cm and 0.3cm);
			\draw[altred,fill=altred!30] (0,-0.6) ellipse (0.4cm and 0.25cm);
			\draw[altorange,fill=altorange!30] (0.7,-1) ellipse (0.4cm and 0.25cm);
			\draw[altred,fill=altred!30] (2.2,-1.3) ellipse (0.4cm and 0.25cm);
			\draw[right] (-1,3.5) node{\small $ x_d $};
			\draw[below left] (-2.8,-2.1) node{\small $ y $};
			\draw[below right] (10,0) node{\small $ t $};
			\draw (6,2) node{\small $ \Omega_T $};
			\draw (8.5,-1.2) node {\small $ \omega_T $};
			\draw[spring,->,>=latex,altorange] (-1,-1.2) -- (0.7,-1) node[etiquette]{\small $ \epsilon^3 $};
			\draw[altorange] (-0.8,-1.5) node{$ \psi $};
			\draw[spring,->,>=latex,altorange] (4.9,-1.1) -- (6.9,-0.9) node[etiquette]{\small $ \epsilon^2 $};
			\draw[altorange] (5.1,-1.4) node{$ \psi $};
			\draw[spring,->,>=latex,altred] (1.3,-0.1) -- (-0,-0.6) node[etiquette]{\small $ \epsilon^2 $};
			\draw[altred] (1.5,-0.2) node{$ \phi $};
			\draw[spring,->,>=latex,altred] (3.5,-0.8) -- (2.3,-1.3) node[etiquette]{\small $ \epsilon^2 $};
			\draw[altred] (3.5,-1.2) node{$ \phi $};
			\draw[altred,->,line width = 1pt,>=latex] (-0.2,-0.7) -- (1.5,2.7) node[etiquette]{\small $ \epsilon^1 $}; 
			\draw[altred] (0.7,1.8) node{$ \phi_1 $};
			\draw[altorange,->,line width = 1pt,>=latex] (0.8,-1) -- (1.5,2.3) node[etiquette]{\small $ \epsilon^2 $}; 
			\draw[altorange] (1.7,1.3) node{$ \psi_1 $};
			\draw[altgreen,->,line width = 1pt,>=latex] (1.7,2.4) -- (3.5,1.1) node[etiquette]{\small $ \epsilon^2 $}; 
			\draw[altgreen] (3.2,1.7) node{$ \nu_2 $};
			\draw[altred,->,line width = 1pt,>=latex] (2.1,-1.3) -- (3.5,0.9) node[etiquette]{\small $ \epsilon^1 $}; 
			\draw[altred] (2.8,0.5) node{$ \phi_3 $};
			\draw[altorange,->,line width = 1pt,>=latex] (3.7,1) -- (4.8,-1.1) node[etiquette]{\small $ \epsilon^2 $}; 
			\draw[altorange] (4.5,0.5) node{$ \psi_2 $};
			\draw[altorange,->,line width = 1pt,>=latex] (7,-0.9) -- (7.7,2.3) node[etiquette]{\small $ \epsilon^1 $}; 
			\draw[altorange] (7.1,1.3) node{$ \psi_1 $};
			\draw[altorange,->,line width = 1pt,>=latex] (7.2,-0.9) -- (8.9,2.3) node[etiquette]{\small $ \epsilon^1 $}; 
			\draw[altorange] (8.7,1.1) node{$ \psi_3 $};
		\end{scope}
		\begin{scope}[shift={(-1.5,-4.3)},scale=0.9]
			\draw[altred,fill=altred!30] (0,0) ellipse (0.6cm and 0.2cm);
			\draw[right] (0.9,0) node{\footnotesize Amplification};
			\draw[altblue,dashed,fill=altblue!10] (0,-0.8) circle (0.4cm);
			\draw[right] (0.9,-0.8) node{\footnotesize Resonance};
			\draw[spring,altred,->,line width = 1pt,>=latex] (4,0)--(5.2,0);
			\draw[right] (5.3,0) node{\footnotesize Boundary profile};
			\draw[altred,->,line width = 1pt,>=latex] (4,-0.8)--(5.2,-0.8);
			\draw[right] (5.3,-0.8) node{\footnotesize Interior profile};
			\draw[altred,line width = 1pt] (8.6,0) -- (9.8,0) node[etiquette]{\small $ \epsilon^2 $}; 
			\draw[right,align=left] (10,0) node{\footnotesize Order of profile's\\[-2pt]\footnotesize amplitude};
			\draw (5.5,1) node{\small \textbf{Legend}};
			\draw[rounded corners,thick,black!50] (-1.3,1.5) rectangle (13.3, -1.5) {};
		\end{scope}
	\end{tikzpicture}
	\caption{Creation of instability through amplification.}
	\label{figure configuration resonance}
\end{figure}

\bigskip

We make now a small divisors assumption, which is adapted from \cite[Assumption 1.9]{CoulombelWilliams2017Mach}. This assumption is needed only for frequencies for which the uniform Kreiss-Lopatinskii condition is not satisfied, so, in our case, for $ \phi $ and $ \psi $. Analogously to \cite[Assumption 1.9]{CoulombelWilliams2017Mach}, it requires a polynomial control of the determinant of the symbol associated with combinations of incoming frequencies, using the fact that frequencies lifted from $ \phi $ do not resonate, and the same for $ \psi $. The formulation is simpler than the one of \cite[Assumption 1.9]{CoulombelWilliams2017Mach} since in our case there is only two incoming frequencies, so the only possibility for a combination of it is $ \lambda_1\phi_1+\lambda_3\phi_3 $, with $ \lambda_1,\lambda_3\in\Z^* $, and the same for $ \psi $.

\begin{assumption}\label{hyp petits diviseurs}
	There exists a constant $ C>0 $ and a real positive number $ m_0 $ such that, for $ \zeta=\phi,\psi $, for all $ \lambda_1,\lambda_3\in\Z^* $, 
	\begin{equation*}
		\big|\det L\big(0,\lambda_1\,\zeta_1+\lambda_3\,\zeta_3\big)\big|\geq C \big|(\lambda_1,\lambda_3)\big|^{-m_0}.
	\end{equation*}
\end{assumption}

Finally, we define several vectors associated with the previously introduced eigenspaces. For $ \zeta $ in $ \F_b\privede{0} $ and $ j\in \mathcal{C}(\zeta) $, we denote by $ r_{\zeta,j} $ a unit column vector of the one dimensional space $ \ker L\big(0,\alpha_j(\zeta)\big) $, and $ \ell_{\zeta,j} $ a row vector such that
\begin{equation}\label{eq relation l zeta j L alpha j}
	\ell_{\zeta,j}\,L\big(0,\alpha_j(\zeta)\big)=0
\end{equation}
with the following normalization: for all $ \zeta $ in $ \F_{b}\privede{0} $ and for all $ j,j' $ in $ \mathcal{C}(\zeta) $, we have
\begin{equation}\label{eq relation l zeta j normalisation}
	\ell_{\zeta,j'}\,A_d(0)\,r_{\zeta,j}=\delta^j_{j'}.
\end{equation}

The projectors $ P_j(\zeta) $, $ Q_j(\zeta) $ (defined in Lemma \ref{lemme def Pj}) and the vectors $ r_{\zeta,j} $ and $ \ell_{\zeta,j} $ are chosen to be homogeneous of degree 0 with respect to $ \zeta $. Accordingly, we define the partial inverses $ R_{\zeta,j} $, which satisfy, for $ \zeta $ in $ \F_b\privede{0} $ represented by $ \mathbf{n}_0,\lambda $ in $ \B_{\Z^2}\times\Z^* $,
\begin{equation}\label{eq relation R zeta j P}
	R_{\zeta,j}\,L\big(0,\alpha_j(\zeta)\big)=L\big(0,\alpha_j(\zeta)\big)\,R_{\zeta,j}=\lambda\,\big(I-P_{\zeta,j}\big).
\end{equation}

Consider $ \zeta\in\Upsilon $. Assumption \ref{hypothese weak lopatinskii} asserts that the space $ \ker B \cap E_-(\zeta) $ is one dimensional, so we denote by $ e_{\zeta} $ a unit vector in this space. Now, since, according to the same assumption, $ \Upsilon $ is included in the hyperbolic region $ \mathcal{H} $ and because of Proposition \ref{prop decomp E_-}, we can decompose $ e_{\zeta} $ as
\begin{equation}\label{eq decomp e zeta}
	 e_{\zeta}=\sum_{j\in\mathcal{I}(\zeta)}e_{\zeta,j},
\end{equation} 
with $ e_{\zeta,j}\in\Span r_{\zeta,j} $ for $ j\in\mathcal{I}(\zeta) $. 
We also denote by $ b_{\zeta} $ a vector of $ \C^2 $ such that
\begin{equation}\label{eq def b zeta}
	B\,E_-(\zeta)=\ensemble{X\in\C^2\,\middle|\, b_{\zeta}\cdot X=0},
\end{equation}
that is, a nonzero vector of $ \ker \,^tB_{|E_-(\zeta)} $, which is of dimension 1. Notation $ b_{\zeta}\cdot X $ refers to the complex scalar product in $ \C^2 $.

Using vectors $ r_{\zeta,j} $ and $ \ell_{\zeta,j} $, we have the following lemma, analogous to the one of \cite{Lax1957Asymptotic}. The proof of this particular result can be found in \cite{CoulombelGuesWilliams2011Resonant}, and is recalled here for the sake of clarity.

\begin{lemma}[{\cite[Lemma 2.11]{CoulombelGuesWilliams2011Resonant}}]\label{lemme Lax}
	For $ \zeta\in\F_{b}\privede{0} $ and $ j\in\mathcal{C}(\zeta) $, we have
	\begin{equation*}
		\ell_{\zeta,j}\,L(0,\partial_{z})\,r_{\zeta,j}=X_{\alpha_j(\zeta)},
	\end{equation*}
where $ X_{\alpha_j(\zeta)} $ is the vector field defined in Definition \ref{def sortant rentrant alpha X alpha}.
\end{lemma}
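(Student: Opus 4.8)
The plan is to verify the identity by a direct computation in coordinates, using the definition of the symbol $L(0,\partial_z)$ and the characterization of the vector field $X_{\alpha_j(\zeta)}$ from Definition~\ref{def sortant rentrant alpha X alpha}. Writing $\zeta=(\tau,\eta)$, $\alpha_j(\zeta)=(\tau,\eta,\xi_j(\zeta))$ and abbreviating $\alpha=\alpha_j(\zeta)$, $k=k_j$, $r=r_{\zeta,j}$, $\ell=\ell_{\zeta,j}$, I would first observe that since $L(0,\partial_z)=\partial_t+\sum_{i=1}^{d-1}A_i(0)\partial_i+A_d(0)\partial_d$ has constant coefficients, applying it to an oscillating wave $e^{i(z\cdot\alpha)/\epsilon}r$ pulls down the symbol; the point of the lemma is rather the \emph{first-order variation}, i.e.\ that $\ell\,L(0,\partial_z)\,r$, as a scalar transport operator, equals $X_\alpha$. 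Concretely, applying $L(0,\partial_z)$ to a slowly varying amplitude times $r$ (constant vector) produces $\big(\partial_t+\sum_i A_i(0)\partial_i + A_d(0)\partial_d\big)$ acting coefficientwise, so after contracting with $\ell$ we get the scalar operator $\ell r\,\partial_t + \sum_{i=1}^{d-1}(\ell A_i(0)r)\,\partial_i + (\ell A_d(0)r)\,\partial_d$.

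The key algebraic input is the differentiation of the eigenvalue relation. From $L(0,\alpha)r=0$ with $L(0,\alpha)=\tau I+\sum_{i=1}^{d-1}\eta_iA_i(0)+\xi A_d(0)$ and $\tau=\tau_k(\eta,\xi)$, differentiating $L\big(0,(\tau_k(\eta,\xi),\eta,\xi)\big)r(\eta,\xi)=0$ with respect to $\eta_i$ and to $\xi$, then left-multiplying by $\ell$ and using $\ell L(0,\alpha)=0$ (which kills the $\ell L(0,\alpha)\partial r$ terms), yields
\begin{align*}
	\ell\big(\partial_{\eta_i}\tau_k(\eta,\xi)\,I + A_i(0)\big)r&=0,\\
	\ell\big(\partial_{\xi}\tau_k(\eta,\xi)\,I + A_d(0)\big)r&=0.
\end{align*}
Combined with the normalization $\ell A_d(0)r=1$ from \eqref{eq relation l zeta j normalisation} (taking $j'=j$), the second relation gives $\ell r=-\partial_\xi\tau_k(\eta,\xi)$, and the first gives $\ell A_i(0)r=-\partial_{\eta_i}\tau_k(\eta,\xi)\,\ell r=\partial_{\eta_i}\tau_k(\eta,\xi)\,\partial_\xi\tau_k(\eta,\xi)$. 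Substituting these into the scalar operator above produces
\begin{equation*}
	\ell\,L(0,\partial_z)\,r=-\partial_\xi\tau_k(\eta,\xi)\,\partial_t+\sum_{i=1}^{d-1}\partial_{\eta_i}\tau_k(\eta,\xi)\,\partial_\xi\tau_k(\eta,\xi)\,\partial_i+\partial_\xi\tau_k(\eta,\xi)\,\partial_d,
\end{equation*}
which, after factoring out $-\partial_\xi\tau_k$, is exactly the expression \eqref{eq champ vecteur X_alpha} defining $X_\alpha$; note $\partial_\xi\tau_k\neq0$ precisely because $\alpha$ is not glancing (it is in $\mathcal{C}(\zeta)$ and by Assumption~\ref{hypothese pas de glancing} non-glancing frequencies in $\F_b\privede{0}$ have $\partial_\xi\tau_k\neq0$), so the division is legitimate.

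I expect the only genuinely delicate point to be the smooth local choice of $r(\eta,\xi)$ as an eigenvector so that the differentiations above make sense: this is where strict hyperbolicity (Assumption~\ref{hypothese stricte hyp}) enters, guaranteeing that $-\tau_k$ is a simple eigenvalue of $A(\eta,\xi)$ and hence that a smooth unit eigenvector $r_{\zeta,j}$ can be selected locally near the frequencies of interest, and likewise a smooth left eigenvector $\ell_{\zeta,j}$; the homogeneity-of-degree-$0$ normalization fixed earlier makes these choices consistent. Everything else is the linear-algebra bookkeeping sketched above. Since the statement is classical (Lax) and the paper explicitly says the proof ``is recalled here for the sake of clarity,'' I would present exactly this differentiated-eigenrelation argument rather than anything more elaborate.
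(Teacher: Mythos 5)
Your approach is the same classical Lax argument the paper uses: differentiate the eigenrelation, left-multiply by the co-vector $\ell$, invoke the normalization $\ell A_d(0) r = 1$, and assemble the resulting first-order relations into the transport operator. The cosmetic difference is that you parametrize the characteristic variety by $(\eta,\xi)$ (so $\tau = \tau_k(\eta,\xi)$ is the dependent quantity) while the paper parametrizes by $\zeta = (\tau,\eta)$ (so $\xi = \xi_j(\zeta)$ is dependent and one needs the implicit-function relations \eqref{eq inter2} to convert). Both parametrizations are legitimate.

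However, the final algebraic step contains errors that make your displayed operator wrong. From your second relation $\ell\bigl(\partial_\xi\tau_k\,I + A_d(0)\bigr)r = 0$ together with $\ell A_d(0) r = 1$, one gets $\partial_\xi\tau_k\,\ell r + 1 = 0$, i.e.\ $\ell r = -1/\partial_\xi\tau_k$, not $\ell r = -\partial_\xi\tau_k$. Consequently the first relation yields $\ell A_i(0) r = -\partial_{\eta_i}\tau_k\,\ell r = \partial_{\eta_i}\tau_k/\partial_\xi\tau_k$, not $\partial_{\eta_i}\tau_k\,\partial_\xi\tau_k$. Finally, the coefficient of $\partial_d$ in $\ell\,L(0,\partial_z)\,r$ is $\ell A_d(0) r = 1$, not $\partial_\xi\tau_k$ as written in your last display. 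With these corrections, the operator becomes
\begin{equation*}
\ell\,L(0,\partial_z)\,r = -\frac{1}{\partial_\xi\tau_k}\,\partial_t + \sum_{i=1}^{d-1}\frac{\partial_{\eta_i}\tau_k}{\partial_\xi\tau_k}\,\partial_i + \partial_d = \frac{-1}{\partial_\xi\tau_k}\Bigl(\partial_t - \nabla_\eta\tau_k\cdot\nabla_y - \partial_\xi\tau_k\,\partial_{x_d}\Bigr),
\end{equation*}
which is exactly $X_{\alpha}$ as in \eqref{eq champ vecteur X_alpha}. As written, your final display differs from this by a factor of $(\partial_\xi\tau_k)^2$ in the prefactor and a factor of $\partial_\xi\tau_k$ in the $\partial_d$ term, so the claim that it ``is exactly the expression'' defining $X_\alpha$ does not hold until the intermediate inner products are corrected.
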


\begin{proof}
	Denote by $ k $ the integer between $ 1 $ and $ 3 $ such that, if $ \alpha_j(\zeta)=(\tau,\eta,\xi) $, then $ \tau=\tau_k(\eta,\xi) $. Since $ \zeta\in\F_b\privede{0} $, the frequency $ \zeta $ is not glancing, so, according to definition \ref{def sortant rentrant alpha X alpha}, we have $ \partial_{\xi}\tau_k(\eta,\xi)\neq0 $. Therefore, according to the implicit function theorem, the function $ \zeta'\mapsto\xi_j(\zeta') $ is differentiable near $ \zeta $. Indeed, $ \xi_j $ is such that, if $ \zeta'=(\tau',\eta') $,
	 \begin{equation}\label{eq inter1}
	 	\tau_k\big(\eta',\xi_j(\tau',\eta')\big)-\tau'=0.
	 \end{equation} 
Therefore\footnote{Here we extend the definition of $ r_{\zeta,j} $ to any frequency $ \zeta $ in $ \Xi\setminus\mathcal{G} $.}, seen as a function of $ \zeta $, the vector $ r_{\zeta,j} $ is also differentiable with respect to $ \zeta $. Differentiating relation \eqref{eq inter1} even proves the following relations:
 \begin{equation}\label{eq inter2}
 	\partial_{\tau}\xi_j(\tau,\eta)=\inv{\partial_{\xi}\tau_k(\eta,\xi)},\qquad \partial_{\eta_p}\xi_j(\tau,\eta)=\frac{-\partial_{\eta_p}\tau_k(\eta,\xi)}{\partial_{\xi}\tau_k(\eta,\xi)},\quad \forall p=1,\dots,d-1.
 \end{equation}

Now, differentiating the relation
\begin{equation*}
	L\big(0,(\tau,\eta,\xi_j(\tau,\eta))\big)\,r_{\zeta,j}=0
\end{equation*}
with respect to $ \tau $ and $ \eta_p $, $ p=1,\dots,d-1 $,
and multiplying on the left by $ \ell_{\zeta,j} $, gives
\begin{align*}
	\ell_{\zeta,j}\,r_{\zeta,j}+\partial_{\tau}\xi_j(\tau,\eta)\,\ell_{\zeta,j}\,A_d(0)\,r_{\zeta,j}&=0,
	\intertext{and, for $ p=1,\dots,d-1 $,} \ell_{\zeta,j}\,A_p(0)\,r_{\zeta,j}+\partial_{\eta_p}\xi_j(\tau,\eta)\,\ell_{\zeta,j}\,A_d(0)\,r_{\zeta,j}&=0.
\end{align*}
With relations \eqref{eq inter2}, the result follows.
\end{proof} 

The following result could be seen as an analogue to Lax lemma, for the boundary. Indeed, it asserts that a certain operator appearing in the boundary equations is actually a linear transport operator with constant velocity. The result is due to \cite{CoulombelGues2010Geometric}, and its technical proof is not recalled here.

\begin{lemma}[{\cite[Proposition 3.5]{CoulombelGues2010Geometric}}]\label{lemme Lax bord}
	Let $ \zeta=\phi,\psi $, and recall that $ \kappa $ is the scalar function of the weak Kreiss-Lopatinskii condition, see Assumption \ref{hypothese weak lopatinskii}. Then, there exists a nonzero real scalar $ \beta_{\zeta} $ such that
	\begin{equation*}
		b_{\zeta}\cdot B\,\Big(R_{\zeta,1}\,L(0,\partial_z)\,e_{\zeta,1}+R_{\zeta,3}\,L(0,\partial_z)\,e_{\zeta,3}\Big)=\beta_{\zeta}\left(\partial_{\tau}\kappa(\zeta)\,\partial_t+\sum_{j=1}^{d-1}\partial_{\eta_j}\kappa(\zeta)\,\partial_{x_j}\right).
	\end{equation*}
Moreover, the coefficient $ \partial_{\tau}\kappa(\zeta) $ is equal to 1.
\end{lemma}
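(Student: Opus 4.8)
\emph{Proof idea.} This is \cite[Proposition 3.5]{CoulombelGues2010Geometric}; it is the boundary analogue of the Lax Lemma~\ref{lemme Lax}. Fix $\zeta=\phi$ or $\psi$; by Assumptions~\ref{hypothese pas de glancing} and~\ref{hypothese ensemble frequences}, $\zeta\in\Upsilon$ is hyperbolic and non-glancing with $\mathcal{I}(\zeta)=\{1,3\}$, so by Proposition~\ref{prop decomp E_-} one has $E_-(\zeta')=\Span r_{\zeta',1}\oplus\Span r_{\zeta',3}$ for $\zeta'$ real near $\zeta$. On $\mathcal{V}\cap\Sigma_0$, Assumption~\ref{hypothese weak lopatinskii} reads $B\,E_1(\zeta')=i\kappa(\zeta')\,p_1(\zeta')$ and $B\,E_2(\zeta')=p_2(\zeta')$, where $p_1(\zeta'),p_2(\zeta')$ denote the columns of $P(\zeta')$. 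Since $\kappa(\zeta)=0$ on $\Upsilon$, we get $E_1(\zeta)\in\ker B\cap E_-(\zeta)=\Span e_\zeta$; rescaling the distinguished basis so that $E_1=e_\zeta$ on $\Upsilon$, the splitting $E_1(\zeta')=e_{\zeta',1}+e_{\zeta',3}$ with $e_{\zeta',j}\in\Span r_{\zeta',j}$ then extends~\eqref{eq decomp e zeta}. Note also $b_\zeta\cdot p_1(\zeta)\neq0$: as $\kappa(\zeta)=0$ and $p=2$, the line $B\,E_-(\zeta)=\Span p_2(\zeta)$ equals $b_\zeta^\perp$, and $p_1(\zeta)\notin\Span p_2(\zeta)$ because $P(\zeta)$ is invertible.

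Following the proof of Lemma~\ref{lemme Lax}, I would then differentiate the polarization relations $L\big(0,(\tau,\eta,\xi_j(\tau,\eta))\big)\,e_{\zeta',j}=0$ with respect to $\tau$ and to each $\eta_i$, using the formulas~\eqref{eq inter2} for $\partial_\tau\xi_j$ and $\partial_{\eta_i}\xi_j$; this expresses $\partial_\tau e_{\zeta,j}$ and $\partial_{\eta_i}e_{\zeta,j}$, modulo $\ker L\big(0,\alpha_j(\zeta)\big)=\Span r_{\zeta,j}$, through the partial inverse $R_{\zeta,j}$ applied to $e_{\zeta,j}$, respectively to $A_i(0)\,e_{\zeta,j}$, i.e.\ through the operators $R_{\zeta,j}L(0,\partial_z)e_{\zeta,j}$ read on their symbols (the normalization of $R_{\zeta,j}$ on $\ker L(0,\alpha_j(\zeta))$ being exactly what produces the nontrivial $\partial_t$-coefficient). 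The key point is that, once the two contributions $j\in\{1,3\}$ are summed and $B$ is applied, the $\Span r_{\zeta,j}$-indeterminacies recombine into $B$ of a $\zeta'$-derivative of $E_1=e_{\zeta,1}+e_{\zeta,3}$. Hence there is a single scalar $c$ with
\[
	b_\zeta\cdot B\Big(R_{\zeta,1}L(0,\partial_z)e_{\zeta,1}+R_{\zeta,3}L(0,\partial_z)e_{\zeta,3}\Big)=c\Big(\big(b_\zeta\cdot B\,\partial_\tau E_1(\zeta)\big)\,\partial_t+\sum_{i=1}^{d-1}\big(b_\zeta\cdot B\,\partial_{\eta_i}E_1(\zeta)\big)\,\partial_{x_i}\Big),
\]
the $\partial_{x_d}$-part of the left-hand side not contributing (in the applications this operator acts on boundary amplitudes, the normal derivatives of polarized interior amplitudes having first been traded for tangential ones via the transport fields $X_{\alpha_j}$ of Lemma~\ref{lemme Lax}).

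To conclude, differentiate $B\,E_1(\zeta')=i\kappa(\zeta')\,p_1(\zeta')$ at $\zeta\in\Upsilon$ (where $\kappa(\zeta)=0$):
\[
	B\,\partial_\tau E_1(\zeta)=i\,\partial_\tau\kappa(\zeta)\,p_1(\zeta),\qquad B\,\partial_{\eta_i}E_1(\zeta)=i\,\partial_{\eta_i}\kappa(\zeta)\,p_1(\zeta).
\]
Contracting with $b_\zeta$ and using $b_\zeta\cdot p_1(\zeta)\neq0$ yields the announced identity with $\beta_\zeta:=i\,c\,\big(b_\zeta\cdot p_1(\zeta)\big)\neq0$; that $\beta_\zeta$ can be taken real follows from the real structure of the $A_i(0),A_d(0)$ and of the frequencies together with an appropriate normalization of $E_1(\zeta)$, $b_\zeta$ and the $r_{\zeta,j}$. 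For the normalization $\partial_\tau\kappa(\zeta)=1$ I would argue separately: the stable subspace $E_-(\zeta)$ depends holomorphically on $\sigma=\tau-i\gamma$ in $\{\gamma>0\}$, so the Lopatinskii determinant $\det\big(B\,E_1(\zeta)\ B\,E_2(\zeta)\big)=\det P(\zeta)\,(\gamma+i\kappa(\zeta))$ is a nowhere-vanishing smooth factor times a function holomorphic in $\sigma$; comparing, near a point $\sigma_0\in\R$ of $\Upsilon$, the factorization $\gamma+i\kappa(\zeta)$ with the behaviour $\propto(\sigma-\sigma_0)$ of that holomorphic factor at its simple zero forces $\partial_\tau\kappa(\zeta)=1$ (and, as a byproduct, $\partial_\gamma\kappa(\zeta)=0$). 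Equivalently, $\partial_\tau\kappa(\zeta)=1$ can be obtained by evaluating the $\partial_t$-coefficient of the left-hand side directly from the normalization~\eqref{eq relation l zeta j normalisation} and the defining relation~\eqref{eq relation R zeta j P} of $R_{\zeta,j}$, and dividing by $\beta_\zeta$.

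The main obstacle is the middle step: proving that the $d$ scalar coefficients of the operator are a \emph{common} --- and \emph{real} --- multiple of the components of $\big(\partial_\tau\kappa(\zeta),\nabla_\eta\kappa(\zeta)\big)$, with that common factor pinned down so that the $\partial_t$-coefficient is exactly $\beta_\zeta$. This requires a careful bookkeeping of how $E_1(\zeta')$, equivalently $e_{\zeta'}=\sum_j e_{\zeta',j}$, and $\kappa$ vary with $\zeta'$, and of the precise normalizations of $\kappa$ and of the $R_{\zeta,j}$; this is the technical content of \cite[Proposition 3.5]{CoulombelGues2010Geometric}. The argument uses crucially that $\Upsilon\subset\mathcal{H}$, so that $E_1(\zeta')$ has no glancing or elliptic components near $\zeta$; the small-divisors Assumption~\ref{hyp petits diviseurs} plays no role here.
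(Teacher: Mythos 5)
The paper does not prove this lemma: immediately after the statement it says ``The result is due to \cite{CoulombelGues2010Geometric}, and its technical proof is not recalled here.'' So there is no internal proof to compare against; I can only assess your reconstruction on its own terms.

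Your skeleton is the right one --- differentiate the polarization identities $L\big(0,(\tau,\eta,\xi_j(\tau,\eta))\big)e_{\zeta',j}=0$ in $(\tau,\eta)$, apply $R_{\zeta,j}$ and $b_\zeta\cdot B$, sum over $j\in\{1,3\}$ to reassemble $b_\zeta\cdot B\,\partial_{\tau,\eta}E_1(\zeta)$, then use $B\,E_1(\zeta')=i\kappa(\zeta')\,p_1(\zeta')$ (on $\Sigma_0$, where $\gamma=0$) together with $\kappa(\zeta)=0$ to read off $i\,\partial_{\tau,\eta}\kappa(\zeta)\,b_\zeta\cdot p_1(\zeta)$; and your argument for $\partial_\tau\kappa(\zeta)=1$ via holomorphy of the Lopatinskii determinant in $\sigma=\tau-i\gamma$ (the factor $\gamma+i\kappa$ has to be $\partial_{\bar\sigma}$-closed at first order, forcing $\partial_\tau\kappa=1$, $\partial_\gamma\kappa=0$) is correct.

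There is, however, a genuine gap. You dismiss the $\partial_{x_d}$-coefficient of the left-hand side with the remark that it ``does not contribute in applications'' because normal derivatives can be traded for tangential ones via the transport fields $X_{\alpha_j}$. That is not what the lemma says: it asserts, as an \emph{identity between differential operators}, that $b_\zeta\cdot B\big(\sum_{j=1,3}R_{\zeta,j}L(0,\partial_z)e_{\zeta,j}\big)$ is a tangential vector field, and the remark following the lemma stresses precisely this point. The vanishing of the $\partial_{x_d}$-coefficient, i.e.\ of $\sum_{j=1,3}b_\zeta\cdot B\,R_{\zeta,j}A_d(0)e_{\zeta,j}$, has to be proved, not assumed away; moreover it enters your own derivation, since differentiating the polarization in $\tau$ produces $R_{\zeta,j}(I+\partial_\tau\xi_j\,A_d(0))e_{\zeta,j}$, and without killing the $R_{\zeta,j}A_d(0)e_{\zeta,j}$ piece you do not get the clean proportionality to $\partial_\tau\kappa(\zeta)$. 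The missing observation is that $R_{\zeta,j}A_d(0)e_{\zeta,j}=0$ \emph{term by term}: indeed $A_d(0)e_{\zeta,j}\in A_d(0)\ker L(0,\alpha_j(\zeta))=\Ima Q_j(\zeta)$, and the partial inverse $R_{\zeta,j}$ is, by construction, the inverse of the bijection $L(0,\alpha_j(\zeta)):\Ima(I-P_{\zeta,j})\to\ker Q_j(\zeta)$ extended by zero on the complementary subspace $\Ima Q_j(\zeta)$, hence vanishes on $A_d(0)\Span r_{\zeta,j}$. This is what makes the operator tangential and at the same time cleans up the $\partial_\tau$- and $\partial_{\eta_i}$-computations so that the coefficients are exact multiples of $\partial_\tau\kappa(\zeta)$ and $\partial_{\eta_i}\kappa(\zeta)$ with the same constant $\beta_\zeta=-i\,b_\zeta\cdot p_1(\zeta)$.
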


\begin{remark}
	In particular, the previous result ensures that the operator $ b_{\zeta}\cdot B\,\big(R_{\zeta,1}\,L(0,\partial_z)\,e_{\zeta,1}\linebreak+R_{\zeta,3}\,L(0,\partial_z)\,e_{\zeta,3}\big) $ is tangent to the boundary. 
\end{remark}

\section{Derivation of the system}

This section is devoted to the derivation of the general system studied in this article. We start by detailing the ansatz we choose here, and by displaying the WKB cascade associated with system \eqref{eq systeme 1}. Then we proceed by trying to decouple this cascade for the profiles. 

\subsection{Ansatz and WKB cascade}

The ansatz for each amplitude $ U_n $ of \eqref{eq ansatz provisoire} must allow to consider both oscillating modes (associated with characteristic frequencies $ \alpha_j(\zeta) $ for $ \zeta\in\F_b\privede{0} $ and $ j\in\mathcal{C}(\zeta) $) and evanescent modes (associated with evanescent frequencies $ \alpha_j(\zeta) $ for $ \zeta\in\F_b\privede{0} $ and $ j\in\mathcal{P}(\zeta) $). We define at this purpose the following spaces of profiles. We denote by $ \T $ the one-dimensional torus.

\begin{definition}\label{def espaces profils}
	The space of evanescent profiles $ \P^{\ev}_{T} $ is defined as the set of functions $ U^{\ev} $ of $ \mathcal{C}_b\big(\R^+_{\chi_d},H^{\infty}(\Omega_T\times\T^2)\big) $ which converge to zero as $ \chi_d $ goes to infinity.
	
	The space of oscillating profiles $ \P^{\osc}_{T} $ is defined as the set of formal trigonometric functions in $ \chi_d $ with values in the Sobolev space $ H^{\infty}(\Omega_T\times\T^2) $, that is, formal series 
	\begin{equation*}
		U^{\osc}(z,\theta_1,\theta_2,\chi_d)=\sum_{\xi\in\R}U^{\osc}_{\xi}(z,\theta_1,\theta_2)\,e^{i\,\xi\,\chi_d},
	\end{equation*}
with $ U^{\osc}_{\xi}\in H^{\infty}(\Omega_T\times\T^2) $ for $ \xi\in\R $.

Finally, $ \P_{T} $ is defined as the direct sum
\begin{equation*}
	\P_{T}:=\P^{\osc}_{T}\oplus\P_{T}^{\ev}.
\end{equation*}
\end{definition}

The ansatz is the following: we look for an approximate solution 
\begin{equation*}
	u^{\epsilon,\app}(z):=v^{\epsilon}\left( z,\frac{z'\cdot\phi}{\epsilon},\frac{z'\cdot\psi}{\epsilon},\frac{x_d}{\epsilon}\right)
\end{equation*}
where the formal series $ v^{\epsilon} $ is given by
\begin{equation}\label{eq ansatz}
	v^{\epsilon}\big(z,\theta_1,\theta_2,\chi_d\big):=\sum_{n\geq 1}\epsilon^n\,U_n\big(z,\theta_1,\theta_2,\chi_d\big),
\end{equation}
where, for $ n\geq 1 $, $ U_n $ belongs to $ \P_{T} $.

Formally plugging ansatz \eqref{eq ansatz} in system \eqref{eq systeme 1}, we obtain the following WKB cascade\footnote{We have used here the assumption that coefficients $ A_j $ are affine maps.} (see \cite{CoulombelWilliams2017Mach}) for the profiles $ (U_n)_{n\geq 1} $:
\begin{subequations}\label{eq cascade int}
\begin{align}
	\mathcal{L}(\partial_{\theta},\partial_{\chi_d})\,U_1&=0\label{eq cascade int U1}\\[5pt]
	\mathcal{L}(\partial_{\theta},\partial_{\chi_d})\,U_2+L(0,\partial_z)\,U_1+\mathcal{M}(U_1,U_1)&=0\label{eq cascade int U2}\\
	\mathcal{L}(\partial_{\theta},\partial_{\chi_d})\,U_{n+1}+L(0,\partial_z)\,U_n+\sum_{k=1}^n\mathcal{M}(U_{n-k+1},U_k)+\sum_{k=1}^{n-1}\mathcal{N}(U_{n-k},U_k)&=0,\label{eq cascade int Un}
\end{align}
\end{subequations}
where \eqref{eq cascade int Un} should hold for any $ n\geq 2 $. In \eqref{eq cascade int}, the fast operator $ \mathcal{L}(\partial_{\theta},\partial_{\chi_d}) $ and the quadratic operators $ \mathcal{M} $ and $ \mathcal{N} $ are defined by
\begin{align*}
	\mathcal{L}(\partial_{\theta},\partial_{\chi_d}):=&\,L(0,\phi)\,\partial_{\theta_1}+L(0,\psi)\,\partial_{\theta_2}+A_d(0)\,\partial_{\chi_d},\\[5pt]
	\mathcal{M}(u,v):=&\,L_1(u,\phi)\,\partial_{\theta_1}v+L_1(u,\psi)\,\partial_{\theta_2}v+dA_d(0)\,\cdot u\,\partial_{\chi_d}v\\
	=&\,\sum_{k=1}^{d-1}dA_k(0)\cdot u\, \big(\phi_k\,\partial_{\theta_1}+\psi_k\,\partial_{\theta_2}\big) v+dA_d(0)\cdot u\,\partial_{\chi_d}v,\\
	\mathcal{N}(u,v):=&\,L_1(u,\partial_z)\,v:=\sum_{k=1}^d dA_k(0)\cdot u\,\partial_k v,
\end{align*}
where we have denoted by, for $ X $ in $ \C^3 $ and $ \zeta=(\zeta_0,\dots,\zeta_{d-1})\in\R^{d} $, 
\begin{equation*}
		L(0,\zeta):=L\big(0,(\zeta,0)\big)=\sum_{k=1}^{d-1}\zeta_k A_k(0),\qquad L_1(X,\zeta):=\sum_{k=1}^{d-1}\zeta_k\, dA_k(0)\cdot X.
\end{equation*}
The boundary and initial conditions of \eqref{eq systeme 1} reads 
\begin{subequations}\label{eq cascade bord}
\begin{align}
	B\,\big(U_1\big)_{|x_d,\chi_d=0}(z',\theta_1,\theta_2)&=0\label{eq cascade bord U12},
	&B\,\big(U_2\big)_{|x_d,\chi_d=0}(z',\theta_1,\theta_2)&=G(z',\theta_1)\\
	B\,\big(U_3\big)_{|x_d,\chi_d=0}(z',\theta_1,\theta_2)&=H(z',\theta_2),
	&B\,\big(U_n\big)_{|x_d,\chi_d=0}(z',\theta_1,\theta_2)&=0,\quad\forall n\geq 4\label{eq cascade bord U3n},
\end{align}
\end{subequations}
and 
\begin{equation}\label{eq cascade initial}
	\big(U_n\big)_{|t\leq 0}=0,\quad \forall n\geq 1.	
\end{equation}

The aim is now to decouple cascade \eqref{eq cascade int}.
First we use polarization equation \eqref{eq cascade int U1} to obtain the form of the leading profile $ U_1 $, and proceed to show that the mean value $ U_1^* $ of $ U_1 $ is zero using evolution equation \eqref{eq cascade int U2}. Then we need to determine the oscillating part of $ U_1 $. Equation \eqref{eq cascade int U2} leads to a transport equations for each mode. When the equation is outgoing (that is, when the frequency is outgoing), the transport equation can be solved with a source term eventually depending on other leading profiles, due to resonances. When it is incoming (i.e. when the frequency is incoming), we need to determine a boundary condition from first equation of \eqref{eq cascade bord U12}. Two cases may occur. If the associated boundary frequency $ \zeta $ is not in $ \Upsilon $, that is, for $ \zeta\neq \phi,\psi $, we can write boundary condition \eqref{eq cascade bord U12} for $ \zeta $ as $ B\,X=F $, where source term $ F $ depends on the trace of the outgoing leading profile for this boundary frequency $ \zeta $, and where $ X $ (containing traces of incoming leading profiles) belongs to $ E_-(\zeta) $. Since $ B $ is invertible on $ E_-(\zeta) $ according to Assumption \ref{hypothese weak lopatinskii} and the fact that $ \zeta\notin \Upsilon $, this boundary condition $ B\,X=F $ leads to a boundary condition for traces of incoming leading profiles. The second case is more complicated. If $ \zeta\in\Upsilon $, that is, if $ \zeta=\phi,\psi $, matrix $ B $ is no longer invertible on $ E_-(\zeta) $. Therefore, boundary condition $ B\,X=F $ cannot be inverted, and leads to both a compatibility condition (which we shall see will over-determine the system), and expressions for traces of incoming leading profiles for $ \phi $ and $ \psi $ depending on unknown scalar functions $ a_{\phi}^1 $ and $ a_{\psi}^1 $. Then, to determine these functions $ a_{\phi}^1 $and $ a_{\psi}^1 $, we need investigate first corrector $ U_2 $. 

According to equation \eqref{eq cascade int U2}, the first corrector $ U_2 $ is not polarized. But this equation allows us to determine of formula for its nonpolarized part, depending on the leading profiles. We write second equation of boundary condition \eqref{eq cascade bord U12} with these expressions for the nonpolarized parts which leads to equations on the traces of the nonpolarized parts, and therefore, equations on the traces of leading profiles, namely $ a^1_{\phi} $ and $ a^1_{\psi} $. However, the system of equations obtained is still not closed, since equations on $ a^1_{\phi} $ and $ a^1_{\psi} $ involve traces of the first corrector $ U_2 $.

The next step is to obtain equations on the polarized part of the first corrector $ U_2 $, which is achieved using equation \eqref{eq cascade int Un} for $ n=3 $. Once again, depending on the frequency, we obtain incoming or outgoing transport equations with source term depending on leading profile and first corrector. For incoming equations, when the associated frequency $ \zeta $ is not in $ \Upsilon $, boundary condition \eqref{eq cascade bord U12} can be inverted to obtain a closed system. Otherwise, when $ \zeta $ belongs to $ \Upsilon $, the same arguments as for the leading profiles leads to compatibility conditions (that are these time always satisfied by previous construction) and expressions for traces of incoming first corrector profiles for $ \phi $ and $ \psi $ depending on unknown scalar functions $ a_{\phi}^2 $ and $ a_{\psi}^2 $. 

Investigating the nonpolarized part of the second corrector $ U_3 $ leads, in its turn, to equations on $ a_{\phi}^2 $ and $ a_{\psi}^2 $, depending once again on trace of the second corrector $ U_2 $, preventing to close the system. This method applies recursively to any order.

\subsection{Rewriting the equations: leading profile and first corrector}

This subsection is devoted to the almost-decoupling of the cascade \eqref{eq cascade int}, \eqref{eq cascade bord} and \eqref{eq cascade initial}. The computations are, for the most part of it, formal. Except for the leading profile, we will not detail the obtaining of formulas for the evanescent part, as it will not be interesting for the instability analysis, since all three frequencies $ \phi $, $ \psi $ and $ \nu $ are hyperbolic. 

\subsubsection{Leading profile}
We start by deriving the polarization condition for $ U_1 $ from \eqref{eq cascade int U1}, recalling the analysis of \cite{Lescarret2007Wave}. If we write $ U_1 $ in $ \P_{T} $ as
\begin{align*}
	U_1(z,\theta,\chi_d)&=U_1^{\osc}(z,\theta,\chi_d)+U_1^{\ev}(z,\theta,\chi_d)\\[5pt]
	&=\sum_{\mathbf{n}\in\Z^m}\sum_{\xi\in\R}U^{1,\osc}_{\mathbf{n},\xi}(z)\,e^{i\,\mathbf{n}\cdot\theta}\,e^{i\,\xi\,\chi_d}+\sum_{\mathbf{n}\in\Z^m}U^{1,\ev}_{\mathbf{n}}(z,\chi_d)\,e^{i\,\mathbf{n}\cdot\theta},
\end{align*}
equation \eqref{eq cascade int U1} reads
\begin{multline*}
	\sum_{\mathbf{n}\in\Z^2}\sum_{\xi\in\R}iL\big(0,(\freq,\xi)\big)\,U^{1,\osc}_{\mathbf{n},\xi}(z)\,e^{i\,\mathbf{n}\cdot\theta}\,e^{i\,\xi\,\chi_d}\\
	+\sum_{\mathbf{n}\in\Z^2}\big\{iL(0,\freq)+A_d(0)\,\partial_{\chi_d}\big\}\,U^{1,\ev}_{\mathbf{n}}(z,\chi_d)\,e^{i\,\mathbf{n}\cdot\theta}=0.
\end{multline*}
Therefore, on one hand, for the oscillating part, we get $ L\big(0,(\freq,\xi)\big)\,U^{1,\osc}_{\mathbf{n},\xi}=0 $ for every $ \mathbf{n}\in\Z^2\privede{0} $ and $ \xi\in\R $, so, if $ (\freq,\xi) $ is noncharacteristic, $ U^{1,\osc}_{\mathbf{n},\xi}=0 $ and if $ \xi=\xi_j(\freq) $ for some $ j\in\mathcal{C}(\freq) $, we find that $ U^{1,\osc}_{\mathbf{n},\xi} $ belongs to $ \ker L\big(0,\alpha_j(\freq)\big)=\Span r_{\freq,j} $. Thus we write 
\begin{equation*}
	U^{1,\osc}_{\mathbf{n},\xi}=\sigma^1_{\mathbf{n}_0,j,\lambda} \,r_{\mathbf{n}_0\cdot\boldsymbol{\zeta},j},
\end{equation*}
if $ \mathbf{n}=\lambda\,\mathbf{n}_0 $ with $ \mathbf{n}_0\in\B_{\Z^2} $ and $ \lambda\in\Z^* $, and where $ \sigma^1_{\mathbf{n}_0,j,\lambda} $ is a scalar function of $ \Omega_T $. On the other hand, for the evanescent part, we get $ U^{1,\ev}_{0}=0 $, and, for $ \mathbf{n}\in\Z^2\privede{0} $, multiplying by $ A_d(0)^{-1} $,
\begin{equation*}
	\partial_{\chi_d}\,U^{1,\ev}_{\mathbf{n}}-\mathcal{A}(\freq)\,U^{1,\ev}_{\mathbf{n}}=0.
\end{equation*}
Solving this differential equation in $ \mathcal{P}^{\ev}_{T} $ leads to
\begin{equation*}
	U^{1,\ev}_{\mathbf{n}}(z,\chi_d)=e^{\chi_d\mathcal{A}(\mathbf{n}\cdot\boldsymbol{\zeta})}\,\Pi^e(\mathbf{n}\cdot\boldsymbol{\zeta})\,U^{1,\ev}_{\mathbf{n}}(z,0).
\end{equation*}
In short, polarization equation \eqref{eq cascade int U1} asserts that $ U_1 $ reads
\begin{multline}\label{eq ecriture U 1}
	U_1(z,\theta,\chi_d)=U^*_1(z)+\sum_{\mathbf{n}\in\B_{\Z^2}}\sum_{j\in\mathcal{C}(\freq)}\sum_{\lambda\in\Z^*}\sigma^1_{\mathbf{n},j,\lambda}(z)\,e^{i\,\lambda\,\mathbf{n}\cdot\theta}\,e^{i\,\lambda\,\xi_j(\freq)\,\chi_d}\,r_{\freq,j} \\
	+\sum_{\mathbf{n}\in\Z^2\privede{0}}e^{\chi_d\mathcal{A}(\mathbf{n}\cdot\boldsymbol{\zeta})}\,\Pi^e(\mathbf{n}\cdot\boldsymbol{\zeta})\,U^{1,\ev}_{\mathbf{n}}(z,0)\,e^{i\,\mathbf{n}\cdot\theta}.
\end{multline}

We start by showing that the mean value $ U^*_1 $ is zero, using equation \eqref{eq cascade int U2}. The oscillating part of $ L(0,\partial_z)\,U_1+\mathcal{M}(U_1,U_1) $ is given by $ L(0,\partial_z)\,U^{\osc}_1+\mathcal{M}(U^{\osc}_1,U^{\osc}_1) $ and according to \eqref{eq ecriture U 1} and the expression of the quadratic operator $ \mathcal{M} $, the latter reads
\begin{subequations}\label{eq derivation L U1 + M U1}
	\begin{align}\label{eq derivation L U1 + M U1 a}
		&L(0,\partial_z)\,U^*_1+\sum_{\mathbf{n}\in\B_{\Z^2}}\sum_{j\in\mathcal{C}(\freq)}\sum_{\lambda\in\Z^*}L(0,\partial_z)\, \sigma^1_{\mathbf{n},j,\lambda}\, e^{i\lambda\,\mathbf{n}\cdot\theta}\, e^{i\lambda\xi_j(\mathbf{n}\cdot\boldsymbol{\zeta})\chi_d}\,r_{\mathbf{n}\cdot\boldsymbol{\zeta},j}\\\label{eq derivation L U1 + M U1 b}
		&+\sum_{\mathbf{n}\in\B_{\Z^2}}\sum_{j\in\mathcal{C}(\freq)}\sum_{\lambda\in\Z^*}L_1\big(U^*_1,i\,\lambda\,\alpha_j(\mathbf{n}\cdot\boldsymbol{\zeta})\big)\,r_{\mathbf{n}\cdot\boldsymbol{\zeta},j}\,\sigma^1_{\mathbf{n},j,\lambda}\, e^{i\lambda\,\mathbf{n}\cdot\theta}\, e^{i\lambda\xi_j(\mathbf{n}\cdot\boldsymbol{\zeta})\chi_d} \\\label{eq derivation L U1 + M U1 c}
		&+\sum_{\mathbf{n}_1,\mathbf{n}_2\in\B_{\Z^2}}\sum_{\substack{j_1\in\mathcal{C}(\mathbf{n}_1\cdot\boldsymbol{\zeta})\\j_2\in\mathcal{C}(\mathbf{n}_2\cdot\boldsymbol{\zeta})}}\sum_{\lambda_1,\lambda_2\in\Z^*}L_1\big(r_{\mathbf{n}_1\cdot\boldsymbol{\zeta},j_1},i\,\lambda_2\,\alpha_{j_2}(\mathbf{n}_2\cdot\boldsymbol{\zeta})\big)\,r_{\mathbf{n}_2\cdot\boldsymbol{\zeta},j_2}\,\\\nonumber
		&\qquad\qquad\sigma^1_{\mathbf{n}_1,j_1,\lambda_1}\,\sigma^1_{\mathbf{n}_2,j_2,\lambda_2}\,e^{i\,(\lambda_1\mathbf{n}_1+\lambda_2\mathbf{n}_2)\cdot\theta}\,e^{i\,(\lambda_1\xi_{j_1}(\mathbf{n}_1\cdot\boldsymbol{\zeta})+\lambda_2\xi_{j_2}(\mathbf{n}_2\cdot\boldsymbol{\zeta}))\,\chi_d},
	\end{align}
\end{subequations}
where, for $ \alpha=(\alpha_0,\alpha_1,\dots,\alpha_d)\in\R^{d+1} $ and $ X\in\C^3 $, we have denoted
\begin{equation*}
	L_1(X,\alpha):=\sum_{k=1}^d\alpha_k\,dA_k(0)\cdot X.
\end{equation*}
We now isolate the nonoscillating terms in \eqref{eq derivation L U1 + M U1}, to obtain a system satisfied by the mean value $ U^*_1 $. In equation \eqref{eq derivation L U1 + M U1}, the terms in the sums in \eqref{eq derivation L U1 + M U1 a} and \eqref{eq derivation L U1 + M U1 b} are always oscillating since $ \mathbf{n}\in\B_{\Z^2} $. As for them, the terms in the sum in \eqref{eq derivation L U1 + M U1 c}are not oscillating if and only if $ \lambda_1\mathbf{n}_1+\lambda_2\mathbf{n}_2=0 $ and $ \lambda_1\,\xi_{j_1}(\mathbf{n}_1\cdot\boldsymbol{\zeta})+\lambda_2\,\xi_{j_2}(\mathbf{n}_2\cdot\boldsymbol{\zeta})=0 $, that is, if $ \mathbf{n}_1=\mathbf{n}_2 $, $ \lambda_1=-\lambda_2 $ and $ j_1=j_2 $. Therefore, we deduce from \eqref{eq derivation L U1 + M U1} that we have
\begin{equation}\label{eq derivation inter1}
	L(0,\partial_z)\,U^*_1+\sum_{\mathbf{n}\in\B_{\Z^2}}\sum_{j\in\mathcal{C}(\freq)}\sum_{\lambda\in\Z^*}L_1\big(r_{\mathbf{n}\cdot\boldsymbol{\zeta},j},-i\,\lambda\,\alpha_j(\mathbf{n}\cdot\boldsymbol{\zeta})\big)\,r_{\mathbf{n}\cdot\boldsymbol{\zeta},j}\,\sigma^1_{\mathbf{n},j,\lambda}\,\sigma^1_{\mathbf{n},j,-\lambda}=0.
\end{equation}
By a change of variable $ \lambda=-\lambda $ we prove that the second term in the left-hand side of \eqref{eq derivation inter1} is actually zero, so we have the following linear constant coefficient equation
\color{altpink}
\begin{equation*}
	L(0,\partial_z)\,U^*_1=0.
\end{equation*}\color{black}
With the following boundary and initial conditions obtained from \eqref{eq cascade bord U12} and \eqref{eq cascade initial},
\color{altpink}
\begin{equation*}
	B\big(U^*_1\big)_{|x_d,\chi_d=0}=0,\qquad \big(U^*_1\big)_{|t\leq0}=0,
\end{equation*}\color{black}
we get that the mean value $ U^*_1 $ satisfies a system which is weakly well-posed, see \cite{Coulombel2005Wellposedness}, with zero source term, boundary forcing term and initial term, so the mean value $ U^*_1 $ is zero. 

Since $ U^*_1  $ is zero, equation \eqref{eq cascade int U2}  now reads, for each nonzero characteristic mode $ \lambda\,\alpha_j(\freq) $, with $ \mathbf{n}\in\B_{\Z^2} $, $ j\in\mathcal{C}(\freq) $ and $ \lambda\in\Z^* $,
\begin{multline}\label{eq derivation L U1 + M U1 2}
	i\,L\big(0,\alpha_j(\freq)\big)\,U^{2,\osc}_{\mathbf{n},\xi_j(\freq)}+L(0,\partial_z)\, \sigma^1_{\mathbf{n},j,\lambda}\,r_{\mathbf{n}\cdot\boldsymbol{\zeta},j}\\
	+\sum_{(\mathbf{n}_1,\mathbf{n}_2,j_1,j_2,\lambda_1,\lambda_2)}L_1\big(r_{\mathbf{n}_1\cdot\boldsymbol{\zeta},j_1},i\,\lambda_2\,\alpha_{j_2}(\mathbf{n}_2\cdot\boldsymbol{\zeta})\big)\,r_{\mathbf{n}_2\cdot\boldsymbol{\zeta},j_2}\sigma^1_{\mathbf{n}_1,j_1,\lambda_1}\,\sigma^1_{\mathbf{n}_2,j_2,\lambda_2}=0,
\end{multline}
where the sum is over the set of 6-tuples $ (\mathbf{n}_1,\mathbf{n}_2,j_1,j_2,\lambda_1,\lambda_2) $ in $ \big(\B_{\Z^2}\big)^2\times\mathcal{C}(\mathbf{n}_1\cdot\boldsymbol{\zeta})\times\mathcal{C}(\mathbf{n}_2\cdot\boldsymbol{\zeta})\times(\Z^*)^2 $  such that $ \lambda_1\alpha_{j_1}(\mathbf{n}_1\cdot\boldsymbol{\zeta})+\lambda_2\alpha_{j_2}(\mathbf{n}_2\cdot\boldsymbol{\zeta})=\lambda\alpha_{j}(\freq) $.
There are two possibilities for that to happen.
\begin{itemize}[leftmargin=20pt]
	\item Either frequencies $ \lambda_1\,\alpha_{j_1}(\mathbf{n}_1\cdot\boldsymbol{\zeta}) $ and $ \lambda_2\,\alpha_{j_2}(\mathbf{n}_2\cdot\boldsymbol{\zeta}) $ are colinear (therefore colinear to $ \alpha_j(\freq) $), that is to say $ \mathbf{n}_1=\mathbf{n}_2=\mathbf{n} $ and $ j_1=j_2=j $. This is called \emph{self-interaction} of frequency $ \alpha_{j}(\freq) $ with itself. Note that the obtained frequency $ \lambda_1\alpha_{j_1}(\mathbf{n}_1\cdot\boldsymbol{\zeta})+\lambda_2\alpha_{j_2}(\mathbf{n}_2\cdot\boldsymbol{\zeta}) $ is then always real characteristic.
	\item Or frequencies $ \lambda_1\,\alpha_{j_1}(\mathbf{n}_1\cdot\boldsymbol{\zeta}) $ and $ \lambda_2\,\alpha_{j_2}(\mathbf{n}_2\cdot\boldsymbol{\zeta}) $  are noncolinear, in which case a true resonance in the sense of Assump\-tion \ref{hypothese ensemble frequences} occurs, namely \eqref{eq hyp res phi psi nu 1} or \eqref{eq hyp res phi psi nu 2}. For example, if $ \alpha_j(\freq)=\psi_2 $ (i.e. if $ \mathbf{n}=(0,1) $ and $ j=2 $), then according to Assumption \ref{hypothese ensemble frequences}, it implies that $ \lambda=k\lambda_{\psi} $ for some $ k\in\Z^* $ and, up to a permutation, $ \mathbf{n}_1=(1,0) $, $ j_1=3 $, $ \lambda_1=-k\lambda_{\psi} $ and $ \mathbf{n}_2=(-\lambda_{\phi},-\lambda_{\psi}) $, $ j_2=2 $, $ \lambda_2=-k $.
\end{itemize}
Recall that, for a frequency $ \lambda\,\zeta=\lambda\,\mathbf{n}\cdot\boldsymbol{\zeta}\in\F_{b}\privede{0} $ with $ \mathbf{n}\in\B_{\Z^2} $ and $ \lambda\in\Z^* $, we alternate from the representations $ \lambda\,\zeta $ and $ (\lambda,\mathbf{n}) $, so we shall denote
\begin{equation*}
	\sigma^1_{\zeta,j,\lambda}:=\sigma^1_{\mathbf{n},j,\lambda},\quad \forall j=1,2,3,\forall \lambda\in\Z^*.
\end{equation*}

According to the previous analysis, we can now write the system satisfied by the leading profiles. For example, for $ \psi_2 $ which is involved in resonance \eqref{eq hyp res phi psi nu 2}, multiplying equation \eqref{eq derivation L U1 + M U1 2} for $ \mathbf{n}=(0,1) $ and $ j=2 $ by the vector $ \ell_{\psi,2} $ cancels the first term of \eqref{eq derivation L U1 + M U1 2}, according to \eqref{eq relation l zeta j L alpha j}. Thus we obtain, for $ \lambda\in\Z^* $,
\begin{multline*}
	\ell_{\psi,2}\,L(0,\partial_z)\,r_{\psi,2}\,\sigma^1_{\psi,2,\lambda}+\ell_{\psi,2}\,L_1\big(r_{\psi,2},\psi_2\big)\,r_{\psi,2}\sum_{\lambda_1+\lambda_2=\lambda}i\lambda_2\,\sigma^1_{\psi,2,\lambda_1}\,\sigma^1_{\psi,2,\lambda_2}\\
	+\indicatrice_{\lambda=k\lambda_{\psi}}	\ell_{\psi,2}\Big\{L_1\big(r_{\phi,3},-\nu_2\big)\,r_{\nu,2}+L_1\big(r_{\nu,2},-\lambda_{\phi}\phi_3\big)\,r_{\phi,3}\Big\}\,ik\,\sigma^1_{\phi,3,-k\lambda_{\phi}}\,\sigma^1_{\nu,2,-k}=0.
\end{multline*}
In the left-hand side of the previous equation, the first term, the transport one, corresponds to the second term of the left-hand side of \eqref{eq derivation L U1 + M U1 2}, the second one, the Burgers type term corresponds to the self-interaction part of the third term of the left-hand side of  \eqref{eq derivation L U1 + M U1 2}, while the last one, the resonant term, corresponds to the resonance part of the third term of the left-hand side of  \eqref{eq derivation L U1 + M U1 2}. This splitting between transport, self-interaction and resonance terms can be generalized to any frequency. For $ \zeta=\phi,\psi,\nu $, $ j=1,2,3 $ and $ \lambda\in\Z^* $, we have
\begin{subequations}\label{eq evolution sigma 1}
\begin{multline}\label{eq evolution sigma 1 phi psi nu}
	\color{altblue}	X_{\alpha_j(\zeta)}\,\sigma^1_{\zeta,j,\lambda}+D_{\zeta,j}\sum_{\lambda_1+\lambda_2=\lambda}i\lambda_2\,\sigma^1_{\zeta,j,\lambda_1}\,\sigma^1_{\zeta,j,\lambda_2}\\	\color{altblue}
	+\indicatrice_{\lambda=k\lambda_{\zeta}}\sum_{\substack{(\zeta_1,\zeta_2,j_1,j_2)\\\in\mathcal{R}(\zeta,j)}}J^{\zeta_1,j_1}_{\zeta_2,j_2}\,ik\,\sigma^1_{\zeta_1,j_1,-k\lambda_{\zeta_1}}\,\sigma^1_{\zeta_2,j_2,-k\lambda_{\zeta_2}}=0,
\end{multline}
\color{black}
and for other frequencies $ \zeta\in\F_{b}\privede{0,\phi,\psi,\nu} $, $ j\in\mathcal{C}(\zeta) $ and $ \lambda\in\Z^* $,
\begin{equation}\label{eq evolution sigma 1 autre}
		X_{\alpha_j(\zeta)}\,\sigma^1_{\zeta,j,\lambda}+D_{\zeta,j}\sum_{\lambda_1+\lambda_2=\lambda}i\lambda_2\,\sigma^1_{\zeta,j,\lambda_1}\,\sigma^1_{\zeta,j,\lambda_2}=0.
\end{equation}
\end{subequations}
We have denoted, for $ \zeta\in\F_{b} $ and $ j\in\mathcal{C}(\zeta) $, the vector field $ X_{\alpha_j(\zeta)} $ and the self-interaction coefficient $ D_{\zeta,j} $ as 
\begin{subequations}\label{eq prof princ def X D J}
	\begin{align}
	X_{\alpha_j(\zeta)}&:=\ell_{\zeta,j}\,L(0,\partial_z)\,r_{\zeta,j},\qquad D_{\zeta,j}:=\ell_{\zeta,j}\,L_1\big(r_{\zeta,j},\alpha_j(\zeta)\big)\,r_{\zeta,j} ,\label{eq def X D}
	\intertext{and, for $ (\zeta_1,\zeta_2,j_1,j_2)\in\mathcal{R}(\zeta,j) $ (in the case $ \zeta=\phi,\psi,\nu $), the resonance coefficient $ J_{\zeta_2,j_2}^{\zeta_1,j_1} $ as}
	 J^{\zeta_1,j_1}_{\zeta_2,j_2}&:=\ell_{\zeta,j}\, L_1\big(r_{\zeta_1,j_1},\lambda_{\zeta_2}\,\alpha_{j_2}(\zeta_2)\big)\,r_{\zeta_2,j_2}
	 .\label{eq def J}
\end{align}
\end{subequations}
According to the Lax Lemma \ref{lemme Lax}, the operator $ \ell_{\zeta,j}\,L(0,\partial_z)\,r_{\zeta,j} $ of \eqref{eq def X D} is equal to the vector field \eqref{eq champ vecteur X_alpha} which has already been denoted by $ X_{\alpha_j(\zeta)} $, so the notation is coherent. In the following, for $ \zeta\in\F_b\privede{0} $ and $ j\in\mathcal{C}(\zeta) $, we denote by $ X_{\zeta,j} $ the vector field $ X_{\alpha_j(\zeta)} $ and $ \mathbf{v}_{\zeta,j}:=\mathbf{v}_{\alpha_{j}(\zeta)} $ the velocity vector associated with it. 

\bigskip

For all frequencies $ \zeta=\mathbf{n}\cdot\boldsymbol{\zeta} $ with $ \mathbf{n}\in\B_{\Z^2} $ except for $ \zeta=\phi,\psi,\nu $, and for all $ j\in\mathcal{C}(\zeta) $, since the frequency $ \alpha_j(\zeta) $ does not occur in any resonance, if we denote by $ \sigma^1_{\zeta,j} $ the series
\begin{equation*}
	\sigma^1_{\zeta,j}(z,\Theta):=\sum_{\lambda\in\Z^*}\sigma^1_{\zeta,j,\lambda}(z)\,e^{i\lambda\Theta},
\end{equation*}
then, according to \eqref{eq evolution sigma 1 autre}, we have the Burgers type equation
\begin{equation}\label{eq evolution S hors cas part}
	\color{altpurple}X_{\zeta,j}\,\sigma^1_{\zeta,j}+D_{\zeta,j}\,\sigma^1_{\zeta,j}\partial_{\Theta}\sigma^1_{\zeta,j}=0,
\end{equation}
along with the initial condition $ \color{altpurple}(\sigma^1_{\zeta,j})_{t=0}=0 $\color{black}, which is a nonlinear scalar transport equation in the half space $ \Omega_T $. If the frequency $ \alpha_j(\zeta) $ is outgoing, i.e. if the last component of $ \mathbf{v}_{\zeta,j} $ is positive, there is no need for a boundary condition, so we deduce from \eqref{eq evolution S hors cas part} that $ \sigma^1_{\zeta,j} $ is zero. Therefore, for $ \lambda\in\Z^* $, we have $ \sigma^1_{\zeta,j,\lambda}=0 $. The same arguments can be applied for the outgoing frequency $ \phi_2 $ since there is no resonance for it.  

If $ \alpha_j(\zeta) $ is incoming we need in this case a boundary condition to determine the trace of $ \sigma^1_{\zeta,k}(z,\Theta) $ at $ x_d=0 $. From boundary condition \eqref{eq cascade bord U12} for the frequency $ \lambda\zeta $, $ \lambda\in\Z^* $, writing of $ U_1 $ \eqref{eq ecriture U 1} and the fact that all outgoing frequencies for $ \zeta $ have been proved to be zero, we deduce that 
\begin{equation}\label{eq cond bord sigma 1 hors cas part}
	B\sum_{j\in\mathcal{I}(\zeta)}\big(\sigma^1_{\zeta,j,\lambda}\big)_{|x_d=0}r_{\zeta,j}+B\,\Pi^e(\zeta)\,\big(U^{1,\ev}_{\lambda\zeta}\big)_{|x_d,\chi_d=0}=0.
\end{equation}
Since $ \lambda\zeta\in\F_b\setminus\Upsilon $ (because for now we consider $ \zeta\neq \phi,\psi $) and, according to decomposition \eqref{eq decomp E_-(zeta)} of the stable subspace $ E_-(\zeta) $, the vector in \eqref{eq cond bord sigma 1 hors cas part} to which matrix $ B $ applies lies in $ E_-(\lambda\zeta) $, on which $ B $ is invertible, according to Assumption \ref{hypothese weak lopatinskii}. Therefore we deduce from the previous equation \eqref{eq cond bord sigma 1 hors cas part}, using vectors $ \ell_{\zeta,j} $, that
\color{altpurple}
\begin{equation}\label{eq prof princ bord autre}
	\big(\sigma^1_{\zeta,j}\big)_{|x_d=0}=0,\quad \forall j\in\mathcal{I}(\zeta).
\end{equation} 
\color{black}
Along with \eqref{eq evolution S hors cas part} we get $ \sigma^1_{\zeta,j,\lambda}=0 $ for all $ \lambda\in\Z^* $. We have therefore proven that for all frequencies $ \zeta=\mathbf{n}\cdot\boldsymbol{\zeta} $ with $ \mathbf{n}\in\B_{\Z^2} $ except for $ \zeta=\phi,\psi,\nu $, and for all $ j\in\mathcal{C}(\zeta) $ and $ \lambda\in\Z^* $, we have
\begin{equation*}
	\sigma^1_{\zeta,j,\lambda}=0.
\end{equation*}
In the same way we deduce from \eqref{eq cond bord sigma 1 hors cas part}, using decomposition \eqref{eq decomp E_-(zeta)},
\begin{equation*}
	\big(U^{1,\ev}_{\lambda\zeta}\big)_{|x_d,\chi_d=0}=0,
\end{equation*}
so, according to \eqref{eq ecriture U 1}, we can set the evanescent part $ U^{1,\ev} $ of $ U^1 $ to be zero.

We now need to determine boundary conditions for the incoming frequencies $ \phi_j $, $ \psi_j $ and $ \nu_j $ for $ j=1,3 $ as well.
For the frequency $ \nu $ we obtain, in the same fashion as before, since $ \nu $ is in the hyperbolic region $ \mathcal{H} $, for $ \lambda\in\Z^* $,
\begin{equation*}
	B\,\Big(\big(\sigma_{\nu,1,\lambda}^1\big)_{|x_d=0}\,r_{\nu,1}+\big(\sigma_{\nu,2,\lambda}^1\big)_{|x_d=0}\,r_{\nu,2}+\big(\sigma_{\nu,3,\lambda}^1\big)_{|x_d=0}\,r_{\nu,3}\Big)=0,
\end{equation*}
so, for $ j=1,3 $, according to the normalization \eqref{eq relation l zeta j normalisation},
\begin{equation*}
	\big(\sigma_{\nu,j,\lambda}^1\big)_{|x_d=0}=-\ell_{\nu_j}\,A_d(0)\, \big(B_{|E_-(\nu)}\big)^{-1}\,B\big(\sigma_{\nu,2,\lambda}^1\big)_{|x_d=0}\,r_{\nu,2}.
\end{equation*}
We denote by $ \mu_{\nu,j} $, for $ j=1,3 $, the coefficient
\begin{equation}\label{eq prof princ def mu nu}
	\mu_{\nu,j}:=-\ell_{\nu,j}\,A_d(0)\,\big(B_{|E_-(\nu)}\big)^{-1}\,B\,r_{\nu,2},
\end{equation}
so that, for $ j=1,3 $, we have
\begin{equation}\label{eq prof princ bord nu}
	\color{altgreen}\big(\sigma_{\nu,j,\lambda}^1\big)_{|x_d=0}\,r_{\nu,j}=\mu_{\nu,j}\,\big(\sigma_{\nu,2,\lambda}^1\big)_{|x_d=0}\,r_{\nu,j}.
\end{equation}
\color{black}

For $ \phi $ we have, in a similar manner, since $ \sigma^1_{\phi,2,\lambda} $ is zero for $ \lambda\in\Z^* $, 
\begin{equation*}
	B\,\Big(\big(\sigma_{\phi,1,\lambda}^1\big)_{|x_d=0}\,r_{\phi,1}+\big(\sigma_{\phi,3,\lambda}^1\big)_{|x_d=0}\,r_{\phi,3}\Big)=0,
\end{equation*}
for every $ \lambda $ in $ \Z^* $, so, according to \eqref{eq decomp E_-(zeta)}, the vector in factor of $ B $ in the left-hand side belongs to $ \ker B \cap E_-(\phi) $. But since $ \phi $ is in $ \Upsilon $, the latter space is of dimension $ 1 $ and reads $ \Span e_{\phi} $, so there exists a scalar function $ a_{\phi,\lambda} $ of $ \omega_T $ such that, 
\begin{equation*}
	\big(\sigma_{\phi,1,\lambda}^1\big)_{|x_d=0}\,r_{\phi,1}+\big(\sigma_{\phi,3,\lambda}^1\big)_{|x_d=0}\,r_{\phi,3}=a_{\phi,\lambda}^1\,e_{\phi}.
\end{equation*}
Therefore, according to decomposition \eqref{eq decomp e zeta} of $ e_{\zeta} $, for $ j=1,3 $,
\begin{equation}\label{eq prof princ bord phi}
\color{altred}	\big(\sigma_{\phi,j,\lambda}^1\big)_{|x_d=0}\,r_{\phi,j}=a_{\phi,\lambda}^1\,e_{\phi,j}.
\end{equation}
\color{black}

Finally the case of  the phase $ \psi $ gather the two previous ones. We have, for $ \lambda\in\Z^* $,
\begin{equation*}
	B\,\Big(\big(\sigma_{\psi,1,\lambda}^1\big)_{|x_d=0}\,r_{\psi,1}+\big(\sigma_{\psi,2,\lambda}^1\big)_{|x_d=0}\,r_{\psi,2}+\big(\sigma_{\psi,3,\lambda}^1\big)_{|x_d=0}\,r_{\psi,3}\Big)=0.
\end{equation*}
In particular, because of \eqref{eq decomp E_-(zeta)}, it implies 
\begin{equation*}
	B\big(\sigma_{\psi,2,\lambda}^1\big)_{|x_d=0}\,r_{\psi,2}\in \Ima B_{|E_-(\psi)}=\Big(\ker \,^tB_{|E_-(\psi)}\Big)^{\perp}.
\end{equation*}
Therefore, according to the definition of $ b_{\psi} $, the following necessary condition follows:
\begin{equation*}
	b_{\psi}\cdot B\big(\sigma_{\psi,2,\lambda}^1\big)_{|x_d=0}\,r_{\psi,2}=0.
\end{equation*}
But since the scalar $ b_{\psi}\cdot B\,r_{\psi,2} $ is not zero\footnote{the linear form $ b_{\psi}\cdot B $ is not uniformly zero and is already zero on two of the three vectors $ r_{\psi,1} $, $ r_{\psi,2} $ and $ r_{\psi,3} $ constituting a basis of $ \C^3 $, so cannot be on the third one}, we necessarily have
\begin{equation}\label{eq prof princ cond psi 2 zero}
	\big(\sigma_{\psi,2,\lambda}^1\big)_{|x_d=0}=0.
\end{equation}
When it is satisfied we can write, in the same way as for $ \phi $, for $ j=1,3 $, 
\begin{equation}\label{eq prof princ bord psi}
	\color{altorange2}\big(\sigma_{\psi,j,\lambda}^1\big)_{|x_d=0}\,r_{\psi,j}=a_{\psi,\lambda}^1\,e_{\psi,j},
\end{equation}
\color{black}
with $ a_{\psi,\lambda} $ a scalar function of $ \omega_T $.

At this point we have obtained an constant coefficient equation for $ U_1^* $, and transport equations \eqref{eq evolution sigma 1 phi psi nu} and \eqref{eq evolution S hors cas part}, associated with (when incoming) boundary conditions \eqref{eq prof princ bord autre}, \eqref{eq prof princ bord nu}, \eqref{eq prof princ bord phi} and \eqref{eq prof princ bord psi}, but the last two ones are expressed through scalar functions $ a^1_{\zeta,\lambda} $ which are still to be determined, so the system is not closed at this stage. Also note that condition \eqref{eq prof princ cond psi 2 zero} might raise an issue of over-determination of the system.

To determine the equations satisfied by coefficients $ a^1_{\phi,\lambda} $ and $ a^1_{\psi,\lambda} $, we need to study the nonpolarized part of the first corrector $ U_2 $.

\subsubsection{Nonpolarized part of the first corrector}

For the first corrector we no longer have a polarization condition such as \eqref{eq cascade int U1}, so noncharacteristic modes may appear through quadratic interaction of characteristic modes. Thus the first corrector $ U_2 $ reads
\begin{align*}
	U_2(z,\theta,\chi_d)&=U^*_2(z)+\sum_{\mathbf{n}\in\B_{\Z^2}}\sum_{j\in\mathcal{C}(\freq)}\sum_{\lambda\in\Z^*}U^{2,\osc}_{\mathbf{n},j,\lambda}(z)\,e^{i\,\lambda\,\mathbf{n}\cdot\theta}\,e^{i\,\lambda\,\xi_j(\freq)\,\chi_d} \\\nonumber
	&\quad+\sum_{\mathbf{n}\in\B_{\Z^2}}\sum_{\lambda\in\Z^*}U^{2,\ev}_{\mathbf{n},\lambda}(z,\chi_d)\,e^{i\,\lambda\,\mathbf{n}\cdot\theta}+U^{2,\nc}(z,\theta,\chi_d),
\end{align*}
where $ U^*_2 $ is the mean value of $ U_2 $ and $ U^{2,\nc} $ corresponds to the noncharacteristic modes. According to \eqref{eq cascade int U2}, the noncharacteristic part $ U^{2,\nc} $ satisfies (since there are only characteristic modes in $ L(0,\partial_z)\,U_1 $), 
\begin{multline}\label{eq 1cor expr U 2 nc}
	\mathcal{L}(\partial_{\theta},\partial_{\chi_d})\,U^{2,\nc}=-\sum_{\substack{(\zeta_1,\zeta_2,j_1,j_2,\\\lambda_1,\lambda_2)\in\mathcal{NR}}}L_1(r_{\zeta_1,j_1},\alpha_{j_2}(\zeta_2))\,r_{\zeta_2,j_2}\,i\lambda_2\,\sigma_{\zeta_1,j_1,\lambda_1}^1\,\sigma_{\zeta_2,j_2,\lambda_2}^1\,\\
	e^{i(\lambda_1\mathbf{n}_1+\lambda_2\mathbf{n}_2)\cdot\theta}\,e^{i(\lambda_1\xi_{j_1}(\zeta_1)+\lambda_2\xi_{j_2}(\zeta_2))\chi_d},
\end{multline}
where $ \mathcal{NR} $ denotes the set of 6-tuples $ (\zeta_1,\zeta_2,j_1,j_2,\lambda_1,\lambda_2) $ such that the frequency
\begin{equation*}
	\lambda_1\,\alpha_{j_1}(\zeta_1)+\lambda_2\,\alpha_{j_2}(\zeta_2)
\end{equation*}
is noncharacteristic (which is such that there is no resonance). Note that in \eqref{eq 1cor expr U 2 nc}, only occur the boundary frequencies $ \phi,\psi $ and $ \nu $, since for all the others, the first profile is zero. Since all frequencies in $ U^{2,\nc} $ are noncharacteristic, equation \eqref{eq 1cor expr U 2 nc} determine $ U^{2,\nc} $ totally. Indeed, for each mode of noncharacteristic frequency $ \lambda_1\,\alpha_{j_1}(\zeta_1)+\lambda_2\,\alpha_{j_2}(\zeta_2) $ with  $ (\zeta_1,\zeta_2,j_1,j_2,\lambda_1,\lambda_2) \in\mathcal{NR} $, the operator $ \mathcal{L}(\partial_{\theta},\partial_{\chi_d}) $ reads $ i\,L\big(0,\lambda_1\alpha_{j_1}(\zeta_1)+\lambda_2\alpha_{j_2}(\zeta_2)\big) $, which is an invertible matrix. 

Since for every boundary frequency $ \zeta=\mathbf{n}\cdot\boldsymbol{\zeta}\in\F_b\privede{0,\phi,\psi,\nu} $, the oscillating profile $ \sigma^1_{\zeta,j,\lambda} $ is zero for $ j\in\mathcal{C}(\zeta) $ and $ \lambda\in\Z^* $ and since there is no resonances generating theses frequencies, according to \eqref{eq cascade int U2}, the profile $ U^{2,\osc}_{\mathbf{n},j,\lambda} $ satisfies
\begin{equation*}
	L\big(0,\alpha_j(\zeta)\big)\,U^{2,\osc}_{\mathbf{n},j,\lambda}=0,
\end{equation*}
so it is polarized, and we denote by $ \sigma^2_{\zeta,j,\lambda} $ the scalar function of $ \Omega_T $ such that 
\begin{equation*}
	U^{2,\osc}_{\mathbf{n},j,\lambda}=\sigma^2_{\zeta,j,\lambda}\,r_{\zeta,j}.
\end{equation*}

With the same arguments as for the leading profile, we get the following polarization condition for the evanescent part: $ U^{2,\ev}_{0}=0 $, and, for $ \mathbf{n}\in\Z^2\privede{0} $, 
\begin{equation*}
	U^{2,\ev}_{\mathbf{n}}(z,\chi_d)=e^{\chi_d\mathcal{A}(\mathbf{n}\cdot\boldsymbol{\zeta})}\,\Pi^e(\mathbf{n}\cdot\boldsymbol{\zeta})\,U^{2,\ev}_{\mathbf{n}}(z,0).
\end{equation*}
Therefore, $ U_2 $ reads as the more precise following way,
\begin{align}\label{eq ecriture U 2}
	U_2(z,\theta,\chi_d)&=U^*_2(z)+\sum_{\mathbf{n}\in\B_{\Z^2}}\sum_{j\in\mathcal{C}(\freq)}\sum_{\lambda\in\Z^*}U^{2,\osc}_{\mathbf{n},j,\lambda}(z)\,e^{i\,\lambda\,\mathbf{n}\cdot\theta}\,e^{i\,\lambda\,\xi_j(\freq)\,\chi_d} \\\nonumber
	&\quad+\sum_{\mathbf{n}\in\B_{\Z^2}}\sum_{\lambda\in\Z^*}e^{\chi_d\mathcal{A}(\mathbf{n}\cdot\boldsymbol{\zeta})}\,\Pi^e(\mathbf{n}\cdot\boldsymbol{\zeta})\,U^{2,\ev}_{\mathbf{n}}(z,0)\,e^{i\,\lambda\,\mathbf{n}\cdot\theta}+U^{2,\nc}(z,\theta,\chi_d).
\end{align}

\bigskip

Writing down boundary equation \eqref{eq cascade bord U12} for $ U_2 $ will lead to equations on boundary terms $ a_{\phi}^1 $ and $ a_{\psi}^1 $. Thus we need to determine the nonpolarized part of the amplitudes associated with frequencies lifted from $ \phi,\psi,\nu $. For $ \zeta=\mathbf{n}\cdot\boldsymbol{\zeta}\in\ensemble{\phi,\psi,\nu} $, $ j=1,2,3 $, and $ \lambda\in\Z^* $, from polarization equation \eqref{eq cascade int U1}, we get, (using the notation $ U^{2,\osc}_{\zeta,j,\lambda}:=U^{2,\osc}_{\mathbf{n},j,\lambda} $),
\begin{align*}
	i\,L\big(0,\alpha_j(\zeta)\big)\,U^{2,\osc}_{\zeta,j,\lambda}=
	&-L(0,\partial_z)\,\sigma_{\zeta,j,\lambda}^1\,r_{\zeta,j}-L_1\big(r_{\zeta,j},\alpha_j(\zeta)\big)\,r_{\zeta,j}\sum_{\lambda_1+\lambda_2=\lambda}i\lambda_2\,\sigma_{\zeta,j,\lambda_1}^1\,\sigma_{\zeta,j,\lambda_2}^1\\
	&-\indicatrice_{\lambda=k\lambda_{\zeta}}\sum_{\substack{(\zeta_1,\zeta_2,j_1,j_2)\\\in\mathcal{R}(\zeta,j)}}ik\,\Big\lbrace L_1\big(r_{\zeta_1,j_1},\lambda_{\zeta_2}\,\alpha_{j_2}(\zeta_2)\big)\,r_{\zeta_2,j_2}\\
	&\qquad\qquad+L_1\big(r_{\zeta_2,j_2},\lambda_{\zeta_1}\,\alpha_{j_1}(\zeta_1)\big)\,r_{\zeta_1,j_1}\Big\rbrace\,
	\sigma^1_{\zeta_1,j_1,-k\lambda_{\zeta_1}}\,\sigma^1_{\zeta_2,j_2,-k\lambda_{\zeta_2}}.
\end{align*}
Then we multiply this equation on the left by the partial inverse $ R_{\zeta,j} $ to obtain, according to relation \eqref{eq relation R zeta j P},
\begin{align}\label{eq 1cor nonpola part U 2 zeta}
	i\lambda\,\big(I-&P_{\zeta,j}\big)\,U^{2,\osc}_{\zeta,j,\lambda}=\\[5pt]\nonumber
	&-R_{\zeta,j}\,L(0,\partial_z)\,\sigma_{\zeta,j,\lambda}^1\,r_{\zeta,j}-R_{\zeta,j}\,L_1\big(r_{\zeta,j},\alpha_j(\zeta)\big)\,r_{\zeta,j}\sum_{\lambda_1+\lambda_2=\lambda}i\lambda_2\,\sigma_{\zeta,j,\lambda_1}^1\,\sigma_{\zeta,j,\lambda_2}^1\\\nonumber
	&-R_{\zeta,j}\,\indicatrice_{\lambda=k\lambda_{\zeta}}\sum_{\substack{(\zeta_1,\zeta_2,j_1,j_2)\\\in\mathcal{R}(\zeta,j)}}ik\,\Big\lbrace L_1\big(r_{\zeta_1,j_1},\lambda_{\zeta_2}\,\alpha_{j_2}(\zeta_2)\big)\,r_{\zeta_2,j_2}\\\nonumber
	&\qquad\qquad+L_1\big(r_{\zeta_2,j_2},\lambda_{\zeta_1}\,\alpha_{j_1}(\zeta_1)\big)\,r_{\zeta_1,j_1}\Big\rbrace\,
	\sigma^1_{\zeta_1,j_1,-k\lambda_{\zeta_1}}\,\sigma^1_{\zeta_2,j_2,-k\lambda_{\zeta_2}}.
\end{align}
We now write the boundary conditions for the first corrector $ U_2 $, for the frequencies $ \phi $ and $ \psi $.

\bigskip

Note that since $ \phi $ is in the hyperbolic region, the stable elliptic component $ E^e_-(\phi) $ is zero, so $ \Pi^e(\phi) $ is also zero. Therefore, boundary condition \eqref{eq cascade bord U12} for mode $ \phi $ reads, according to equation \eqref{eq ecriture U 2},
\begin{align}\label{eq 1cor boundary cond phi}
	B\,P_{\phi,1}\,\big(U^2_{\phi,1,\lambda}\big)_{|x_d,\chi_d=0}+B\,P_{\phi,3}\,\big(U^2_{\phi,3,\lambda}\big)_{|x_d,\chi_d=0}\\\nonumber
	+B\,(I-P_{\phi,1})\,\big(U^2_{\phi,1,\lambda}\big)_{|x_d,\chi_d=0}+B\,(I-P_{\phi,3})\,\big(U^2_{\phi,3,\lambda}\big)_{|x_d,\chi_d=0}\\\nonumber
	+B\,\big(U^2_{\phi,2,\lambda}\big)_{|x_d,\chi_d=0}+B\,\big(U^{2,\nc}_{\phi,\lambda}\big)_{|x_d,\chi_d=0}&=G_{\lambda},
\end{align}
where we have expanded the source term $ G $ in Fourier series as
\begin{equation*}
	G(z',\Theta)=\sum_{\lambda\in\Z}G_{\lambda}(z')\,e^{i\lambda\Theta},
\end{equation*}
and where we have denoted by $ U^{2,\nc}_{\phi,\lambda} $ the sum of all the terms of $ U^{2,\nc} $ of which the trace on the boundary of the associated frequency is equal to $ \lambda\phi $, namely,
\begin{multline*}
	U^{2,\nc}_{\phi,\lambda}=-\sum_{\substack{(\zeta_1,\zeta_2,j_1,j_2,\\\lambda_1,\lambda_2)\in\mathcal{NR}\\\lambda_1\zeta_1+\lambda_2\zeta_2=\lambda\phi}}L\big(0,\lambda_1\alpha_{j_1}(\zeta_1)+\lambda_2\alpha_{j_2}(\zeta_2)\big)^{-1}\,L_1(r_{\zeta_1,j_1},\alpha_{j_2}(\zeta_2))\,r_{\zeta_2,j_2}\,\lambda_2\,\sigma^1_{\zeta_1,j_1,\lambda_1}\,\sigma^1_{\zeta_2,j_2,\lambda_2}\,\\
	e^{i(\lambda_1\mathbf{n}_1+\lambda_2\mathbf{n}_2)\cdot\theta}\,e^{i(\lambda_1\xi_{j_1}(\zeta_1)+\lambda_2\xi_{j_2}(\zeta_2))\chi_d}.
\end{multline*}
We investigate now which frequencies occur in this sum. If we denote by, for $ i=1,2 $, $ \zeta_i=\mathfrak{m}_i\phi+\mathfrak{n}_i\psi $ with $ (\mathfrak{m}_i,\mathfrak{n}_i)\in\B_{\Z^2} $, since there are only frequencies lifted from $ \phi,\psi,\nu $ in $ \mathcal{NR} $, we necessarily have ($ \mathfrak{m}_i=1 $ and $ \mathfrak{n}_i=0 $) or ($ \mathfrak{m}_i=0 $ and $ \mathfrak{n}_i=1 $) or ($ \mathfrak{m}_i=\lambda_{\phi} $ and $ \mathfrak{n}_i=\lambda_{\psi} $). In this notation, the condition $ \lambda_1\zeta_1+\lambda_2\zeta_2=\lambda\phi $ is equivalent to 
\begin{equation*}
	\left\lbrace\begin{array}{l}
		\lambda_1\,\mathfrak{m}_1+\lambda_2\,\mathfrak{m}_2=\lambda\\
		\lambda_1\,\mathfrak{n}_1+\lambda_2\,\mathfrak{n}_2=0,
	\end{array}\right.
\end{equation*}
and using that $ \lambda_{\phi},\lambda_{\psi} $ are coprime integers, we find that this system admits the following solutions
\begin{equation*}
	\left\lbrace\begin{array}{l}
		(\mathfrak{m}_1,\mathfrak{n}_1)=(1,0)\\
		(\mathfrak{m}_2,\mathfrak{n}_2)=(1,0)\\
		\lambda_1+\lambda_2=\lambda
	\end{array}\right.,
	\quad 
	\left\lbrace\begin{array}{l}
		(\mathfrak{m}_1,\mathfrak{n}_1)=(0,1)\\
		(\mathfrak{m}_2,\mathfrak{n}_2)=(\lambda_{\phi},\lambda_{\psi})\\
		(\lambda,\lambda_1,\lambda_2)=k\,(\lambda_{\phi},-\lambda_{\psi},1)
	\end{array}\right.,
\mbox{ and }
\left\lbrace\begin{array}{l}
	(\mathfrak{m}_1,\mathfrak{n}_1)=(\lambda_{\phi},\lambda_{\psi})\\
	(\mathfrak{m}_2,\mathfrak{n}_2)=(0,1)\\
	(\lambda,\lambda_1,\lambda_2)=k\,(\lambda_{\phi}1,-\lambda_{\psi})
\end{array}\right..
\end{equation*}
Selecting only 6-tuples of $ \mathcal{NR} $, we obtain that $ U^{2,\nc}_{\phi,\lambda} $ is equal to
\begin{align}\label{eq 1cor U 2 nc phi}
	U^{2,\nc}_{\phi,\lambda}&=\indicatrice_{\lambda=k\lambda_{\phi}}\sum_{\substack{j_1,j_2=1,2,3 \\ (j_1,j_2)\neq(2,1), (2,2)}} L\big(0,k\nu_{j_1}-\lambda_{\psi}k\psi_{j_2}\big)^{-1}\,\big\lbrace \lambda_{\psi}\,L_1(r_{\nu,j_1},\psi_{j_2})\,r_{\psi,j_2}\\\nonumber
	&\qquad\qquad-L_1(r_{\psi,j_2},\nu_{j_1})\,r_{\nu,j_1}\big\rbrace
	\,k\,\sigma^1_{\nu,j_1,k}\,\sigma^1_{\psi,j_2,-\lambda_{\psi}k}\,
	e^{ik\lambda_{\phi}\theta_1}\,e^{i(k\xi_{j_1}(\nu)-\lambda_{\psi}k\xi_{j_2}(\psi))\chi_d}\\\nonumber
	&\quad-\sum_{\lambda_1+\lambda_2=\lambda} L\big(0,\lambda_1\phi_1+\lambda_2\phi_3\big)^{-1}\big\lbrace \lambda_1 \,L_1(r_{\phi,3},\phi_{1})\,r_{\phi,1}\\\nonumber
	&\qquad\qquad+ \lambda_2\, L_1(r_{\phi,1},\phi_{3})\,r_{\phi,3}\big\rbrace\,\sigma^1_{\phi,1,\lambda_1}\,\sigma^1_{\phi,3,\lambda_2}\,
	e^{i\lambda\,\theta_1}\,e^{i(\lambda_1\xi_1(\phi)+\lambda_2\xi_3(\phi))\chi_d}.
	\end{align}
Since the vectors $ P_{\phi,1}\,\big(U^2_{\phi,1,\lambda}\big)_{|x_d,\chi_d=0} $ and $ P_{\phi,3}\,\big(U^2_{\phi,3,\lambda}\big)_{|x_d,\chi_d=0} $ in \eqref{eq 1cor boundary cond phi} are respectively in $ E_-^1(\phi) $ and $ E_-^3(\phi) $, by definition \eqref{eq def b zeta} of $ b_{\phi} $, we have
\begin{equation*}
	b_{\phi}\cdot B\,P_{\phi,1}\,\big(U^2_{\phi,1,\lambda}\big)_{|x_d,\chi_d=0} = b_{\phi}\cdot B\, P_{\phi,3}\,\big(U^2_{\phi,3,\lambda}\big)_{|x_d,\chi_d=0} =0.
\end{equation*}
So if we take the scalar product of $ b_{\phi} $ with equality \eqref{eq 1cor boundary cond phi} multiplied by $ i\lambda $, using \eqref{eq 1cor nonpola part U 2 zeta}, \eqref{eq 1cor U 2 nc phi} and the boundary conditions \eqref{eq prof princ bord phi} for the leading profile associated with $ \lambda\phi $, we get the amplitude equation
\begin{multline}\label{eq 1cor eq evol a phi}
	\color{altred}X^{\Lop}_{\phi}\,a^1_{\phi,\lambda}
	+D^{\Lop}_{\phi}\sum_{\lambda_1+\lambda_2=\lambda}i\lambda_2\,a^1_{\phi,\lambda_1}\,a^1_{\phi,\lambda_2}+i\lambda\sum_{\lambda_1+\lambda_3=\lambda}\gamma_{\phi}(\lambda_1,\lambda_3)\,a^1_{\phi,\lambda_1}\,a^1_{\phi,\lambda_3}\\[5pt]\color{altred}
	+\indicatrice_{\lambda=k\lambda_{\phi}}\,\Gamma^{\phi,k}_{1}\,ik\,a^1_{\psi,-k\lambda_{\psi}}\,\big(\sigma^1_{\nu,2,-k}\big)_{|x_d=0}=i\lambda\,b_{\phi}\cdot B\,\big(U^2_{\phi,2,\lambda}\big)_{|x_d,\chi_d=0}-i\lambda\,b_{\phi}\cdot G_{\lambda},
\end{multline}
\color{black}
with
\begin{subequations}\label{eq 1cor eq X lop v phi etc}
\begin{align}
	X^{\Lop}_{\phi}&:=b_{\phi}\cdot B\,\Big(R_{\phi,1}\,L(0,\partial_z)\,e_{\phi,1}+R_{\phi,3}\,L(0,\partial_z)\,e_{\phi,3}\Big),\\
	D^{\Lop}_{\phi}&:=b_{\phi}\cdot B\,\Big(R_{\phi,1}\,L_1\big(e_{\phi,1},\phi_1\big)\,e_{\phi,1}+R_{\phi,3}\,L_1\big(e_{\phi,3},\phi_3\big)\,e_{\phi,3}\Big)\label{eq 1cor eq v phi},\\[5pt]\label{eq 1cor eq gamma phi lambda 1 lambda 3}
	\gamma_{\phi}(\lambda_1,\lambda_3)&:=b_{\phi}\cdot B\,L\big(\lambda_1\phi_1+\lambda_3\phi_3\big)^{-1}\big\lbrace \lambda_1 \,L_1(e_{\phi,3},\phi_{1})\,e_{\phi,1}+ \lambda_3\, L_1(e_{\phi,1},\phi_{3})\,e_{\phi,3}\big\rbrace,\\[5pt]
	\Gamma^{\phi}&:=b_{\phi}\cdot B\,R_{\phi,1}\,\Big( L_1(e_{\psi,1},-\nu_2)\,r_{\nu,2}+L_1(r_{\nu,2},-\lambda_{\psi}\psi_1)\,e_{\psi,1}\Big)\label{eq 1cor eq gamma phi 1}\\
	&\nonumber\quad+\lambda_{\phi}\,b_{\phi}\cdot B\sum_{\substack{j_1=1,3,j_2=1,2,3 \\ (j_1,j_2)\neq(1,2)}}\mu_{\nu,j_2}\, L\big(\nu_{j_2}-\lambda_{\psi}\psi_{j_1}\big)^{-1}\,\Big\lbrace L_1(r_{\psi,j_1},\nu_{j_2})\,r_{\nu,j_2}\\
	&\nonumber\qquad-L_1(r_{\nu,j_2},\psi_{j_1})\,r_{\psi,j_1}\,\lambda_{\psi}\Big\rbrace.
\end{align}
\end{subequations}
Equation \eqref{eq 1cor eq evol a phi} differs from the analogous one of \cite[equation (2.19)]{CoulombelWilliams2017Mach} by the two terms $ \indicatrice_{\lambda=k\lambda_{\phi}}\,\Gamma^{\phi}\,ik\,a^1_{\psi,-k\lambda_{\psi}}\,\big(\sigma^1_{\nu,2,-k}\big)_{|x_d=0}$ and $ i\lambda\,b_{\phi}\cdot B\,\big(U^2_{\phi,2,\lambda}\big)_{|x_d,\chi_d=0} $. The first one appears here because of the resonances, and the second one wasn't in \cite{CoulombelWilliams2017Mach} because profiles  $ U^2_{\phi,2,\lambda} $ were zero\footnote{Note that here, with a more precise analysis, we could also show that every profile $ U^n_{\phi,2,\lambda} $ is zero, since frequency $ \phi_2 $ does not occur in any resonance. We choose however to not detail it, in order to simplify the whole analysis, and since $ U^n_{\psi,2,\lambda} $ will not be zero, and proving that $ U^n_{\phi,2,\lambda} $ would not simplify the solving of the equations}.
Computing $ L\big(\lambda_1\phi_1+\lambda_2\phi_3\big)^{-1} $ on $ r_{\phi,2} $ leads to the following alternative expression for $ \gamma_{\phi}(\lambda_1,\lambda_3) $: 
\begin{equation*}
	\gamma_{\phi}(\lambda_1,\lambda_3)=b_{\phi}\cdot B\,r_{\phi,2}\,\frac{i\lambda_1\,\ell_{\phi,2} \,E^{\phi}_{3,1}+ i\lambda_3\,\ell_{\phi,2}\, E^{\phi}_{1,3}}{\lambda_1\,\big(\xi_1(\phi)-\xi_2(\phi)\big)+\lambda_3\,\big(\xi_3(\phi)-\xi_2(\phi)\big)},
\end{equation*}
where we have denoted
\begin{equation*}
	E^{\phi}_{3,1}:=L_1(e_{\phi,3},\phi_{1})\,e_{\phi,1},\qquad E^{\phi}_{1,3}:=L_1(e_{\phi,1},\phi_{3})\,e_{\phi,3}.
\end{equation*}
This rewriting, which can be found in \cite{CoulombelWilliams2017Mach}, will be useful in the following to study the bilinear operator associated with the symbol $ \gamma_{\phi}(\lambda_1,\lambda_2) $ of \eqref{eq 1cor eq gamma phi lambda 1 lambda 3}. 
Finally, according to Lemma \ref{lemme Lax bord}, the operator $ X_{\phi}^{\Lop} $ is actually equal to the tangential vector field
\begin{equation*}
	\beta_{\phi}\Big(\partial_t+\nabla_{\eta}\kappa(\phi)\cdot\nabla_y\Big),
\end{equation*}
which we still denote by $ X_{\phi}^{\Lop} $.

\bigskip

Similarly, for $ \psi $ we have
\begin{align}\label{eq 1cor boundary cond psi}
	B\,P_{\psi,1}\,\big(U^2_{\psi,1,\lambda}\big)_{|x_d,\chi_d=0}+B\,P_{\psi,3}\,\big(U^2_{\psi,3,\lambda}\big)_{|x_d,\chi_d=0}\\\nonumber
	+B\,(I-P_{\psi,1})\,\big(U^2_{\psi,1,\lambda}\big)_{|x_d,\chi_d=0}+B\,(I-P_{\psi,3})\,\big(U^2_{\psi,3,\lambda}\big)_{|x_d,\chi_d=0}\\\nonumber
	+B\,\big(U^2_{\psi,2,\lambda}\big)_{|x_d,\chi_d=0}+B\,\big(U^{2,\nc}_{\psi,\lambda}\big)_{|x_d,\chi_d=0}&=0,
\end{align}
where the vector $ U^{2,\nc}_{\psi,\lambda} $ is given by
\begin{align}\label{eq 1cor U 2 nc psi}
	U^{2,\nc}_{\psi,\lambda}&=\indicatrice_{\lambda=k\lambda_{\psi}}\sum_{\substack{j_1=1,3,j_2=1,2,3 \\ (j_1,j_2)\neq(1,2), (3,2)}} L\big(0,k\nu_{j_2}-\lambda_{\phi}k\phi_{j_1}\big)^{-1}\,\big\lbrace \lambda_{\phi}\,L_1(r_{\nu,j_2},\phi_{j_1})\,r_{\phi,j_1}\\\nonumber
	&\qquad\qquad-L_1(r_{\phi,j_1},\nu_{j_2})\,r_{\nu,j_2}\big\rbrace\,k\,\sigma_{\nu,j_2,k}^1\,\sigma_{\phi,j_1,-\lambda_{\phi}k}^1\,
	e^{ik\lambda_{\phi}\theta_1}\,e^{i(k\xi_{j_2}(\nu)-\lambda_{\phi}k\xi_{j_1}(\phi))\chi_d}\\[5pt]\nonumber
	&\quad-\sum_{\substack{j_1,j_2=1,2,3\\ j_1\neq j_2}}\sum_{\lambda_1+\lambda_2=\lambda}L\big(0,\lambda_1\psi_{j_1}+\lambda_2\psi_{j_2}\big)^{-1}\,L_1(r_{\psi,j_1},\psi_{j_2})\,r_{\psi,j_2}\,\lambda_2\,\\\nonumber
	&\qquad\qquad\sigma^1_{\psi,j_1,\lambda_1}\,\sigma^1_{\psi,j_2,\lambda_2}\, e^{i\lambda\,\theta_2}\,e^{i(\lambda_1\xi_{j_1}(\psi)+\lambda_2\xi_{j_2}(\psi))\chi_d}.
\end{align}
If we take the scalar product of $ b_{\psi} $ with equality \eqref{eq 1cor boundary cond psi} multiplied by $ i\lambda $, using \eqref{eq 1cor nonpola part U 2 zeta}, \eqref{eq 1cor U 2 nc phi} and the boundary conditions \eqref{eq 1cor boundary cond psi} for the leading profile associated with $ \lambda\psi $, we get the second amplitude equation
\begin{multline}\label{eq 1cor eq evol a psi}
\color{altorange2}X^{\Lop}_{\psi}\,a^1_{\psi,\lambda}
+v_{\psi}\,\sum_{\lambda_1+\lambda_2=\lambda}i\lambda_2\,a^1_{\psi,\lambda_1}\,a^1_{\psi,\lambda_2}
+i\lambda\sum_{\lambda_1+\lambda_3=\lambda}\gamma_{\psi}(\lambda_1,\lambda_3)\,a^1_{\psi,\lambda_1}\,a^1_{\psi,\lambda_3}\\[5pt]\color{altorange2}
+\indicatrice_{\lambda=k\lambda_{\psi}}\,\Gamma^{\psi}\,ik\,\big(\sigma_{\nu,2,k}\big)_{|x_d=0}\,a^1_{\phi,-\lambda_{\phi}k}=i\lambda\,b_{\psi}\cdot B\,\big(U^2_{\psi,2,\lambda}\big)_{|x_d,\chi_d=0}.
\end{multline}
\color{black}
with, using once again Lemma \ref{lemme Lax bord},
\begin{subequations}\label{eq 1cor eq X lop v psi etc}
	\begin{align}
		X^{\Lop}_{\psi}&:=b_{\psi}\cdot B\Big(R_{\psi,1}\,L(0,\partial_z)\,e_{\psi,1}+R_{\psi,3}\,L(0,\partial_z)\,e_{\psi,3}\Big)\nonumber\\
		&\ =\beta_{\psi}\Big(\partial_t+\nabla_{\eta}\kappa(\psi)\cdot\nabla_y\Big),\\
		v_{\psi}&:=b_{\psi}\cdot B\Big(R_{\psi,1}\,L_1\big(e_{\psi,1},\psi_1\big)\,e_{\psi,1}+R_{\psi,3}\,L_1\big(e_{\psi,3},\psi_3\big)\,e_{\psi,3}\Big),\label{eq 1cor eq v psi}\\[5pt]
		\gamma_{\psi}(\lambda_1,\lambda_3)&:=
		b_{\psi}\cdot B\,L\big(\lambda_1\psi_1+\lambda_3\psi_3\big)^{-1}\big\lbrace \lambda_1 \,L_1(e_{\psi,3},\psi_{1})\,e_{\psi,1}+ \lambda_3\, L_1(e_{\psi,1},\psi_{3})\,e_{\psi,3}\big\rbrace\\[5pt]
		\Gamma^{\psi}&:=b_{\psi}\cdot B\,R_{\psi,1}\,\Big( L_1(e_{\phi,1},-\nu_2)\,r_{\nu,2}+L_1(r_{\nu,2},-\lambda_{\phi}\phi_1)\,e_{\phi,1}\Big)\\
		&\quad+\lambda_{\psi}\,b_{\psi}\cdot B\sum_{\substack{j_1=1,3,j_2=1,2,3 \\ (j_1,j_2)\neq(1,2), (3,2)}}\mu_{\nu,j_2}\,L\big(\nu_{j_2}-\lambda_{\phi}\phi_{j_1}\big)^{-1}\,\big\lbrace L_1(e_{\phi,j_1},\nu_{j_2})\,r_{\nu,j_2} \nonumber\\
		&\qquad\qquad-\lambda_{\phi}\,L_1(r_{\nu,j_2},\phi_{j_1})\,e_{\phi,j_1}\big\rbrace.\nonumber
	\end{align}
\end{subequations}
Again, comparing to \cite[equation (2.19)]{CoulombelWilliams2017Mach}, equation \eqref{eq 1cor eq evol a psi} features two additional terms $ \indicatrice_{\lambda=k\lambda_{\psi}}\,\Gamma^{\psi}\,ik\,\big(\sigma_{\nu,2,k}\big)_{|x_d=0}\,a^1_{\phi,-\lambda_{\phi}k} $ and $ i\lambda\,b_{\psi}\cdot B\,\big(U^2_{\psi,2,\lambda}\big)_{|x_d,\chi_d=0} $ for the same reason. The is no boundary forcing term here because the one for $ \psi $ is of order $ O(\epsilon^3) $. In the same way as for $ \phi $ we have
\begin{equation*}
	\gamma_{\psi}(\lambda_1,\lambda_3)=b_{\psi}\cdot B\,r_{\psi,2}\,\frac{i\lambda_1\,\ell_{\psi,2} \,E^{\psi}_{3,1}+ i\lambda_3\,\ell_{\psi,2}\, E^{\psi}_{1,3}}{\lambda_1\,\big(\xi_1(\psi)-\xi_2(\psi)\big)+\lambda_3\,\big(\xi_3(\psi)-\xi_2(\psi)\big)},
\end{equation*}
where we have denoted
\begin{equation*}
E^{\psi}_{3,1}:=L_1(e_{\psi,3},\psi_{1})\,e_{\psi,1},\qquad E^{\psi}_{1,3}:=L_1(e_{\psi,1},\psi_{3})\,e_{\psi,3}.
\end{equation*}

In conclusion, in this paragraph, we have determined the nonpolarized part of the first corrector, with \eqref{eq 1cor nonpola part U 2 zeta}, and the evolution equations \eqref{eq 1cor eq evol a phi} and \eqref{eq 1cor eq evol a psi} satisfied by $ a^1_{\phi,\lambda} $ and $ a^1_{\psi,\lambda} $. However, the obtained system of equation is still not closed since equations \eqref{eq 1cor eq evol a phi} and \eqref{eq 1cor eq evol a psi} involve traces of the first corrector $ U_2 $, of which the polarized part is still undetermined.

Therefore we proceed inductively, by deriving equations for the polarized part of the first corrector $ U_2 $, and then studying the nonpolarized part of the second corrector $ U_3 $, in order to obtain evolution equations on the boundary terms for the polarized part of $ U_2 $.

\subsubsection{Polarized part of the first corrector}
For $ \zeta=\mathbf{n}\cdot\boldsymbol{\zeta}\in\F_b\privede{0} $ with $ \mathbf{n}\in\B_{\Z^2} $, for $ j\in\mathcal{C}(\zeta) $ and $ \lambda\in\Z^* $, we decompose the profile $ U^{2,\osc}_{\mathbf{n},j,\lambda} $ as $ U^{2,\osc}_{\mathbf{n},j,\lambda}=P_{\zeta,j}\,U^{2,\osc}_{\mathbf{n},j,\lambda}+(I-P_{\zeta,j})\,U^{2,\osc}_{\mathbf{n},j,\lambda} $. Recall that the nonpolarized part $ (I-P_{\zeta,j})\,U^{2,\osc}_{\mathbf{n},j,\lambda} $ is given by \eqref{eq 1cor nonpola part U 2 zeta}, so it remains to determine the polarized part, which is written as
\begin{equation*}
	P_{\zeta,j}\,U^{2,\osc}_{\mathbf{n},j,\lambda}=\sigma_{\zeta,j,\lambda}^2\,r_{\zeta,j},
\end{equation*}
with $ \sigma^2_{\zeta,j,\lambda} $ a scalar function of $ \Omega_T $. We start by determining the mean value $ U_2^* $, as in the general case of a corrector $ U_n $, the mean value $ U_n^* $ appears in equations for the polarized components. According to equation \eqref{eq cascade int U2}, $ U_2^* $ satisfies the equation
\color{altpink}
\begin{align*}
	L(0,\partial_z)\,U^*_2+\sum_{\mathbf{n}\in\B_{\Z^2}}\sum_{j\in\mathcal{C}(\freq)}\sum_{\lambda\in\Z^*}L_1\big(r_{\mathbf{n}\cdot\boldsymbol{\zeta},j},-i\,\lambda\,\alpha_j(\mathbf{n}\cdot\boldsymbol{\zeta})\big)\,r_{\mathbf{n}\cdot\boldsymbol{\zeta},j}\,\Big(\sigma^1_{\mathbf{n},j,\lambda}\,\sigma^2_{\mathbf{n},j,-\lambda}+\sigma^2_{\mathbf{n},j,\lambda}\,\sigma^1_{\mathbf{n},j,-\lambda}\Big)\\
	+\sum_{\mathbf{n}\in\B_{\Z^2}}\sum_{j\in\mathcal{C}(\freq)}\sum_{\lambda\in\Z^*}L_1\big((I-P_{\freq,j})\,U^{2,\osc}_{\mathbf{n},j,\lambda},-i\,\lambda\,\alpha_j(\mathbf{n}\cdot\boldsymbol{\zeta})\big)\,\sigma^1_{\mathbf{n},j,-\lambda}\,r_{\freq,j}\\
	+\sum_{\mathbf{n}\in\B_{\Z^2}}\sum_{j\in\mathcal{C}(\freq)}\sum_{\lambda\in\Z^*}L_1\big(\sigma^1_{\mathbf{n},j,\lambda},-i\,\lambda\,\alpha_j(\mathbf{n}\cdot\boldsymbol{\zeta})\big)\,(I-P_{\freq,j})\,U^{2,\osc}_{\mathbf{n},j,-\lambda}\,r_{\freq,j}\\
	+\sum_{\mathbf{n}\in\B_{\Z^2}}\sum_{j\in\mathcal{C}(\freq)}\sum_{\lambda\in\Z^*}L_1\big(U^{1,\osc}_{\mathbf{n},j,\lambda},\partial_z\big)\,U^{1,\osc}_{\mathbf{n},j,\lambda}&=0.
\end{align*}\color{black}
The change of variable $ \lambda=-\lambda $ shows that the second term in first line of previous equation is zero. Boundary and initial condition \eqref{eq cascade bord U12} and \eqref{eq cascade initial} as well as writing \eqref{eq ecriture U 2} for $ U_2 $ leads to the following boundary and initial condition for $ U_2^* $:
\color{altpink}
\begin{equation*}
	B\,\big(U_2^*\big)_{|x_d=0}=G_0-B\,\big(U^{2,\nc}_0\big)_{|x_d,\chi_d=0},\qquad B\,\big(U_2^*\big)_{|t\leq 0}=0,
\end{equation*}\color{black}
where $ G_0 $ is the mean value of the forcing term $ G $ and $ U^{2,\nc}_0 $ is the sum of all terms of $ U^{2,\nc} $  of which the trace on the boundary has zero frequency. According to expressions \eqref{eq 1cor nonpola part U 2 zeta} of the nonpolarized parts and \eqref{eq 1cor expr U 2 nc} of $ U^{2,\nc} $, the mean value $ U_2^* $ satisfies an initial boundary value problem which is weakly well posed and of which the source and boundary terms depend only on the leading profile $ U_1 $ (with possibly a first order derivative applied to it).

We consider now modes $ \lambda\,\alpha_j(\zeta) $ with $ \zeta=\mathbf{n}\cdot\boldsymbol{\zeta}\in\F_b\privede{0,\phi,\psi,\nu} $, $ \mathbf{n}\in\B_{\Z^2} $, $ j\in\mathcal{C}(\zeta) $ and $ \lambda\in\Z^* $. Recall that it has been proven in the previous part that for these modes, the profile $ U^{2,\osc}_{\mathbf{n},j} $ is polarized, that is
\begin{equation*}
	U^{2,\osc}_{\mathbf{n},j} =\sigma^2_{\zeta,j,\lambda}\,r_{\zeta,j}.
\end{equation*}
Since there is no resonance generating frequency $ \alpha_j(\zeta) $, since the mean value $ U^*_1 $ is zero and since profiles $ \sigma_{\zeta,j,\lambda}^1 $ are also zero, the terms $ \mathcal{M}(U_1,U_2) $,  $ \mathcal{M}(U_2,U_1) $, and $ \mathcal{N}(U_1,U_1) $ contain no term of frequency $ \alpha_j(\zeta) $.
Therefore, analogously as for the leading profile, multiplying equation \eqref{eq cascade int Un} for $ n=3 $ on the left by $ \ell_{\zeta,j} $ leads to the following system of transport equations for the scalar functions $ \sigma_{\zeta,j,\lambda}^2 $:
\begin{subequations}\label{eq 1cor systeme sigma zeta hors cas part}
\begin{equation}\label{eq 1cor systeme sigma zeta hors cas part1}
			\color{altpurple}X_{\alpha_j(\zeta)}\,\sigma_{\zeta,j,\lambda}^2=0,\qquad
		\big(\sigma_{\zeta,j,\lambda}^2\big)_{|t\leq 0}=0.
\end{equation}
\color{black}
When frequency $ \alpha_j(\zeta) $ is outgoing, transport equation \eqref{eq 1cor systeme sigma zeta hors cas part1} leads to $ \sigma_{\zeta,j,\lambda}^2=0 $. When frequency $ \alpha_j(\zeta) $ is incoming, according to boundary condition \eqref{eq cascade bord U12} and decomposition \eqref{eq ecriture U 2} of $ U_2 $, since $ \zeta $ is not in $ \Upsilon $, since $ G $ does  not contain any oscillation in $ \zeta $ and since the outgoing profile $ \sigma^2_{\zeta,j,\lambda} $, $ j\in\mathcal{O}(\zeta) $, is zero, we have
\begin{equation}
	\color{altpurple}\big(\sigma_{\zeta,j,\lambda}^2\big)_{|x_d=0}=-\ell_{\zeta,j}\,A_d(0)\,\big(B_{|E_-(\zeta)}\big)^{-1}B\,\big(U^{2,\nc}_{\zeta,\lambda}\big)_{|x_d,\chi_d=0},
\end{equation}
\color{black}
\end{subequations}
where $ U^{2,\nc}_{\zeta,\lambda} $ is the sum of all terms of $ U^{2,\nc} $ of which the trace on the boundary is $ \lambda\,\zeta $. It is fully determined by \eqref{eq 1cor expr U 2 nc}, and thus depends only on $ \big(\sigma^1_{\zeta,j,\lambda}\big)_{|x_d=0} $ for $ \zeta=\phi,\psi,\nu $. Therefore, if the traces $ \big(\sigma^1_{\zeta,j,\lambda}\big)_{|x_d=0} $ for $ \zeta=\phi,\psi,\nu $ are determined, system \eqref{eq 1cor systeme sigma zeta hors cas part} allows us to construct the profiles $ \sigma^2_{\zeta,j,\lambda} $, for $ \zeta\in\F_b\privede{0,\phi,\psi,\nu} $ and $ j\in\mathcal{I}(\zeta) $.

\bigskip

We now take interest into modes $ \phi_j $, $ \psi_j $ and $ \nu_j $ for $ j=1,2,3 $. Applying $ \ell_{\zeta,j} $ to equation \eqref{eq cascade int Un} for $ n=3 $ leads to the following equation for $ \sigma^2_{\zeta,j,\lambda} $,
\begin{align}\label{eq 1cor evol sigma phi psi nu}
	&\color{altblue}X_{\zeta,j}\,\sigma^2_{\zeta,j,\lambda}+D_{\zeta,j}\sum_{\lambda_1+\lambda_2=\lambda}i\lambda\,\sigma_{\zeta,j,\lambda_1}^2\,\sigma_{\zeta,j,\lambda_2}^1+\indicatrice_{\lambda=k\lambda_{\zeta}}\sum_{\substack{(\zeta_1,\zeta_2,j_1,j_2)\\\in\mathcal{R}(\zeta,j)}}J^{\zeta_2,j_2}_{\zeta_1,j_1}\,ik\,\sigma^1_{\zeta_1,j_1,-k\lambda_{\zeta_1}}\,\sigma^2_{\zeta_2,j_2,-k\lambda_{\zeta_2}}\\\nonumber\color{altblue}
	=&\color{altblue}-\ell_{\zeta,j}\,L(0,\partial_z)\,(I-P_{\zeta,j})\,U^{2}_{\zeta,j,\lambda}
	-\ell_{\zeta,j}\sum_{\lambda_1+\lambda_2=\lambda}L_1\big((I-P_{\zeta,j})\,U^2_{\zeta,j,\lambda_1},\zeta_j\big)\,r_{\zeta,j}\,i\lambda_2\,\sigma_{\zeta,j,\lambda_2}^1\\\nonumber
	&\color{altblue}-\ell_{\zeta,j}\sum_{\lambda_1+\lambda_2=\lambda}L_1\big(r_{\zeta,j},\zeta_j\big)\,(I-P_{\zeta,j})\,U^2_{\zeta,j,\lambda_2}\,i\lambda_2\,\sigma_{\zeta,j,\lambda_1}^1\\\nonumber
	&\color{altblue}-\indicatrice_{\lambda=k\lambda_{\zeta}}\sum_{\substack{(\zeta_1,\zeta_2,j_1,j_2)\\\in\mathcal{R}(\zeta,j)}}\ell_{\zeta,j}\,ik\,\Big\lbrace L_1\big(r_{\zeta_1,j_1},-\lambda_{\zeta_2}\alpha_{j_2}(\zeta_2)\big)\,(I-P_{\zeta_2,j_2})\,U^2_{\zeta_2,j_2,-k\lambda_{\zeta_2}}\,\sigma_{\zeta_1,j_1,-k\lambda_{\zeta_1}}^1\\
	&\color{altblue}\nonumber\qquad+L_1\big((I-P_{\zeta_2,j_2})\,U^2_{\zeta_2,j_2,-k\lambda_{\zeta_2}},-\lambda_{\zeta_1}\alpha_{j_1}(\zeta_1)\big)\,r_{\zeta_1,j_1}\,\sigma_{\zeta_1,j_1,-k\lambda_{\zeta_1}}^1\Big\rbrace\\\nonumber
	&\color{altblue}-\ell_{\zeta,j}\sum_{\lambda_1+\lambda_2=\lambda}L_1\big(\sigma_{\zeta,j,\lambda_1}^1\,r_{\zeta,j},\partial_z\big)\,\sigma_{\zeta,j,\lambda_2}^1\,r_{\zeta,j}\\\nonumber
	&\color{altblue}-\indicatrice_{\lambda=k\lambda_{\zeta}}\sum_{\substack{(\zeta_1,\zeta_2,j_1,j_2)\\\in\mathcal{R}(\zeta,j)}}\ell_{\zeta,j}\, L_1\big(\sigma_{\zeta_1,j_1,-k\lambda_{\zeta_1}}^1\,r_{\zeta_1,j_1},\partial_z\big)\,\sigma_{\zeta_2,j_2,-k\lambda_{\zeta_2}}^1\,r_{\zeta_2,j_2}.
	\end{align}
\color{black}
Note that the source term on the right-hand side of equation \eqref{eq 1cor evol sigma phi psi nu} only depends on the leading profile $ U_1 $, according to formula \eqref{eq 1cor nonpola part U 2 zeta} for nonpolarized parts, with possibly second order derivatives applied to it (since in the expression of the nonpolarized part, first order derivatives are applied to $ U_1 $).

For the incoming frequencies $ \phi_1 $, $ \phi_3 $ and so on, boundary conditions must be determined to solve the above transport equations \eqref{eq 1cor evol sigma phi psi nu}. We have already seen that boundary equation \eqref{eq cascade bord U12} for $ U_2 $ reads, for mode $ \lambda\phi $, as \eqref{eq 1cor boundary cond phi}. From this boundary condition, according to decomposition \eqref{eq decomp E_-(zeta)} of $ E_-(\zeta) $ and relation \eqref{eq def b zeta} defining the vector $ b_{\phi} $, we get the following necessary solvability condition
\begin{align*}
b_{\phi}\cdot\Big(B\,(I-P_{\phi,1})\,\big(U^2_{\phi,1,\lambda}\big)_{|x_d,\chi_d=0}+B\,(I-P_{\phi,3})\,\big(U^2_{\phi,3,\lambda}\big)_{|x_d,\chi_d=0}\\\nonumber
+B\,\big(U^2_{\phi,2,\lambda}\big)_{|x_d,\chi_d=0}+B\,\big(U^{2,\nc}_{\phi,\lambda}\big)_{|x_d,\chi_d=0}\Big)&=b_{\phi}\cdot G_{\lambda},
\end{align*}
which is satisfied as soon as the scalar functions $ a_{\phi,\lambda} $ satisfy the evolution equation \eqref{eq 1cor eq evol a phi}, since these two equations are different writings of the same one. Thus we obtain, for $ j=1,3 $, in a similar manner than for the leading profile,
\begin{equation}\label{eq 1cor trace phi}
	\color{altred}\big(\sigma_{\phi,j,\lambda}^2\big)_{|x_d=0}\,r_{\phi,j}=a_{\phi,\lambda}^2\,e_{\phi,j}
	+\tilde{F}^2_{\phi,j,\lambda},
\end{equation}
\color{black} 
with $ a^2_{\phi,\lambda} $ a scalar function defined on $ \omega_T $ and where, for $ j=1,3 $, we have denoted by $ \tilde{F}^2_{\phi,j,\lambda} $ the function
\begin{align*}
	\tilde{F}^2_{\phi,j,\lambda}&:=\ell_{\phi,j}\cdot A_d(0)\,\big(B_{|E_-(\phi)}\big)^{-1}\Big(G_{\lambda}-B\,(I-P_{\phi,1})\,\big(U^2_{\phi,1,\lambda}\big)_{|x_d,\chi_d=0}\\
	&\quad-B\,(I-P_{\phi,3})\,\big(U^2_{\phi,3,\lambda}\big)_{|x_d,\chi_d=0}-B\,(I-P_{\phi,2})\,\big(U^2_{\phi,2,\lambda}\big)_{|x_d,\chi_d=0}\\
	&\quad-B\,\big(\sigma_{\phi,2,\lambda}^2\big)_{|x_d=0}\,r_{\phi,2}-B\,\big(U^{2,\nc}_{\phi,\lambda}\big)_{|x_d,\chi_d=0}\Big)\,r_{\phi,j}.
\end{align*}

In the same way, for $ \psi $, from \eqref{eq 1cor boundary cond psi}, the following condition must be satisfied: 
\begin{align*}
	b_{\psi}\cdot\Big(B\,(I-P_{\psi,1})\,\big(U^2_{\psi,1,\lambda}\big)_{|x_d,\chi_d=0}+B\,(I-P_{\psi,3})\,\big(U^2_{\psi,3,\lambda}\big)_{|x_d,\chi_d=0}\\\nonumber
	+B\,\big(U^2_{\psi,2,\lambda}\big)_{|x_d,\chi_d=0}+B\,\big(U^{2,\nc}_{\psi,\lambda}\big)_{|x_d,\chi_d=0}\Big)&=0,
\end{align*}
and it is the case when $ a_{\psi,\lambda} $ verify equation \eqref{eq 1cor eq evol a psi}.
Therefore, for $ j=1,3 $,
\begin{align}\label{eq 1cor trace psi}
	\color{altorange2} \big(\sigma_{\psi,j,\lambda}^2\big)_{|x_d=0}\,r_{\psi,j}=a_{\psi,\lambda}^2\,e_{\psi,j}
	+\tilde{F}^2_{\psi,j,\lambda},
\end{align}
\color{black}
with $ a^2_{\psi,\lambda} $ a scalar function of $ \omega_T $,
and where we have denoted by $ \tilde{F}^2_{\psi,j,\lambda} $ the function
\begin{align*}
\tilde{F}^2_{\psi,j,\lambda}&:=-\ell_{\psi,j}\cdot A_d(0)\,\big(B_{|E_-(\psi)}\big)^{-1}\Big(B\,(I-P_{\psi,1})\,\big(U^2_{\psi,1,\lambda}\big)_{|x_d,\chi_d=0}\\
&\quad+B\,(I-P_{\psi,3})\,\big(U^2_{\psi,3,\lambda}\big)_{|x_d,\chi_d=0}+B\,(I-P_{\psi,2})\,\big(U^2_{\psi,2,\lambda}\big)_{|x_d,\chi_d=0}\\
&\quad -B\,\big(\sigma_{\psi,2,\lambda}^2\big)_{|x_d=0}\,r_{\psi,2}+B\,\big(U^{2,\nc}_{\psi,\lambda}\big)_{|x_d,\chi_d=0}\Big).
\end{align*}
Note that expressions \eqref{eq 1cor trace phi} and \eqref{eq 1cor trace psi} of incoming traces $ \big(\sigma_{\phi,j,\lambda}^2\big)_{|x_d=0} $ and $ \big(\sigma_{\psi,j,\lambda}^2\big)_{|x_d=0} $ for $ j=1,3 $ is respectively coupled to the outgoing trace $ \big(\sigma_{\phi,2,\lambda}^2\big)_{|x_d=0} $ and $ \big(\sigma_{\psi,2,\lambda}^2\big)_{|x_d=0} $ through terms $ \tilde{F}^2_{\phi,j,\lambda} $ and $ \tilde{F}^2_{\psi,j,\lambda} $.

Finally, for amplitudes associated with the boundary phase $ \nu $, we need to write a boundary condition for the first corrector. Boundary condition \eqref{eq cascade bord U12} for $ U_2 $ reads \begin{align*}
	B\,P_{\nu,1}\,\big(U^2_{\nu,1,\lambda}\big)_{|x_d,\chi_d=0}+B\,P_{\nu,3}\,\big(U^2_{\nu,3,\lambda}\big)_{|x_d,\chi_d=0}\\\nonumber
	+B\,(I-P_{\nu,1})\,\big(U^2_{\nu,1,\lambda}\big)_{|x_d,\chi_d=0}+B\,(I-P_{\nu,3})\,\big(U^2_{\nu,3,\lambda}\big)_{|x_d,\chi_d=0}\\\nonumber
	+B\,\big(U^2_{\nu,2,\lambda}\big)_{|x_d,\chi_d=0}+B\,\big(U^{2,\nc}_{\nu,\lambda}\big)_{|x_d,\chi_d=0}&=0,
\end{align*}
where $ U^{2,\nc}_{\nu,\lambda} $ is the sum of all the terms of $ U^{2,\nc} $ of which the trace on the boundary of the associated frequency is equal to $ \lambda\,\nu $, which is fully determined by \eqref{eq 1cor expr U 2 nc}.
Therefore, since $ \nu\in\F_{b}\setminus \Upsilon $, we get, for $ j=1,3 $, 
\begin{align}\label{eq 1cor trace nu}
	\color{altgreen}\big(\sigma_{\nu,j,\lambda}^2\big)_{|x_d=0}\,r_{\nu,j}=\mu_{\nu,j}\,\big(\sigma_{\nu,2,\lambda}^2\big)_{|x_d=0}\,r_{\nu,j}+\tilde{F}^2_{\nu,j,\lambda},
\end{align}
\color{black}
where $ \mu_{\nu,j} $ has been defined in equation \eqref{eq prof princ def mu nu} and where we have denoted by $ \tilde{F}^2_{\nu,j,\lambda} $ the function
\begin{multline*}
\tilde{F}^2_{\nu,j,\lambda}:=-\ell_{\nu,j}\cdot A_d(0)\,\big(B_{|E_-(\nu)}\big)^{-1}\Big(B\,(I-P_{\nu,1})\,\big(U^2_{\nu,1,\lambda}\big)_{|x_d,\chi_d=0}\\
+B\,(I-P_{\nu,3})\,\big(U^2_{\nu,3,\lambda}\big)_{|x_d,\chi_d=0}+B\,(I-P_{\nu,2})\,\big(U^2_{\nu,2,\lambda}\big)_{|x_d,\chi_d=0}+B\,\big(U^{2,\nc}_{\nu,\lambda}\big)_{|x_d,\chi_d=0}\Big).
\end{multline*}

In the same way as for the leading profile, we need to investigate the nonpolarized part of the second corrector to find equations on $ a_{\phi,\lambda}^2 $ and $ a_{\psi,\lambda}^2 $.

\subsubsection{Nonpolarized part of the second corrector} We follow the same analysis as for the first corrector. With similar arguments we get that the second corrector $ U_3 $ reads
\begin{align}\label{eq ecriture U 3}
	U_3(z,\theta,\chi_d)&=U^*_3(z)+\sum_{\mathbf{n}\in\B_{\Z^2}}\sum_{j\in\mathcal{C}(\freq)}\sum_{\lambda\in\Z^*}U^{3,\osc}_{\mathbf{n},j,\lambda}(z)\,e^{i\,\lambda\,\mathbf{n}\cdot\theta}\,e^{i\,\lambda\,\xi_j(\freq)\,\chi_d} \\\nonumber
	&\quad+\sum_{\mathbf{n}\in\B_{\Z^2}}\sum_{\lambda\in\Z^*}e^{\chi_d\mathcal{A}(\mathbf{n}\cdot\boldsymbol{\zeta})}\,\Pi^e(\mathbf{n}\cdot\boldsymbol{\zeta})\,U^{3,\ev}_{\mathbf{n}}(z,0)\,e^{i\,\lambda\,\mathbf{n}\cdot\theta}+U^{3,\nc}(z,\theta,\chi_d),
\end{align}
with $ U_3^* $ the mean value of $ U_3 $, $ U^{3,\nc} $ the noncharacteristic terms, and where, for $ \zeta=\freq\in\F_{b}\privede{0} $, $ \mathbf{n}\in\B_{\Z^2} $, $ j\in\mathcal{C}(\zeta) $ and $ \lambda\in\Z^* $, profile $ U^{3,\osc}_{\mathbf{n},j,\lambda} $ decomposes as 
\begin{equation*}
	 U^{3,\osc}_{\mathbf{n},j,\lambda}=\sigma^3_{\zeta\,j,\lambda}\,r_{\zeta,j}+\big(I-P_{\zeta,j}\big)\,U^{3,\osc}_{\mathbf{n},j,\lambda},
\end{equation*}
with $ \sigma_{\zeta,j,\lambda}^3 $ a scalar function of $ \Omega_T $. Furthermore, according to \eqref{eq cascade int Un} for $ n=2 $, the noncharacteristic part $ U^{3,\nc} $ is given by
\begin{align}\label{eq 2cor expr U 3 nc}
	\mathcal{L}(&\partial_{\theta},\partial_{\chi_d})\,U^{3,\nc}=\\\nonumber
	&-\sum_{\substack{(\zeta_1,\zeta_2,j_1,j_2,\\\lambda_1,\lambda_2)\in\mathcal{NR}}}\Big(L_1\big(U^{1,\osc}_{\zeta_1,j_1,\lambda_1},i\,\lambda_2\alpha_{j_2}(\zeta_2)\big)\,U^{2,\osc}_{\zeta_2,j_2,\lambda_2}
	+L_1\big(U^{2,\osc}_{\zeta_1,j_1,\lambda_1},i\,\lambda_2\alpha_{j_2}(\zeta_2)\big)\,U^{1,\osc}_{\zeta_2,j_2,\lambda_2}\\\nonumber
	&+L_1\big(U^{1,\osc}_{\zeta_1,j_1,\lambda_1},\partial_z\big)\,U^{1,\osc}_{\zeta_2,j_2,\lambda_2}\Big)\, e^{i(\lambda_1\mathbf{n}_1+\lambda_2\mathbf{n}_2)\cdot\theta}\,e^{i(\lambda_1\xi_{j_1}(\zeta_1)+\lambda_2\xi_{j_2}(\zeta_2))\chi_d},
\end{align}
where the set $ \mathcal{NR} $ of nonresonant frequencies has already been defined. Since all frequencies in $ U^{3,\nc} $ are noncharacteristic, equation \eqref{eq 2cor expr U 3 nc} totally determines $ U^{3,\nc} $. Note that opposite to what was done for the first corrector, this is no longer true that in $ U^{3,\nc} $ there are only profiles of modes $ \phi_j $, $ \psi_j $ and $ \nu_j $, since now second order profiles $ \sigma^2_{\zeta,j,\lambda} $ for $ \zeta\in\F_{b}\privede{0,\phi,\psi,\nu} $ are possibly nonzero.

For the same reason, profiles $ U^{3,\osc}_{\mathbf{n},j,\lambda} $ for $ \mathbf{n}\cdot\boldsymbol{\zeta}\in\F_{b}\privede{0,\phi,\psi,\nu} $ are not necessarily polarized. Therefore we derive now the nonpolarized part for each frequency $ \zeta=\freq\in\F_{b}\privede{0} $, $ \mathbf{n}\in\B_{\Z^2} $ and $ j\in\mathcal{C}(\zeta) $, $ \lambda\in\Z^* $. Multiplying equation \eqref{eq cascade int Un} for $ n=2 $ and frequency $ \lambda\,\alpha_j(\zeta) $ on the left by the partial inverse $ R_{\zeta,j} $ leads to the relation
\begin{align}\label{eq 2cor nonpola part U 3 zeta}
	i\lambda\,\big(I-&P_{\zeta,j}\big)\,U^{3,\osc}_{\mathbf{n},j,\lambda}=\\\nonumber
	&-R_{\zeta,j}\,L(0,\partial_z)\,\sigma_{\zeta,j,\lambda}^2\,r_{\zeta,j}-R_{\zeta,j}\,L_1\big(r_{\zeta,j},\alpha_j(\zeta)\big)\,r_{\zeta,j}\sum_{\lambda_1+\lambda_2=\lambda}i\lambda\,\sigma_{\zeta,j,\lambda_1}^1\,\sigma_{\zeta,j,\lambda_2}^2\\\nonumber
	&-R_{\zeta,j}\,\indicatrice_{\lambda=k\lambda_{\zeta}}\sum_{\substack{(\zeta_1,\zeta_2,j_1,j_2)\\\in\mathcal{R}(\zeta,j)}}L_1\big(r_{\zeta_1,j_1},-\lambda_{\zeta_2}\alpha_{j_2}(\zeta_2)\big)\,r_{\zeta_2,j_2}\,ik\,\sigma_{\zeta_1,j_1,-k\lambda_{\zeta_1}}^1\,\sigma_{\zeta_2,j_2,-k\lambda_{\zeta_2}}^2\\\nonumber
	&-R_{\zeta,j}\,\partial_z\,\mbox{\emph{terms in }} \big(U_1,(I-P)\,U_2,U_2^*\big),
\end{align}
where notation $ \partial_z\,\mbox{\emph{terms in }} \big(U_1,(I-P)\,U_2\big) $ refers to quadratic terms involving the leading profile $ U_1 $, the nonpolarized parts of the first corrector $ U_2 $ and the mean value $ U_2^* $, with possibly first order derivative in front of it. The key point is that since the leading profile is polarized and since the mean value $ U_1^* $ is zero, all the terms involving a profile $ \sigma^2_{\zeta',j',\lambda'} $ depend only on leading order polarized profiles $ \sigma^1_{\zeta'',j'',\lambda''} $, and not on $ (I-P)\,U_2 $ or $ U_2^* $.
We now write the boundary conditions for the second corrector, for the frequencies $ \phi $ and $ \psi $, which will lead to equations on the amplitudes $ a_{\phi}^2 $ and $ a^{2}_{\psi} $. 

For $ \phi $ we have, according to boundary condition \eqref{eq cascade bord U3n} and writing \eqref{eq ecriture U 3} of $ U_3 $, since the elliptic component $ E^e_-(\phi) $ of the stable subspace $ E_-(\zeta) $ is zero,
\begin{align}\label{eq 2 cor boundary cond phi}
	B\,P_{\phi,1}\,\big(U^3_{\phi,1,\lambda}\big)_{|x_d,\chi_d=0}+B\,P_{\phi,3}\,\big(U^3_{\phi,3,\lambda}\big)_{|x_d,\chi_d=0}\\\nonumber
	+B\,(I-P_{\phi,1})\,\big(U^3_{\phi,1,\lambda}\big)_{|x_d,\chi_d=0}+B\,(I-P_{\phi,3})\,\big(U^3_{\phi,3,\lambda}\big)_{|x_d,\chi_d=0}\\\nonumber
	+B\,\big(U^{3,\osc}_{\phi,2,\lambda}\big)_{|x_d,\chi_d=0}+B\,\big(U^{3,\nc}_{\phi,\lambda}\big)_{|x_d,\chi_d=0}&=0,
\end{align}
where $ U^{3,\nc}_{\phi,\lambda} $ is the sum of all the terms of $ U^{3,\nc} $ of which the trace on the boundary of the associated frequency is equal to $ \lambda\,\phi $, namely, according to expression \eqref{eq 2cor expr U 3 nc} of $ U^{3,\nc} $,
\begin{align}\label{eq 2cor U3 nc phi}
	U^{3,\nc}_{\phi,\lambda}&=\indicatrice_{\lambda=k\lambda_{\phi}}\sum_{\substack{j_1,j_2=1,2,3 \\ (j_1,j_2)\neq\\(2,1), (2,2)}} L\big(0,k\nu_{j_1}-\lambda_{\psi}k\psi_{j_2}\big)^{-1}\,\big\lbrace\lambda_{\psi} L_1(r_{\nu,j_1},\psi_{j_2})\,r_{\psi,j_2}-L_1(r_{\psi,j_2},\nu_{j_1})\,r_{\nu,j_1}\big\rbrace\\\nonumber
	&\qquad\qquad\,k\,\big\lbrace\sigma^1_{\nu,j_1,k}\,\sigma^2_{\psi,j_2,-\lambda_{\psi}k}+\sigma^2_{\nu,j_1,k}\,\sigma^1_{\psi,j_2,-\lambda_{\psi}k}\big\rbrace\,
	e^{ik\lambda_{\phi}\theta_1}\,e^{i(k\xi_{j_1}(\nu)-\lambda_{\psi}k\xi_{j_2}(\psi))\chi_d}\\[5pt]\nonumber
	&-\sum_{\lambda_1+\lambda_2=\lambda}L\big(0,\lambda_1\phi_1+\lambda_2\phi_3\big)^{-1}\big\lbrace \lambda_1 \,L_1(r_{\phi,3},\phi_{1})\,r_{\phi,1}+ \lambda_2\, L_1(r_{\phi,1},\phi_{3})\,r_{\phi,3}\big\rbrace\\\nonumber
	&\qquad\qquad\big\lbrace\sigma^1_{\phi,1,\lambda_1}\,\sigma^2_{\phi,3,\lambda_2}+\sigma^2_{\phi,1,\lambda_1}\,\sigma^1_{\phi,3,\lambda_2}\big\rbrace
\,	e^{i\lambda\,\theta_1}\,e^{i(\lambda_1\xi_1(\phi)+\lambda_2\xi_3(\phi))\chi_d}\\[5pt]\nonumber
	&+\partial_{z,\theta}\,\mbox{\emph{terms in}}\,\big(U_1,(I-P)\,U_2,(P\,U_2)_{\zeta\neq \phi,\psi,\nu},U_2^*\big),
\end{align}
where the notation $ \partial_z\,\mbox{\emph{terms in}}\,\big(U_1,(I-P)\,U_2,(P\,U_2)_{\zeta\neq \phi,\psi,\nu},U_2^*\big) $ refers to quadratic terms in $ U_1 $, the nonpolarized part of $ U_2 $, the polarized part of $ U_2 $ of which the associated modes are different from $ \lambda\phi_j $, $ \lambda\psi_j $ and $ \lambda\nu_j $, and the mean value $ U_2^* $, with possibly one derivative in front of it. Once again, the key point here is that since $ U_1 $ is polarized and of zero mean value, and since only the profiles $ \sigma^1_{\zeta,j,\lambda} $ for $ \zeta=\phi,\psi,\nu $ are nonzero, in $ U^{3,\nc}_{\phi,\lambda} $, every term involving $ \sigma^2_{\zeta,j,\lambda'} $ for $ \zeta=\phi,\psi,\nu $, is a quadratic term with a profile $ \sigma^1_{\zeta',j',\lambda''} $ for $ \zeta'=\phi,\psi,\nu $.

Similarly as for the leading profile, taking the scalar product of $ b_{\phi} $ with equation \eqref{eq 2 cor boundary cond phi} multiplied by $ i\lambda $, using \eqref{eq 2cor nonpola part U 3 zeta}, \eqref{eq 2cor U3 nc phi} and expression of traces \eqref{eq 1cor trace phi}, \eqref{eq 1cor trace psi} and \eqref{eq 1cor trace nu}, we get
\begin{align}\label{eq 2cor eq evol a phi}
	&\color{altred} X^{\Lop}_{\phi}\,a_{\phi,\lambda}^2+D^{\Lop}_{\phi}\,i\lambda\sum_{\lambda_1+\lambda_2=\lambda}a_{\phi,\lambda_1}^1\,a_{\phi,\lambda_2}^2+i\lambda\sum_{\lambda_1+\lambda_2=\lambda}\gamma_{\phi}(\lambda_1,\lambda_2)\big(a_{\phi,\lambda_1}^1\,a_{\phi,\lambda_2}^2+a_{\phi,\lambda_1}^2\,a_{\phi,\lambda_2}^1\big)\\\nonumber
	&\color{altred}+\indicatrice_{\lambda=k\lambda_{\phi}}\,\Gamma^{\phi}\,ik\,\Big\lbrace\big(\sigma_{\nu,2,-k}^1\big)_{|x_d=0}\,a_{\psi,-k\lambda_{\psi}}^2+\big(\sigma_{\nu,2,-k}^2\big)_{|x_d=0}\,a_{\psi,-k\lambda_{\psi}}^1\Big\rbrace\\[5pt]\nonumber
	&\color{altred}+\partial_{z,\theta}\,\mbox{\emph{terms in}}\,\Big[\tilde{F}^2_{\phi},\big(\tilde{F}^2_{\phi},a^1_{\phi}\big),\big(\tilde{F}^2_{\psi},(\sigma^1_{\nu,2})_{|x_d=0}\big),\big((\sigma^2_{\phi})_{|x_d=0},a^1_{\phi}\big),\big((\sigma^2_{\psi})_{|x_d=0},(\sigma^1_{\nu,2})_{|x_d=0}\big)\Big]
	\\[5pt]\nonumber
	&\color{altred}=i\lambda\,b_{\psi}\cdot B\,\big(U^{3,\osc}_{\phi,2,\lambda}\big)_{|x_d,\chi_d=0}+\partial_{z,\theta}\,\mbox{\emph{terms in}}\,\big(U_1,(I-P)\,U_2,(P\,U_2)_{\zeta\neq \phi,\psi,\nu},U_2^*\big)_{|x_d,\chi_d=0},
\end{align}
\color{black}
where $ X^{\Lop}_{\phi} $, $ D^{\Lop}_{\phi} $, $ \gamma_{\phi}(\lambda_1,\lambda_2) $ and $ \Gamma^{\phi} $ have already been defined in \eqref{eq 1cor eq X lop v phi etc}, and where 
\begin{equation*}
	\partial_{z,\theta}\,\mbox{\emph{terms in}}\,\Big[\tilde{F}^2_{\phi},\big(\tilde{F}^2_{\phi},a^1_{\phi}\big),\big(\tilde{F}^2_{\psi},(\sigma^1_{\nu,2})_{|x_d=0}\big),\big((\sigma^2_{\phi})_{|x_d=0},a^1_{\phi}\big),\big((\sigma^2_{\psi})_{|x_d=0},(\sigma^1_{\nu,2})_{|x_d=0}\big)\Big]
\end{equation*}
refers to linear terms in $ \tilde{F}^2_{\phi,j,\lambda} $ for $ j=1,2 $ and $ \lambda\in\Z^* $, and quadratic terms in $ \big(\tilde{F}^2_{\phi,j,\lambda},a^1_{\phi,\lambda}\big) $,  $ \big(\tilde{F}^2_{\psi,j,\lambda},(\sigma^1_{\nu,2,\lambda})_{|x_d=0}\big) $, $ \big((\sigma^2_{\phi,j,\lambda})_{|x_d=0},a^1_{\phi,\lambda}\big) $ or $ \big((\sigma^2_{\psi,j,\lambda})_{|x_d=0},(\sigma^1_{\nu,2,\lambda})_{|x_d=0}\big) $ for $ j=1,2 $ and $ \lambda\in\Z^* $, with, for all these terms, possibly one derivative in front of them. Recall that terms $ \tilde{F}^2_{\phi} $ and  $ \tilde{F}^2_{\psi} $ depend on the traces $ \big(\sigma^2_{\phi,2,\lambda})_{|x_d=0} $ and $ \big(\sigma^2_{\psi,2,\lambda})_{|x_d=0} $.

For phase $ \psi $ we have the following boundary condition
\begin{align}\label{eq 2 cor boundary cond psi}
	B\,P_{\psi,1}\,\big(U^3_{\psi,1,\lambda}\big)_{|x_d,\chi_d=0}+B\,P_{\psi,3}\,\big(U^3_{\psi,3,\lambda}\big)_{|x_d,\chi_d=0}\\\nonumber
	+B\,(I-P_{\psi,1})\,\big(U^3_{\psi,1,\lambda}\big)_{|x_d,\chi_d=0}+B\,(I-P_{\psi,3})\,\big(U^3_{\psi,3,\lambda}\big)_{|x_d,\chi_d=0}\\\nonumber
	+B\,\big(U^3_{\psi,2,\lambda}\big)_{|x_d,\chi_d=0}+B\,\big(U^{3,\nc}_{\psi,\lambda}\big)_{|x_d,\chi_d=0}&=H_{\lambda},
\end{align}
where we have expanded $ H $ in Fourier series as 
 \begin{equation*}
 	H(z',\Theta)=\sum_{\lambda\in\Z}H_{\lambda}(z')\,e^{i\lambda\Theta},
 \end{equation*}
 and $ U^{3,\nc}_{\psi,\lambda} $ is the sum of all the terms of $ U^{3,\nc} $ of which the trace on the boundary of the associated frequency is equal to $ \lambda\psi $, that is,
\begin{align}\label{eq 2cor U3 nc psi}
	U^{3,\nc}_{\psi,\lambda}&=\indicatrice_{\lambda=k\lambda_{\psi}}\sum_{\substack{j_1=1,3,j_2=1,2,3 \\ (j_1,j_2)\\\neq(1,2), (3,2)}} L\big(k\nu_{j_2}-\lambda_{\phi}k\phi_{j_1}\big)^{-1}\,\big\lbrace \lambda_{\phi}\,L_1(r_{\nu,j_2},\phi_{j_1})\,r_{\phi,j_1}-L_1(r_{\phi,j_1},\nu_{j_2})\,r_{\nu,j_2}\big\rbrace\\\nonumber
	&\qquad\qquad\,k\,\big\lbrace\sigma_{\nu,j_2,k}^1\,\sigma_{\phi,j_1,-\lambda_{\phi}k}^2+\sigma_{\nu,j_2,k}^2\,\sigma_{\phi,j_1,-\lambda_{\phi}k}^1\big\rbrace\,
	e^{ik\lambda_{\phi}\theta_1}\,e^{i(k\xi_{j_2}(\nu)-\lambda_{\phi}k\xi_{j_1}(\phi))\chi_d}\\\nonumber
	&-\sum_{\substack{j_1,j_2=1,2,3\\ j_1\neq j_2}}\sum_{\lambda_1+\lambda_2=\lambda}L\big(\lambda_1\psi_{j_1}+\lambda_2\psi_{j_2}\big)^{-1}\,L_1(r_{\psi,j_1},\psi_{j_2})\,r_{\psi,j_2}\,\lambda_2\,\big\lbrace\sigma^1_{\psi,j_1,\lambda_1}\,\sigma^2_{\psi,j_2,\lambda_2}\,\\\nonumber
	&\qquad\qquad+\sigma^2_{\psi,j_1,\lambda_1}\,\sigma^1_{\psi,j_2,\lambda_2}\big\rbrace\,e^{i\lambda\,\theta_2}\,e^{i(\lambda_1\xi_{j_1}(\psi)+\lambda_2\xi_{j_2}(\psi))\chi_d}\\[5pt]\nonumber
	&+\partial_{z,\theta}\,\mbox{\emph{terms in}}\,\big(U_1,(I-P)\,U_2,(P\,U_2)_{\zeta\neq \phi,\psi,\nu},U_2^*\big).
\end{align}
If we take the scalar product of $ b_{\psi} $ with the equality \eqref{eq 2 cor boundary cond psi} multiplied by $ \lambda $, using \eqref{eq 2cor nonpola part U 3 zeta}, \eqref{eq 2cor U3 nc phi} and expression of traces \eqref{eq 1cor trace phi}, \eqref{eq 1cor trace psi} and \eqref{eq 1cor trace nu}, we get the amplitude equation
\begin{align}\label{eq 2cor eq evol a psi}
	&\color{altorange2} X^{\Lop}_{\psi}\,a^2_{\psi,\lambda}
	+D^{\Lop}_{\psi}\,i\lambda\!\sum_{\lambda_1+\lambda_2=\lambda}a^1_{\psi,\lambda_1}\,a^2_{\psi,\lambda_2}
	+i\lambda\!\sum_{\lambda_1+\lambda_2=\lambda}\gamma_{\psi}(\lambda_1,\lambda_2)\big(a^1_{\psi,\lambda_1}\,a^2_{\psi,\lambda_2}+a^2_{\psi,\lambda_1}\,a^1_{\psi,\lambda_2}\big)
	\\\nonumber
	&\color{altorange2}+\indicatrice_{\lambda=k\lambda_{\psi}}\,\Gamma^{\psi}\,ik\,\big\lbrace\big(\sigma_{\nu,2,k}^1\big)_{|x_d=0}\,a^2_{\phi,-\lambda_{\phi}k}+\big(\sigma_{\nu,2,k}^2\big)_{|x_d=0}\,a^1_{\phi,-\lambda_{\phi}k}\big\rbrace\\\nonumber
	&\color{altorange2}+\partial_{z,\theta}\,\mbox{\emph{terms in}}\,\Big[\tilde{F}^2_{\psi},\big(\tilde{F}^2_{\psi},a^1_{\psi}\big),\big(\tilde{F}^2_{\phi},(\sigma^1_{\nu,2})_{|x_d=0}\big),\big((\sigma^2_{\psi})_{|x_d=0},a^1_{\psi}\big),\big((\sigma^2_{\phi})_{|x_d=0},(\sigma^1_{\nu,2})_{|x_d=0}\big)\Big]
	\\[5pt]\nonumber
	&\color{altorange2}=i\lambda\,b_{\psi}\cdot B\,\big(U^{3,\osc}_{\psi,2,\lambda}\big)_{|x_d,\chi_d=0}-i\lambda\,b_{\psi}\cdot H_{\lambda}\\\nonumber
	&\color{altorange2}+\partial_{z,\theta}\,\mbox{\emph{terms in}}\,\big(U_1,(I-P)\,U_2,(P\,U_2)_{\zeta\neq \phi,\psi,\nu},U_2^*\big)_{|x_d,\chi_d=0},
\end{align}
\color{black}
where $ X^{\Lop}_{\psi} $, $ D^{\Lop}_{\psi} $, $ \gamma_{\psi}(\lambda_1,\lambda_2) $, and $ \Gamma^{\psi} $ have already been defined in \eqref{eq 1cor eq X lop v psi etc}, and where 
\begin{equation*}
	\partial_{z,\theta}\,\mbox{\emph{terms in}}\,\Big[\tilde{F}^2_{\psi},\big(\tilde{F}^2_{\psi},a^1_{\psi}\big),\big(\tilde{F}^2_{\phi},(\sigma^1_{\nu,2})_{|x_d=0}\big),\big((\sigma^2_{\psi})_{|x_d=0},a^1_{\psi}\big),\big((\sigma^2_{\phi})_{|x_d=0},(\sigma^1_{\nu,2})_{|x_d=0}\big)\Big]
\end{equation*}
refers to linear terms in $ \tilde{F}^2_{\phi,j,\lambda} $ for $ j=1,2 $ and $ \lambda\in\Z^* $, and quadratic terms in $ \big(\tilde{F}^2_{\psi,j,\lambda},a^1_{\psi,\lambda}\big) $,  $ \big(\tilde{F}^2_{\phi,j,\lambda},(\sigma^1_{\nu,2,\lambda})_{|x_d=0}\big) $, $ \big((\sigma^2_{\psi,j,\lambda})_{|x_d=0},a^1_{\psi,\lambda}\big) $ or $ \big((\sigma^2_{\phi,j,\lambda})_{|x_d=0},(\sigma^1_{\nu,2,\lambda})_{|x_d=0}\big) $ for $ j=1,2 $ and $ \lambda\in\Z^* $, with, for all these terms, possibly one derivative in front of them. 

\bigskip

Note that equations \eqref{eq 2cor eq evol a phi} and \eqref{eq 2cor eq evol a psi} can be seen as linearizations around the trace of the leading profile $ U_1 $ of equations  \eqref{eq 1cor eq evol a phi} and \eqref{eq 1cor eq evol a psi}. This is usual in weakly nonlinear geometric optics, where equations for the leading profile are nonlinear, and equations for higher order are linearizations of the former equations around the leading profile $ U_1 $. Again, the obtained system of equations is still not closed since traces of the second corrector $ U_3 $ appear in amplitude equations \eqref{eq 2cor eq evol a phi} and \eqref{eq 2cor eq evol a psi}.

With the obtained equations, can have the intuition on how lower terms ascent toward higher order terms, eventually leading to an instability. In equation \eqref{eq 2cor eq evol a psi} for amplitudes $ a^2_{\psi,\lambda} $, the boundary forcing term $ H $ occurs, and therefore this forcing term ascents to first corrector profiles $ \sigma^2_{\psi,j,\lambda} $ for $ j=1,3 $ and $ \lambda\in\Z^* $ through boundary conditions \eqref{eq 1cor trace psi}. Eventually, because of the resonances \eqref{eq hyp res phi psi nu} leading to resonances terms in transport equations \eqref{eq 1cor evol sigma phi psi nu} for first corrector profiles, the boundary term $ H $ arises in profiles $ \sigma^2_{\psi,2,\lambda} $ for $ \lambda\in\Z^* $. In their turn, these profiles $ \sigma^2_{\psi,2,\lambda} $ for $ \lambda\in\Z^* $ interfere in amplitude equation \eqref{eq 1cor eq evol a psi} for $ a^1_{\psi,\lambda} $, for $ \lambda\in\Z^* $, because of the trace $ \big(U^2_{\psi,2,\lambda}\big)_{|x_d,\xi_d=0} $. Then this reasoning can be applied recursively to obtain that the boundary forcing term $ H $ interferes in leading profiles $ \sigma^1_{\zeta,j,\lambda} $, for $ \zeta=\phi,\psi,\nu $, $ j=1,2,3 $ and $ \lambda\in\Z^* $.

\subsection{General system}

The above arguments can be extended recursively to any corrector $ U_n $, $ n\geq 3 $. Doing so we get that the $ n $-th profile $ U_n $ reads
\begin{multline}\label{eq ecriture U n}
	U_n(z,\theta,\chi_d)=U^*_n(z)+\sum_{\mathbf{n}\in\B_{\Z^2}}\sum_{j\in\mathcal{C}(\mathbf{n})}\sum_{\lambda\in\Z^*}U^{n,\osc}_{\mathbf{n},j,\lambda}\,e^{i\lambda\,\mathbf{n}\cdot\theta}\,e^{i\lambda\xi_j(\mathbf{n}\cdot\boldsymbol{\zeta})\chi_d}\\
	+\sum_{\mathbf{n}\in\B_{\Z^2}}\sum_{\lambda\in\Z^*}e^{\chi_d\mathcal{A}(\mathbf{n}\cdot\boldsymbol{\zeta})}\,\Pi^e(\mathbf{n}\cdot\boldsymbol{\zeta})\,U^{n,\ev}_{\mathbf{n}}(z,0)\,e^{i\,\lambda\,\mathbf{n}\cdot\theta}+U^{n,\nc}(z,\theta,\chi_d),
\end{multline}
with $ U_n^* $ the mean value of $ U_n $, $ U^{n,\nc} $ the noncharacteristic terms, and where, for $ \zeta=\freq\in\F_{b}\privede{0} $, $ \mathbf{n}\in\B_{\Z^2} $, $ j\in\mathcal{C}(\zeta) $ and $ \lambda\in\Z^* $, the oscillating profile $ U^{n,\osc}_{\mathbf{n},j,\lambda} $ decomposes as 
\begin{equation*}
U^{n,\osc}_{\mathbf{n},j,\lambda}=\sigma^n_{\zeta\,j,\lambda}\,r_{\zeta,j}+\big(I-P_{\zeta,j}\big)\,U^{n,\osc}_{\mathbf{n},j,\lambda},
\end{equation*}
with $ \sigma_{\zeta,j,\lambda}^n $ a scalar function defined on $ \Omega_T $. According to equation \eqref{eq cascade int Un} for $ n-1 $ and since $ U_1 $ is polarized and of zero mean value, $ U^{n,\nc} $ is determined by the formula
\begin{align}\label{eq ncor expr U n nc}
	\mathcal{L}(\partial_{\theta},\partial_{\chi_d})\,U^{n,\nc}&=
	-\sum_{\substack{(\zeta_1,\zeta_2,j_1,j_2,\\\lambda_1,\lambda_2)\in\mathcal{NR}}}L_1(r_{\zeta_1,j_1},\alpha_{j_2}(\zeta_2))\,r_{\zeta_2,j_2}\,i\lambda\,\sigma_{\zeta_1,j_1,\lambda_1}^1\,\sigma_{\zeta_2,j_2,\lambda_2}^{n-1}\,\\\nonumber
	& \qquad\qquad e^{i(\lambda_1\mathbf{n}_1+\lambda_2\mathbf{n}_2)\cdot\theta}\,e^{i(\lambda_1\xi_{j_1}(\zeta_1)+\lambda_2\xi_{j_2}(\zeta_2))\chi_d}\\\nonumber
	&\quad+\partial_{z,\theta}\,\mbox{\emph{terms in}}\,\big(U_1,\dots,U_{n-2},(I-P)\,U_{n-1},U_{n-1}^*\big),
\end{align}
where notation $ \partial_{z,\theta}\,\mbox{\emph{terms in}}\,\big(U_1,\dots,U_{n-2},(I-P)\,U_{n-1},U_{n-1}^*\big) $ refers to quadratic terms involving the profiles $ U_1,\dots,U_{n-2} $, the nonpolarized parts of the corrector $ U_{n-1} $ and the mean value $ U_{n-1}^* $, with possibly first order derivatives in front of it. 
As for it, for $ \zeta=\freq\in\F_{b}\privede{0} $, $ \mathbf{n}\in\B_{\Z^2} $ and $ j\in\mathcal{C}(\zeta) $, $ \lambda\in\Z^* $, the nonpolarized part $ \big(I-P_{\zeta,j}\big)\,U^{n,\osc}_{\zeta,j,\lambda} $ of $ U^{n,\osc}_{\zeta,j,\lambda} $ is given by
\begin{align}\label{eq ncor nonpola part U n zeta}
	i\lambda\,\big(I-&P_{\zeta,j}\big)\,U^{n,\osc}_{\mathbf{n},j,\lambda}=\\\nonumber
	&-R_{\zeta,j}\,L(0,\partial_z)\,\sigma_{\zeta,j,\lambda}^{n-1}\,r_{\zeta,j}-R_{\zeta,j}\,L_1\big(r_{\zeta,j},\alpha_j(\zeta)\big)\,r_{\zeta,j}\sum_{\lambda_1+\lambda_2=\lambda}i\lambda\,\sigma_{\zeta,j,\lambda_1}^1\,\sigma_{\zeta,j,\lambda_2}^{n-1}\\\nonumber
	&-R_{\zeta,j}\,\indicatrice_{\lambda=k\lambda_{\zeta}}\sum_{\substack{(\zeta_1,\zeta_2,j_1,j_2)\\\in\mathcal{R}(\zeta,j)}}L_1\big(r_{\zeta_1,j_1},-\lambda_{\zeta_2}\alpha_{j_2}(\zeta_2)\big)\,r_{\zeta_2,j_2}\,ik\,\sigma_{\zeta_1,j_1,-k\lambda_{\zeta_1}}^1\,\sigma_{\zeta_2,j_2,-k\lambda_{\zeta_2}}^{n-1}\\\nonumber
	&-R_{\zeta,j}\,\partial_{z,\theta}\,\mbox{\emph{terms in}}\,\big(U_1,\dots,U_{n-2},(I-P)\,U_{n-1},U_{n-1}^*\big).
\end{align}
This formula is obtained by multiplying equation \eqref{eq cascade int Un} for $ n-1 $ by the partial inverse $ R_{\zeta,j} $, using that $ U_1 $ is polarized and of zero mean value. 

We specify now equations satisfied by the mean value $ U_n^* $ and the polarized components $ \sigma^n_{\zeta,j,\lambda} $. Since $ U_1 $ is polarized and of zero mean value, equations \eqref{eq cascade int Un}, \eqref{eq cascade bord U3n} for $ n $ and \eqref{eq cascade initial} lead to the following system for the mean value $ U_n^* $:
\begin{equation}\label{eq systeme moyenne}
	\color{altpink}\begin{cases}
		L(0,\partial_z)\,U_n^*=\partial_{z,\theta}\,\mbox{\emph{terms in}}\,\big(U_1,\dots,U_{n-1},(I-P)\,U_n\big)\\[5pt]
		B\,\big(U_n^*\big)_{|x_d=0}=\indicatrice_{n=3}\,H_0-B\,\big(U^{n,\nc}_{0}\big)_{|x_d,\chi_d=0}\\[5pt]
		B\,\big(U_n^*\big)_{|t\leq0}=0,
	\end{cases}
\end{equation}
where $ U^{n,\nc}_0 $ refers to the sum of all terms of $ U^{n,\nc} $ of which the trace on the boundary is of zero frequency.

For a frequency $ \zeta=\freq\in\F_{b}\privede{0,\phi,\psi,\nu} $, $ \mathbf{n}\in\B_{\Z^2} $, $ j\in\mathcal{C}(\zeta) $ and $ \lambda\in\Z^* $, multiplying equation \eqref{eq cascade int Un} by $ \ell_{\zeta,j} $ leads to, since all harmonics $ \sigma^1_{\zeta,j,\lambda'} $, $ \lambda\in\Z^* $ are zero and since $ U_1 $ is polarized and of zero mean value, 
\begin{subequations}\label{eq systeme sigma hors cas part}
\begin{align}
	\color{altpurple}X_{\alpha_j(\zeta)}\,\sigma^n_{\zeta,j,\lambda}&\color{altpurple}=\partial_{z,\theta}\,\mbox{\emph{terms in}}\,\big(U_1,\dots,U_{n-1},(I-P)\,U_n,U_n^*\big)\\[5pt]\color{altpurple}
	\big(\sigma^n_{\zeta,j,\lambda}\big)_{|t\leq 0}&\color{altpurple}= 0,
\intertext{\color{black}with initial condition \eqref{eq cascade initial}, and, if $ j\in\mathcal{I}(\zeta) $, boundary condition \eqref{eq cascade bord U3n} gives, since $ \zeta\in\F_{b}\setminus\Upsilon $,}
\color{altpurple}
	\big(\sigma^n_{\zeta,j,\lambda}\big)_{|x_d=0}&\color{altpurple}=-\ell_{\zeta,j}\,A_d(0)\,\big(E_-(\zeta)\big)^{-1}\,B\,\big(U^{n,\nc}_{\zeta,\lambda}\big)_{|x_d,\chi_d=0}.
\end{align}
\end{subequations}
\color{black}

Finally, for $ \zeta=\freq\in\ensemble{\phi,\psi,\nu} $, $ j\in\mathcal{C}(\zeta) $ and $ \lambda\in\Z^* $, multiplying equation \eqref{eq cascade int Un} by $ \ell_{\zeta,j} $ gives, with the same arguments,
\begin{subequations}\label{eq systeme sigma}
\begin{align}\label{eq systeme sigma evol}
	&\color{altblue}\quad X_{\zeta,j}\,\sigma^n_{\zeta,j,\lambda}+D_{\zeta,j}\sum_{\lambda_1+\lambda_2=\lambda}i\lambda\,\sigma_{\zeta,j,\lambda_1}^n\,\sigma_{\zeta,j,\lambda_2}^1+\indicatrice_{\lambda=k\lambda_{\zeta}}\sum_{\substack{(\zeta_1,\zeta_2,j_1,j_2)\\\in\mathcal{R}(\zeta,j)}}J^{\zeta_2,j_2}_{\zeta_1,j_1}\,ik\,\sigma^1_{\zeta_1,j_1,-k\lambda_{\zeta_1}}\,\sigma^n_{\zeta_2,j_2,-k\lambda_{\zeta_2}}\\\nonumber
	&\color{altblue}=\partial_{z,\theta}\,\mbox{\emph{terms in}}\,\big(U_1,\dots,U_{n-1},(I-P)\,U_n,U_n^*\big),
\end{align}
\color{black}
where the notation are defined by \eqref{eq prof princ def X D J}. 
Equation \eqref{eq systeme sigma evol} is coupled with the following initial condition,
\begin{equation}\label{eq systeme sigma init}
	\color{altblue}\big(\sigma^n_{\zeta,j,\lambda}\big)_{|t\leq 0}=0.
\end{equation}
\end{subequations}
It remains to determine the traces on the boundary of the corresponding incoming frequencies. The trace of the amplitudes associated with the boundary phase $ \phi $ on the boundary is given by, for $ j=1,3 $,
\begin{subequations}\label{eq systeme sigma bord}
\begin{equation}\label{eq systeme sigma bord phi}
	\color{altred}\big(\sigma_{\phi,j,\lambda}^n\big)_{|x_d=0}\,r_{\phi,j}=a_{\phi,\lambda}^n\,e_{\phi,j}
	+\tilde{F}^n_{\phi,j,\lambda},
\end{equation}
\color{black}
with $ a^n_{\phi,\lambda} $ a scalar function defined on $ \omega_T $ and
\begin{align*}
	\tilde{F}^n_{\phi,j,\lambda}&:=-\ell_{\phi,j}\cdot A_d(0)\,\big(B_{|E_-(\phi)}\big)^{-1}\Big(B\,(I-P_{\phi,1})\,\big(U^{n,\osc}_{\phi,1,\lambda}\big)_{|x_d,\chi_d=0}\\
	&\quad+B\,(I-P_{\phi,3})\,\big(U^{n,\osc}_{\phi,3,\lambda}\big)_{|x_d,\chi_d=0}+B\,(I-P_{\phi,2})\,\big(U^{n,\osc}_{\phi,2,\lambda}\big)_{|x_d,\chi_d=0}\\
	&\quad+B\,\big(\sigma_{\phi,2,\lambda}^n\big)_{|x_d=0}\,r_{\phi,2}+B\,\big(U^{n,\nc}_{\phi,\lambda}\big)_{|x_d,\chi_d=0}\Big)\,r_{\phi,j}.
\end{align*}
For $ \psi $ we have, for $ j=1,3 $, 
\begin{align}\label{eq systeme sigma bord psi}
\color{altorange2}	\big(\sigma_{\psi,j,\lambda}^n\big)_{|x_d=0}\,r_{\psi,j}=a_{\psi,\lambda}^n\,e_{\psi,j}
+\tilde{F}^n_{\psi,j,\lambda},
\end{align}
\color{black}
with $ a^n_{\psi,\lambda} $ a scalar function of $ \omega_T $ and
\begin{align*}
	\tilde{F}^n_{\psi,j,\lambda}&:=-\ell_{\psi,j}\cdot A_d(0)\,\big(B_{|E_-(\psi)}\big)^{-1}\Big(-\indicatrice_{n=3}\,H_{\lambda}+B\,(I-P_{\psi,1})\,\big(U^{n,\osc}_{\psi,1,\lambda}\big)_{|x_d,\chi_d=0}\\
	&\quad+B\,(I-P_{\psi,3})\,\big(U^{n,\osc}_{\psi,3,\lambda}\big)_{|x_d,\chi_d=0}+B\,(I-P_{\psi,2})\,\big(U^{n,\osc}_{\psi,2,\lambda}\big)_{|x_d,\chi_d=0}\\
	&\quad+B\,\big(\sigma_{\psi,2,\lambda}^n\big)_{|x_d=0}\,r_{\psi,2}+B\,\big(U^{n,\nc}_{\psi,\lambda}\big)_{|x_d,\chi_d=0}\Big)\,r_{\psi,j}.
\end{align*}
Note that $ \tilde{F}^n_{\phi,j,\lambda} $ and $ \tilde{F}^n_{\psi,j,\lambda} $, for $ j=1,3 $ and $ \lambda\in\Z^* $ depends respectively on the traces $ \big(\sigma_{\phi,2,\lambda}^n\big)_{|x_d=0} $ and $ \big(\sigma_{\psi,2,\lambda}^n\big)_{|x_d=0} $.
Finally, for $ \nu $,we have for $ j=1,3 $, 
\begin{align}\label{eq systeme sigma bord nu}
	\color{altgreen}\big(\sigma_{\nu,j,\lambda}^n\big)_{|x_d=0}\,r_{\nu,j}=\mu_{\nu,j}\,\big(\sigma_{\nu,2,\lambda}^n\big)_{|x_d=0}\,r_{\nu,j}+\tilde{F}^n_{\nu,j,\lambda},
\end{align}
\color{black}
with
\begin{multline*}
	\tilde{F}^n_{\nu,j,\lambda}:=-\ell_{\nu,j}\cdot A_d(0)\,\big(B_{|E_-(\nu)}\big)^{-1}\Big(B\,(I-P_{\nu,1})\,\big(U^n_{\nu,1,\lambda}\big)_{|x_d,\chi_d=0}\\
	+B\,(I-P_{\nu,3})\,\big(U^n_{\nu,3,\lambda}\big)_{|x_d,\chi_d=0}+B\,(I-P_{\nu,2})\,\big(U^n_{\nu,2,\lambda}\big)_{|x_d,\chi_d=0}+B\,\big(U^{n,\nc}_{\nu,\lambda}\big)_{|x_d,\chi_d=0}\Big)\,r_{\nu,j}.
\end{multline*}
\end{subequations}
Coefficients $ \mu_{\nu,j} $ have been introduced in \eqref{eq prof princ def mu nu}.

Scalar functions $ a^{n}_{\phi,\lambda} $ and $ a^n_{\psi,\lambda} $ satisfy  the following equations, which are derived using boundary condition \eqref{eq cascade bord U3n} and formulas \eqref{eq ncor expr U n nc} and \eqref{eq ncor nonpola part U n zeta},
\begin{subequations}\label{eq systeme a}
\begin{align}\label{eq systeme a phi}
	&\color{altred} X^{\Lop}_{\phi}\,a_{\phi,\lambda}^n+D^{\Lop}_{\phi}\,i\lambda\!\sum_{\lambda_1+\lambda_2=\lambda}\!a_{\phi,\lambda_1}^1\,a_{\phi,\lambda_2}^n+i\lambda\!\sum_{\lambda_1+\lambda_2=\lambda}\!\gamma_{\phi}(\lambda_1,\lambda_2)\big(a_{\phi,\lambda_1}^1\,a_{\phi,\lambda_2}^n+a_{\phi,\lambda_1}^n\,a_{\phi,\lambda_2}^1\big)\\\nonumber
	&\color{altred}+\indicatrice_{\lambda=k\lambda_{\phi}}\,\Gamma^{\phi}\,ik\,\Big\lbrace\big(\sigma_{\nu,2,-k}^1\big)_{|x_d=0}\,a_{\psi,-k\lambda_{\psi}}^n+\big(\sigma_{\nu,2,-k}^n\big)_{|x_d=0}\,a_{\psi,-k\lambda_{\psi}}^1\Big\rbrace\\[5pt]\nonumber
	&\color{altred}+\partial_{z,\theta}\,\mbox{\emph{terms in}}\,\Big[\tilde{F}^n_{\phi},\big(\tilde{F}^n_{\phi},a^1_{\phi}\big),\big(\tilde{F}^n_{\psi},(\sigma^1_{\nu,2})_{|x_d=0}\big),\big((\sigma^n_{\phi})_{|x_d=0},a^1_{\phi}\big),\big((\sigma^n_{\psi})_{|x_d=0},(\sigma^1_{\nu,2})_{|x_d=0}\big)\Big]
\\[5pt]\nonumber
	&\color{altred}=i\lambda\,b_{\phi}\cdot B\,\big(U^{n+1,\osc}_{\phi,2,\lambda}\big)_{|x_d,\chi_d=0}\\\nonumber
	&\color{altred}+\partial_{z,\theta}\,\mbox{\emph{terms in}}\,\big(U_1,\dots,U_{n-1},(I-P)\,U_n,(P\,U_n)_{\zeta\neq \phi,\psi,\nu},U_n^*\big)_{|x_d,\chi_d=0},
\end{align}
\color{black}
and
\begin{align}\label{eq systeme a psi}
	&\color{altorange2}X^{\Lop}_{\psi}\,a^n_{\psi,\lambda}
	+D^{\Lop}_{\psi}\,i\lambda\!\sum_{\lambda_1+\lambda_2=\lambda}\!a^1_{\psi,\lambda_1}\,a^n_{\psi,\lambda_2}
	+i\lambda\!\sum_{\lambda_1+\lambda_2=\lambda}\!\gamma_{\psi}(\lambda_1,\lambda_2)\big(a_{\psi,\lambda_1}^1\,a_{\psi,\lambda_2}^n+a_{\psi,\lambda_1}^n\,a_{\psi,\lambda_2}^1\big)\\\nonumber
	%
	%
	&\color{altorange2}+\indicatrice_{\lambda=k\lambda_{\psi}}\,\Gamma^{\psi}\,ik\,\big\lbrace\big(\sigma_{\nu,2,k}^1\big)_{|x_d=0}\,a^n_{\phi,-\lambda_{\phi}k}+\big(\sigma_{\nu,2,k}^n\big)_{|x_d=0}\,a^1_{\phi,-\lambda_{\phi}k}\big\rbrace\\\nonumber
	&\color{altorange2}+\partial_{z,\theta}\,\mbox{\emph{terms in}}\,\Big[\tilde{F}^n_{\psi},\big(\tilde{F}^n_{\psi},a^1_{\psi}\big),\big(\tilde{F}^n_{\phi},(\sigma^1_{\nu,2})_{|x_d=0}\big),\big((\sigma^n_{\psi})_{|x_d=0},a^1_{\psi}\big),\big((\sigma^n_{\phi})_{|x_d=0},(\sigma^1_{\nu,2})_{|x_d=0}\big)\Big]
	\\[5pt]\nonumber
	&\color{altorange2}=i\lambda\,b_{\psi}\cdot B\,\big(U^{n+1,\osc}_{\psi,2,\lambda}\big)_{|x_d,\chi_d=0}-\indicatrice_{n=3}\,b_{\psi}\cdot H_{\lambda}\\\nonumber
	&\color{altorange2}+\partial_{z,\theta}\,\mbox{\emph{terms in}}\,\big(U_1,\dots,U_{n-1},(I-P)\,U_n,(P\,U_n)_{\zeta\neq \phi,\psi,\nu},U_n^*\big)_{|x_d,\chi_d=0},
\end{align}
\color{black}
where the notation have been defined in \eqref{eq 1cor eq X lop v phi etc} and \eqref{eq 1cor eq X lop v psi etc}.
These two equations come with the following initial conditions
\begin{equation}\label{eq systeme init a phi psi}
	\color{altred}\big(a^n_{\phi,\lambda}\big)_{|t\leq 0}=0,\qquad \color{altorange2} \big(a^n_{\psi,\lambda}\big)_{|t\leq 0}=0.
\end{equation}
\end{subequations}

The system of equations \eqref{eq systeme moyenne}, \eqref{eq systeme sigma hors cas part}, \eqref{eq systeme sigma}, \eqref{eq systeme sigma bord} and \eqref{eq systeme a} is highly coupled. In all equations for the corrector of order $ n $, there are terms depending on $ U_1,\dots,U_{n-1},(I-P)\,U_n,(P\,U_n)_{\zeta\neq \phi,\psi,\nu},U_n^* $, but this is not a big issue, since, if the lower order correctors $ U_1,\dots,\linebreak U_{n-1} $ are constructed, $ (I-P)\,U_n,(P\,U_n)_{\zeta\neq \phi,\psi,\nu},U_n^* $ can be determined with \eqref{eq ncor nonpola part U n zeta}, \eqref{eq systeme moyenne} and \eqref{eq systeme sigma hors cas part}. The terms inducing coupling which seem the most problematic are the terms $ \lambda\,b_{\psi}\cdot B\,\big(U^{n+1,\osc}_{\phi,2,\lambda}\big)_{|x_d,\chi_d=0} $ and $ \lambda\,b_{\psi}\cdot B\,\big(U^{n+1,\osc}_{\psi,2,\lambda}\big)_{|x_d,\chi_d=0} $ in \eqref{eq systeme a phi} and \eqref{eq systeme a psi} which couple evolution equations for $ a^n_{\phi,\lambda} $ and $ a^n_{\psi,\lambda} $ (and therefore evolution equations for the corrector $ U^n $ of order $ n $), with the corrector of one order higher, $ U^{n+1} $. In equations \eqref{eq systeme a} there are also traces of profiles of order $ n $, which prevents to solve this equations (having determined lower order correctors) before solving the evolution equations for $ U^n $.

In addition to being highly coupled, system of equations \eqref{eq systeme moyenne}, \eqref{eq systeme sigma hors cas part}, \eqref{eq systeme sigma}, \eqref{eq systeme sigma bord} and \eqref{eq systeme a} seems also over-determined. Indeed, condition \eqref{eq prof princ cond psi 2 zero} imposing that the \emph{outgoing} leading profile $ \sigma^1_{\psi,2,\lambda} $ is of zero trace on the boundary gives one more boundary condition than the two boundary conditions prescribed by the structure of the problem. Therefore, this is not clear at all that the system of equations \eqref{eq systeme moyenne}, \eqref{eq systeme sigma hors cas part}, \eqref{eq systeme sigma}, \eqref{eq systeme sigma bord} and \eqref{eq systeme a} admits a solution satisfying the additional condition \eqref{eq prof princ cond psi 2 zero}.

\section{Existence of an analytic solution}\label{section existence}

In this section we focus on the well-posedness of \eqref{eq systeme moyenne}, \eqref{eq systeme sigma hors cas part}, \eqref{eq systeme sigma}, \eqref{eq systeme sigma bord} and \eqref{eq systeme a}. Both because of the high coupling of the system, and the over-determination of it, we choose to concentrate on a simplified version of the general system and try to prove well-posedness for it. This simplified model should focus on the profiles associated with frequencies $ \phi_j $, $ \psi_j $ and $ \nu_j $, because on one hand it greatly reduces the number of equations, and therefore complexity of the system and of the functional framework, and on the other hand because it seems that, due to amplification and resonances, equations on the profiles associated with $ \phi_j $, $ \psi_j $ and $ \nu_j $ carry the main difficulties of system of equations \eqref{eq systeme moyenne}, \eqref{eq systeme sigma hors cas part}, \eqref{eq systeme sigma} and \eqref{eq systeme a}. Indeed, we already pointed out that if profiles $ \sigma^n_{\zeta,j,\lambda} $ for $ n\geq 1 $, $ \zeta=\phi,\psi,\nu $, $ j=1,2,3 $ and $ \lambda\in\Z^* $ are determined, system \eqref{eq systeme moyenne}-\eqref{eq systeme sigma hors cas part} becomes upper triangular, and could be studied in a rather classical way, see for example \cite{Kilque2021Weakly}. Since we wish to study simplified versions of the system \eqref{eq systeme moyenne}, \eqref{eq systeme sigma hors cas part}, \eqref{eq systeme sigma}, \eqref{eq systeme sigma bord} and \eqref{eq systeme a} in an analytical setting, the initial conditions in \eqref{eq systeme moyenne}, \eqref{eq systeme sigma hors cas part}, \eqref{eq systeme sigma} and \eqref{eq systeme a}, requiring that the profiles $ \sigma_{\zeta,j,\lambda} $ and their boundary terms $ a_{\zeta,\lambda} $ are zero for negative times $ t $ are not suited for analytic functions, since it would imply that these profiles and boundary terms are zero everywhere. Therefore, in the simplified models, we modify, in a non equivalent way, these boundary conditions into conditions requiring that the solutions are zero at $ t=0 $, which are now adapted for analytic functions. 

We start by describing a first simplified model, very simple, which concentrate on boundary equations, and detail the functional framework which will be used to prove well-posedness of it, and proceed with the proof. Then we describe a second simplified model, more elaborate, which incorporates interior (incoming) equations, introduce additional functional framework , make some specifications on the simplified model with regard to the functional framework, and state the main result, before proceeding by proving it.

\subsection{First simplified model}

For the first simplified model, we focus only on boundary equations \eqref{eq systeme a}. In these equations, terms 
\begin{equation*}
	X^{\Lop}_{\phi}\,a_{\phi,\lambda}^n,\quad D^{\Lop}_{\phi}\,i\lambda\!\sum_{\lambda_1+\lambda_2=\lambda}\!a_{\phi,\lambda_1}^1\,a_{\phi,\lambda_2}^n\quad \text{and}\quad  i\lambda\!\sum_{\lambda_1+\lambda_2=\lambda}\!\gamma_{\phi}(\lambda_1,\lambda_2)\big(a_{\phi,\lambda_1}^1\,a_{\phi,\lambda_2}^n+a_{\phi,\lambda_1}^n\,a_{\phi,\lambda_2}^1\big)
\end{equation*}
as well as the analogous ones for $ \psi $ appear in the chosen first simplified model, and the last one is rewritten as a semilinear term. On the contrary, terms
\begin{align*}
	&\indicatrice_{\lambda=k\lambda_{\phi}}\,\Gamma^{\phi}\,ik\,\Big\lbrace\big(\sigma_{\nu,2,-k}^1\big)_{|x_d=0}\,a_{\psi,-k\lambda_{\psi}}^n+\big(\sigma_{\nu,2,-k}^n\big)_{|x_d=0}\,a_{\psi,-k\lambda_{\psi}}^1\Big\rbrace,\\[5pt]
	&\partial_{z,\theta}\,\mbox{\emph{terms in}}\,\Big[\tilde{F}^n_{\phi},\big(\tilde{F}^n_{\phi},a^1_{\phi}\big),\big(\tilde{F}^n_{\psi},(\sigma^1_{\nu,2})_{|x_d=0}\big),\big((\sigma^n_{\phi})_{|x_d=0},a^1_{\phi}\big),\big((\sigma^n_{\psi})_{|x_d=0},(\sigma^1_{\nu,2})_{|x_d=0}\big)\Big]
\end{align*}
and the analogous ones for $ \psi $ are removed in the simplified model, since they involved traces of outgoing interior profiles (recall that $ \tilde{F}^n_{\phi,j,\lambda} $ and $ \tilde{F}^n_{\psi,j,\lambda} $ depend respectively on $ \big(\sigma^n_{\phi,j,\lambda}\big)_{|x_d=0} $ and  $ \big(\sigma^n_{\psi,j,\lambda}\big)_{|x_d=0} $). For the same reasons, terms 
\begin{equation*}
	i\lambda\,b_{\phi}\cdot B\,\big(U^{n+1,\osc}_{\phi,2,\lambda}\big)_{|x_d,\chi_d=0}\quad \text{and}\quad i\lambda\,b_{\psi}\cdot B\,\big(U^{n+1,\osc}_{\psi,2,\lambda}\big)_{|x_d,\chi_d=0}
\end{equation*}
are not kept. Finally, source terms 
\begin{equation*}
	\partial_{z,\theta}\,\mbox{\emph{terms in}}\,\big(U_1,\dots,U_{n-1},(I-P)\,U_n,(P\,U_n)_{\zeta\neq \phi,\psi,\nu},U_n^*\big)_{|x_d,\chi_d=0}
\end{equation*}
involves quadratic terms in the traces of profiles $ \sigma^k_{\zeta,j,\lambda} $ for $ 1\leq k\leq n $, $ \zeta\in\F_{b}\privede{0} $, $ j\in\mathcal{C}(\zeta) $ and $ \lambda\in\Z^* $, with possibly derivatives of order up to $ n $ in front of it. They are simplified in three ways: we keep only traces of profiles associated with boundary frequencies $ \phi $, $ \psi $ and $ \nu $, we express traces only through functions $ a^k_{\zeta,\lambda} $ for $ \zeta=\phi,\psi $, and we choose only first order derivatives in $ \Theta $ (but we shall see in the following that considering derivatives in $ y $ would present no additional difficulty). Finally, boundary terms $ G $ and $ H $ are represented by functions $ H^n_{\zeta} $, belonging to a space specified later on. Multiplying equations \eqref{eq systeme a} by $ e^{i\lambda\Theta} $ for $ \Theta\in\T $ a periodic variable therefore leads to the following simplified model amplitude equations
\begin{subequations}\label{eq ana1 a}
	\begin{align}\label{eq ana1 a phi} 
		\color{altred}X^{\Lop}_{\phi}\,a_{\phi}^n+D^{\Lop}_{\phi}\,\partial_{\Theta}\big(a^1_{\phi}\,a^n_{\phi}\big)
		+\mathbf{w}_{\phi}\,\mathbb{F}_{\phi}^{\per}\big[\partial_{\Theta}\,a^1_{\phi},\partial_{\Theta}\,a^n_{\phi}\big]\color{altred}&\color{altred}=H^n_{\phi}+K^{\Lop}_{\phi}\sum_{k=1}^{n-1}\partial_{\Theta}\big(a^k_{\phi}\,a^{n-k}_{\psi}\big),\\\label{eq ana1 a psi}
		\color{altorange2}X^{\Lop}_{\psi}\,a_{\psi}^n+D^{\Lop}_{\psi}\,\partial_{\Theta}\big(a^1_{\psi}\,a^n_{\psi}\big)
		+\mathbf{w}_{\psi}\,\mathbb{F}_{\psi}^{\per}\big[\partial_{\Theta}\,a^1_{\psi},\partial_{\Theta}\,a^n_{\psi}\big]&\color{altorange2}=H^n_{\psi}+K^{\Lop}_{\psi}\sum_{k=1}^{n-1}\partial_{\Theta}\big(a^k_{\phi}\,a^{n-k}_{\psi}\big),
	\end{align}
\end{subequations}
where, for $ \zeta=\phi,\psi $, we have denoted by $ a^n_{\zeta} $ the function of $ \omega_T\times\T $ defined as
\begin{equation*}
	a^n_{\zeta}(z,\Theta):=\sum_{\lambda\in\Z^*}a^n_{\zeta,\lambda}(z)\,e^{i\lambda\,\Theta},
\end{equation*}
where, for $ \zeta=\phi,\psi $, the bilinear operator $ \mathbb{F}^{\per}_{\zeta} $ is defined as 
\begin{equation}\label{eq ana1 def F per}
	\mathbb{F}^{\per}_{\zeta}\big[a,b\big]:= \sum_{\lambda\in\Z^*}\sum_{\substack{\lambda_1+\lambda_3=\lambda\\\lambda_1\lambda_3\neq0}}\frac{i\,a_{\lambda_1}\,b_{\lambda_3}}{\lambda_1\,\delta^1_{\zeta}+\lambda_3\,\delta^3_{\zeta}}\,e^{i\lambda\,\Theta},
\end{equation}
with $ \delta^1_{\zeta} $ and $ \delta^3_{\zeta} $ scalars defined as
\begin{equation*}
	\delta^1_{\zeta}:=\frac{\xi_1(\zeta)-\xi_2(\zeta)}{\xi_3(\zeta)-\xi_1(\zeta)},\qquad \delta^3_{\zeta}:=\frac{\xi_3(\zeta)-\xi_2(\zeta)}{\xi_3(\zeta)-\xi_1(\zeta)},
\end{equation*}
and where $ \mathbf{w}_{\phi},\mathbf{w}_{\psi},K^{\Lop}_{\phi},K^{\Lop}_{\psi}\in\R $.
Here we have used analysis of \cite[Section 3.1]{CoulombelWilliams2017Mach} to rewrite terms involving the $ \gamma_{\zeta}(\lambda_1,\lambda_3) $ coefficients in \eqref{eq systeme a} as $ \mathbf{w}_{\phi}\,\mathbb{F}_{\phi}^{\per}\big[\partial_{\Theta}\,a^1_{\phi},\partial_{\Theta}\,a^n_{\phi}\big] $, up to changing definition of the coefficients $ D^{\Lop}_{\zeta} $. Note that since $ \phi $ and $ \psi $ are nonresonant, the denominators in equation \eqref{eq ana1 def F per} defining $ \mathbb{F}^{\per}_{\zeta} $ are nonzero. Up to changing all notation by a harmless multiplicative constant, we can assume that, for $ \zeta=\phi,\psi $, vector fields $ X_{\zeta}^{\Lop} $ read
\begin{equation}\label{eq ana1 def X Lop}
	X_{\zeta}^{\Lop}=\partial_t-\mathbf{v}^{\Lop}_{\zeta}\cdot \nabla_y,
\end{equation}
with $ \mathbf{v}^{\Lop}_{\zeta}\in\R^{d-1} $.
Equations \eqref{eq ana1 a} are coupled with the initial condi\-tions
\begin{equation}\label{eq ana1 a init}
	\color{altred}\big(a_{\phi}^n\big)_{|t=0}=0,\qquad \color{altorange2}\big(a_{\psi}^n\big)_{|t=0}=0.
\end{equation}
Again, these initial conditions \eqref{eq ana1 a init} are not the same as \eqref{eq systeme init a phi psi}, and are written in this (non equivalent) form to be suited for the analytical framework. Note that equations \eqref{eq ana1 a} are quasilinear for $ a^1_{\phi},a^1_{\psi} $ when $ n=1 $, and linear for $ a^n_{\phi},a^n_{\psi} $ when $ n\geq 2 $. As we will prove later that terms $ \mathbb{F}_{\phi}^{\per}\big[\partial_{\Theta}\,a^1_{\phi},\partial_{\Theta}\,a^n_{\phi}\big] $ and $ \mathbb{F}_{\psi}^{\per}\big[\partial_{\Theta}\,a^1_{\psi},\partial_{\Theta}\,a^n_{\psi}\big] $ are semilinear, equations \eqref{eq ana1 a} are transport equations, with a Burgers type term (when $ n=1 $), a semilinear term, a source term and a convolution type term. System of equations \eqref{eq ana1 a}, \eqref{eq ana1 a init} is a simplification of system \eqref{eq systeme a} for which we propose to set up the analytical tools to solve it.

\bigskip

The aim is to solve system \eqref{eq ana1 a}-\eqref{eq ana1 a init} with the following Cauchy-Kovalevskaya theorem. First proofs of this kind of result are due to \cite{Nirenberg1972CauchyKowalewski} and then \cite{Nishida1977Nirenberg}, and the proof of the following formulation goes back to \cite{BaouendiGoulaouic1978Nishida}.

\begin{theorem}[{\cite{BaouendiGoulaouic1978Nishida}}]\label{theorem Cauchy-Kovalevskaya}
	Let $ (B_r)_{r_0\leq r\leq r_1} $ be a decreasing sequence of Banach spaces (with $ 0\leq r_0\leq r_1\leq 1 $), i.e. such that, for $ r_0\leq r'<r\leq r_1 $, \begin{equation*}
		B_r\subset B_{r'},\quad \norme{.}_{r'}\leq \norme{.}_{r}.
	\end{equation*}
	Let $ T,R,C $ and $ M $ be positive real numbers, and consider a continuous function $ F $ from $ [-T,T]\times\ensemble{u\in B_r\,\middle|\,\norme{u}_r<R} $ to $ B_{r'} $ for every $ r_0\leq r'<r\leq r_1 $ which satisfies
	\begin{equation}\label{eq ana hyp F 1}
		\sup_{|t|\leq T}\norme{F(t,u)-F(t,v)}_{r'}\leq \frac{C}{r-r'}\,\norme{u-v}_r
	\end{equation}
	for all $ r_0\leq r'<r\leq r_1 $, $ |t|<T $, and for all $ u,v $ in $ B_r $ such that $ \norme{u}_r\leq R $, $ \norme{v}_r\leq R $, and 
	\begin{equation}\label{eq ana hyp F 2}
		\sup_{|t|\leq T}\norme{F(t,0)}_r\leq \frac{M}{r_1-r},
	\end{equation}
	for every $ r_0\leq r<r_1 $.
	
	Then there exists a real number $ \delta $ in $ (0,T) $ and a unique function $ u $, belonging to $ \mathcal{C}^1\big((-\delta(r_1-r),\delta(r_1-r)), B_r\big) $ for every $ r_0\leq r\leq r_1 $, satisfying
	\begin{equation*}
		\sup_{|t|<\delta(r_1-r)}\norme{u(t)}_r<R,
	\end{equation*}
	and the system
	\begin{equation*}
		\begin{cases}
			u'(t)=F\big(t,u(t)\big) & \mbox{for } |t|<\rho(r_1-r)\\
			u(0)=0.
		\end{cases}
	\end{equation*}
\end{theorem}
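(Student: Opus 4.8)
The final statement is the abstract Cauchy-Kovalevskaya theorem of \cite{BaouendiGoulaouic1978Nishida}, so it is quoted, not re-proven. The proof I would sketch is the classical majorant-series argument adapted to the scale of Banach spaces $(B_r)$, which is the one actually found in the references. Here is the plan.

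\textbf{Setup and strategy.} The plan is to construct the solution as a Picard-type iteration $u_0 \equiv 0$, $u_{k+1}(t) := \int_0^t F(s, u_k(s))\,\diff s$, and to prove that on a suitably shrunk time interval — one whose length depends \emph{linearly} on $r_1 - r$, which is the whole point — this sequence converges in $\mathcal{C}^1\big((-\delta(r_1-r),\delta(r_1-r)),B_r\big)$ for every $r \in [r_0,r_1]$. The key quantitative device is to look for a bound of the form $\norme{u_{k+1}(t) - u_k(t)}_r \leq$ (something involving $\frac{|t|}{r-r(k)}$ for an auxiliary decreasing sequence of radii), and to absorb the loss-of-derivative factor $\frac{1}{r-r'}$ coming from hypothesis \eqref{eq ana hyp F 1} against the shrinking of the radius at each step. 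Concretely, one fixes $r < r_1$, introduces intermediate radii $r = \rho_0 < \rho_1 < \cdots$ accumulating below $r_1$ with $\rho_{k+1} - \rho_k \sim (r_1 - r)/k^2$ (so that $\sum (\rho_{k+1}-\rho_k) < r_1 - r$), and estimates $F(s,u_k(s)) - F(s,u_{k-1}(s))$ in $B_{\rho_k}$ using \eqref{eq ana hyp F 1} with the pair $(\rho_{k+1},\rho_k)$.

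\textbf{The core estimate.} First I would establish, by induction on $k$, a bound of the shape
\begin{equation*}
	\sup_{|t| < a(r_1 - \rho_k)} \norme{u_{k+1}(t) - u_k(t)}_{\rho_k} \leq R\, \Big(\frac{b\,|t|}{(r_1-\rho_k)}\Big)^{?}
\end{equation*}
— the exact exponent and constants being the routine part I would not grind through — where the parameters $a,b$ are chosen small depending only on $C$, $M$, $R$, $r_1 - r_0$. The base case $k=0$ uses \eqref{eq ana hyp F 2} to bound $\norme{u_1(t)}_{r} = \norme{\int_0^t F(s,0)\,\diff s}_r \leq |t|\, M/(r_1 - r)$, which is $< R$ once $|t| < \delta(r_1-r)$ with $\delta$ small. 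The inductive step writes $u_{k+1}(t) - u_k(t) = \int_0^t [F(s,u_k(s)) - F(s,u_{k-1}(s))]\,\diff s$, applies \eqref{eq ana hyp F 1} between radii $\rho_k$ and $\rho_{k-1}$ (giving a factor $C/(\rho_k - \rho_{k-1}) \sim C k^2 /(r_1-r)$), and uses the induction hypothesis; the combinatorial factors $k^2$ are defeated by the geometric-type decay coming from the time integration, which is where choosing $\delta$ small enough — independently of $k$ — closes the argument. Summing the telescoping series gives a Cauchy sequence in each $B_r$, $|t| < \delta(r_1-r)$, hence a limit $u(t) \in B_r$ with $\norme{u(t)}_r < R$; passing to the limit in the integral equation and differentiating shows $u \in \mathcal{C}^1$ and solves $u' = F(t,u)$, $u(0) = 0$.

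\textbf{Uniqueness and the main obstacle.} Uniqueness follows from the same contraction estimate: if $u, \tilde u$ are two solutions valid on $|t| < \delta(r_1 - r)$, then $w := u - \tilde u$ satisfies $w(t) = \int_0^t [F(s,u) - F(s,\tilde u)]\,\diff s$, and feeding this into \eqref{eq ana hyp F 1} along a telescoping chain of radii forces $w \equiv 0$ on a (possibly smaller) interval, then by continuation on the whole interval. The main obstacle — and the only genuinely delicate point — is the bookkeeping of the radii $\rho_k$ and the verification that the resulting series in $k$ converges with a radius of convergence in $|t|$ that is \emph{uniform in the choice of the target radius $r$} and scales like $r_1 - r$; this is exactly the mechanism by which the $\frac{1}{r-r'}$ singularity in the hypothesis is traded for analyticity (finite radius) rather than a genuine loss, and it is what distinguishes this scaled-Banach-space statement from a naive fixed-point argument. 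Since this is precisely Theorem~3.1 of \cite{BaouendiGoulaouic1978Nishida} (building on \cite{Nirenberg1972CauchyKowalewski,Nishida1977Nirenberg}), I would simply cite it rather than reproduce the calculation.
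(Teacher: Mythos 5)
Your proposal is correct and matches the paper's treatment: the paper states this theorem with a citation to \cite{BaouendiGoulaouic1978Nishida} and does not reprove it, and you do the same. Your sketch of the underlying Picard-iteration-with-shrinking-radii argument is a faithful outline of the classical mechanism, but since neither you nor the paper supplies a full proof, the comparison reduces to the observation that both defer to the cited reference.
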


We therefore need to define a chain of Banach spaces of analytic functions adapted to our problem \eqref{eq ana1 a}.

\subsection{Functional framework}

\subsubsection{Functional spaces}

For a function $ u $ of $ L^2(\R^{d-1}) $, the symbol $ \hat{u} $ refers to the Fourier transform of $ u $, with the following convention
\begin{equation*}
	\hat{u}(\xi):=\int_{\R^{d-1}}u(y)\,e^{-i\,\xi\cdot y}\,dy, \quad \forall \xi\in\R^{d-1}.
\end{equation*}
For a complex vector $ X $, notation $ |X| $ refers to the norm $ \sqrt{X\cdot X^*} $, and we denote by $ \prodscalbis{.} $ the Japanese bracket, that is, for a complex vector $ X $, 
\begin{equation*}
	\prodscalbis{X}:=\big(1+|X|^2\big)^{1/2}.
\end{equation*}
We set $ d^* $ to be an integer such that $ d^*>\tilde{m}_0+2+(d+1)/2 $, where $ \tilde{m}_0 $ is the nonnegative real number of Lemma \ref{lemma hyp petit diviseurs}. The following definition quantifies analyticity by means of an exponential decay of the Fourier transform.

\begin{definition}
	For $ s\in(0,1) $, the space $ Y_{s} $ is defined as the space of all functions $ u $ of $ L^2(\R^{d-1}\times\T) $ such that, if their Fourier series expansion in $ \Theta $ reads
	\begin{equation*}
		u(y,\Theta)=\sum_{\lambda\in\Z}u_{\lambda}(y)\,e^{i\lambda\Theta},
	\end{equation*}
	then
	\begin{equation*}
		\norme{u}^2_{Y_s}:=\int_{\R^{d-1}}\sum_{\lambda\in\Z}e^{2s|(\lambda,\xi)|}\prodscalbis{(\lambda,\xi)}^{2d^*}\big|\hat{u_{\lambda}}(\xi)\big|^2\,d\xi<+\infty.
	\end{equation*}
\end{definition}

The following results make precise how $ y $ and $ \Theta $ derivatives act on $ Y_s $, and assert that $ Y_s $ is a Banach algebra.

\begin{lemma}\label{lemma ana dy dtheta Y s}
	There exists $ C>0 $ such that, for $ 0\leq s'<s\leq 1 $, for $ u $ in $ Y_s $, functions $ \nabla_y\, u $ and $ \partial_{\Theta}u $ belong to $ Y_{s'} $, and we have
	\begin{equation}\label{eq ana est dt dtheta Ys}
		\norme{\nabla_y\, u }_{Y_{s'}'}\leq \frac{C}{s-s'}\norme{u}_{Y_s}\quad\text{and}\quad \norme{\partial_{\Theta}u}_{Y_{s'}}\leq \frac{C}{s-s'}\norme{u}_{Y_s}.
	\end{equation} 
\end{lemma}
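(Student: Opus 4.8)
The plan is to work directly on the Fourier side, where the space $Y_s$ is defined by an exponential weight $e^{2s|(\lambda,\xi)|}$ together with a polynomial weight $\prodscalbis{(\lambda,\xi)}^{2d^*}$. First I would observe that on the Fourier transform a $y$-derivative $\partial_{y_k}$ is multiplication by $i\xi_k$ and that $\partial_\Theta$ corresponds to multiplication by $i\lambda$; in both cases the multiplier is bounded in modulus by $|(\lambda,\xi)|$. So it suffices to control $|(\lambda,\xi)|\,e^{2s'|(\lambda,\xi)|}\prodscalbis{(\lambda,\xi)}^{2d^*}$ by $\tfrac{C}{(s-s')^2}\,e^{2s|(\lambda,\xi)|}\prodscalbis{(\lambda,\xi)}^{2d^*}$ pointwise in $(\lambda,\xi)$, or rather (taking square roots in the norm) the bound $|(\lambda,\xi)|\,e^{s'|(\lambda,\xi)|}\le \tfrac{C}{s-s'}\,e^{s|(\lambda,\xi)|}$.

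The key elementary inequality is then the following: for $x\ge 0$ and $0\le s'<s$,
\begin{equation*}
	x\,e^{s'x}\le x\,e^{-(s-s')x}\,e^{sx}\le \Big(\sup_{x\ge 0} x\,e^{-(s-s')x}\Big)e^{sx}=\frac{1}{e(s-s')}\,e^{sx},
\end{equation*}
since the function $x\mapsto x e^{-\alpha x}$ attains its maximum $1/(e\alpha)$ at $x=1/\alpha$. Applying this with $x=|(\lambda,\xi)|$ and $\alpha=s-s'$ gives, for $u\in Y_s$,
\begin{align*}
	\norme{\partial_\Theta u}^2_{Y_{s'}}&=\int_{\R^{d-1}}\sum_{\lambda\in\Z}e^{2s'|(\lambda,\xi)|}\prodscalbis{(\lambda,\xi)}^{2d^*}\lambda^2\,\big|\hat{u_\lambda}(\xi)\big|^2\,d\xi\\
	&\le \frac{1}{e^2(s-s')^2}\int_{\R^{d-1}}\sum_{\lambda\in\Z}e^{2s|(\lambda,\xi)|}\prodscalbis{(\lambda,\xi)}^{2d^*}\big|\hat{u_\lambda}(\xi)\big|^2\,d\xi=\frac{1}{e^2(s-s')^2}\norme{u}^2_{Y_s},
\end{align*}
and identically for each component of $\nabla_y u$ (using $|\xi_k|\le|(\lambda,\xi)|$), whence the stated estimates with $C=1/e$ (up to the harmless replacement of $(s-s')$ by $(s-s')$ versus $(s-s')^{1}$: note the claimed bound has a single power of $s-s'$, which is exactly what the inequality above produces). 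In particular $\partial_\Theta u$ and $\nabla_y u$ lie in $Y_{s'}$ because the right-hand side is finite.

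There is no real obstacle here: the only mildly delicate point is to make sure the polynomial weight $\prodscalbis{(\lambda,\xi)}^{2d^*}$ simply rides along unchanged (it does, since the multiplier $i\lambda$ or $i\xi_k$ is independent of it and we never need to trade polynomial for exponential decay), and to confirm that the loss is only one power of $s-s'$ rather than two — which is guaranteed by bounding $x\,e^{-(s-s')x}$ rather than, say, $x^2 e^{-(s-s')x}$. For the vector-valued statement one can either argue componentwise and sum, or note $|\nabla_y u|$-Fourier-multiplier is $|\xi|\le|(\lambda,\xi)|$ directly. This completes the proof.
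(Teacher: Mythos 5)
Your proof is correct and follows exactly the same route as the paper's: reduce to the pointwise inequality $x\,e^{s'x}\le \tfrac{1}{e(s-s')}e^{sx}$ (the paper phrases the squared version and quotes a slightly different constant, but it is the same elementary bound) and apply it with $x=|(\lambda,\xi)|$ after noting $|\lambda|,|\xi_k|\le|(\lambda,\xi)|$. No substantive difference; if anything you are a bit more careful than the paper in writing the bound directly in terms of $|(\lambda,\xi)|$ rather than $|\xi|$.
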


\begin{proof}
	We prove the estimate for $ \nabla_y\, u $, the one for $ \partial_{\Theta}u $ being similar. We have, by definition of the $ Y_{s'} $-norm,
	\begin{multline*}
		\norme{\nabla_y\, u}_{Y_{s'}}^2=\int_{\R^{d-1}}\sum_{\lambda\in\Z}e^{2s'|(\lambda,\xi)|}\prodscalbis{(\lambda,\xi)}^{2d^*}|\xi|^2\big|\hat{u_{\lambda}}(\xi)\big|^2\,d\xi\\
		\leq \frac{C^2}{(s-s')^2}\int_{\R^{d-1}}\sum_{\lambda\in\Z}e^{2s|(\lambda,\xi)|}\prodscalbis{(\lambda,\xi)}^{2d^*}\big|\hat{u_{\lambda}}(\xi)\big|^2\,d\xi= \frac{C^2}{(s-s')^2}\,\norme{u}^2_{s},
	\end{multline*}
	since $ |\xi|^2\exp(2s'|\xi|)\leq C^2\exp(2s|\xi|)/(s-s')^2 $ for $ \xi $ in $ \R^{d} $, with $ C>0 $ independent of $ s,s' $ and $ \xi $, which reads precisely $ C=2e^{-1} $.
\end{proof}

\begin{lemma}\label{lemma Ys Banach algebra}
	For $ s\in(0,1) $, the space $ Y_s $ is a Banach algebra, up to a positive constant, that is, there exists $ C>0 $ (independent of $ s $), such that for $ u,v$ in $ Y_s $, the function $ uv $ belongs to $ Y_s $ and we have
	\begin{equation*}
		\norme{uv}_{Y_s}\leq C\norme{u}_{Y_s}\norme{v}_{Y_s}.
	\end{equation*}
\end{lemma}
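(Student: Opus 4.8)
The statement to prove is that $Y_s$ is a Banach algebra up to a universal constant: $\norme{uv}_{Y_s}\leq C\norme{u}_{Y_s}\norme{v}_{Y_s}$ for $u,v\in Y_s$, with $C$ independent of $s\in(0,1)$. The natural approach is to work on the Fourier side, since the $Y_s$-norm is defined purely in terms of the (partial) Fourier transform in $y$ and the Fourier series in $\Theta$. The product $uv$ has Fourier data given by convolution: if $u(y,\Theta)=\sum_\lambda u_\lambda(y)e^{i\lambda\Theta}$ and likewise for $v$, then $(uv)_\lambda = \sum_{\mu\in\Z} u_\mu v_{\lambda-\mu}$, and $\widehat{(uv)_\lambda}(\xi) = \sum_\mu \int_{\R^{d-1}} \widehat{u_\mu}(\eta)\,\widehat{v_{\lambda-\mu}}(\xi-\eta)\,d\eta$ (up to the normalizing $(2\pi)^{-(d-1)}$ from our Fourier convention). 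So the whole game is to bound the weighted $\ell^2 L^2$ norm of this double convolution.

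\textbf{Key steps.} First I would record the elementary inequality on the weights: for $\zeta=\zeta_1+\zeta_2$ with $\zeta_i\in\R\times\R^{d-1}$ (here $\zeta_i=(\mu,\eta)$ etc.), one has $e^{s|\zeta|}\leq e^{s|\zeta_1|}e^{s|\zeta_2|}$ by the triangle inequality, and $\prodscalbis{\zeta}^{d^*}\leq 2^{d^*/2}\prodscalbis{\zeta_1}^{d^*}\prodscalbis{\zeta_2}^{d^*}$ (Peetre-type inequality). Writing $w_s(\zeta):=e^{s|\zeta|}\prodscalbis{\zeta}^{d^*}$, this gives $w_s(\zeta)\leq C_0\, w_s(\zeta_1)\,w_s(\zeta_2)$ with $C_0$ depending only on $d^*$, hence independent of $s$. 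Therefore, setting $F_{\lambda}(\xi):=w_s((\lambda,\xi))|\widehat{u_\lambda}(\xi)|$ and $G_\lambda(\xi):=w_s((\lambda,\xi))|\widehat{v_\lambda}(\xi)|$ (so that $\norme{u}_{Y_s}=\norme{F}_{\ell^2_\lambda L^2_\xi}$ and similarly for $v$), one estimates pointwise
\begin{equation*}
	w_s((\lambda,\xi))\,|\widehat{(uv)_\lambda}(\xi)| \leq C_0\sum_{\mu\in\Z}\int_{\R^{d-1}} F_\mu(\eta)\,G_{\lambda-\mu}(\xi-\eta)\,d\eta,
\end{equation*}
the right-hand side being a full convolution on $\Z\times\R^{d-1}$ of the nonnegative functions $F$ and $G$. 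Then Young's convolution inequality on the group $\Z\times\R^{d-1}$ with exponents $2*1=2$ would give $\norme{F*G}_{\ell^2 L^2}\leq \norme{F}_{\ell^2 L^2}\norme{G}_{\ell^1 L^1}$, which is not quite what we want because $\norme{G}_{\ell^1 L^1}$ is not controlled by $\norme{G}_{\ell^2 L^2}$. The fix is the standard one: split the weight $\prodscalbis{(\lambda,\xi)}^{d^*}$ and use that $d^*$ is chosen large, in particular $d^*>(d+1)/2$ (the total dimension of $\Z\times\R^{d-1}$ is $d$, so $\prodscalbis{\cdot}^{-d^*}\in L^2(\Z\times\R^{d-1})$ — actually $d^* > (d+1)/2 > d/2$ suffices, and the stronger bound $d^*>\tilde m_0+2+(d+1)/2$ certainly gives it). Concretely, one writes the convolution as an $\ell^2 L^2$ pairing, distributes a factor $\prodscalbis{(\mu,\eta)}^{-\sigma}\prodscalbis{(\lambda-\mu,\xi-\eta)}^{-\sigma}$ times its reciprocal for a suitable $\sigma$ with $0<\sigma\leq d^*$ and $\sigma>d/2$, applies Cauchy–Schwarz in $(\mu,\eta)$, and uses $\sum_\mu\int\prodscalbis{(\mu,\eta)}^{-2\sigma}d\eta<\infty$ to absorb the reciprocal factor into an absolute constant.

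\textbf{Details of the core estimate.} More carefully: with $H_\lambda(\xi):=w_s((\lambda,\xi))|\widehat{(uv)_\lambda}(\xi)|$, I would bound
\begin{align*}
	H_\lambda(\xi) &\leq C_0\sum_\mu\int \frac{F_\mu(\eta)}{\prodscalbis{(\mu,\eta)}^{\sigma}}\cdot\frac{G_{\lambda-\mu}(\xi-\eta)}{\prodscalbis{(\lambda-\mu,\xi-\eta)}^{\sigma}}\cdot \prodscalbis{(\mu,\eta)}^{\sigma}\prodscalbis{(\lambda-\mu,\xi-\eta)}^{\sigma}\,d\eta,
\end{align*}
which is pointless unless we redistribute — the right bookkeeping is instead: replace $F_\mu$ by $\tilde F_\mu:=\prodscalbis{(\mu,\cdot)}^{-\sigma}F_\mu$ and $w_s$ by $w_s^{(\sigma)}:=e^{s|\cdot|}\prodscalbis{\cdot}^{d^*-\sigma}$ on the product side, so that for $\sigma$ with $d/2<\sigma<d^*$ the factor $\prodscalbis{(\mu,\eta)}^{-\sigma}\prodscalbis{(\lambda-\mu,\xi-\eta)}^{-\sigma}$ is, uniformly, $\ell^1 L^1$ in $(\mu,\eta)$ by the Peetre inequality $\prodscalbis{\zeta_1+\zeta_2}^{-\sigma}\lesssim\prodscalbis{\zeta_1}^{\sigma}\prodscalbis{\zeta_2}^{-\sigma}$ applied appropriately; a clean way is to observe that the map $(u,v)\mapsto uv$ is, on the Fourier side, a convolution, and $\ell^2L^2\times\ell^2L^2\to\ell^2L^2$ boundedness of convolution with both factors weighted by $\prodscalbis{\cdot}^{d^*}$ holds precisely because $\prodscalbis{\cdot}^{-d^*}\in L^2(\Z\times\R^{d-1})$ when $2d^*>d$, by the standard Schur-test / Cauchy–Schwarz argument. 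All constants produced are functions of $d^*$ and $d$ alone — crucially the exponential weight only ever helps (it satisfies $w$ is submultiplicative with constant $1$), so $C$ is independent of $s$, as claimed.

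\textbf{Main obstacle.} The only real subtlety — and the step I would be most careful about — is getting the weight algebra exactly right so that the constant does not blow up as $s\to 0$ or $s\to 1$: one must use the exponential factor only through its submultiplicativity $e^{s|\zeta_1+\zeta_2|}\le e^{s|\zeta_1|}e^{s|\zeta_2|}$ (no inverse powers of $s$ here, unlike in Lemma \ref{lemma ana dy dtheta Y s} where a derivative is taken), and handle the polynomial weight $\prodscalbis{\cdot}^{2d^*}$ purely via Peetre's inequality and the summability/integrability $\sum_{\lambda}\int_{\R^{d-1}}\prodscalbis{(\lambda,\xi)}^{-2d^*}\,d\xi<\infty$, which is exactly where the hypothesis $d^*>(d+1)/2$ (implied by $d^*>\tilde m_0+2+(d+1)/2$) enters. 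Everything else is a routine application of Young's inequality / Cauchy–Schwarz for convolutions on $\Z\times\R^{d-1}$.
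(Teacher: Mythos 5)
Your proposal is correct and follows essentially the same route as the paper: submultiplicativity of the exponential weight $e^{s|\cdot|}$ (so no dependence on $s$ appears) plus a weighted Cauchy--Schwarz/Schur-test estimate on the convolution in $\Z\times\R^{d-1}$, closed by the summability of $\prodscalbis{\cdot}^{-2d^*}$ on $\Z\times\R^{d-1}$. Two small remarks: the correct threshold for that summability is $d^*>d/2$ (the group $\Z\times\R^{d-1}$ has total dimension $d$, which is what the paper uses), not $d^*>(d+1)/2$ as you wrote, though under the paper's hypothesis on $d^*$ this is harmless; and to make the Schur kernel $\sup_{(\lambda,\xi)}\sum_\mu\int_{\R^{d-1}}\prodscalbis{(\lambda,\xi)}^{2d^*}/\big(\prodscalbis{(\mu,\eta)}^{2d^*}\prodscalbis{(\lambda-\mu,\xi-\eta)}^{2d^*}\big)\,d\eta$ finite, the paper uses the \emph{additive} bound $\prodscalbis{\zeta_1+\zeta_2}^{2d^*}\leq 2^{d^*+1}\big(\prodscalbis{\zeta_1}^{2d^*}+\prodscalbis{\zeta_2}^{2d^*}\big)$ — the multiplicative Peetre inequality you write first only bounds this kernel pointwise by $2^{d^*}$ and by itself would leave the sum divergent, which is exactly why your initial Young-inequality attempt stalls before you arrive at the right argument.
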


\begin{proof}
	Let $ s $ be in $ (0,1) $, and consider $ u,v $ in $ Y_s $. We have, 
	\begin{align*}
		&\quad\int_{\R^{d-1}}\sum_{\lambda\in\Z}e^{2s|(\lambda,\xi)|}\prodscalbis{(\lambda,\xi)}^{2d^*}\big|\hat{(uv)_{\lambda}}(\xi)\big|^2\,d\xi\\
		&=\int_{\R^{d-1}}\sum_{\lambda\in\Z}e^{2s|(\lambda,\xi)|}\prodscalbis{(\lambda,\xi)}^{2d^*}\left|\int_{\R^{d-1}}\sum_{\mu\in\Z}\hat{u_{\mu}}(\eta)\hat{v_{\lambda-\mu}}(\xi-\eta)\,d\eta\right|^2\,d\xi\\
		&\leq\int_{\R^{d-1}}\sum_{\lambda\in\Z}\left(\int_{\R^{d-1}}\sum_{\mu\in\Z}\frac{\prodscalbis{(\lambda,\xi)}^{2d^*}}{\prodscalbis{(\mu,\eta)}^{2d^*}\prodscalbis{(\lambda-\mu,\xi-\eta)}^{2d^*}}\,d\eta\right)\\
		&\quad\int_{\R^{d-1}}\sum_{\mu\in\Z}e^{2s|(\mu,\eta)|}\prodscalbis{(\mu,\eta)}^{2d^*}\big|\hat{u_{\mu}}(\eta)\big|^2 e^{2s|(\lambda-\mu,\xi-\eta)|}\prodscalbis{(\lambda-\mu,\xi-\eta)}^{2d^*}\big|\hat{v_{\lambda-\mu}}(\xi-\eta)\big|^2\,d\eta\,d\xi\\
		&\leq C \norme{u}^2_{Y_s}\norme{v}^2_{Y_s},
	\end{align*}
	by Cauchy-Schwarz inequality, if $ \int_{\R^{d-1}}\sum_{\mu\in\Z}\frac{\prodscalbis{(\lambda,\xi)}^{2d^*}}{\prodscalbis{(\mu,\eta)}^{2d^*}\prodscalbis{(\lambda-\mu,\xi-\eta)}^{2d^*}}\,d\eta $ is bounded uniformly with respect to $ (\lambda,\xi) $. We conclude by making the proof of this latter result.
	
	For $ \mu,\lambda\in\Z $ and $ \xi,\eta\in\R^{d-1} $, we have
	\begin{equation*}
		\big|(\lambda,\xi)\big|^2\leq 2 \big|(\mu,\eta)\big|^2+2 \big|(\lambda-\mu,\xi-\eta)\big|^2
	\end{equation*}
	so
	\begin{equation*}
		\prodscalbis{(\lambda,\xi)}^2\leq 2 \prodscalbis{(\mu,\eta)}^2+2 \prodscalbis{(\lambda-\mu,\xi-\eta)}^2,
	\end{equation*} 
and 
\begin{equation*}
	\prodscalbis{(\lambda,\xi)}^{2d^*}\leq 2^{d^*+1} \prodscalbis{(\mu,\eta)}^{2d^*}+2^{d^*+1} \prodscalbis{(\lambda-\mu,\xi-\eta)}^{2d^*}
\end{equation*} 
Therefore, by a change of variables $ (\eta,\mu)=(\xi-\eta,\lambda-\mu) $,
\begin{equation*}
	\int_{\R^{d-1}}\sum_{\mu\in\Z}\frac{\prodscalbis{(\lambda,\xi)}^{2d^*}}{\prodscalbis{(\mu,\eta)}^{2d^*}\prodscalbis{(\lambda-\mu,\xi-\eta)}^{2d^*}}\,d\eta \leq  2^{d^*+2}\int_{\R^{d-1}}\sum_{\mu\in\Z}\inv{\prodscalbis{(\mu,\eta)}^{2d^*}}\,d\eta \leq C\,2^{d^*+2}
\end{equation*}
with $ C $ depending only on $ d^* $, since $ d^* $ is such that $ d^*>d/2 $.
\end{proof}

As we work with sequences of functions $ (a_{\phi}^n)_{n\geq 1} $ and $ (a_{\psi}^n)_{n\geq 1} $, we define a functional space accordingly. We also specify the norm chosen on the product space, since we will work with couples of sequences.

\begin{definition}\label{def Y s suite}
	For $ s\in(0,1) $, the space $ \Y_{s} $ is defined as the set of sequences $ \mathbf{a}=\big(a_n\big)_{n\in\mathbb{N}} $ of $ Y_{s} $ such that
	\begin{equation*}
		\normetriple{\mathbf{a}}^2_{\Y_{s}}:=\sum_{n\geq 1}e^{2sn}\prodscalbis{n}^{2d^*}\normetriple{a_n}^2_{Y_s}<+\infty.
	\end{equation*}
For $ s\in(0,1) $, the norm on the product space $ \Y_s^2 $ is defined, for $ (\a,\b)\in\Y_s^2 $, as
\begin{equation*}
	\normetriple{\big(\a,\b\big)}_{\Y_s^2}^2:=\normetriple{\a}_{\Y_s}^2+\normetriple{\b}_{\Y_s}^2.
\end{equation*}
\end{definition}

The space $ \Y_s $ satisfies analogous properties as the ones of Lemmas \ref{lemma ana dy dtheta Y s} and \ref{lemma Ys Banach algebra} (with the convolution on sequences for product), but they will not be used directly. 

\subsubsection{Specifications on the simplified model}

We are now able to precise some properties of the study system \eqref{eq ana1 a}, \eqref{eq ana1 a init}.

Boundary source terms $ H^n_{\phi} $, $ H^n_{\psi} $ for $ n\geq 1 $ are taken such that, defining $ \mathbf{H}:=\big(\mathbf{H}_{\phi},\mathbf{H}_{\psi}\big):=\big(H^n_{\phi},H^n_{\psi}\big)_{n\geq 1} $, function $ \mathbf{H} $ is in $ \mathcal{C}\big([-T,T],Y^2_1\big) $. In the statement \ref{prop WP bord CK} below of existence and uniqueness for system \eqref{eq ana1 a}, \eqref{eq ana1 a init}, there will be an additional assumption on $ \mathbf{H} $, requiring that there exists $ M>0 $ such that, for $ 0\leq s<1 $,
\begin{equation*}
	\sup_{|t|<T}\normetriple{\mathbf{H}}_{\Y^2_s}\leq \frac{M}{1-s}.
\end{equation*}
This assumption on $ \mathbf{H} $ is stronger than requiring  $ H $ and $ G $ of \eqref{eq systeme 1} to be in $ H^{\infty}(\R^d\times\T) $, as it imposes analyticity with respect to space variables, but with bound on the norm increasing with regularity. 
We denote by $ \gamma_0>0 $ a positive constant such that,	for $ \zeta=\phi,\psi $,
\begin{subequations}\label{eq ana1 hyp toy model}
	\begin{equation}\label{eq ana1 hyp toy model v D bord}
		\big|\mathbf{v}^{\Lop}_{\zeta}\big|\leq \gamma_0,\qquad \big|D^{\Lop}_{\zeta}\big|\leq \gamma_0, \quad |\mathbf{w}_{\zeta}|\leq\gamma_0^{1/2}\quad \text{and}\quad \big|K^{\Lop}_{\zeta}\big|\leq \gamma_0,
	\end{equation}
	and, for $ s\in(0,1) $, for $ u,v $ in $ Y_s $, and for $ \zeta=\phi,\psi $,
	\begin{equation}\label{eq ana1 hyp toy model F per}
		\norme{\mathbb{F}^{\per}_{\zeta}\big[\partial_{\Theta}\,u,\partial_{\Theta}\,v\big]}_{Y_s}\leq \gamma_0^{1/2}\,\normetriple{u}_{Y_s}\normetriple{v}_{Y_s}.
	\end{equation}
\end{subequations}
All estimates relies on the fact that scalars $ D^{\Lop}_{\zeta} $, $ \mathbf{w}_{\zeta} $ and $ K^{\Lop}_{\zeta} $, vectors $ \mathbf{v}^{\Lop}_{\zeta} $ and operators $ \mathbb{F}^{\per}_{\zeta} $ are indexed by finite sets. As for it, estimate \eqref{eq ana1 hyp toy model F per} asserting  that the operator $ \mathbb{F}^{\per}_{\zeta} $, composed with derivation in $ \Theta $, acts as a semilinear operator, is a result of \cite[Theorem 3.1]{CoulombelWilliams2017Mach}. The proof in our case is a straightforward adaptation to our functional framework of the one of \cite{CoulombelWilliams2017Mach}, which we detail here. 

\begin{proposition}[{\cite[Theorem 3.1]{CoulombelWilliams2017Mach}}]\label{prop F per semilin}
	There exists a constant $ C>0 $ such that for $ s\in[0,1] $, for $ u,v $ in $ Y_s $ and $ \zeta=\phi,\psi $, we have 
	\begin{equation}\label{eq ana1 est prop F per}
		\norme{\mathbb{F}^{\per}_{\zeta}\big[\partial_{\Theta}\,u,\partial_{\Theta}\,v\big]}_{Y_s}\leq C\,\normetriple{u}_{Y_s}\normetriple{v}_{Y_s}.
	\end{equation}
\end{proposition}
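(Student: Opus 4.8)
The plan is to estimate the $Y_s$-norm of $\mathbb{F}^{\per}_{\zeta}[\partial_{\Theta}u,\partial_{\Theta}v]$ directly from the defining series \eqref{eq ana1 def F per}, following the argument of \cite[Theorem 3.1]{CoulombelWilliams2017Mach} but working in the Fourier-side weighted $L^2$ framework of the spaces $Y_s$. First I would write out, for the Fourier coefficient in $\Theta$ of index $\lambda$, the expression
\begin{equation*}
	\big(\mathbb{F}^{\per}_{\zeta}[\partial_{\Theta}u,\partial_{\Theta}v]\big)_{\lambda}
	=\sum_{\substack{\lambda_1+\lambda_3=\lambda\\\lambda_1\lambda_3\neq0}}\frac{i\,(i\lambda_1)(i\lambda_3)}{\lambda_1\,\delta^1_{\zeta}+\lambda_3\,\delta^3_{\zeta}}\,u_{\lambda_1}\,v_{\lambda_3},
\end{equation*}
and note that since $\phi,\psi$ are nonresonant the denominator $\lambda_1\delta^1_{\zeta}+\lambda_3\delta^3_{\zeta}$ never vanishes for $(\lambda_1,\lambda_3)\in(\Z^*)^2$; moreover, by a number-theoretic small-divisors bound (this is where Assumption \ref{hyp petits diviseurs}, equivalently Lemma \ref{lemma hyp petit diviseurs}, enters, and is the reason $d^*$ was chosen $>\tilde m_0+2+(d+1)/2$), one has
\begin{equation*}
	\left|\frac{\lambda_1\lambda_3}{\lambda_1\delta^1_{\zeta}+\lambda_3\delta^3_{\zeta}}\right|\leq C\,\big|(\lambda_1,\lambda_3)\big|^{1+\tilde m_0}.
\end{equation*}

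The key point, exactly as in \cite{CoulombelWilliams2017Mach}, is that this polynomial loss of $1+\tilde m_0$ derivatives is \emph{absorbed} by the extra smoothing weight built into $d^*$: the product of two $Y_s$ functions is estimated using the convolution inequality from the proof of Lemma \ref{lemma Ys Banach algebra}, and the gain of $2d^*$ powers of $\prodscalbis{(\mu,\eta)}$ and $\prodscalbis{(\lambda-\mu,\xi-\eta)}$ in the denominator there dominates $\prodscalbis{(\lambda,\xi)}^{2d^*}$ times the polynomial factor $\prodscalbis{(\lambda_1,\lambda_3)}^{2(1+\tilde m_0)}$, provided $2d^*-2(1+\tilde m_0)>d$, which holds by the choice of $d^*$. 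Concretely, I would bound
\begin{align*}
	\norme{\mathbb{F}^{\per}_{\zeta}[\partial_{\Theta}u,\partial_{\Theta}v]}_{Y_s}^2
	&\leq C\int_{\R^{d-1}}\sum_{\lambda}e^{2s|(\lambda,\xi)|}\prodscalbis{(\lambda,\xi)}^{2d^*}\\
	&\qquad\times\left(\sum_{\mu}\int_{\R^{d-1}}\big|(\mu,\eta)\big|\big|(\lambda-\mu,\xi-\eta)\big|\,\big|(\mu,\eta)+(\lambda-\mu,\xi-\eta)\big|^{\tilde m_0}\,\big|\hat u_{\mu}(\eta)\big|\,\big|\hat v_{\lambda-\mu}(\xi-\eta)\big|\,d\eta\right)^2 d\xi,
\end{align*}
then use the subadditivity estimates $\prodscalbis{(\lambda,\xi)}^{2d^*}\leq 2^{d^*+1}(\prodscalbis{(\mu,\eta)}^{2d^*}+\prodscalbis{(\lambda-\mu,\xi-\eta)}^{2d^*})$ and $e^{2s|(\lambda,\xi)|}\leq e^{2s|(\mu,\eta)|}e^{2s|(\lambda-\mu,\xi-\eta)|}$, together with Cauchy--Schwarz in $(\mu,\eta)$, to split off a finite constant $\int_{\R^{d-1}}\sum_{\mu}\prodscalbis{(\mu,\eta)}^{-2(d^*-1-\tilde m_0)}\,d\eta<\infty$ and recover $C\normetriple{u}_{Y_s}^2\normetriple{v}_{Y_s}^2$. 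The uniformity of $C$ in $s\in[0,1]$ comes from the fact that the exponential weights are handled purely multiplicatively and the remaining integral does not see $s$.

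I expect the main obstacle to be the bookkeeping around the small-divisors estimate: one must check that the denominator $\lambda_1\delta^1_{\zeta}+\lambda_3\delta^3_{\zeta}$, which is (up to the nonzero factor $b_{\zeta}\cdot Br_{\zeta,2}$ and $\xi_3(\zeta)-\xi_1(\zeta)$) proportional to $\det L(0,\lambda_1\zeta_1+\lambda_3\zeta_3)$ via the alternative expression for $\gamma_{\zeta}$ given just after \eqref{eq 1cor eq X lop v psi etc}, genuinely obeys the polynomial lower bound of Assumption \ref{hyp petits diviseurs}; this is the only place nonresonance of $\zeta$ and the small-divisors hypothesis are used, and it is exactly the step where the argument of \cite{CoulombelWilliams2017Mach} is invoked. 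Once that lower bound is in hand, the rest is the now-standard convolution estimate already carried out in Lemma \ref{lemma Ys Banach algebra}, with the two extra derivative weights and the $\tilde m_0$ polynomial loss comfortably paid for by the surplus $2d^*-2-2\tilde m_0>d$ in the exponent.
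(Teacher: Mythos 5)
There is a genuine gap in the proposal. The claimed kernel bound $\bigl|\lambda_1\lambda_3/(\lambda_1\delta^1_{\zeta}+\lambda_3\delta^3_{\zeta})\bigr|\leq C|(\lambda_1,\lambda_3)|^{1+\tilde m_0}$ has the wrong exponent to begin with (the crude product of $|\lambda_1\lambda_3|\leq|(\lambda_1,\lambda_3)|^2$ with the small-divisors bound only gives $|(\lambda_1,\lambda_3)|^{2+\tilde m_0}$), but even with the exponent corrected this symmetric two-variable bound is not enough to close the convolution estimate. After splitting $\prodscalbis{(\lambda,\xi)}^{2d^*}$ by subadditivity and cancelling, say, against $\prodscalbis{(\lambda-\mu,\xi-\eta)}^{2d^*}$, what is left to integrate is $|K(\lambda,\mu)|^2/\prodscalbis{(\mu,\eta)}^{2d^*}$. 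If $|\lambda-\mu|\gg|\mu|$, the two-variable bound allows $|K(\lambda,\mu)|\sim|\lambda-\mu|^{2+\tilde m_0}$, which has nothing to do with $\prodscalbis{(\mu,\eta)}$ and grows without bound as $|\lambda|\to\infty$; the supremum over $(\lambda,\xi)$ of the kernel integral is therefore infinite and Lemma \ref{lemma technique Cou} does not apply. The ``surplus in the exponent of $d^*$'' cannot pay for a polynomial factor that lives in the wrong variable.

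The step you are missing is the algebraic cancellation, which is the real content of the paper's argument. Writing $D=(\lambda-\mu)\delta^1_{\zeta}+\mu\delta^3_{\zeta}$, one has
\begin{equation*}
	\frac{\mu(\lambda-\mu)}{D}=\frac{1}{\delta^1_{\zeta}}\left(\mu-\frac{\delta^3_{\zeta}\,\mu^2}{D}\right),
\end{equation*}
and together with the $\min$-formulation of Lemma \ref{lemma hyp petit diviseurs} ($1/|D|\leq C\min(|\mu|,|\lambda-\mu|)^{\tilde m_0}$) this gives the \emph{one-variable} bound $|\mathbf F(\lambda,\mu)|\leq C|\mu|^{\tilde m_0+2}$, uniformly in $\lambda$, and symmetrically $\leq C|\lambda-\mu|^{\tilde m_0+2}$. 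Only then can the cutoff decomposition $\chi_1+\chi_2\equiv1$ (which plays the role of your subadditivity split) route the polynomial loss onto the \emph{small} dual variable, where it is killed by the surplus $2d^*-2(\tilde m_0+2)>d+1$. Without the partial-fraction step, the argument cannot close. Your identification of where nonresonance and Assumption \ref{hyp petits diviseurs} enter, and the rewriting of the denominator via $\det L(0,\lambda_1\zeta_1+\lambda_3\zeta_3)$, are correct; the gap is entirely in how the numerator $|\lambda_1\lambda_3|$ is handled.
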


\begin{proof}
	The result relies on the following lemma, which constitutes a reformulation of the small divisors Assumption \ref{hyp petits diviseurs}. Its proof is the same as the one in \cite{CoulombelWilliams2017Mach}, and is recalled here for the sake of completeness.  
	
	\begin{lemma}[{\cite[Lemma 3.2]{CoulombelWilliams2017Mach}}]\label{lemma hyp petit diviseurs}
		There exists a constant $ C>0 $ and a real number $ \tilde{m}_0 $ such that, for $ \lambda_1,\lambda_3\in\Z^* $, and for $ \zeta=\phi,\psi $, we have
		\begin{equation}\label{eq ana1 est lemme hyp petits div}
			\inv{\big|\lambda_1\,\delta^1_{\zeta}+\lambda_3\,\delta^3_{\zeta}\big|}\leq C \min\big(|\lambda_1|^{\tilde{m}_0},|\lambda_3|^{\tilde{m}_0}\big).
		\end{equation}
	\end{lemma}

\begin{proof}
	Without loss of generality, we consider $ \zeta=\phi $. The aim is to use the bound of Assumption \ref{hyp petits diviseurs}. Using equality $ L(0,\phi_2)\,r_{\phi,2}=0 $, we get,
	\begin{equation*}
		L\big(0,\lambda_1\,\phi_1+\lambda_3\,\phi_3\big)\,r_{\phi,2}=\Big[\lambda_1\big(\xi_1(\phi)-\xi_2(\phi)\big)+\lambda_3\big(\xi_3(\phi)-\xi_2(\phi)\big)\Big]A_d(0)\,r_{\phi,2},
	\end{equation*}
so the quantity $ \lambda_1\big(\xi_1(\phi)-\xi_2(\phi)\big)+\lambda_3\big(\xi_3(\phi)-\xi_2(\phi)\big) $ is nonzero since otherwise $ r_{\phi,2} $ would be a nonzero vector in the kernel of $ L\big(0,\lambda_1\,\phi_1+\lambda_3\,\phi_3\big) $, contradicting Assumption \ref{hypothese ensemble frequences} asserting that $ \lambda_1\,\phi_1+\lambda_3\,\phi_3 $ is never characteristic for $ \lambda_1,\lambda_3\in\Z^* $. Therefore we have
\begin{equation*}
	\inv{\Big|\lambda_1\big(\xi_1(\phi)-\xi_2(\phi)\big)+\lambda_3\big(\xi_3(\phi)-\xi_2(\phi)\big)\Big|}\leq C \norme{L\big(0,\lambda_1\,\phi_1+\lambda_3\,\phi_3\big)^{-1}},
\end{equation*}
with a constant $ C>0 $ independent on $ \lambda_1,\lambda_3 $. Using Assumption \ref{hyp petits diviseurs} and a polynomial bound on the transpose of the comatrix, we get that there exists a nonnegative real number $ \tilde{m}_0 $ such that
\begin{equation*}
	\inv{\Big|\lambda_1\big(\xi_1(\phi)-\xi_2(\phi)\big)+\lambda_3\big(\xi_3(\phi)-\xi_2(\phi)\big)\Big|}\leq C \big|(\lambda_1,\lambda_3)\big|^{\tilde{m}_0},
\end{equation*}
with a new constant $ C>0 $ independent on $ \lambda_1,\lambda_3 $. Up to changing constant $ C>0 $, we obtain,
\begin{equation}\label{eq ana inter 1}
	\inv{\big|\lambda_1\,\delta^1_{\phi}+\lambda_3\,\delta^3_{\phi}\big|}\leq C \big|(\lambda_1,\lambda_3)\big|^{\tilde{m}_0}.
\end{equation}
To get the formulation of \eqref{eq ana inter 1} with a minimum, we see that two cases may occur. Either $ \big|\lambda_1\,\delta^1_{\phi}+\lambda_3\,\delta^3_{\phi}\big|>|\delta^1_{\phi}| $, and in this case, with $ C\geq 1/|\delta^1_{\phi}| $, we have
	\begin{equation*}
		\inv{\big|\lambda_1\,\delta^1_{\phi}+\lambda_3\,\delta^3_{\phi}\big|}\leq \inv{\big|\delta^1_{\phi}\big|}\leq C \leq C \,|\lambda_1|^{\tilde{m}_0}
	\end{equation*}
since $ \tilde{m}_0\geq 0 $. In the other case, if $ \big|\lambda_1\,\delta^1_{\phi}+\lambda_3\,\delta^3_{\phi}\big|\leq|\delta^1_{\phi}| $, we have
\begin{equation*}
	|\lambda_3|\leq \inv{\big|\delta^3_{\phi}\big|}\big|\lambda_1\,\delta^1_{\phi}+\lambda_3\,\delta^3_{\phi}\big|+\inv{\big|\delta^3_{\phi}\big|}\big|\lambda_1\,\delta^1_{\phi}\big|\leq 2\frac{\big|\delta^1_{\phi}\big|}{\big|\delta^3_{\phi}\big|}|\lambda_1|,
\end{equation*}
so, up to changing constant $ C $, estimate \eqref{eq ana inter 1} rewrites 
\begin{equation*}
	\inv{\big|\lambda_1\,\delta^1_{\phi}+\lambda_3\,\delta^3_{\phi}\big|}\leq C \,|\lambda_1|^{\tilde{m}_0}.
\end{equation*}
Applying the same arguments for $ \lambda_3 $ leads to the aimed estimate \eqref{eq ana1 est lemme hyp petits div}.
\end{proof}

The proof of Proposition \ref{prop F per semilin} also relies on the following technical result, whose formulation is the one of \cite{CoulombelWilliams2017Mach}. Its proof is an immediate adaptation of a result of \cite{RauchReed1982Microlocal}, and is not recalled here.

\begin{lemma}[{\cite[Lemma 1.2.2]{RauchReed1982Microlocal}},{\cite[Lemma 3.3]{CoulombelWilliams2017Mach}}]\label{lemma technique Cou}
	Let $ \mathbb{K} : \R^{d-1}\times\Z\times \R^{d-1}\times\Z \rightarrow \C $  be a locally integrable measurable function such that, either
	\begin{equation*}
		\sup_{(\xi,\lambda)\in \R^{d-1}\times\Z}\int_{\R^{d-1}}\sum_{\mu\in\Z}\big|\mathbb{K}(\xi,\lambda,\eta,\mu)\big|^2\,d\eta<+\infty,
	\end{equation*}
or
\begin{equation*}
\sup_{(\eta,\mu)\in \R^{d-1}\times\Z}\int_{\R^{d-1}}\sum_{\lambda\in\Z}\big|\mathbb{K}(\xi,\lambda,\eta,\mu)\big|^2\,d\xi<+\infty.
\end{equation*}
Then the map
\begin{equation*}
	(f,g)\mapsto \int_{\R^{d-1}}\sum_{\mu\in\Z}\mathbb{K}(\xi,\lambda,\eta,\mu)\,f(\xi-\eta,\lambda-\mu)\,g(\eta,\mu)\,d\eta
\end{equation*}
is bounded on $ L^2(\R^{d-1}\times\Z)\times L^2(\R^{d-1}\times\Z) $ with values in $ L^2(\R^{d-1}\times\Z) $.
\end{lemma}
	
	We now proceed with the proof of  Proposition \ref{prop F per semilin}, and we consider without loss of generality $ \zeta=\phi $. For $ \xi\in\R^{d-1} $, $ \lambda\in\Z $ and for $ u,v $ in $ Y_s $, the Fourier transform of the $ \lambda $-th term of the Fourier series expansion of $ \mathbb{F}^{\per}_{\zeta}\big[\partial_{\Theta}\,u,\partial_{\Theta}\,v\big] $ is given by 
	\begin{equation*}
		\widehat{\mathbb{F}^{\per}_{\zeta}\big[\partial_{\Theta}\,u,\partial_{\Theta}\,v\big]_{\lambda}}(\xi)=-i\int_{\R^{d-1}}\sum_{\substack{\mu\in\Z \\ \mu\neq 0, \lambda}}\frac{\mu(\lambda-\mu)}{(\lambda-\mu)\delta^1_{\phi}+\mu\delta^3_{\phi}}\,\hat{u_{\lambda-\mu}}(\xi-\eta)\,\hat{v_{\mu}}(\eta)\,d\eta.
	\end{equation*}
	Therefore, to obtain inequality \eqref{eq ana1 est prop F per}, we have to estimate the quantity
	\begin{equation*}
		\int_{\R^{d-1}}\sum_{\lambda\in\Z}\left|\int_{\R^{d-1}}\sum_{\mu\in\Z}e^{s|(\lambda,\xi)|}\prodscalbis{(\lambda,\xi)}^{d^*}\mathbf{F}(\lambda,\mu)\,\hat{u_{\lambda-\mu}}(\xi-\eta)\,\hat{v_{\mu}}(\eta)\,d\eta\right|^2d\xi,
	\end{equation*}
where we have denoted, for $ \lambda,\mu\in\Z $,
\begin{equation*}
	\mathbf{F}(\lambda,\mu):=\begin{cases}
		\frac{\mu(\lambda-\mu)}{(\lambda-\mu)\delta^1_{\phi}+\mu\delta^3_{\phi}} & \text{if } \mu \neq 0,\lambda\\
		0 & \text{otherwise,}
	\end{cases}
\end{equation*}
and we will do it using Lemma \ref{lemma technique Cou}.
We consider two nonnegative functions $ \chi_1,\chi_2 $ on $ \R^d\times\R^d $ such that $ \chi_1+\chi_2\equiv 1 $ and 
\begin{align*}
	\chi_1(\xi,\lambda,\eta,\mu)=0 &\quad \text{if } \prodscalbis{(\eta,\mu)}\geq (2/3)\prodscalbis{(\xi,\lambda)}\\
	\chi_2(\xi,\lambda,\eta,\mu)=0 &\quad \text{if } \prodscalbis{(\eta,\mu)}\leq (1/3)\prodscalbis{(\xi,\lambda)}.
\end{align*}
We first consider the quantity
\begin{subequations}\label{eq ana inter 2}
\begin{align}
	&\int_{\R^{d-1}}\sum_{\mu\in\Z}\chi_1(\xi,\lambda,\eta,\mu)\,e^{s|(\lambda,\xi)|}\prodscalbis{(\lambda,\xi)}^{d^*}\mathbf{F}(\lambda,\mu)\,\hat{u_{\lambda-\mu}}(\xi-\eta)\,\hat{v_{\mu}}(\eta)\,d\eta,
	\intertext{rewritten as}
	&\int_{\R^{d-1}}\sum_{\mu\in\Z}\frac{\chi_1(\xi,\lambda,\eta,\mu)\,e^{s|(\lambda,\xi)|}\prodscalbis{(\lambda,\xi)}^{d^*}\mathbf{F}(\lambda,\mu)}{e^{s|(\lambda-\mu,\xi-\eta)|}\prodscalbis{(\lambda-\mu,\xi-\eta)}^{d^*}
		e^{s|(\mu,\eta)|}\prodscalbis{(\mu,\eta)}^{d^*}}\,\\\nonumber
	&\qquad\times\Big(e^{s|(\lambda-\mu,\xi-\eta)|}\prodscalbis{(\lambda-\mu,\xi-\eta)}^{d^*}\hat{u_{\lambda-\mu}}(\xi-\eta)\Big)\Big(e^{s|(\mu,\eta)|}\prodscalbis{(\mu,\eta)}^{d^*}\hat{v_{\mu}}(\eta)\Big)d\eta.
\end{align}
\end{subequations}
We have
\begin{equation*}
	e^{2s|(\lambda,\xi)|} \leq e^{2s|(\mu,\eta)|} e^{2s|(\lambda-\mu,\xi-\eta)|},
\end{equation*}
and, on the support of $ \chi_1 $,
\begin{equation*}
	\prodscalbis{(\lambda-\mu,\xi-\eta)}\geq \prodscalbis{(\lambda,\xi)}-\prodscalbis{(\mu,\eta)}\geq \inv{3}\prodscalbis{(\lambda,\xi)},
\end{equation*}
so
\begin{equation*}
	\int_{\R^{d-1}}\sum_{\mu\in\Z}\left|\frac{\chi_1(\xi,\lambda,\eta,\mu)\,e^{s|(\lambda,\xi)|}\prodscalbis{(\lambda,\xi)}^{d^*}\mathbf{F}(\lambda,\mu)}{e^{s|(\lambda-\mu,\xi-\eta)|}\prodscalbis{(\lambda-\mu,\xi-\eta)}^{d^*}
		e^{s|(\mu,\eta)|}\prodscalbis{(\mu,\eta)}^{d^*}}\right|^2d\eta\leq C\int_{\R^{d-1}}\sum_{\mu\in\Z}\left|\frac{\mathbf{F}(\lambda,\mu)}{\prodscalbis{(\mu,\eta)}^{d^*}}\right|^2d\eta.
\end{equation*}
Using Lemma \ref{lemma hyp petit diviseurs} we get
\begin{equation*}
	\left|\frac{\mu(\lambda-\mu)}{(\lambda-\mu)\delta^1_{\phi}+\mu\delta^3_{\phi}}\right|=\inv{\big|\delta^1_{\phi}\big|}\left|\mu-\frac{\delta^3_{\phi}\,\mu^2}{(\lambda-\mu)\delta^1_{\phi}+\mu\delta^3_{\phi}}\right|\leq C |\mu|^{\tilde{m}_0+2},
\end{equation*}
so 
\begin{multline*}
	\sup_{(\xi,\lambda)\in \R^{d-1}\times\Z}\int_{\R^{d-1}}\sum_{\mu\in\Z}\left|\frac{\chi_1(\xi,\lambda,\eta,\mu)\,e^{s|(\lambda,\xi)|}\prodscalbis{(\lambda,\xi)}^{d^*}\mathbf{F}(\lambda,\mu)}{e^{s|(\lambda-\mu,\xi-\eta)|}\prodscalbis{(\lambda-\mu,\xi-\eta)}^{d^*}
		e^{s|(\mu,\eta)|}\prodscalbis{(\mu,\eta)}^{d^*}}\right|^2d\eta\\
	\leq C\int_{\R^{d-1}}\sum_{\mu\in\Z}\frac{|\mu|^{2(\tilde{m}_0+2)}}{\prodscalbis{(\mu,\eta)}^{2d^*}}\,d\eta<+\infty,
\end{multline*}
since we chose $ d^*>\tilde{m}_0+2+(d+1)/2 $. Applying Lemma \ref{lemma technique Cou} to the quantity in \eqref{eq ana inter 2} we obtain
\begin{align*}
	&\int_{\R^{d-1}}\sum_{\lambda\in\Z}\left|\int_{\R^{d-1}}\sum_{\mu\in\Z}\chi_1(\xi,\lambda,\eta,\mu)\,e^{s|(\lambda,\xi)|}\prodscalbis{(\lambda,\xi)}^{d^*}\mathbf{F}(\lambda,\mu)\,\hat{u_{\lambda-\mu}}(\xi-\eta)\,\hat{v_{\mu}}(\eta)\,d\eta\right|^2d\xi\\[5pt]
	&\leq C \norme{u}^2_{Y_s}\norme{v}^2_{Y_s}.
\end{align*}
Applying similar arguments for $ \chi_2 $ leads to the analogous estimate for $ \chi_2 $, and combining estimates for $ \chi_1 $ and $ \chi_2 $ gives the sought one \eqref{eq ana1 est prop F per}, concluding the proof.
\end{proof}

We are now able to prove well-posedness for system \eqref{eq ana1 a}, \eqref{eq ana1 a init}, using Theorem \ref{theorem Cauchy-Kovalevskaya}, with the above properties. 

\subsection{A Cauchy-Kovalevskaya theorem for boundary equations}

System \eqref{eq ana1 a}, \eqref{eq ana1 a init} reads 
\begin{equation}\label{eq ana1 dt a}
	\begin{cases}
		\partial_t \,\mathbf{a} = \mathbf{F}\big(t,\a):= L \,\mathbf{a} -\partial_{\Theta}\,\mathbf{D}^{\Lop}\big(\a,\a\big) - \mathbb{F}^{\per}\big(\partial_{\Theta}\,\a,\partial_{\Theta}\,\a\big)+\mathbf{H}+\mathbf{K}^{\Lop}\big(\a,\a\big),\\[5pt]
		\a(0)=0,
	\end{cases}
\end{equation}
where $ \a:=(\a_{\phi},\a_{\psi}):=\big(a^n_{\phi},a^n_{\psi}\big)_{n\geq 1} $, and, if $ \mathbf{c}:=\big(c^n_{\phi},c^n_{\psi}\big)_{n\geq 1} $, 
\begin{align*}
	L\,\a&:=\big(\mathbf{v}_{\phi}^{\Lop}\cdot\nabla_y\,a^n_{\phi},\mathbf{v}_{\phi}^{\Lop}\cdot\nabla_y\,a^n_{\psi}\big)_{n\geq 1}, \\[5pt]
	\mathbf{D}^{\Lop}(\a,\mathbf{c})&:=\big(D^{\Lop}_{\phi}\,a^1_{\phi}\,c^n_{\phi}, D^{\Lop}_{\psi}\,a^1_{\psi}\,c^n_{\psi}\big), \\[5pt]
	\mathbb{F}^{\per}(\a,\mathbf{c})&:=\big(\mathbf{w}_{\phi}\,\mathbb{F}^{\per}_{\phi}[a^1_{\phi},c^{n}_{\phi}], \mathbf{w}_{\psi}\linebreak\mathbb{F}^{\per}_{\psi}[a^1_{\psi},c^{n}_{\psi}]\big)_{n\geq 1}, \\[5pt]
	\mathbf{K}^{\Lop}(\a,\mathbf{c})&:=\Big(K^{\Lop}_{\phi}\sum_{k=1}^{n-1}\partial_{\Theta}\big(a^k_{\phi}\,c^{n-k}_{\psi}\big),K^{\Lop}_{\psi}\sum_{k=1}^{n-1}\partial_{\Theta}\big(a^k_{\phi}\,c^{n-k}_{\psi}\big)\Big)_{n\geq 1}.
\end{align*}

System \eqref{eq ana1 dt a} is now in the right shape to apply Theorem \ref{theorem Cauchy-Kovalevskaya}, and we prove the following result. 

\begin{proposition}\label{prop WP bord CK}
	For $ M_0>0 $, there exists $ \delta\in(0,T) $ such that for all $ \mathbf{H} $ in $ \mathcal{C}\big([-T,T],\Y_1^2) $ satisfying, for $ 0<s<1 $,
	\begin{equation}\label{eq ana1 hyp H WP}
		\sup_{|t|<T}\normetriple{\mathbf{H}(t)}_{Y^2_s}\leq \frac{M_0}{1-s},
	\end{equation}
	system \eqref{eq ana1 dt a} admits a unique solution $ \a $ in $ \mathcal{C}^1\big((-\delta(1-s),\delta(1-s)), \Y^2_s\big) $ for every $ 0<s\leq 1 $.
\end{proposition}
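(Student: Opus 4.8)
The plan is to verify that the map $\mathbf{F}(t,\cdot)$ defined in \eqref{eq ana1 dt a} fits into the abstract framework of Theorem \ref{theorem Cauchy-Kovalevskaya}, with the chain of Banach spaces $B_s := \Y_s^2$ for $s\in(0,1)$ (taking $r_0=0$, $r_1=1$ after the harmless rescaling $s\mapsto s$), and then to invoke the theorem directly. The decreasing-chain property $B_s\subset B_{s'}$ with $\normetriple{\cdot}_{s'}\le\normetriple{\cdot}_s$ for $s'<s$ is immediate from Definition \ref{def Y s suite}, since $e^{2s'n}\le e^{2sn}$ and the exponential weight $e^{2s'|(\lambda,\xi)|}$ in each $Y_{s'}$-norm is dominated termwise by the $Y_s$ one. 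So the two substantial points are the Lipschitz estimate \eqref{eq ana hyp F 1} and the bound \eqref{eq ana hyp F 2} on $\mathbf{F}(t,0)$.

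For \eqref{eq ana hyp F 2}: since $\mathbf{F}(t,0)=\mathbf{H}(t)$ (all the other terms are at least linear or bilinear in $\a$ and vanish at $\a=0$), this is exactly hypothesis \eqref{eq ana1 hyp H WP} with $M=M_0$. For \eqref{eq ana hyp F 1}, I would treat $\mathbf{F}(t,u)-\mathbf{F}(t,v)$ term by term. The genuinely unbounded-looking contributions are the transport term $L\,\a$ and the term $\partial_\Theta \mathbf{D}^{\Lop}(\a,\a)$ and the $\partial_\Theta$ in $\mathbf{K}^{\Lop}$: each involves one derivative in $y$ or $\Theta$, so Lemma \ref{lemma ana dy dtheta Y s} produces precisely the factor $C/(s-s')$ required, after also using that $\Y_s$ inherits (coefficientwise) the algebra property of $Y_s$ from Lemma \ref{lemma Ys Banach algebra} and the $\ell^2$-in-$n$ weight with a convolution estimate in $n$. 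The bilinear terms are handled by writing $u\cdot u - v\cdot v = (u-v)\cdot u + v\cdot(u-v)$ and bounding one factor by $R$; this is where the smallness radius $R$ and the constants $\gamma_0$ from \eqref{eq ana1 hyp toy model} enter. The operator $\mathbb{F}^{\per}$ is \emph{semilinear} by Proposition \ref{prop F per semilin} / \eqref{eq ana1 hyp toy model F per}, so $\mathbb{F}^{\per}(\partial_\Theta u,\partial_\Theta u)-\mathbb{F}^{\per}(\partial_\Theta v,\partial_\Theta v)$ is controlled by $C\normetriple{u-v}_s(\normetriple{u}_s+\normetriple{v}_s)$ with \emph{no} loss of derivative, hence no $1/(s-s')$ is even needed there — so a fortiori \eqref{eq ana hyp F 1} holds. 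Assembling these, and using that all the structural scalars/vectors/operators are indexed by the finite set $\{\phi,\psi\}$, one gets a single constant $C$ (depending on $\gamma_0$, $d^*$, $R$) with
\begin{equation*}
	\sup_{|t|\le T}\normetriple{\mathbf{F}(t,u)-\mathbf{F}(t,v)}_{\Y^2_{s'}} \le \frac{C}{s-s'}\,\normetriple{u-v}_{\Y^2_s}
\end{equation*}
for $\normetriple{u}_{\Y^2_s},\normetriple{v}_{\Y^2_s}\le R$. One should also note continuity of $t\mapsto\mathbf{F}(t,u)$ into $\Y^2_{s'}$, which follows from the continuity assumption on $\mathbf{H}$ and the fact that the remaining terms are $t$-independent.

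With these three facts — the decreasing chain, \eqref{eq ana hyp F 1}, and \eqref{eq ana hyp F 2} — Theorem \ref{theorem Cauchy-Kovalevskaya} applies verbatim and yields $\delta\in(0,T)$ and a unique $\a\in\mathcal{C}^1\big((-\delta(1-s),\delta(1-s)),\Y^2_s\big)$ for every $s\in(0,1]$ solving \eqref{eq ana1 dt a}, which is the claim. I expect the main obstacle to be bookkeeping rather than conceptual: carefully checking that the $\ell^2$-weighted-in-$n$ structure of $\Y_s$ (the extra $e^{2sn}\prodscalbis{n}^{2d^*}$ weight and the Cauchy-product in the $n$ index coming from sums like $\sum_{k=1}^{n-1}$ in $\mathbf{K}^{\Lop}$) still closes as an algebra with a uniform constant, so that the bilinear terms obey the same Lipschitz bound on the ball of radius $R$; the key inequality is the analogue of the one in Lemma \ref{lemma Ys Banach algebra}, namely $\prodscalbis{n}^{2d^*}\le 2^{d^*+1}(\prodscalbis{k}^{2d^*}+\prodscalbis{n-k}^{2d^*})$ together with $e^{2sn}\le e^{2sk}e^{2s(n-k)}$, which makes the $n$-convolution bounded exactly as the $(\lambda,\xi)$-convolution was. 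Everything else is a direct citation of the lemmas already proved.
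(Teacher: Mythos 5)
Your proposal is correct and takes essentially the same route as the paper: apply Theorem \ref{theorem Cauchy-Kovalevskaya} on the scale $\big(\Y_s^2\big)_{0<s\le 1}$, observe $\mathbf{F}(t,0)=\mathbf{H}(t)$ so \eqref{eq ana hyp F 2} is just \eqref{eq ana1 hyp H WP}, and establish \eqref{eq ana hyp F 1} by polarizing the bilinear terms and using the derivation bound of Lemma \ref{lemma ana dy dtheta Y s}, the algebra property, the semilinearity of $\mathbb{F}^{\per}$, and the $n$-convolution estimate — which is exactly the content of the paper's Lemma \ref{lemma ana1 est LBK Y s}.
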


The key estimates to prove this result are the following. These are classical, and their proof is recalled here for the sake of completeness. 

\begin{lemma}\label{lemma ana1 est LBK Y s}
	There exists a constant $ C>0 $ such that for $ 0<s'<s\leq 1 $, for $ \b,\mathbf{c} $ in $ \Y_s^2 $, the following estimates hold
	\begin{subequations}\label{eq ana1 est LBK Y s}
		\begin{align}
			\label{eq ana1 est LBK Y s:L}
			\normetriple{L\,\b}_{\Y_{s'}^2}&\leq  \frac{C\,\gamma_0\, \normetriple{\b}_{\Y_s^2}}{s-s'},\\[5pt]
			\label{eq ana1 est LBK Y s:B}
			\normetriple{\partial_{\Theta}\,\mathbf{D}^{\Lop}\big(\b,\mathbf{c}\big)}_{\Y_{s'}^2}&\leq\frac{C\,\gamma_0\, \normetriple{\b}_{\Y_s^2}\normetriple{\mathbf{c}}_{\Y_s^2}}{s-s'},\\[5pt]
			\label{eq ana1 est LBK Y s:F}
			\normetriple{\mathbb{F}^{\per}\big(\partial_{\Theta}\b,\partial_{\Theta}\mathbf{c}\big)}_{\Y_{s}^2}&\leq C\,\gamma_0\, \normetriple{\b}_{\Y_s^2}\normetriple{\mathbf{c}}_{\Y_s^2},\\[5pt]
			\label{eq ana1 est LBK Y s:K}
			\normetriple{\mathbf{K}^{\Lop}\big(\b,\mathbf{c}\big)}_{\Y_{s'}^2}&\leq\frac{C\,\gamma_0\, \normetriple{\b}_{\Y_s^2}\normetriple{\mathbf{c}}_{\Y_s^2}}{s-s'}.
		\end{align}
	\end{subequations}
\end{lemma}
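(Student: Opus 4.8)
The plan is to prove the four estimates in \eqref{eq ana1 est LBK Y s} essentially one at a time, reducing each to the already-established properties of the scalar spaces $Y_s$ and $\Y_s$. The key observation that runs through all four cases is that the operators $L$, $\mathbf{D}^{\Lop}$, $\mathbb{F}^{\per}$ and $\mathbf{K}^{\Lop}$ act componentwise (in the $\phi,\psi$ slot) with coefficients bounded by $\gamma_0$ via \eqref{eq ana1 hyp toy model v D bord}, so it suffices to control each component of the resulting sequence and then sum the weighted $Y_s$-norms with the weight $e^{2s'n}\prodscalbis{n}^{2d^*}$ from Definition \ref{def Y s suite}.

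First I would treat \eqref{eq ana1 est LBK Y s:L}: since $(L\b)^n_\phi=\mathbf{v}^{\Lop}_\phi\cdot\nabla_y b^n_\phi$, Lemma \ref{lemma ana dy dtheta Y s} gives $\norme{\nabla_y b^n_\phi}_{Y_{s'}}\leq \frac{C}{s-s'}\norme{b^n_\phi}_{Y_s}$, and with $|\mathbf{v}^{\Lop}_\phi|\leq\gamma_0$ we get $\normetriple{(L\b)^n_\phi}_{Y_{s'}}\leq \frac{C\gamma_0}{s-s'}\normetriple{b^n_\phi}_{Y_s}$; squaring, multiplying by $e^{2s'n}\prodscalbis{n}^{2d^*}\leq e^{2sn}\prodscalbis{n}^{2d^*}$ and summing over $n$ (and over the two components) yields \eqref{eq ana1 est LBK Y s:L}. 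Next, for \eqref{eq ana1 est LBK Y s:B}, the $n$-th component is $D^{\Lop}_\phi\,\partial_\Theta(b^1_\phi c^n_\phi)$; I would bound $\norme{\partial_\Theta(b^1_\phi c^n_\phi)}_{Y_{s'}}\leq \frac{C}{s-s'}\norme{b^1_\phi c^n_\phi}_{Y_s}$ by Lemma \ref{lemma ana dy dtheta Y s}, then $\norme{b^1_\phi c^n_\phi}_{Y_s}\leq C\norme{b^1_\phi}_{Y_s}\norme{c^n_\phi}_{Y_s}$ by the Banach-algebra property Lemma \ref{lemma Ys Banach algebra}; since $\norme{b^1_\phi}_{Y_s}\leq e^{-s}\prodscalbis{1}^{-d^*}\normetriple{\b}_{\Y_s}\leq\normetriple{\b}_{\Y_s}$, summing the weighted norms over $n$ produces $\normetriple{\b}_{\Y_s^2}\normetriple{\mathbf{c}}_{\Y_s^2}$ with the $\frac{1}{s-s'}$ loss. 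For \eqref{eq ana1 est LBK Y s:F} I would use Proposition \ref{prop F per semilin} (or directly \eqref{eq ana1 hyp toy model F per}) on each component: $\norme{\mathbb{F}^{\per}_\phi[\partial_\Theta b^1_\phi,\partial_\Theta c^n_\phi]}_{Y_s}\leq C\normetriple{b^1_\phi}_{Y_s}\normetriple{c^n_\phi}_{Y_s}$, note that this estimate is at the \emph{same} index $s$ (no derivative loss, which is why the left side carries $\Y_s^2$ not $\Y_{s'}^2$), multiply by $|\mathbf{w}_\phi|\leq\gamma_0^{1/2}$, and sum as before. Finally \eqref{eq ana1 est LBK Y s:K}: the $n$-th component $K^{\Lop}_\phi\sum_{k=1}^{n-1}\partial_\Theta(b^k_\phi c^{n-k}_\psi)$ is handled by combining Lemma \ref{lemma ana dy dtheta Y s} (one $\partial_\Theta$, cost $\frac{1}{s-s'}$), Lemma \ref{lemma Ys Banach algebra} (product of $b^k_\phi$ and $c^{n-k}_\psi$), and the discrete convolution inequality for the sequence weight — here one checks, exactly as in the proof of Lemma \ref{lemma Ys Banach algebra} but with the single integer variable $n$, that $\prodscalbis{n}^{2d^*}\leq 2^{d^*+1}(\prodscalbis{k}^{2d^*}+\prodscalbis{n-k}^{2d^*})$ and $e^{2sn}\leq e^{2sk}e^{2s(n-k)}$ so that $\sum_n e^{2s'n}\prodscalbis{n}^{2d^*}\big|\sum_k \cdots\big|^2$ is controlled by $(\sum_n \prodscalbis{n}^{-2d^*})\,\normetriple{\b}_{\Y_s}^2\normetriple{\mathbf{c}}_{\Y_s}^2$ after a Cauchy--Schwarz in the convolution, the prefactor converging since $2d^*>1$.

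The main obstacle I expect is purely bookkeeping rather than conceptual: one must be careful that in \eqref{eq ana1 est LBK Y s:K} the loss of regularity is spent \emph{only once} — either on the $\partial_\Theta$ (giving $\frac{1}{s-s'}$) while the Banach-algebra product and the sequence-convolution are done at the fixed index $s$ — and that the $n$-weight convolution estimate is genuinely uniform in $s\in(0,1]$, which it is because the constant $C=\sum_{n\in\Z}\prodscalbis{n}^{-2d^*}$ does not depend on $s$ (the exponential weights cancel exactly in the splitting, just as they do for the spatial variable in Lemma \ref{lemma Ys Banach algebra}). I would also remark in passing that all these estimates are stable under passing to the product norm $\normetriple{(\a,\b)}^2_{\Y_s^2}=\normetriple{\a}^2_{\Y_s}+\normetriple{\b}^2_{\Y_s}$ since the operators do not mix the two components in a way that creates cross terms beyond what Cauchy--Schwarz absorbs. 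Once Lemma \ref{lemma ana1 est LBK Y s} is in hand, verifying hypotheses \eqref{eq ana hyp F 1} and \eqref{eq ana hyp F 2} of Theorem \ref{theorem Cauchy-Kovalevskaya} for $\mathbf{F}(t,\cdot)$ is immediate — bilinearity turns the difference $\mathbf{F}(t,u)-\mathbf{F}(t,v)$ into expressions of the form already estimated (with a bound on $R$ entering the Lipschitz constant), and $\mathbf{F}(t,0)=\mathbf{H}(t)$ satisfies \eqref{eq ana hyp F 2} by assumption \eqref{eq ana1 hyp H WP} — which will then yield Proposition \ref{prop WP bord CK}.
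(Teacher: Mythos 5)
Your proposal is correct and follows essentially the same route as the paper's proof: componentwise reduction to the scalar spaces $Y_s$, using Lemma \ref{lemma ana dy dtheta Y s} for the single derivative loss, Lemma \ref{lemma Ys Banach algebra} (resp.\ Proposition \ref{prop F per semilin}) for the products, extraction of the $n=1$ factor via $\norme{b^1_\zeta}_{Y_s}\leq\normetriple{\b}_{\Y_s^2}$, and a Cauchy--Schwarz on the discrete convolution weighted by $\prodscalbis{n}^{2d^*}/(\prodscalbis{k}^{2d^*}\prodscalbis{n-k}^{2d^*})$ for \eqref{eq ana1 est LBK Y s:K}. The only cosmetic difference is that the paper bounds the convolution sum $\sum_{k=1}^{n-1}\prodscalbis{n}^{2d^*}\prodscalbis{k}^{-2d^*}\prodscalbis{n-k}^{-2d^*}$ uniformly in $n$ directly, whereas you first split $\prodscalbis{n}^{2d^*}$ using the elementary inequality; both are equivalent and both give the needed uniform bound.
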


\begin{proof}
	First, estimate \eqref{eq ana1 est LBK Y s:L} follows directly from estimate \eqref{eq ana1 hyp toy model v D bord} and Lemma \ref{lemma ana dy dtheta Y s} since $ L $ is a linear combination of a bounded vector and a first order derivative. For the second one \eqref{eq ana1 est LBK Y s:B}, we have, according to Lemma \ref{lemma Ys Banach algebra}, for $ \zeta=\phi,\psi $ and $ n\geq 1 $, 
	\begin{equation*}
		\norme{D^{\Lop}_{\zeta}\,b^1_{\zeta}\,c^n_{\zeta}}_{Y_s}\leq C\,\gamma_0\norme{b^1_{\zeta}}_{Y_s}\norme{c^n_{\zeta}}_{Y_s}\leq C\,\gamma_0\norme{\b}_{\Y_s^2}\norme{c^n_{\zeta}}_{Y_s}.
	\end{equation*}
Therefore, according to Lemma \ref{lemma ana dy dtheta Y s},
\begin{equation*}
	\norme{\partial_{\Theta}\,D^{\Lop}_{\zeta}\,b^1_{\zeta}\,c^n_{\zeta}}_{Y_{s'}^2}\leq \frac{C}{s-s'}\norme{D^{\Lop}_{\zeta}\,b^1_{\zeta}\,c^n_{\zeta}}_{Y_s}\leq \frac{C\,\gamma_0}{s-s'}\norme{\b}_{\Y_s^2}\norme{c^n_{\zeta}}_{Y_s}.
\end{equation*}
Multiplying by $ e^{2sn}\prodscalbis{n}^{2d^*} $ and summing over $ n\geq 1 $ and $ \zeta=\phi,\psi $ gives the estimate \eqref{eq ana1 est LBK Y s:B} for the $ \Y_s^2 $-norm. With \eqref{eq ana1 hyp toy model F per}, the proof of \eqref{eq ana1 est LBK Y s:F} is analogous but simpler since the operator is semilinear. Finally, for \eqref{eq ana1 est LBK Y s:K}, according to Lemmas \ref{lemma ana dy dtheta Y s} and \ref{lemma Ys Banach algebra}, we have, for $ n\geq 1 $,
\begin{equation*}
	\norme{\sum_{k=1}^{n-1}\partial_{\Theta}\big(b^k_{\phi}\,c^{n-k}_{\psi}\big)}_{Y_{s'}}\leq \frac{C}{s-s'}
	\sum_{k=1}^{n-1}\norme{b^k_{\phi}}_{Y_s}\norme{c^{n-k}_{\psi}}_{Y_s}.
\end{equation*}
Thus, by Cauchy-Schwarz inequality,
\begin{align*}
	2\sum_{n\geq 1}&e^{2s'n}\prodscalbis{n}^{2d^*}\norme{\sum_{k=1}^{n-1}\partial_{\Theta}\big(b^k_{\phi}\,c^{n-k}_{\psi}\big)}_{Y_{s'}}^2\\
	&\leq \frac{C^2}{(s-s')^2}\sum_{n\geq 1}e^{2sn}\prodscalbis{n}^{2d^*}\left(\sum_{k=1}^{n-1}\norme{b^k_{\phi}}_{Y_s}\norme{c^{n-k}_{\psi}}_{Y_s}\right)^2\\
	&\leq \frac{C^2}{(s-s')^2}\sum_{n\geq 1}\left(\sum_{k=1}^{n-1}\frac{\prodscalbis{n}^{2d^*}}{\prodscalbis{k}^{2d^*}\prodscalbis{n-k}^{2d^*}}\right)\\
	&\quad\times\sum_{k=1}^{n-1}e^{2sk}\prodscalbis{k}^{2d^*}\norme{b^k_{\phi}}_{Y_s}^2e^{2s(n-k)}\prodscalbis{n-k}^{2d^*}\norme{c^{n-k}_{\psi}}_{Y_s}^2\\
	&\leq \frac{C^2}{(s-s')^2}\normetriple{\b}_{\Y_s^2}^2\normetriple{\mathbf{c}}_{\Y_s^2}^2,
\end{align*}
since $ \sum_{k=1}^{n-1}\frac{\prodscalbis{n}^{2d^*}}{\prodscalbis{k}^{2d^*}\prodscalbis{n-k}^{2d^*}} $ is bounded uniformly with respect to $ n\geq 1 $, and the result follows.
\end{proof}

We proceed with proof of Proposition \ref{prop WP bord CK}, which essentially amounts to verify assumptions of Theorem \ref{theorem Cauchy-Kovalevskaya}.

\begin{proof}[Proof\emph{ (Proposition \ref{prop WP bord CK})}]
	We apply Theorem \ref{theorem Cauchy-Kovalevskaya} to system \eqref{eq ana dt a} with the scale of Banach spaces $ \big(\Y_s^2\big)_{0< s\leq 1} $. First note that assumption \eqref{eq ana hyp F 2} is satisfied as soon as assumption \eqref{eq ana1 hyp H WP} for $ \mathbf{H} $ is verified. Next we take interest into continuity assumption for $ \mathbf{F} $ and assumption \eqref{eq ana hyp F 1}. For $ 0<s'<s\leq1 $, and for $ t,t'\in(-T,T) $ and $ \b,\mathbf{c} $ in $ \Y_s^2 $, we have,
	\begin{align*}
		\mathbf{F}\big(t,\b\big)-\mathbf{F}\big(t',\mathbf{c}\big)&=\mathbf{L}\big(\b-\mathbf{c}\big)-\partial_{\Theta}\,\mathbf{D}^{\Lop}\big(\b,\b-\mathbf{c}\big) -\partial_{\Theta}\,\mathbf{D}^{\Lop}\big(\b-\mathbf{c},\mathbf{c}\big)
		\\&\quad -\mathbb{F}^{\per}\big(\partial_{\Theta}\,\b,\partial_{\Theta}\,(\b-\mathbf{c})\big)- \mathbb{F}^{\per}\big(\partial_{\Theta}\,(\b-\mathbf{c}),\partial_{\Theta}\,\mathbf{c}\big)+\mathbf{H}(t)-\mathbf{H}(t')\\&\quad+\mathbf{K}^{\Lop}\big(\b,\b-\mathbf{c}\big)+\mathbf{K}^{\Lop}\big(\b-\mathbf{c},\mathbf{c}\big)
	\end{align*}
	so, according to estimates of Lemma \ref{lemma ana1 est LBK Y s},
	\begin{multline}\label{eq ana1 inter1}
		\normetriple{\mathbf{F}\big(t,\b\big)-\mathbf{F}\big(t',\mathbf{c}\big)}_{\Y_{s'}^2}\\
		\leq \normetriple{\mathbf{H}(t)-\mathbf{H}(t')}_{\Y_s^2}+C\,\gamma_0\Big(1+3\normetriple{\b}_{\Y_s^2}+3\normetriple{\mathbf{c}}_{\Y_s^2}\Big)\frac{\normetriple{\b-\mathbf{c}}_{\Y_s^2}}{s-s'}.
	\end{multline}
Therefore, since $ \mathbf{H} $ is continuous from $ [-T,T] $ to $ \Y_1^2 $, if we set $ R>0 $ (which is therefore arbitrary), we both get, from \eqref{eq ana1 inter1}, continuity of $ \mathbf{F} $ from $ [-T,T]\times\ensemble{\b\in\Y_s^2\mid \normetriple{\b}_{\Y_s^2}<R} $ to $ \Y_{s'}^2 $, and (setting $ t'=t $) estimate \eqref{eq ana hyp F 1}, with constant $ C $ given by $ C\,\gamma_0\big(1+6R\big) $. Theorem \ref{theorem Cauchy-Kovalevskaya} therefore applies here and gives the sought result. 
\end{proof}

Here we used that system \eqref{eq ana1 a}, \eqref{eq ana1 a init} presents quadratic nonlinearities, but, using the same arguments, other types of nonlinearities could also be treated.

\subsection{Second simplified model}

We now refine the previous simplified model by incorporating interior equations in it. According to remarks from the introduction of this section, in the new chosen simplified model, we remove the coupling with profiles of frequencies different from $ \phi_j $, $ \psi_j $ and $ \nu_j $, which were appearing in \eqref{eq systeme sigma evol} in terms  $ \partial_{z,\theta}\,\mbox{\emph{terms in}}\,\big(U_1,\dots,U_{n-1},(I-P)\,U_n,U_n^*\big) $. The latter terms are also simplified since they carry derivatives of order higher than one\footnote{In expression \eqref{eq ncor nonpola part U n zeta} of nonpolarized parts, there are already derivatives.}, and we keep only first order derivatives in $ \Theta $ (once again, considering derivatives in $ y $ presents no additional difficulty). They are therefore represented through terms of the form
\begin{equation*}
	\sum_{\zeta_1,\zeta_2=\phi,\psi,\nu}\,\sum_{j_1,j_2=1,3}\,\sum_{k=1}^{n-1}\partial_{y,\Theta}\big(\sigma^k_{\zeta_1,j_1}\,\sigma^{n-k}_{\zeta_2,j_2}\big).
\end{equation*}
We also remove couplings with outgoing frequencies $ \phi_2 $, $ \psi_2 $ and $ \nu_2 $, as incoming equations will be solved seen as propagation in the normal variable equations, a form which is not suited to solve outgoing equations. Finally, we multiply equations \eqref{eq systeme sigma} by $ e^{i\lambda\Theta} $ for $ \Theta\in\T $ a periodic variable. It leads to the following study interior evolution equations, for $ n\geq 1 $, $ \zeta=\phi,\psi,\nu $ and $ j=1,3 $,
\begin{multline}\label{eq ana sigma}
	\color{altblue}X_{\zeta,j}\,\sigma^n_{\zeta,j}+D_{\zeta,j}\,\partial_{\Theta}\big(\sigma_{\zeta,j}^n\,\sigma_{\zeta,j}^1\big)+\sum_{\substack{\zeta_1,\zeta_2\ensemble{\phi,\psi,\nu}\\j_1,j_2\in\ensemble{1,3}}}\partial_{\Theta}\,\J^{\zeta_2,j_2}_{\zeta_1,j_1}\big[\sigma^1_{\zeta_1,j_1},\sigma^n_{\zeta_2,j_2}\big]\\[-10pt]
	\color{altblue}=K_{\zeta,j}\sum_{\substack{\zeta_1,\zeta_2\ensemble{\phi,\psi,\nu}\\j_1,j_2\in\ensemble{1,3}}}\,\sum_{k=1}^{n-1}\partial_{\Theta}\big(\sigma^k_{\zeta_1,j_1}\,\sigma^{n-k}_{\zeta_2,j_2}\big),
\end{multline}
\color{black}
where we have defined $ \sigma^n_{\zeta,j} $, a function of $ \Omega_T\times\T $,  as
\begin{equation*}
	\sigma^n_{\zeta,j}(z,\Theta):=\sum_{\lambda\in\Z^*}\sigma^n_{\zeta,j,\lambda}(z)\,e^{i\lambda\,\Theta},
\end{equation*}
and where $ K_{\zeta,j}\in\R $ for $ \zeta=\phi,\psi,\nu $ and $ j=1,3 $.
For $ (\zeta_1,\zeta_2,j_1,j_2)\in\ensemble{\phi,\psi,\nu}^2\times\ensemble{1,3}^2 $, the bilinear operator $ \J_{\zeta_1,j_1}^{\zeta_2,j_2} $ is defined as
\begin{equation}\label{eq ana def J}
	\J_{\zeta_1,j_1}^{\zeta_2,j_2}\big[\sigma,\tau\big]=J_{\zeta_1,j_1}^{\zeta_2,j_2}\sum_{\lambda\in\Z^*}\sigma_{\lambda}\tau_{\lambda}\,e^{i\lambda\Theta},
\end{equation}
with some coefficients $ J_{\zeta_1,j_1}^{\zeta_2,j_2} $. Similarly as for the boundary equations, up to changing all notation by a harmless multiplicative constant, according to expression \eqref{eq champ vecteur X_alpha} of vector field $ X_{\zeta,j} $, it can be assumed to read
\begin{equation*}
	X_{\zeta,j}=\partial_{t}-\mathbf{v}_{\zeta,j}\cdot\nabla_x,
\end{equation*}
where vector $ \mathbf{v}_{\zeta,j} $ has been defined in Definition \ref{def sortant rentrant alpha X alpha}. Recall last component of each vector $ \mathbf{v}_{\zeta,j} $ is positive.
Equation \eqref{eq ana sigma} is not provided with an initial condition, as we will see it as a propagation in the normal variable equation. 

For boundary conditions for profiles $ \sigma^n_{\zeta,j} $, $ j=1,3 $, the coupling terms in $ \big(\sigma^n_{\zeta,2}\big)_{|x_d=0} $ (appearing in terms in $ \tilde{F}^n_{\zeta,j,\lambda} $ for boundary conditions for profiles associated with $ \phi $ and $ \psi $) are not kept, since it would require trace estimates to solve interior equations, and we do not have such estimates in our possession. Terms in $ \tilde{F}^n_{\zeta,j,\lambda} $ also convey first order derivatives of lower order terms $ a^1_{\zeta},\dots,a^{n-1}_{\zeta} $. For the functional framework chosen later, these derivatives are an issue, and since coupling with lower order terms $ a^1_{\zeta},\dots,a^{n-1}_{\zeta} $ will be expressed in evolution equations for $ a^n_{\zeta} $, terms $ \tilde{F}^n_{\zeta,j,\lambda} $ are only represented in the study equations by boundary terms $ g^n_{\zeta,j} $, belonging to one of the spaces defined later on. This lead to the following study boundary conditions, for $ j=1,3 $,
\begin{subequations}\label{eq ana bord sigma}
\begin{align}\label{eq ana bord sigma phi psu}
	\color{altred}\big(\sigma_{\phi,j}^n\big)_{|x_d=0}&\color{altred}=(e_{\phi,j}\cdot r_{\phi,j})\,a^n_{\phi}+g^n_{\phi,j},\qquad \color{altorange2}\big(\sigma_{\psi,j}^n\big)_{|x_d=0}=(e_{\psi,j}\cdot r_{\psi,j})\,a^n_{\psi}+g^n_{\psi,j},\\[5pt]\label{eq ana bord sigma nu}
	\color{altgreen}\big(\sigma_{\nu,j}^n\big)_{|x_d=0}&\color{altgreen}=g^n_{\nu,j},
\end{align}
\end{subequations}
where, for $ \zeta=\phi,\psi $, we have denoted by $ a^n_{\zeta} $ the function of $ \Omega_T\times\T $ defined as
\begin{equation*}
	a^n_{\zeta}(z,\Theta):=\sum_{\lambda\in\Z^*}a^n_{\zeta,\lambda}(z)\,e^{i\lambda\,\Theta}.
\end{equation*}

Finally, equations for boundary terms $ a^n_{\phi} $ and $ a^n_{\psi} $ are the same as for the first simplified model, namely,
\begin{subequations}\label{eq ana a}
\begin{align}\label{eq ana a phi} 
\color{altred}X^{\Lop}_{\phi}\,a_{\phi}^n+D^{\Lop}_{\phi}\,\partial_{\Theta}\big(a^1_{\phi}\,a^n_{\phi}\big)
+\mathbf{w}_{\phi}\,\mathbb{F}_{\phi}^{\per}\big[\partial_{\Theta}\,a^1_{\phi},\partial_{\Theta}\,a^n_{\phi}\big]\color{altred}&\color{altred}=H^n_{\phi}+K^{\Lop}_{\phi}\sum_{k=1}^{n-1}\partial_{\Theta}\big(a^k_{\phi}\,a^{n-k}_{\psi}\big),\\\label{eq ana a psi}
\color{altorange2}X^{\Lop}_{\psi}\,a_{\psi}^n+D^{\Lop}_{\psi}\,\partial_{\Theta}\big(a^1_{\psi}\,a^n_{\psi}\big)
+\mathbf{w}_{\psi}\,\mathbb{F}_{\psi}^{\per}\big[\partial_{\Theta}\,a^1_{\psi},\partial_{\Theta}\,a^n_{\psi}\big]&\color{altorange2}=H^n_{\psi}+K^{\Lop}_{\psi}\sum_{k=1}^{n-1}\partial_{\Theta}\big(a^k_{\phi}\,a^{n-k}_{\psi}\big),
\end{align}
\end{subequations}
 coupled with the initial condi\-tions
\begin{equation}\label{eq ana a init}
	\color{altred}\big(a_{\phi}^n\big)_{|t= 0}=0,\qquad \color{altorange2}\big(a_{\psi}^n\big)_{|t= 0}=0.
\end{equation}

The strategy to solve the above system of equations \eqref{eq ana sigma}, \eqref{eq ana bord sigma}, \eqref{eq ana a} and \eqref{eq ana a init} is to apply a Cauchy-Kovalevskaya theorem such as Theorem \ref{theorem Cauchy-Kovalevskaya} to interior system \eqref{eq ana sigma},  \eqref{eq ana bord sigma}, seen as a propagation equation in the normal variable. In order to do that, we need the boundary terms in \eqref{eq ana bord sigma} to be analytical with respect to all variables (even with respect to time). For the first simplified model, in Proposition \ref{prop WP bord CK}, we obtained only continuity with respect to time. Therefore we need to refine this result to obtain analyticity with respect to all variables. In the next part we define functional spaces which will be used for this purpose.

\subsection{Additional functional framework}

\subsubsection{Functional spaces}

We define two different types of spaces, which all are spaces of functions defined on $ \omega_T\times\T $, analytical with respect to all variables $ (t,y,\Theta) $. The first ones, which will be denoted by $ E_{\rho} $ and $ \E_{\rho} $, will be used to solve boundary equations \eqref{eq ana a}-\eqref{eq ana a init}, which will be viewed as a fixed point problem in $ \E_{\rho} $. The second one, denoted by $ X_{r} $, $ \X_{r} $, are the one fitted for interior system \eqref{eq ana sigma}-\eqref{eq ana bord sigma}, where equation \eqref{eq ana sigma} will be seen as a differential equation with values in $ \X_r $. Features and relations of this spaces are summarized in Figure \ref{figure fct spaces}.
In addition to defining the functional spaces, we have to describe action of differentiation on it, and to prove that every function of $ \E_{\rho} $ is in $ \X_r $.

Previously introduced spaces $ Y_s $, $ s\in(0,1) $, are used to defined spaces $ E_{\rho} $, $ \rho\in(0,1) $. If $ I\subset \R $ is an interval and $ E $ a Banach space, we denote by $ \mathcal{C}^{\omega}(I,E) $ the space of analytic functions from $ I $ to $ E $.

\begin{definition}
	For $ \rho\in(0,1) $, the space $ \tilde{E}_{\rho} $ is defined as
	\begin{equation*}
		\tilde{E}_{\rho}:=\bigcap_{s\in(0,1)}\mathcal{C}^{\omega}\Big(\big(-\rho(1-s),\rho(1-s)\big),Y_s\Big).
	\end{equation*}
\end{definition}

In the next definition we use the \emph{Catalan numbers} (see \cite{Comtet1974Combinatorics}), defined by, for $ n\geq 0 $, 
\begin{equation*}
\mathfrak{C}_n:=\inv{n+1}\binom{2n}{n}.
\end{equation*}
They satisfy, for $ n\geq 0 $,
\begin{equation}\label{eq ana relation catalan}
\sum_{i=0}^n\mathfrak{C}_i\,\mathfrak{C}_{n-i}=\mathfrak{C}_{n+1}.
\end{equation}
The Catalan numbers appear in the power series expansion of $ x\mapsto (1-x)^{-1/2} $:
\begin{equation}\label{eq ana power series 1/sqrt(1-x)}
\inv{\sqrt{1-x}}=\sum_{n\geq 0}\inv{n!}\frac{(n+1)!\,\mathfrak{C}_n}{4^n}\,x^n,\quad \forall |x|<1.
\end{equation}
Next definition takes inspiration from the method of majoring series, see for example \cite[Chapter II]{John1991PDE}, since, in this formalism, it requires for a function to admit a dilatation of $ x\mapsto (1-x)^{-1/2} $ as a majoring series.

\begin{definition}\label{def E rho}
For $ \rho\in(0,1) $, the space $ E_{\rho} $ is defined as the set of functions $ a $ of $ \tilde{E}_{\rho} $ such that there exists $ M>0 $ such that for all $ s\in(0,1) $ and $ \nu\in\mathbb{N} $, 
\begin{equation}\label{eq ana est norme a}
	\norme{\partial_t^{\nu} a(0)}_{Y_s}\leq \frac{M}{(1-s)^{\nu+1}}\frac{(\nu+1)!\,\mathfrak{C}_{\nu}}{(4\rho)^{\nu}}.
\end{equation}
The infimum of all $ M $ satisfying condition \eqref{eq ana est norme a} is denoted by $ \normetriple{a}_{E_{\rho}} $.
\end{definition}

If $ a $ is in $ E_{\rho} $ for some $ \rho\in(0,1) $, then, for $ s\in(0,1) $, for $ |t|<\rho(1-s) $, by expanding $ a $ in power series with respect to $ t $ at $ 0 $, using estimate \eqref{eq ana est norme a} and the power series expansion of $ x\mapsto(1-x)^{-1/2} $, we get
\begin{equation*}
	\norme{a(t)}_{Y_s}\leq \frac{\normetriple{a}_{E_{\rho}}}{1-s}\left(1-\frac{|t|}{\rho(1-s)}\right)^{-1/2}.
\end{equation*}
We find here the formulation of \cite{BaouendiGoulaouic1978Nishida}.

Since we work with couples of sequences of functions, we define a space accordingly, and we specify the norm used on the product space.

\begin{definition}\label{def E rho suite}
		For $ \rho\in(0,1) $, the space $ \E_{\rho} $ is defined as the set of sequences $ \mathbf{a}=\big(a_n\big)_{n\in\mathbb{N}} $ of $ E_{\rho} $ such that
	\begin{equation*}
		\normetriple{\mathbf{a}}^2_{\E_{\rho}}:=\sum_{n\geq 1}e^{2\,\rho\, n}\prodscalbis{n}^{2d^*}\normetriple{a_n}^2_{E_{\rho}}<+\infty.
	\end{equation*}
For $ \rho\in(0,1) $, the norm on the product space $ \E_{\rho}^2 $ is defined, for $ (\a,\b)\in\E_{\rho}^2 $, as
\begin{equation*}
\normetriple{\big(\a,\b\big)}_{\E_{\rho}^2}^2:=\normetriple{\a}_{\E_{\rho}}^2+\normetriple{\b}_{\E_{\rho}}^2.
\end{equation*}
\end{definition}

Spaces $ E_{\rho} $ and $ \E_{\rho} $ are not normed algebras, and neither do they satisfy a derivation property such as \eqref{eq ana est dt dtheta Ys}. Indeed, for a function $ a $ of $ E_{\rho} $ with $ \rho\in(0,1) $, we have, by Lemma \ref{lemma ana dy dtheta Y s}, for $ 0<s'<s<1 $,
\begin{equation*}
	\norme{\partial_t^{\nu} \partial_{\Theta}\,a(0)}_{Y_{s'}}\leq \inv{s-s'} \norme{\partial_t^{\nu} a(0)}_{Y_s} \leq   \inv{s-s'}\frac{\normetriple{a}_{E_{\rho}}}{(1-s)^{\nu+1}}\frac{(\nu+1)!\,\mathfrak{C}_{\nu}}{(4\rho)^{\nu}}.
\end{equation*}
To obtain an estimate for \eqref{eq ana est norme a}, it seems that we should have the existence of $ C>0 $ such that for all $ 0<s'<s<1 $ and $ \nu\geq 0 $, 
\begin{equation*}
	\inv{(s-s')(1-s)^{\nu+1}}\leq \frac{C}{(1-s')^{\nu+1}},
\end{equation*}
which is false.
However, as we shall see later, estimating $ t\mapsto \int_0^t \partial_{\Theta}\,a(s)\,ds $ instead of $ \partial_{\Theta}\,a $ could solve the problem. This is what is referred to as \emph{regularization by integration in time}, see \cite{Ukai2001CauchyKovalevskaya, Morisse2020Elliptic}. These spaces seem well suited to prove existence of solutions to boundary system \eqref{eq ana a}, analytical with respect to all variables, but the absence of above mentioned properties prevents to apply a Cauchy-Kovalevskaya theorem with these spaces for interior system. This is why we need to define other, more appropriate spaces.

Spaces for interior equations are spaces in $ (t,y,\Theta) $ variables since interior equations will be seen as propagation equation in $ x_d $, valued in these spaces. In the following, $ H^{d^*} $ denotes the Sobolev space $ H^{d^*}(\R^{d-1}_y\times\T_{\Theta})  $ of regularity $ d^* $. Recall that $ d^* $ has been chosen such that $ d^*>\tilde{m}_0+2+(d+1)/2 $, where $ \tilde{m}_0 $ is the real nonnegative number of Lemma \ref{lemma hyp petit diviseurs}. The next definition is based on the classical way to characterize analytic functions. For a $ (d+1) $-tuple $ \alpha=(\alpha_0,\dots,\alpha_d)\in\mathbb{N}^{d+1} $, notation $ \alpha! $ refers to $ \alpha!:=\alpha_0!\cdots \alpha_d! $.

\begin{definition}
	Consider $ \rho\in(0,1) $. For $ r\in(0,1) $, the space $ X_{r} $ is defined as the set of smooth functions $ a $ of $ (t,y,\Theta)\in[-\rho/2,\rho/2]\times\R^{d-1}\times\T $ with values in $ \C $ such that there exists $ M>0 $ such that for every $ \alpha $ in $ \mathbb{N}^{d+1} $,
	\begin{equation*}
		\norme{\partial_{t,y,\Theta}^{\alpha}a(0,.,.)}_{H^{d^*}}\leq \frac{M\alpha!}{r^{|\alpha|}\,(|\alpha|^{2d+1}+1)}.
	\end{equation*}
The infimum of such $ M>0 $ is denoted by $ \normetriple{a}_{X_r} $.
\end{definition}

Note that in the previous definition, the space $ X_{r} $ depends on the fixed constant $ \rho\in(0,1) $, but we chose not to include this dependence in the notation since in the following $ \rho $ will be fixed. The time interval of the form $ [-\rho/2,\rho/2] $ is required because, in the following, functions of $ X_r $ will come from functions of $ E_{\rho} $, which are defined on time intervals $ \big(-\rho(1-s),\rho(1-s)\big) $ for $ s\in(0,1) $, so we choose arbitrarily $ s=1/2 $. Analogously as for $ E_{\rho} $, we define a space for sequences of $ X_r $. 

\begin{definition}
	For $ r\in(0,1) $, the space $ \X_{r} $ is defined as the set of sequences $ \mathbf{a}=\big(a_n\big)_{n\in\mathbb{N}} $ of $ X_{r} $ such that
	\begin{equation*}
		\normetriple{\mathbf{a}}^2_{\X_{r}}:=\sum_{n\geq 1}e^{2\,r\, n}\prodscalbis{n}^{2d^*}\normetriple{a_n}^2_{X_{r}}<+\infty.
	\end{equation*}
For $ r\in(0,1) $, the norm on the product space $ \X_r^6 $ is defined, for $ \a=(\a_1,\dots,\a_6)\in\X_r^6 $, as
\begin{equation*}
\normetriple{\a}_{\X_r^6}^2:=\normetriple{\a_1}_{\X_r}^2+\cdots+\normetriple{\a_6}_{\X_r}^2.
\end{equation*}
\end{definition}

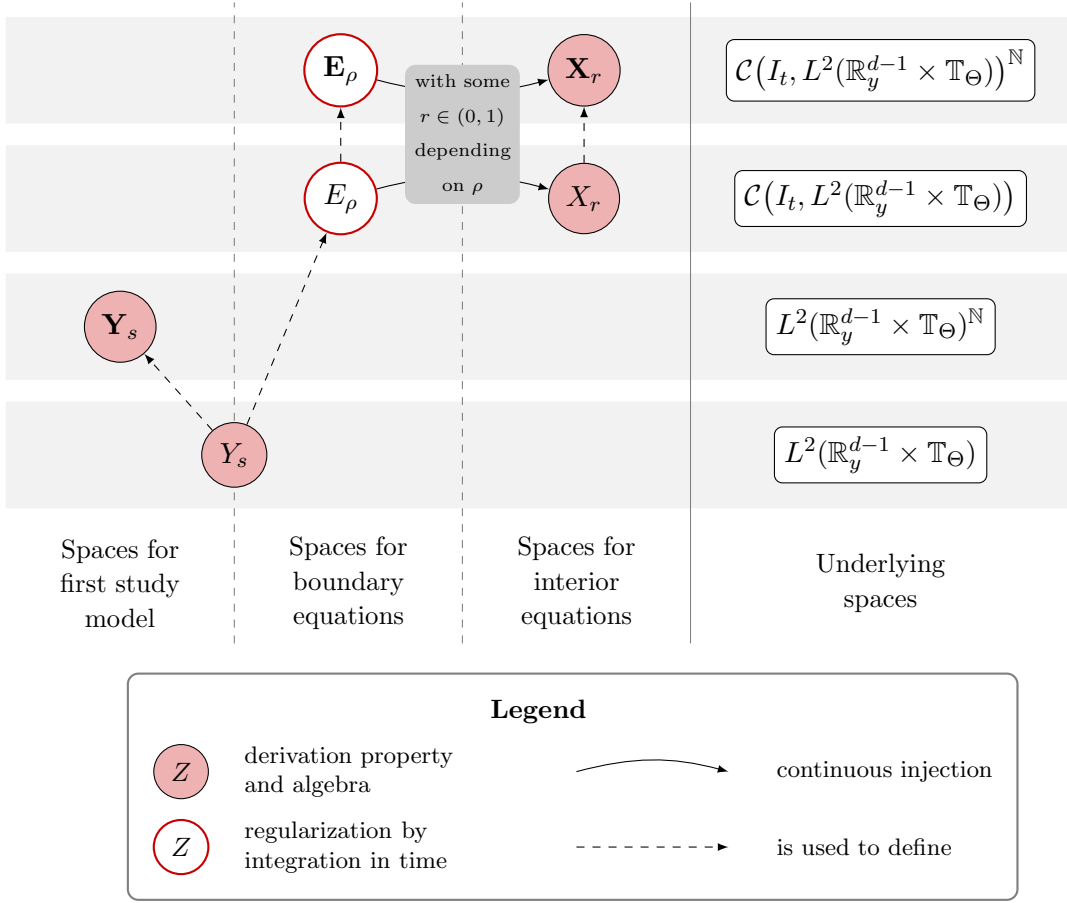
\begin{figure}
	\begin{tikzpicture}
		\draw[fill=black!5,black!5] (-4.5,2) rectangle (9.5,3.4);
		\draw[fill=black!5,black!5] (-4.5,0.3) rectangle (9.5,1.7);
		\draw[fill=black!5,black!5] (-4.5,-1.4) rectangle (9.5,0);
		\draw[fill=black!5,black!5] (-4.5,-3.1) rectangle (9.5,-1.7);
		\node[draw,circle,fill=altred!30] (YY) at (-3,-0.7) {$ \Y_s $};
		\node[draw,circle,altred,thick,text=black,fill=white] (E) at (-0.1,1) {$ E_{\rho} $};
		\node[draw,circle,altred,thick,text=black,fill=white] (EE) at (-0.1,2.7) {$ \E_{\rho} $};
		\node [draw,circle,fill=altred!30](X) at (3.1,1) {$ X_r $};
		\node[draw,circle,fill=altred!30] (XX) at (3.1,2.7)  {$ \X_r $};
		\node[draw,rounded corners=3pt,fill=white]  (LL2) at (7,-0.7) {$ L^2(\R_{y}^{d-1}\times\T_{\Theta})^{\mathbb{N}} $};
		\node[draw,rounded corners=3pt,fill=white]  (L2) at (7,-2.4) {$ L^2(\R_{y}^{d-1}\times\T_{\Theta}) $};
		\node[draw,rounded corners=3pt,fill=white] (CL2) at (7,1) {$ \mathcal{C}\big(I_t,L^2(\R_{y}^{d-1}\times\T_{\Theta})\big) $};
		\node[draw,rounded corners=3pt,fill=white] (CCL2) at (7,2.7) {$ \mathcal{C}\big(I_t,L^2(\R_{y}^{d-1}\times\T_{\Theta})\big)^{\mathbb{N}} $};
		\draw[align=center](-3,-4.1) node{\small Spaces for\\\small first study\\\small model};
		\draw[align=center](3,-4.1) node{\small Spaces for\\\small interior\\\small equations};
		\draw[align=center](0,-4.1) node{\small Spaces for\\\small boundary\\\small equations};
		\draw[align=center](7,-4.1) node{\small Underlying\\\small spaces};
		\draw[black!50] (4.5,-4.9) -- (4.5,3.6);
		\draw[dashed,black!50] (1.5,-4.9) -- (1.5,3.6);
		\draw[dashed,black!50] (-1.5,-4.9) -- (-1.5,3.6);
		\node[draw,circle,fill=altred!30] (Y) at (-1.5,-2.4) {$ Y_s $};
		\draw[->,>=latex] (EE) to[bend right=15] (XX);
		\draw[->,>=latex] (E) to[bend left=15] (X);
		\draw[->,>=latex,dashed] (Y) to (E);
		\draw[->,>=latex,dashed] (E) to (EE);
		\draw[->,>=latex,dashed] (X) to (XX);
		\draw[->,>=latex,dashed] (Y) to (YY);
		\node[rounded corners,fill=black!20,align=center] (A) at (1.5,1.85) {\tiny with some \\\tiny $ r\in(0,1) $\\\tiny depending \\\tiny on $ \rho $};
\begin{scope}[shift={(-2,-1.9)}]
		\node[draw,circle,altred,thick,text=black] (Z) at (-0.2,-5.7) {\small $ Z $};
		\node [draw,circle,fill=altred!30](Z) at (-0.2,-4.7) {\small $ Z $};
		\draw[->,>=latex] (5,-4.7) to[bend left=15] (7,-4.7);
		\draw[->,>=latex,dashed] (5,-5.7) to (7,-5.7);
		\draw[right,align=left] (0.5,-5.7) node{\footnotesize regularization by\\[-2pt] \footnotesize integration in time};
		\draw[right,align=left] (0.5,-4.7) node{\footnotesize derivation property\\[-2pt] \footnotesize and algebra};
		\draw[right,align=left] (7.5,-4.7) node{\footnotesize continuous injection};
		\draw[right,align=left] (7.5,-5.7) node{\footnotesize is used to define};
		\draw (4.5,-3.9) node{\small \textbf{Legend}};
		\draw[rounded corners,thick,black!50] (-0.9, -6.4) rectangle (10.8, -3.4) {};
	\end{scope}
	\end{tikzpicture}
	\caption{Features of functional spaces and links between them}
	\label{figure fct spaces}
\end{figure}

The following result asserts that, for every $ \rho\in(0,1) $, there exists $ r\in(0,1) $ such that space $ \E_{\rho} $ is continuously injected in $ \X_r $, with a constant independent of $ \rho $. The proof is recalled here for the sake of clarity. 

\begin{lemma}\label{lemma Erho dans Xr}
	There exists $ C>0 $, such that, for $ \rho\in(0,1) $, if $ \a $ is in $ \E_{\rho} $ then there exists $ r\in(0,1) $ such that $ \a $ belongs to $ \X_r $ (for the same $ \rho $) and, furthermore, 
	\begin{equation*}
		\normetriple{\a}_{\X_r}\leq C\normetriple{\a}_{\E_{\rho}}.
	\end{equation*}
\end{lemma}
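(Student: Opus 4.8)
The plan is to establish the continuous injection one function at a time and then sum over the sequence index. Fix $\rho\in(0,1)$; I will show that any $a\in E_{\rho}$ lies in $X_{r}$ for a suitable $r=r(\rho)\in(0,1)$ with $\normetriple{a}_{X_{r}}\leq C\,\normetriple{a}_{E_{\rho}}$, where $C$ depends only on the (fixed) dimension $d$, and then deduce the claim for sequences. The two ingredients needed are: how $(y,\Theta)$-derivatives act from the scale $Y_{s}$ into $H^{d^{*}}$, and the growth control on the time derivatives at $t=0$ built into the defining inequality \eqref{eq ana est norme a} of the $E_{\rho}$-norm; these are matched by freezing the $Y_{s}$-parameter at $s=1/2$, which is exactly the value corresponding to the time interval $[-\rho/2,\rho/2]$ on which functions of $X_{r}$ are defined.

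First I would record the elementary tangential estimate: for $u\in Y_{s}$, $s\in(0,1)$, and a multi-index $\beta$ in the variables $(y_{1},\dots,y_{d-1},\Theta)$,
\begin{equation*}
	\norme{\partial^{\beta}_{y,\Theta}u}_{H^{d^{*}}}\leq\frac{d^{|\beta|}\,\beta!}{s^{|\beta|}}\,\norme{u}_{Y_{s}},
\end{equation*}
the case $\beta=0$ being the trivial embedding $\norme{u}_{H^{d^{*}}}\leq\norme{u}_{Y_{s}}$ coming from $e^{2s|(\lambda,\xi)|}\geq1$. On the Fourier side $\widehat{(\partial^{\beta}_{y,\Theta}u)_{\lambda}}(\xi)$ is $\hat{u}_{\lambda}(\xi)$ times a monomial in $(\lambda,\xi)$ of total degree $|\beta|$, bounded by $|(\lambda,\xi)|^{|\beta|}$, and one uses $x^{k}e^{-sx}\leq k!/s^{k}$ together with $|\beta|!\leq d^{|\beta|}\beta!$; this can also be obtained by iterating Lemma \ref{lemma ana dy dtheta Y s}. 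Next, evaluating \eqref{eq ana est norme a} at $s=1/2$ and using the crude bounds $\mathfrak{C}_{\nu}\leq4^{\nu}$ and $(\nu+1)!\leq2^{\nu}\nu!$ gives $\norme{\partial_{t}^{\nu}a(0)}_{Y_{1/2}}\leq2\,\nu!\,(4/\rho)^{\nu}\normetriple{a}_{E_{\rho}}$ for all $\nu$. Writing a general multi-index $\alpha\in\mathbb{N}^{d+1}$ as $\alpha=(\nu,\beta)$ with $\nu$ the time index, so $\alpha!=\nu!\,\beta!$ and $|\alpha|=\nu+|\beta|$, and applying the tangential estimate with $s=1/2$ to $\partial_{t}^{\nu}a(0)$, I obtain
\begin{equation*}
	\norme{\partial^{\alpha}_{t,y,\Theta}a(0)}_{H^{d^{*}}}\leq 2\,\alpha!\,\Lambda_{\rho}^{|\alpha|}\,\normetriple{a}_{E_{\rho}},\qquad\Lambda_{\rho}:=\max\!\Big(2d,\frac{4}{\rho}\Big),
\end{equation*}
and one also checks, from $a\in\mathcal{C}^{\omega}((-2\rho/3,2\rho/3),Y_{1/3})$ and the real-analyticity in $(y,\Theta)$ of elements of $Y_{1/3}$, that $a$ restricted to $[-\rho/2,\rho/2]\times\R^{d-1}\times\T$ is smooth, as required for membership in $X_{r}$. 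The final step is to take $r:=1/(2\Lambda_{\rho})$. Since $\rho<1$ and $d\geq2$ one has $\Lambda_{\rho}\geq4/\rho>4$, hence $r\in(0,1/8)\subset(0,1)$ and $r\leq\rho/8<\rho$; with this choice $\Lambda_{\rho}^{|\alpha|}=r^{-|\alpha|}2^{-|\alpha|}$, so
\begin{equation*}
	\norme{\partial^{\alpha}_{t,y,\Theta}a(0)}_{H^{d^{*}}}\leq\frac{\alpha!}{r^{|\alpha|}}\,2^{1-|\alpha|}\,\normetriple{a}_{E_{\rho}}\leq\frac{C\,\normetriple{a}_{E_{\rho}}\,\alpha!}{r^{|\alpha|}\big(|\alpha|^{2d+1}+1\big)},
\end{equation*}
with $C:=2\sup_{k\geq0}2^{-k}(k^{2d+1}+1)<\infty$ depending only on $d$, since the geometric factor $2^{-|\alpha|}$ absorbs the polynomial weight. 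This is the statement $a\in X_{r}$ with $\normetriple{a}_{X_{r}}\leq C\normetriple{a}_{E_{\rho}}$. Applying this to each term of a sequence $\mathbf{a}=(a_{n})_{n\geq1}\in\E_{\rho}$ (the same $r$ works for all $n$) and using $r<\rho$ so that $e^{2rn}\leq e^{2\rho n}$, one concludes $\normetriple{\mathbf{a}}^{2}_{\X_{r}}=\sum_{n\geq1}e^{2rn}\prodscalbis{n}^{2d^{*}}\normetriple{a_{n}}^{2}_{X_{r}}\leq C^{2}\normetriple{\mathbf{a}}^{2}_{\E_{\rho}}$, which is the asserted bound.

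The main obstacle is the bookkeeping across the three distinct scaling parameters at play — the analyticity radius $s$ of the $Y_{s}$ scale, the time-growth parameter $\rho$ of $E_{\rho}$, and the joint-analyticity radius $r$ of $X_{r}$ — and, relatedly, keeping factorials rather than multinomials throughout so that the $X_{r}$-bound comes out genuinely with $\alpha!$, while making sure the final constant $C$ does not depend on $\rho$. The single choice $r=1/(2\Lambda_{\rho})$ is what simultaneously reconciles the three requirements $r<1$, $r<\rho$ (needed to compare the exponential weights in the sequence norms), and the absorption of the polynomial factor $|\alpha|^{2d+1}+1$ appearing in the definition of $X_{r}$.
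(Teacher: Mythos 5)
Your proof is correct and follows the same route as the paper's: isolate the time derivatives using the defining bound on $\partial_t^{\nu}a(0)$ in $E_{\rho}$, control the tangential derivatives through the $Y_s$ scale via $x^{k}e^{-sx}\leq k!/s^{k}$, combine the two into the mixed $H^{d^*}$ bound, absorb the polynomial weight $(|\alpha|^{2d+1}+1)$ by shrinking the radius, and sum over $n$ using $r<\rho$. You simply freeze $s=1/2$ from the outset (which matches the time interval $[-\rho/2,\rho/2]$ of $X_r$) and give an explicit $r=1/(2\Lambda_\rho)$, whereas the paper keeps $s$ as a free parameter and later passes to a smaller $s'<s$; the underlying argument is the same and your explicit bookkeeping is sound.
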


\begin{proof}
	Let $ \a=(a^n)_{n\geq 1} $ be in $ \E_{\rho} $ for some $ \rho\in(0,1) $. For $ \alpha=(\alpha_0,\alpha',\beta)=(\alpha_0,\alpha_1,\dots,\alpha_{d-1},\beta) $ in $ \mathbb{N}\times\mathbb{N}^{d-1}\times\mathbb{N} $, we have, for $ s\in(0,1) $ and $ n\geq 1 $,
	\begin{align*}
		\norme{\partial_{t,y,\Theta}^{\alpha}a^n(0,.,.)}^2_{H^{d^*}}&=\int_{\R^{d-1}}\sum_{\lambda\in\Z}\xi_1^{2\alpha_1}\cdots\xi_{d-1}^{2\alpha_{d-1}}\lambda^{2\beta}\prodscalbis{(\xi,\lambda)}^{2d^*}\left|\partial_t^{\alpha_0}\hat{a^n_{\lambda}}(0,\xi)\right|^2\,d\xi\\
		&\leq\int_{\R^{d-1}}\sum_{\lambda\in\Z}\frac{\alpha'!^2\beta!^2}{s^{2(|\alpha'|+\beta)}}\,e^{2s|(\xi,\lambda)|}\prodscalbis{(\xi,\lambda)}^{2d^*}\left|\partial_t^{\alpha_0}\hat{a^n_{\lambda}}(0,\xi)\right|^2\,d\xi\\
		&=\frac{\alpha'!^2\beta!^2}{s^{2(|\alpha'|+\beta)}}\norme{\partial_t^{\alpha_0}a^n(0)}^2_{Y_s},
	\end{align*}
using the inequality
\begin{equation*}
	\frac{(s\xi)^{\alpha'}(s\lambda)^{\beta}}{\alpha'!\beta!}\leq e^{s|(\xi,\lambda)|}.
\end{equation*} 
Since $ a^n $ is in $ E_{\rho} $ (because $ \a $ is in $ \E_{\rho} $), we therefore have 
\begin{equation*}
	\norme{\partial_{t,y,\Theta}^{\alpha}a^n(0,.,.)}_{H^{d^*}}\leq \frac{\alpha'!\beta!}{s^{|\alpha'|+\beta}} \frac{\normetriple{a^n}_{E_{\rho}}\,(\alpha_0+1)!\,\mathfrak{C}_{\alpha_0}}{(1-s)^{\alpha_0+1}\,(4\rho)^{\alpha_0}}\leq \frac{\alpha'!\beta!}{s^{|\alpha'|+\beta}} \frac{C\,\normetriple{a^n}_{E_{\rho}}\,\alpha_0!}{(1-s)^{\alpha_0+1}\,(3\rho)^{\alpha_0}}
\end{equation*}
using $ (\alpha_0+1)\,\mathfrak{C}_{|\alpha_0|}/(4\rho)^{\alpha_0}\leq C/(3\rho)^{\alpha_0} $. Finally, we have, if $ s\leq \min(\rho,2/3) $,
\begin{equation*}
	\norme{\partial_{t,y,\Theta}^{\alpha}a^n(0,.,.)}_{H^{d^*}}\leq \frac{\alpha!}{s^{|\alpha|}} \frac{C\,\normetriple{a^n}_{\E_{\rho}}}{3}\leq \frac{C\,\normetriple{a^n}_{\E_{\rho}}\alpha!}{s'^{|\alpha|}\,(|\alpha|^{2d+1}+1)},
\end{equation*}
with $ s'<s $, because $ s'^{|\alpha|}\,(|\alpha|^{2d+1}+1)\leq Cs^{|\alpha|} $. 
Therefore, 
\begin{equation}\label{eq ana inter 3}
	\normetriple{a^n}_{X_{s'}}\leq C \normetriple{a^n}_{E_{\rho}},
\end{equation}
with a constant $ C $ which does not depend on $ n\geq 1 $. Therefore, multiplying inequality \eqref{eq ana inter 3} by $ e^{2s'n}\prodscalbis{n}^{2d^*} $ and summing over $ n\geq 1 $ leads to 
\begin{equation*}
	\normetriple{\a}_{\X_{s'}}\leq C \normetriple{\a}_{\E_{\rho}},
\end{equation*}
so $ \a $ belongs to $ \X_r $ with $ r<\min(\rho,2/3) $ which depends only on $ \rho $, concluding the proof.
\end{proof}

Following results state that partial derivatives with respect to $ t,y,\Theta $ act on $ \X_r $ in the same way as partial derivatives with respect to $ y,\Theta $ act on $ Y_s $, and that spaces $ \X_r $ satisfy an algebra property. For the sake of completeness, we recall the proof of these classical results. 

\begin{lemma}\label{lemma ana dy dtheta Xr}
	There exists $ C>0 $ such that, for $ 0\leq r'<r\leq 1 $, for $ \a $ in $ \X_r $ and for $ e_j $ in $ \mathbb{N}^{d+1} $ with $ |e_j|=1 $, function $ \partial_{t,y,\Theta}^{e_j}\a $ belongs to $ \X_{r'} $ and satisfies
	\begin{equation*}
		\normetriple{\partial_{t,y,\Theta}^{e_j}\a}_{\X_{r'}}\leq \frac{C}{r-r'}\,\normetriple{\a}_{\X_r}.
	\end{equation*}
\end{lemma}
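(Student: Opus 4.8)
The plan is to mimic the proof of Lemma \ref{lemma ana dy dtheta Y s}, with the Fourier‑side weights replaced by a direct comparison of Taylor coefficients at $0$. First I would fix $0<r'<r\leq 1$, a unit multi‑index $e_j\in\mathbb{N}^{d+1}$ (say with its $1$ in position $m$), and a sequence $\a=(a_n)_{n\geq1}$ in $\X_r$. Each $\partial^{e_j}_{t,y,\Theta}a_n$ is smooth on $[-\rho/2,\rho/2]\times\R^{d-1}\times\T$, and for every $\alpha\in\mathbb{N}^{d+1}$ one has $\partial^{\alpha}_{t,y,\Theta}(\partial^{e_j}_{t,y,\Theta}a_n)=\partial^{\alpha+e_j}_{t,y,\Theta}a_n$, so by the very definition of the $X_r$‑norm,
\[
	\norme{\partial^{\alpha}_{t,y,\Theta}(\partial^{e_j}_{t,y,\Theta}a_n)(0,\cdot,\cdot)}_{H^{d^*}}\leq \normetriple{a_n}_{X_r}\,\frac{(\alpha+e_j)!}{r^{|\alpha|+1}\big((|\alpha|+1)^{2d+1}+1\big)}.
\]
The goal is to bound this by $\tfrac{C}{r-r'}\normetriple{a_n}_{X_r}\,\tfrac{\alpha!}{r'^{|\alpha|}(|\alpha|^{2d+1}+1)}$ for a constant $C$ independent of $n$ and $\alpha$, which is exactly the estimate $\normetriple{\partial^{e_j}_{t,y,\Theta}a_n}_{X_{r'}}\leq \tfrac{C}{r-r'}\normetriple{a_n}_{X_r}$.

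The heart of the matter is then one elementary scalar inequality. Writing $k=|\alpha|$, I would use $(\alpha+e_j)!=(\alpha_m+1)\,\alpha!\leq (k+1)\,\alpha!$ together with $\tfrac{k^{2d+1}+1}{(k+1)^{2d+1}+1}\leq 1$ (since $k\leq k+1$); the desired bound then reduces to
\[
	(k+1)\left(\frac{r'}{r}\right)^{k}\leq \frac{C\,r}{r-r'},\qquad \forall\,k\geq 0,
\]
which follows from the standard estimate $\sup_{k\geq0}(k+1)x^{k}\leq \tfrac{1}{1-x}$ for $x\in[0,1)$ (obtained, e.g., from $x^{k}\leq e^{-k(1-x)}$ and optimising over $k$), applied with $x=r'/r$ and $1-x=(r-r')/r$. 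Note that replacing $|\alpha|$ by $|\alpha|+1$ in the polynomial weight only helps, so no loss comes from that term; all the loss is the factor $k+1$, absorbed into $\tfrac{1}{r-r'}$ via the above. This proves the estimate for each $a_n$ with a uniform constant $C$ (indeed one can take $C=1$).

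Finally I would pass to the sequence spaces: multiplying the per‑$n$ estimate by $e^{2r'n}\prodscalbis{n}^{2d^*}$, summing over $n\geq1$, and using $e^{2r'n}\leq e^{2rn}$ since $r'<r$, one gets
\[
	\normetriple{\partial^{e_j}_{t,y,\Theta}\a}_{\X_{r'}}^2=\sum_{n\geq1}e^{2r'n}\prodscalbis{n}^{2d^*}\normetriple{\partial^{e_j}_{t,y,\Theta}a_n}_{X_{r'}}^2\leq \frac{C^2}{(r-r')^2}\sum_{n\geq1}e^{2rn}\prodscalbis{n}^{2d^*}\normetriple{a_n}_{X_r}^2=\frac{C^2}{(r-r')^2}\,\normetriple{\a}_{\X_r}^2,
\]
which is the claimed bound. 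I do not expect any genuine obstacle here: the entire content is the elementary inequality $(k+1)(r'/r)^k\lesssim r/(r-r')$, and the only point requiring a touch of care is the bookkeeping of the factorial ratio $(\alpha+e_j)!/\alpha!=\alpha_m+1$ and of the $1/r$ prefactor in front of $(r'/r)^k$.
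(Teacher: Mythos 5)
Your proof is correct and follows essentially the same route as the paper: both start from the $X_r$-norm bound on the shifted derivative, use $(\alpha+e_j)!/\alpha!=\alpha_m+1\leq|\alpha|+1$ and that the polynomial weight ratio is uniformly bounded, reduce to the elementary inequality $(k+1)(r'/r)^k\leq r/(r-r')$, and then pass to $\X_r$ by multiplying the per-$n$ estimate by $e^{2r'n}\prodscalbis{n}^{2d^*}$ and summing. The only cosmetic difference is that you supply a derivation of that elementary inequality (via $x^k\leq e^{-k(1-x)}$ and optimizing in $k$) where the paper merely states it.
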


\begin{proof}
	In the same way as for the previous Lemma \ref{lemma Erho dans Xr}, proving the estimate for the space $ X_r $ leads to the one for $ \X_r $ and the associate result, by multiplying by $ e^{2s'n}\prodscalbis{n}^{2d^*} $ and summing over $ n\geq 1 $. Let $ a $ be in $ X_r $. Without loss of generality, we make the proof for $ e_j=e_0=(1,0,\dots,0) $. For $ \alpha=(\alpha_0,\alpha',\beta) $ in $ \mathbb{N}\times\mathbb{N}^{d-1}\times\mathbb{N} $, we have, by definition of $ X_r $-norm,
	\begin{align*}
		\norme{\partial_{t,y,\Theta}^{\alpha}\partial_{t,y,\Theta}^{e_0}a(0,.,.)}_{H^{d^*}}&\leq\normetriple{a}_{X_r}\frac{(\alpha+e_0)!}{r^{|\alpha|+1}\,(|\alpha+e|^{2d+1}+1)}\\
		&=\normetriple{a}_{X_r}\frac{\alpha!}{(r')^{|\alpha|}\,(|\alpha|^{2d+1}+1)}\frac{(|\alpha|^{2d+1}+1)}{(|\alpha+e|^{2d+1}+1)}(\alpha_0+1)\frac{(r')^{|\alpha|}}{r^{|\alpha|+1}}.
	\end{align*}
Since $ (|\alpha|^{2d+1}+1)/(|\alpha+e|^{2d+1}+1) $ is bounded uniformly with respect to $ \alpha\in\mathbb{N}^{d+1} $ and since
\begin{equation*}
	(\alpha_0+1)\frac{(r')^{|\alpha|}}{r^{|\alpha|+1}}\leq \big(|\alpha|+1\big)\frac{(r')^{|\alpha|}}{r^{|\alpha|+1}}\leq \inv{r-r'},
\end{equation*}
the result follows.
\end{proof}

\begin{lemma}\label{lemma Xr Banach algebra}
	For $ r\in(0,1) $, spaces $ X_r $ and $ \X_r $ are Banach algebras (the latter for the convolution on sequences), up to a positive constant, that is, there exists $ C>0 $ (independent of $ r\in(0,1) $), such that for $ \a,\b $ in $ \X_r $, the function $ \a\b $ belongs to $ \X_r $ and we have
	\begin{equation*}
		\normetriple{\a\b}_{\X_r}\leq C\normetriple{\a}_{\X_r}\normetriple{\b}_{\X_r},
	\end{equation*}
and the analogous estimate for $ X_r $.
\end{lemma}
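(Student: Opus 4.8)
The plan is to reduce the statement to the scalar space $X_r$ and then pass to the sequence space $\mathbf{X}_r$ by the same Cauchy--Schwarz argument already used in the proof of Lemma~\ref{lemma ana1 est LBK Y s}. Since the $X_r$-norm only sees the derivatives of a function at $t=0$, and since $H^{d^*}=H^{d^*}(\R^{d-1}_y\times\T_\Theta)$ is a Banach algebra (the underlying manifold has dimension $d$ and $d^*>(d+1)/2>d/2$), there is a universal constant $C_{d^*}$ with $\norme{fg}_{H^{d^*}}\leq C_{d^*}\,\norme{f}_{H^{d^*}}\norme{g}_{H^{d^*}}$. For $a,b\in X_r$ and $\alpha\in\N^{d+1}$ the Leibniz rule and this algebra property give
\[
\norme{\partial_{t,y,\Theta}^\alpha(ab)(0,.,.)}_{H^{d^*}}\leq C_{d^*}\sum_{\beta\leq\alpha}\binom{\alpha}{\beta}\norme{\partial^\beta a(0)}_{H^{d^*}}\,\norme{\partial^{\alpha-\beta}b(0)}_{H^{d^*}}.
\]
Inserting the bounds defining $\normetriple{a}_{X_r}$ and $\normetriple{b}_{X_r}$, and using $\binom{\alpha}{\beta}\beta!\,(\alpha-\beta)!=\alpha!$ together with $r^{|\beta|}r^{|\alpha-\beta|}=r^{|\alpha|}$, reduces everything to the combinatorial estimate
\[
S_{|\alpha|}:=\sum_{\beta\in\N^{d+1},\,\beta\leq\alpha}\frac{1}{\big(|\beta|^{2d+1}+1\big)\big(|\alpha-\beta|^{2d+1}+1\big)}\leq\frac{C}{|\alpha|^{2d+1}+1},
\]
with $C$ independent of $\alpha$.

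To prove this estimate I would group the $\beta\leq\alpha$ by the value $k=|\beta|$, so that $|\alpha-\beta|=n-k$ with $n=|\alpha|$. The key point is that the number of such $\beta$ is bounded \emph{symmetrically}: counting $\beta$ directly gives $\binom{k+d}{d}$, while counting $\gamma=\alpha-\beta$ gives $\binom{n-k+d}{d}$, hence $\#\{\beta\leq\alpha:|\beta|=k\}\leq C\min\big((1+k)^d,(1+n-k)^d\big)$. Splitting the sum at $k=\lfloor n/2\rfloor$, for $k\leq n/2$ one has $(n-k)^{2d+1}+1\geq c\,(n+1)^{2d+1}$, so that part is at most $\tfrac{C}{(n+1)^{2d+1}}\sum_{k\geq0}\tfrac{(1+k)^d}{k^{2d+1}+1}$, and the series converges because $d-(2d+1)=-d-1\leq-3$ (recall $d\geq2$); the range $k>n/2$ is treated identically after the substitution $j=n-k$, using the other half of the symmetric count. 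Combining the two halves gives $S_n\leq C/(n+1)^{2d+1}\leq C'/(n^{2d+1}+1)$, whence $\normetriple{ab}_{X_r}\leq C\normetriple{a}_{X_r}\normetriple{b}_{X_r}$, with $C$ depending only on $d$, $d^*$.

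For the sequence space $\mathbf{X}_r$, with the convolution product $(\mathbf{a}\mathbf{b})^n=\sum_k a^k b^{n-k}$, the scalar algebra estimate yields $\normetriple{(\mathbf{a}\mathbf{b})^n}_{X_r}\leq C\sum_k\normetriple{a^k}_{X_r}\normetriple{b^{n-k}}_{X_r}$; one then multiplies by $e^{2rn}\prodscalbis{n}^{2d^*}$, writes $e^{2rn}=e^{2rk}e^{2r(n-k)}$, and applies Cauchy--Schwarz exactly as in Lemma~\ref{lemma ana1 est LBK Y s}, using that $\sum_k\prodscalbis{n}^{2d^*}\big/\big(\prodscalbis{k}^{2d^*}\prodscalbis{n-k}^{2d^*}\big)$ is bounded uniformly in $n$ (a consequence of $\prodscalbis{n}^{2d^*}\leq 2^{2d^*}\big(\prodscalbis{k}^{2d^*}+\prodscalbis{n-k}^{2d^*}\big)$ and $\sum_k\prodscalbis{k}^{-2d^*}<\infty$ since $2d^*>1$). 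The main obstacle is the combinatorial lemma for $S_n$: the naive bound $\#\{\beta\leq\alpha:|\beta|=k\}\lesssim(1+k)^d$ alone only produces decay $|\alpha|^{-d-1}$, which is not enough, so the symmetric counting trick — together with the fact that the exponent $2d+1$ was chosen large enough for $\sum_k(1+k)^d k^{-(2d+1)}$ to converge — is precisely what makes the argument close.
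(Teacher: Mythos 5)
Your proof is correct and follows the same route as the paper: Leibniz rule together with the $H^{d^*}$ algebra property reduces the claim to the uniform boundedness in $\alpha$ of $\sum_{\beta\leq\alpha}\frac{|\alpha|^{2d+1}+1}{(|\beta|^{2d+1}+1)(|\alpha-\beta|^{2d+1}+1)}$, and the passage from $X_r$ to $\X_r$ uses the same Cauchy--Schwarz/convolution argument as in Lemma \ref{lemma Ys Banach algebra}. You go beyond the paper (which merely asserts that the combinatorial sum is bounded) by actually proving it via the symmetric count $\#\{\beta\leq\alpha : |\beta|=k\}\leq C\min\big((1+k)^d,(1+n-k)^d\big)$, and your remark that the one-sided bound $(1+k)^d$ alone would give only decay $n^{-d-1}$ rather than the required $n^{-(2d+1)}$ is correct and explains why the symmetric trick is not optional.
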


\begin{proof}
	We make the proof for spaces $ X_r $, and the result for $ \X_r $ follows using the same arguments as in the proof of Lemma \ref{lemma Ys Banach algebra}. Let $ r $ be in $ (0,1) $ and consider $ a,b $ in $ X_r $. We need to show that there exists $ C>0 $ such that for all $ \alpha\in\mathbb{N}^{d+1} $, we have
	\begin{equation*}
		\norme{\partial_{t,y,\Theta}^{\alpha}\big(ab\big)(0,.,.)}_{H^{d^*}}\leq \frac{C\normetriple{a}_{X_r}\normetriple{b}_{X_r}\,\alpha!}{r^{|\alpha|}\,(|\alpha|^{2d+1}+1)} 
	\end{equation*}
So consider $ \alpha\in\mathbb{N}^{d+1} $. We have, since $ d^*>d/2  $, so $ H^{d^*} $ is an algebra,
\begin{align*}
	\norme{\partial_{t,y,\Theta}^{\alpha}\big(ab\big)(0,.,.)}_{H^{d^*}}&\leq C\sum_{\beta\leq\alpha}\binom{\alpha}{\beta}\norme{\partial_{t,y,\Theta}^{\beta}a(0,.,.)}_{H^{d^*}}\norme{\partial_{t,y,\Theta}^{\alpha-\beta}b(0,.,.)}_{H^{d^*}}\\
	&\leq C\sum_{\beta\leq\alpha}\binom{\alpha}{\beta}\frac{\normetriple{a}_{X_r}\,\beta!}{r^{|\beta|}\,(|\beta|^{2d+1}+1)}\frac{\normetriple{b}_{X_r}\,(\alpha-\beta)!}{r^{|\alpha|-|\beta|}\,(|\alpha-\beta|^{2d+1}+1)} \\
	&=\frac{C\,\normetriple{a}_{X_r}\normetriple{b}_{X_r}\alpha!}{r^{|\alpha|}\,(|\alpha|^{2d+1}+1)}\sum_{\beta\leq\alpha}\frac{(|\alpha|^{2d+1}+1)}{(|\beta|^{2d+1}+1)\,(|\alpha-\beta|^{2d+1}+1)},
\end{align*}
and the result follows since $ \sum_{\beta\leq\alpha}\frac{(|\alpha|^{2d+1}+1)}{(|\beta|^{2d+1}+1)\,(|\alpha-\beta|^{2d+1}+1)} $ is bounded uniformly with respect to $ \alpha\in\mathbb{N}^{d+1} $.
\end{proof}

We summarize the main features of the functional spaces introduced in this section in Figure \ref{figure fct spaces}. The concept of \emph{regularization by integration in time} is detailed below, in Lemma \ref{lemma ana est LBK E rho}.

\subsubsection{Specifications on the simplified model and main result}
In view of the functional spaces defined above, we are able to make precise the study system \eqref{eq ana sigma}-\eqref{eq ana bord sigma}-\eqref{eq ana a}-\eqref{eq ana a init}. 

Boundary terms $ g^n_{\zeta,j} $ appearing in \eqref{eq ana bord sigma} are taken such that, if we define $ \mathbf{g}_{\zeta,j}:=\big(g^n_{\zeta,j}\big)_{n\in\mathbb{N}} $, then function $ \mathbf{g}_{\zeta,j} $ is in $ \X_1 $ for $ \zeta=\phi,\psi,\nu $ and $ j=1,3 $. Analogously, source terms $ H^n_{\zeta} $ of boundary equations \eqref{eq ana a} are taken such that, defining $ \mathbf{H}_{\zeta}:=\big(H^n_{\zeta}\big)_{n\geq 1} $, sequence $ \mathbf{H}_{\zeta} $ is in $ \E_1 $ for $ \zeta=\phi,\psi $. Once again, assumption on $ \mathbf{H}_{\zeta} $ imposing it to be analytical with respect to all its variables is stronger than requiring $ H $ and $ G $ to be in $ H^{\infty}(\R^d\times\T) $. We also denote by $ \gamma_0>0 $ a positive constant such that,
for $ \zeta=\phi,\psi,\nu $ and $ j=1,3 $,
\begin{subequations}\label{eq ana hyp toy model}
\begin{equation}\label{eq ana hyp toy model v D int}
	|\mathbf{v}_{\zeta,j}|\leq \gamma_0,\quad  \big|D_{\zeta,j}\big|\leq \gamma_0 \quad \text{and}\quad \big|K_{\zeta,j}\big|\leq \gamma_0,
\end{equation}
for $ \zeta=\phi,\psi $,
	\begin{equation}\label{eq ana hyp toy model v D bord}
		\big|\mathbf{v}^{\Lop}_{\zeta}\big|\leq \gamma_0,\qquad \big|D^{\Lop}_{\zeta}\big|\leq \gamma_0, \quad |\mathbf{w}_{\zeta}|\leq\gamma_0^{1/2}, \quad \text{and}\quad  \big|K^{\Lop}_{\zeta}\big|\leq \gamma_0,
	\end{equation}
for $ r\in(0,1) $ and for $ \sigma,\tau $ in $ X_r $, for $ \zeta_1,\zeta_2=\phi,\psi,\nu $ and $ j_1,j_2=1,3 $,
\begin{equation}\label{eq ana hyp toy model J}
	\normetriple{\J_{\zeta_1,j_1}^{\zeta_2,j_2}\big[\sigma,\tau\big]}_{X_r}\leq \gamma_0\,\normetriple{\sigma}_{X_r}\normetriple{\tau}_{X_r},
\end{equation}
and, for $ s\in(0,1) $, for $ u,v $ in $ Y_s $, and for $ \zeta=\phi,\psi $,
\begin{equation}\label{eq ana hyp toy model F per}
	\norme{\mathbb{F}^{\per}_{\zeta}\big[\partial_{\Theta}\,u,\partial_{\Theta}\,v\big]}_{Y_s}\leq \gamma_0^{1/2}\,\norme{u}_{Y_s}\norme{v}_{Y_s}.
\end{equation}
\end{subequations}
All estimates relies on the fact that scalars $ D_{\zeta,j} $, $ K_{\zeta,j} $, $ D^{\Lop}_{\zeta} $, $ \mathbf{w}_{\zeta} $ and $ K^{\Lop}_{\zeta} $, vectors $ \mathbf{v}_{\zeta,j} $ and $ \mathbf{v}^{\Lop}_{\zeta} $ and operators $ \mathbb{J}^{\zeta_1,j_1}_{\zeta_2,j_2} $ and $ \mathbb{F}^{\per}_{\zeta} $ are indexed by finite sets. Estimate \eqref{eq ana hyp toy model F per} is the result of Proposition \ref{prop F per semilin}, and \eqref{eq ana hyp toy model J} is the result of the following lemma. 

\begin{lemma}\label{lemma J est}
	There exists a constant $ C>0 $ such that, for $ r\in(0,1) $, for $ \sigma,\tau $ in $ X_r $, $ \zeta_1,\zeta_2 $ in$ \ensemble{\phi,\psi} $ and $ j_1,j_2 $ in $ \ensemble{1,3} $, we have 
	\begin{equation}\label{eq ana1 est lemma J}
		\normetriple{\J_{\zeta_1,j_1}^{\zeta_2,j_2}\big[\sigma,\tau\big]}_{X_r}\leq C\,\normetriple{\sigma}_{X_r}\normetriple{\tau}_{X_r}.
	\end{equation}
\end{lemma}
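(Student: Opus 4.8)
The operator $\J_{\zeta_1,j_1}^{\zeta_2,j_2}$ is, by its definition \eqref{eq ana def J}, a fixed scalar $J_{\zeta_1,j_1}^{\zeta_2,j_2}$ times the bilinear map that sends the Fourier series $\sigma=\sum_\lambda \sigma_\lambda e^{i\lambda\Theta}$ and $\tau=\sum_\lambda \tau_\lambda e^{i\lambda\Theta}$ to $\sum_\lambda \sigma_\lambda\tau_\lambda e^{i\lambda\Theta}$. This is not the full product $\sigma\tau$ (which would involve a convolution in $\lambda$), but the \emph{diagonal} part in the $\Theta$-frequency: only matching Fourier modes multiply. The plan is to show this operation is bounded on $X_r$, with constant independent of $r\in(0,1)$, and then absorb the finitely many scalars $J_{\zeta_1,j_1}^{\zeta_2,j_2}$ (indexed by $\zeta_1,\zeta_2\in\{\phi,\psi\}$, $j_1,j_2\in\{1,3\}$) into the constant $C$, exactly as is done for the other finite families of coefficients in \eqref{eq ana hyp toy model}. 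The passage from $X_r$ to $\X_r$ is then routine: multiply the $X_r$-estimate by $e^{2rn}\prodscalbis{n}^{2d^*}$, sum over $n$, and use the same convolution-in-$n$ bound that already appeared in the proof of Lemma \ref{lemma Xr Banach algebra} (indeed the diagonal-in-$\Theta$ bilinear map on sequences is dominated termwise by the full product, so the argument is literally the one of Lemma \ref{lemma Xr Banach algebra}).

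\textbf{Key steps.} First I would reduce to proving: for all $\alpha\in\mathbb{N}^{d+1}$,
\[
	\norme{\partial_{t,y,\Theta}^{\alpha}\,\mathbb{D}[\sigma,\tau](0,\cdot,\cdot)}_{H^{d^*}}\leq \frac{C\,\normetriple{\sigma}_{X_r}\normetriple{\tau}_{X_r}\,\alpha!}{r^{|\alpha|}\,(|\alpha|^{2d+1}+1)},
\]
where $\mathbb{D}[\sigma,\tau]:=\sum_\lambda\sigma_\lambda\tau_\lambda\,e^{i\lambda\Theta}$. Applying the $\Theta$-derivative portion of $\alpha$ just multiplies the $\lambda$-th mode by $(i\lambda)^{\beta}$ with $\beta$ the last component of $\alpha$; the $t,y$ derivatives act inside each factor by the Leibniz rule. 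The point is that on the Fourier side in $\Theta$ there is no convolution, so
\[
	\widehat{\partial_{t,y,\Theta}^{\alpha}\mathbb{D}[\sigma,\tau]_\lambda}(\xi)
	=(i\lambda)^{\beta}\sum_{\substack{\alpha^{(1)}+\alpha^{(2)}=\alpha\\ \beta^{(1)}=\beta^{(2)}=0\text{ on last coord}}}\binom{\cdot}{\cdot}\big(\text{deriv. of }\sigma_\lambda\big)\ast_\xi\big(\text{deriv. of }\tau_\lambda\big),
\]
but in each term the \emph{same} $\lambda$ occurs in both factors. The $\xi$-convolution and the Sobolev regularity $H^{d^*}$ with $d^*>d/2$ (so $H^{d^*}(\R^{d-1}_y\times\T_\Theta)$ is an algebra) are handled exactly as in Lemma \ref{lemma Xr Banach algebra}: expand with the Leibniz rule, bound each factor by its $X_r$-norm via the defining estimate, and use that $\sum_{\beta\leq\alpha}\frac{|\alpha|^{2d+1}+1}{(|\beta|^{2d+1}+1)(|\alpha-\beta|^{2d+1}+1)}$ is bounded uniformly in $\alpha$. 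Then I would take the infimum over admissible $M$'s to get \eqref{eq ana1 est lemma J} for $X_r$, and finally promote it to $\X_r$ by the weighted summation over $n$ as above.

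\textbf{Main obstacle.} There is essentially no genuine difficulty here: the estimate is strictly easier than the Banach algebra property of Lemma \ref{lemma Xr Banach algebra}, because restricting to the diagonal $\{\lambda_1=\lambda_2=\lambda\}$ only decreases the left-hand side compared to the full product $\sigma\tau$. The one point to be careful about is the bookkeeping of the $\Theta$-derivative factor $(i\lambda)^\beta$: it must be distributed so that each of $\sigma$ and $\tau$ carries a share consistent with the $X_r$-norm bound on its own derivatives — but since $|\lambda|\leq \prodscalbis{(\xi,\lambda)}$, any splitting works and the factor is absorbed by the algebra/convolution step just as the analogous $|\xi|^{\alpha'}|\lambda|^\beta$ factors are absorbed in the definition of $X_r$. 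Hence the proof is a direct transcription of the argument for Lemma \ref{lemma Xr Banach algebra}, which is why the paper is content to state it as a short lemma, and I would simply write ``the proof is identical to that of Lemma \ref{lemma Xr Banach algebra}, the diagonal restriction in $\Theta$ only improving the bound, and the finitely many scalars $J_{\zeta_1,j_1}^{\zeta_2,j_2}$ being absorbed into $C$.''
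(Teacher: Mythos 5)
Your approach matches the paper's: expand by the Leibniz rule (noting that the $\Theta$-derivative produces a factor $(i\lambda)^{\alpha_d}$ with no convolution in $\lambda$), use an $H^{d^*}$ bound for the diagonal bilinear map, feed in the $X_r$-norm bounds on each factor, and close with the combinatorial fact that $\sum_{\beta\leq\alpha}\frac{|\alpha|^{2d+1}+1}{(|\beta|^{2d+1}+1)(|\alpha-\beta|^{2d+1}+1)}$ is bounded. The passage from $X_r$ to $\X_r$ by weighted summation in $n$ is also exactly as in the paper. So the proposal is correct in substance.

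One remark on the ``Main obstacle'' paragraph: the justification ``restricting to the diagonal $\{\lambda_1=\lambda_2=\lambda\}$ only decreases the left-hand side compared to the full product $\sigma\tau$'' is not a valid formal reduction, and you should not lean on it. The diagonal map $\mathbb{D}[\sigma,\tau]=\sum_\lambda\sigma_\lambda\tau_\lambda e^{i\lambda\Theta}$ is not a sub-sum of the product $\sigma\tau$ with the same output frequency: $\sigma_\lambda\tau_\lambda$ appears in the $2\lambda$-th mode of $\sigma\tau$, whereas in $\mathbb{D}[\sigma,\tau]$ it sits at mode $\lambda$; meanwhile the $\lambda$-th mode of $\sigma\tau$ is the convolution $\sum_\mu\sigma_\mu\tau_{\lambda-\mu}$ which can exhibit cancellation. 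So $\|\mathbb{D}[\sigma,\tau]\|_{H^{d^*}}\le\|\sigma\tau\|_{H^{d^*}}$ does not follow, and you cannot literally cite Lemma \ref{lemma Xr Banach algebra}. What does work — and what the paper does — is to re-run the \emph{same Cauchy--Schwarz argument} to prove the dedicated bound
\[
\Bigl\|\sum_\lambda u_\lambda v_\lambda\,e^{i\lambda\Theta}\Bigr\|_{H^{d^*}}\le C\,\Bigl\|\sum_\lambda u_\lambda\,e^{i\lambda\Theta}\Bigr\|_{H^{d^*}}\Bigl\|\sum_\lambda v_\lambda\,e^{i\lambda\Theta}\Bigr\|_{H^{d^*}},
\]
the key being that $\int_{\R^{d-1}}\langle(\lambda,\xi)\rangle^{2d^*}\langle(\lambda,\eta)\rangle^{-2d^*}\langle(\lambda,\xi-\eta)\rangle^{-2d^*}\,d\eta$ is bounded uniformly in $(\lambda,\xi)$. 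Your ``Key steps'' already contains this idea correctly; just replace ``the proof is identical to that of Lemma \ref{lemma Xr Banach algebra}'' with ``the proof is parallel to it, after establishing the above diagonal analogue of the $H^{d^*}$ algebra bound,'' and the argument is complete.
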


\begin{proof}
	Without loss of generality, we make the proof for $ \zeta_1=\zeta_2=\phi $ and $ j_1=j_2=1 $. Recall that $ \mathbb{J}^{\phi,1}_{\phi,1} $ is defined by \eqref{eq ana def J} as
	\begin{equation*}
		\J^{\phi,1}_{\phi,1}\big[\sigma,\tau\big]=J^{\phi,1}_{\phi,1}\sum_{\lambda\in\Z^*}\sigma_{\lambda}\tau_{\lambda}\,e^{i\lambda\Theta}.
	\end{equation*}
	For $ \alpha=(\alpha',\alpha_d) $ in $ \mathbb{N}^{d}\times\mathbb{N} $, we want to estimate in $ H^{d^*} $ the following function
	\begin{equation}\label{eq ana inter 4}
		\partial_{t,y,\Theta}^{\alpha}\J^{\phi,1}_{\phi,1}\big[\sigma,\tau\big](0,.,.)
		=J^{\phi,1}_{\phi,1}\sum_{\lambda\in\Z^*}\sum_{\beta'\leq \alpha'}\binom{\alpha'}{\beta'}\big(i\lambda\big)^{\alpha_d}\partial_{t,y}^{\beta'}\sigma_{\lambda}(0,.)\,\partial_{t,y}^{\alpha'-\beta'}\tau_{\lambda}(0,.)\,e^{i\lambda\Theta}.
	\end{equation}
We prove now an intermediate result. For every function $ u,v $ defined on $ \R^{d-1}\times\T $, whose Fourier series expansions read
\begin{equation*}
	u(y,\Theta):=\sum_{\lambda\in\Z}u_{\lambda}(y)\,e^{i\lambda\Theta},\qquad v(y,\Theta):=\sum_{\lambda\in\Z}v_{\lambda}(y)\,e^{i\lambda\Theta},
\end{equation*}
we have,
\begin{align}\nonumber
	&\norme{(y,\Theta)\mapsto \sum_{\lambda\in\Z} u_{\lambda}(y)v_{\lambda}(y)\,e^{i\lambda\Theta}}^2_{H^{d^*}}
	\\\nonumber
	&\qquad=\sum_{k\in\Z^*}\int_{\R^{d-1}}\prodscalbis{(\lambda,\xi)}^{2d^*}\left|\int_{\R^{d-1}}\hat{u_{\lambda}}(\eta)\,\hat{v_{\lambda}}(\xi-\eta)\,d\eta\right|^2d\xi\\\nonumber
	&\qquad\leq\sum_{k\in\Z^*}\int_{\R^{d-1}}\left(\int_{\R^{d-1}}\frac{\prodscalbis{(\lambda,\xi)}^{2d^*}}{\prodscalbis{(\lambda,\eta)}^{2d^*}\prodscalbis{(\lambda,\xi-\eta)}^{2d^*}}\,d\eta\right)\\\nonumber
	&\qquad\quad\times\int_{\R^{d-1}}\prodscalbis{(\lambda,\eta)}^{2d^*}\big|\hat{u_{\lambda}}(\eta)\big|^2\,\prodscalbis{(\lambda,\xi-\eta)}^{2d^*}\big|\hat{v_{\lambda}}(\xi-\eta)\big|^2\,d\eta\,d\xi
	\\\label{eq ana inter 5}
	&\qquad\leq C\norme{(y,\Theta)\mapsto \sum_{\lambda\in\Z} u_{\lambda}(y)\,e^{i\lambda\Theta}}^2_{H^{d^*}}\norme{(y,\Theta)\mapsto \sum_{\lambda\in\Z} v_{\lambda}(y)\,e^{i\lambda\Theta}}^2_{H^{d^*}},
\end{align}
with a constant $ C>0 $ independent on $ u $ and $ v $, since $ \int_{\R^{d-1}}\frac{\prodscalbis{(\lambda,\xi)}^{2d^*}}{\prodscalbis{(\lambda,\eta)}^{2d^*}\prodscalbis{(\lambda,\xi-\eta)}^{2d^*}}\,d\eta $ is bounded uniformly with respect to $ (\lambda,\xi) $. We have, according to \eqref{eq ana inter 4},
\begin{multline}\label{eq ana inter 6}
	\norme{\partial_{t,y,\Theta}^{\alpha}\J^{\phi,1}_{\phi,1}\big[\sigma,\tau\big](0,.,.)}_{H^{d^*}}^2\\
	\qquad\leq \big(J^{\phi,1}_{\phi,1}\big)^2\sum_{\beta'\leq \alpha'}\binom{\alpha'}{\beta'} \norme{(y,\Theta)\mapsto \sum_{\lambda\in\Z^*}\big(i\lambda\big)^{\alpha_d}\partial_{t,y}^{\beta'}\sigma_{\lambda}(0,.)\,\partial_{t,y}^{\alpha'-\beta'}\tau_{\lambda}(0,.)\,e^{i\lambda\Theta}},
\end{multline}
so, applying inequality \eqref{eq ana inter 5} to quantity \eqref{eq ana inter 6} we get
\begin{align*}
	&\norme{\partial_{t,y,\Theta}^{\alpha}\J^{\phi,1}_{\phi,1}\big[\sigma,\tau\big](0,.,.)}_{H^{d^*}}^2\\
	&\qquad\leq C\big(J^{\phi,1}_{\phi,1}\big)^2\sum_{\beta'\leq \alpha'}\binom{\alpha'}{\beta'}\norme{(y,\Theta)\mapsto \sum_{\lambda\in\Z} \big(i\lambda\big)^{\alpha_d}\partial_{t,y}^{\beta'}\sigma_{\lambda}(0,y)\,e^{i\lambda\Theta}}^2_{H^{d^*}}\\
	&\qquad\quad\times\norme{(y,\Theta)\mapsto \sum_{\lambda\in\Z} \partial_{t,y}^{\alpha'-\beta'}\tau_{\lambda}(0,y)\,e^{i\lambda\Theta}}^2_{H^{d^*}}\\
	&\qquad= C\big(J^{\phi,1}_{\phi,1}\big)^2\sum_{\beta'\leq \alpha'}\binom{\alpha'}{\beta'}\norme{\partial_{t,y,\Theta}^{(\beta',\alpha_d)}\sigma(0,.,.)}^2_{H^{d^*}}\norme{ \partial_{t,y,\Theta}^{(\alpha'-\beta',0)}\tau(0,.,.)}^2_{H^{d^*}}.
\end{align*}
Therefore, by definition of the $ X_r $-norm,
\begin{align*}
	&\norme{\partial_{t,y,\Theta}^{\alpha}\J^{\phi,1}_{\phi,1}\big[\sigma,\tau\big](0,.,.)}_{H^{d^*}}^2\\
	&\qquad\leq C\sum_{\beta'\leq\alpha'}\binom{\alpha'}{\beta'}\frac{\normetriple{\sigma}_{X_r}\,\beta'!\alpha_d!}{r^{|\beta'|+\alpha_d}\,(|(\beta,\alpha_d)|^{2d+1}+1)}\frac{\normetriple{b}_{X_r}\,(\alpha'-\beta')!}{r^{|\alpha'|-|\beta'|}\,(|\alpha'-\beta'|^{2d+1}+1)} \\[5pt]
	&\qquad=\frac{C\,\normetriple{a}_{X_r}\normetriple{b}_{X_r}\alpha!}{r^{|\alpha|}\,(|\alpha|^{2d+1}+1)}\sum_{\beta'\leq\alpha'}\frac{(|\alpha|^{2d+1}+1)}{(|(\beta',\alpha_d)|^{2d+1}+1)\,(|\alpha'-\beta'|^{2d+1}+1)},
\end{align*}
and the result follows since $ \sum_{\beta'\leq\alpha'}\frac{(|\alpha|^{2d+1}+1)}{(|(\beta',\alpha_d)|^{2d+1}+1)\,(|\alpha'-\beta'|^{2d+1}+1)} $ is bounded uniformly with respect to $ \alpha\in\mathbb{N}^{d+1} $.
\end{proof}

The rest of the section is devoted to the proof of the following existence and uniqueness result.

\begin{theorem}\label{theorem existence ana}
	For every $ M_0,M_1>0 $, there exist $ 0<r_1<1 $ and $ \delta>0 $ such that for every $ \mathbf{g} $ in $ \X^6_{r_1} $ and $ \mathbf{H} $ in $ \E^2_1 $ satisfying respectively  $ \normetriple{\mathbf{H}}_{\E_1^2}<M_0 $ and $ \normetriple{\mathbf{g}}_{\X^6_{r_1}}<M_1 $, system of equations \eqref{eq ana sigma},  \eqref{eq ana bord sigma}, \eqref{eq ana a} and \eqref{eq ana a init} admits a unique solution given by $ \boldsymbol{\sigma} $ in $ \mathcal{C}^1\big((-\delta(r_1-r),\delta(r_1-r)),\X_r^6\big) $ for each $ 0<r<r_1 $ and $ \a $ in $ \X^2_{r_1} $, where we have denoted $ \boldsymbol{\sigma}:=\big(\sigma^n_{\phi,1},\sigma_{\phi,3}^n,\sigma^n_{\psi,1},\sigma^n_{\psi,3},\sigma_{\nu,1}^n,\sigma_{\nu,3}^n\big)_{n\geq1} $, $ \mathbf{g}:=\big(\mathbf{g}_{\phi,1},\mathbf{g}_{\phi,3},\mathbf{g}_{\psi,1},\mathbf{g}_{\psi,3},\mathbf{g}_{\nu,1},\mathbf{g}_{\nu,3}\big) $ and $ \mathbf{H}:=\big(\mathbf{H}_{\phi},\mathbf{H}_{\psi}\big) $.
\end{theorem}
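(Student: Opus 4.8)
The strategy is to decouple the system by first solving the boundary equations \eqref{eq ana a}--\eqref{eq ana a init} for $\mathbf{a}=(\mathbf{a}_\phi,\mathbf{a}_\psi)$ in the scale $(\E_\rho^2)_{0<\rho\le 1}$, obtaining a solution analytic in \emph{all} variables (including time), and then plugging the resulting boundary traces \eqref{eq ana bord sigma} into the interior equations \eqref{eq ana sigma}, which we treat by the abstract Cauchy--Kovalevskaya Theorem \ref{theorem Cauchy-Kovalevskaya} as a propagation problem in the normal variable $x_d$, valued in the scale $(\X_r^6)_{0<r<r_1}$. The order of operations matters: we must first close the boundary system on its own, since in this simplified model the interior couplings back onto the boundary (through the traces $(\sigma^n_{\zeta,2})_{|x_d=0}$ and the derivative terms in $\tilde F^n$) have been removed, so the boundary equations form an autonomous subsystem.

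\textbf{Step 1: the boundary equations in $\E_\rho$.} Rewrite \eqref{eq ana a}--\eqref{eq ana a init} as an integral fixed point equation
\begin{equation*}
	\mathbf{a}(t)=\int_0^t\Big(L\,\mathbf{a}(s)-\partial_\Theta\,\mathbf{D}^{\Lop}(\mathbf{a},\mathbf{a})(s)-\mathbb{F}^{\per}(\partial_\Theta\mathbf{a},\partial_\Theta\mathbf{a})(s)+\mathbf{H}(s)+\mathbf{K}^{\Lop}(\mathbf{a},\mathbf{a})(s)\Big)\,ds
\end{equation*}
in the space $\E_\rho^2$. The key new ingredient, compared with the first simplified model where only continuity in time was obtained, is the phenomenon of \emph{regularization by integration in time}: the operators $L$, $\partial_\Theta\mathbf{D}^{\Lop}$, $\mathbf{K}^{\Lop}$ lose a derivative (hence a factor $(s-s')^{-1}$), but the time integral $t\mapsto\int_0^t(\cdot)\,ds$ gains it back in the $E_\rho$ scale, using the Catalan number identity \eqref{eq ana relation catalan} and the power series \eqref{eq ana power series 1/sqrt(1-x)} for $(1-x)^{-1/2}$. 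I would first prove the analogue of Lemma \ref{lemma ana1 est LBK Y s} for the $E_\rho$ (and $\E_\rho$) norms, stated as estimates for the time-integrated operators, using Lemmas \ref{lemma ana dy dtheta Y s} and \ref{lemma Ys Banach algebra} termwise in $s$ together with the combinatorial bound $\sum_{i=0}^\nu\mathfrak{C}_i\mathfrak{C}_{\nu-i}=\mathfrak{C}_{\nu+1}$; the semilinear term $\mathbb{F}^{\per}$ (Proposition \ref{prop F per semilin}) needs no time integration. A contraction mapping argument on a small ball of $\mathcal{C}\big((-\delta(1-s),\delta(1-s)),\E_s^2\big)$, for $\delta$ small depending on $\gamma_0$ and $M_0$, then yields a unique $\mathbf{a}\in\E_1^2$ (shrinking so that $\rho=1$ is reached); the hypothesis $\normetriple{\mathbf{H}}_{\E_1^2}<M_0$ controls the source. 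This gives $\mathbf{a}_\phi,\mathbf{a}_\psi\in\E_1$, and via Lemma \ref{lemma Erho dans Xr} we obtain $\mathbf{a}\in\X^2_{r_1}$ for some $r_1<1$ depending only on the implicit $\rho$, with $\normetriple{\mathbf{a}}_{\X^2_{r_1}}\le C\,M_0$.

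\textbf{Step 2: the interior equations in $\X_r$.} With the boundary data now in hand, substitute \eqref{eq ana bord sigma} into \eqref{eq ana sigma}. Using that the last component $\mathbf{v}_{\zeta,j}\cdot e_d$ of each velocity is positive (incoming frequency), solve for $\partial_{x_d}\boldsymbol{\sigma}$ and rewrite \eqref{eq ana sigma} as
\begin{equation*}
	\partial_{x_d}\boldsymbol{\sigma}=\mathbf{G}\big(x_d,\boldsymbol{\sigma}\big),
\end{equation*}
where $\mathbf{G}$ collects the transport terms $\partial_t-\mathbf{v}_{\zeta,j}\cdot\nabla_y$, the Burgers and $\J$-bilinear terms, and the lower-order convolution source, all divided by the nonzero normal speeds; the boundary condition at $x_d=0$ is $\boldsymbol{\sigma}_{|x_d=0}=$ (affine in $\mathbf{a}$) $+\,\mathbf{g}$, which lies in $\X_{r_1}^6$ by Step 1 and the hypothesis on $\mathbf{g}$. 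Now verify the hypotheses \eqref{eq ana hyp F 1}--\eqref{eq ana hyp F 2} of Theorem \ref{theorem Cauchy-Kovalevskaya} with the chain $(\X_r^6)_{0<r<r_1}$: the derivative-losing part uses Lemma \ref{lemma ana dy dtheta Xr}, the products use the algebra property Lemma \ref{lemma Xr Banach algebra}, the $\J$-operators use \eqref{eq ana hyp toy model J} (i.e. Lemma \ref{lemma J est}), and the constant term $\mathbf{G}(x_d,0)$, built from $\mathbf{a}$ and $\mathbf{g}$, is bounded in $\X_r^6$ uniformly in $x_d\in[-\rho/2,\rho/2]$. Since $\X_r$ \emph{is} a Banach algebra and satisfies a bona fide derivation estimate (unlike $E_\rho$), applying the Cauchy--Kovalevskaya theorem here presents no obstacle: one gets $\delta>0$ and a unique $\boldsymbol{\sigma}\in\mathcal{C}^1\big((-\delta(r_1-r),\delta(r_1-r)),\X_r^6\big)$ for each $0<r<r_1$, solving the interior system. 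Uniqueness of the full solution follows by concatenating the two uniqueness statements (boundary first, then interior, the latter being determined by the former).

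\textbf{Main obstacle.} The delicate point is Step 1: proving that the time-integrated transport and product operators map $E_\rho\to E_{\rho'}$ with the correct Catalan-weighted bounds, so that the loss of a $t$-derivative is exactly compensated. This is precisely the ``regularization by integration in time'' mechanism of \cite{Ukai2001CauchyKovalevskaya,Morisse2020Elliptic}, and it requires carefully tracking, for each $\nu\in\mathbb{N}$, how $\partial_t^\nu$ of the right-hand side at $t=0$ is controlled by lower-order time derivatives of $\mathbf{a}$ together with the convolution identity for $\mathfrak{C}_\nu$; one must also handle the sum over the profile index $n$ with its exponential weight $e^{2\rho n}\prodscalbis{n}^{2d^*}$, which is where the bound $\sum_{k=1}^{n-1}\prodscalbis{n}^{2d^*}/(\prodscalbis{k}^{2d^*}\prodscalbis{n-k}^{2d^*})\le C$ from the proof of Lemma \ref{lemma ana1 est LBK Y s} reappears. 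Once these estimates are in place the fixed point is routine, and Step 2 is then a straightforward verification.
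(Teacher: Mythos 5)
Your two-step strategy — first close the boundary system \eqref{eq ana a}--\eqref{eq ana a init} by a fixed point in the time-analytic scale, use Lemma~\ref{lemma Erho dans Xr} to transfer the solution into the scale $\X_r$, then feed the traces into the interior equations \eqref{eq ana sigma}--\eqref{eq ana bord sigma} and apply Theorem~\ref{theorem Cauchy-Kovalevskaya} as a propagation problem in $x_d$ — is exactly the paper's strategy (Propositions~\ref{prop existence bord} and~\ref{prop existence eq int}), and the main ingredients you name (regularization by integration in time, the Catalan convolution identity \eqref{eq ana relation catalan}, the semilinearity of $\mathbb{F}^{\per}$ from Proposition~\ref{prop F per semilin}, the algebra and derivation properties of $\X_r$) are the correct ones.

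Two imprecisions in your Step~1 are worth flagging. First, the boundary fixed point does not take place in a Cauchy--Kovalevskaya-type scale $\mathcal{C}\big((-\delta(1-s),\delta(1-s)),\E_s^2\big)$: the spaces $\E_s$ already bundle time analyticity through Definition~\ref{def E rho}, so a function of $\E_s^2$ is already a function of $(t,y,\Theta)$, and writing $\mathcal{C}(I,\E_s^2)$ is a category error. The paper instead substitutes $\b:=\partial_t\a$, rewrites \eqref{eq ana dt a} as the integral equation \eqref{eq ana b = F(b)} for $\b$, and runs a \emph{single} Banach fixed point in a closed ball of the fixed space $\E_\rho^2$; the ``scaling'' role is played internally by the parameter $s$ in Definition~\ref{def E rho}, via $s_\nu:=s+\tfrac{1}{\nu+1}(1-s)$ in the proof of Lemma~\ref{lemma ana est LBK E rho}. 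Second, the output of the fixed point is $\a\in\E_\rho^2$ for some $\rho<1$ small, depending on $\gamma_0$ and $M_0$, not $\a\in\E_1^2$: the contraction constant in estimates \eqref{eq ana est LBK E rho} carries factors $\rho$ or $\rho^2$, so the argument only closes for $\rho$ small, and you cannot ``shrink so that $\rho=1$ is reached.'' This is the reason Lemma~\ref{lemma Erho dans Xr} (and not merely the hypothesis on $\mathbf{H}$) is needed to land $\a$ in $\X_{r_1}^2$ for some $r_1<1$. Step~2 as you describe it matches the paper's Proposition~\ref{prop existence eq int}.
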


To prove this result, we will start by proving existence for boundary system \eqref{eq ana a}-\eqref{eq ana a init}, with the Banach fixed point theorem applied in a closed ball of $ \E^2_{\rho} $ for some $ \rho\in(0,1) $. We will therefore have a solution $ \a $ of \eqref{eq ana a}-\eqref{eq ana a init} analytical both with respect to space and time. The strategy is to write equations \eqref{eq ana a} as a fixed point, by the change of variables $ \b:=\partial_t\,\a $, to obtain a problem like the one of \cite{BaouendiGoulaouic1978Nishida}. This will allow us to prove that the operator at stake is a contraction, using the phenomenon of regularization by integration in time. Then we proceed with the existence of solution for interior system \eqref{eq ana sigma}-\eqref{eq ana bord sigma}, by applying a classical Cauchy-Kovalevskaya result, in $ \X_r^6 $ for some $ r\in(0,1) $. For this purpose, equations \eqref{eq ana sigma} will be seen as propagation equations in the normal variable. Verifying assumptions of Theorem \ref{theorem Cauchy-Kovalevskaya} for interior equations presents no difficulty. Finally Lemma \ref{lemma Erho dans Xr} will be used to assert that the obtained solution $ \a $ of \eqref{eq ana a}-\eqref{eq ana a init} in $ \E_{\rho}^2 $ is actually in $ \X_{r}^2 $ for some $ r\in(0,1) $. 

\subsection{Time analyticity on the boundary and Cauchy-Kovalevskaya theorem for incoming equations}

\subsubsection{Existence and time analyticity for boundary equations}

This part is devoted to solving  boundary  system \eqref{eq ana a}-\eqref{eq ana a init}. The goal is to obtain solutions which are analytical not only with respect to $ (y,\Theta) $  but also with respect to time. In the same way as for the first simplified model, system \eqref{eq ana a}-\eqref{eq ana a init} can be displayed in the form
\begin{equation}\label{eq ana dt a}
	\begin{cases}
	\partial_t \,\mathbf{a} = L \,\mathbf{a} -\partial_{\Theta}\,\mathbf{D}^{\Lop}\big(\a,\a\big) - \mathbb{F}^{\per}\big(\partial_{\Theta}\,\a,\partial_{\Theta}\,\a\big)+\mathbf{H}+\mathbf{K}^{\Lop}\big(\a,\a\big),\\[5pt]
	\a(0)=0,
	\end{cases}
\end{equation}
where $ \a:=(\a_{\phi},\a_{\psi}):=\big(a^n_{\phi},a^n_{\psi}\big)_{n\geq 1} $, $ \mathbf{H}:=\big(\mathbf{H}_{\phi},\mathbf{H}_{\psi}\big) $, and, if $ \mathbf{c}:=\big(c^n_{\phi},c^n_{\psi}\big)_{n\geq 1} $, 
\begin{align*}
	L\,\a&:=\big(\mathbf{v}_{\phi}^{\Lop}\cdot\nabla_y\,a^n_{\phi},\mathbf{v}_{\phi}^{\Lop}\cdot\nabla_y\,a^n_{\psi}\big)_{n\geq 1}, \\[5pt]
	\mathbf{D}^{\Lop}(\a,\mathbf{c})&:=\big(D^{\Lop}_{\phi}\,a^1_{\phi}\,a^n_{\phi}, D^{\Lop}_{\psi}\,a^1_{\psi}\,a^n_{\psi}\big)_{n\geq 1}, \\[5pt]
	\mathbb{F}^{\per}(\a,\mathbf{c})&:=\big(\mathbf{w}_{\phi}\,\mathbb{F}^{\per}_{\phi}[a^1_{\phi},c^{n}_{\phi}], \mathbf{w}_{\psi}\linebreak\mathbb{F}^{\per}_{\psi}[a^1_{\psi},c^{n}_{\psi}]\big)_{n\geq 1}, \\[5pt]
	\mathbf{K}^{\Lop}(\a,\mathbf{c})&:=\Big(K^{\Lop}_{\phi}\sum_{k=1}^{n-1}\partial_{\Theta}\big(a^k_{\phi}\,c^{n-k}_{\psi}\big),K^{\Lop}_{\psi}\sum_{k=1}^{n-1}\partial_{\Theta}\big(a^k_{\phi}\,c^{n-k}_{\psi}\big)\Big)_{n\geq 1}.
\end{align*}
Setting $ \b:=\partial_t\a $, system \eqref{eq ana dt a} is equivalent to
\begin{multline}\label{eq ana b = F(b)}
	\b(t)=L\int_0^t\b(\sigma)\,d\sigma-\partial_{\Theta}\,\mathbf{D}^{\Lop}\Big(\int_0^t\b(\sigma)\,d\sigma,\int_0^t\b(\sigma)\,d\sigma\Big)\\
	\quad-\mathbb{F}^{\per}\Big(\partial_{\Theta}\int_0^t\b(\sigma)\,d\sigma,\partial_{\Theta}\int_0^t\b(\sigma)\,d\sigma\Big)
	+\mathbf{K}^{\Lop}\Big(\int_0^t\b(\sigma)\,d\sigma,\int_0^t\b(\sigma)\,d\sigma\Big)+\mathbf{H}(t),
\end{multline}
with $ \a(t)=\int_0^t\b(\sigma)\,d\sigma $.
The aim is to solve equation \eqref{eq ana b = F(b)} with a fixed point theorem in $ \E_{\rho}^2 $, so we start by proving the following key estimates, which will allow us to prove contraction for the operator at stake.

Estimates \eqref{eq ana est LBK E rho:L}, \eqref{eq ana est LBK E rho:B} and \eqref{eq ana est LBK E rho:K} below constitute what we call \emph{regularization by integration in time}, where composing derivation in $ (y,\Theta) $ with integration in time leads to no loss of regularity. This phenomenon was introduced by \cite{Ukai2001CauchyKovalevskaya}, and can also be found in \cite{Metivier2009Optics,Morisse2020Elliptic}. 

\begin{lemma}\label{lemma ana est LBK E rho}
	There exists $ C>0 $ such that for $ \rho\in(0,1) $, for $ \b,\mathbf{c} $ in $ \E_{\rho}^2 $, the following estimates hold
	\begin{subequations}\label{eq ana est LBK E rho}
		\begin{align}
			\label{eq ana est LBK E rho:L}
			\normetriple{t\mapsto L\int_0^t\b(\sigma)\,d\sigma}_{\E_{\rho}^2}&\leq  C\,\rho\,\gamma_0\, \normetriple{\b}_{\E_{\rho}^2},\\
			\label{eq ana est LBK E rho:B}
			\normetriple{t\mapsto \partial_{\Theta}\,\mathbf{D}^{\Lop}\Big(\int_0^t\b(\sigma)\,d\sigma,\int_0^t\mathbf{c}(\sigma)\,d\sigma\Big)}_{\E_{\rho}^2}&\leq C\,\rho^2\,\gamma_0\, \normetriple{\b}_{\E_{\rho}^2}\normetriple{\mathbf{c}}_{\E_{\rho}^2},\\
			\label{eq ana est LBK E rho:F}
			\normetriple{t\mapsto \mathbb{F}^{\per}\Big(\partial_{\Theta}\int_0^t\b(\sigma)\,d\sigma,\partial_{\Theta}\int_0^t\mathbf{c}(\sigma)\,d\sigma\Big)}_{\E_{\rho}^2}&\leq C\,\rho^2\,\gamma_0\, \normetriple{\b}_{\E_{\rho}^2}\normetriple{\mathbf{c}}_{\E_{\rho}^2},\\
			\label{eq ana est LBK E rho:K}
			\normetriple{t\mapsto \mathbf{K}^{\Lop}\Big(\int_0^t\b(\sigma)\,d\sigma,\int_0^t\mathbf{c}(\sigma)\,d\sigma\Big)}_{\E_{\rho}^2}&\leq C\,\rho^2\,\gamma_0\, \normetriple{\b}_{\E_{\rho}^2}\normetriple{\mathbf{c}}_{\E_{\rho}^2}.
		\end{align}
	\end{subequations}
\end{lemma}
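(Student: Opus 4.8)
The plan is to estimate, for each of the four operators and for arbitrary $s'\in(0,1)$ and $\nu\in\mathbb N$, the quantity $\norme{\partial_t^\nu(\cdot)(0)}_{Y_{s'}}$, to check that it obeys the bound \eqref{eq ana est norme a} defining the $E_\rho$-norm componentwise, and then to sum over $n$ with the weights $e^{2\rho n}\prodscalbis{n}^{2d^*}$ to recover the $\E_\rho^2$-estimates, as in Definitions \ref{def E rho} and \ref{def E rho suite}. The whole mechanism rests on one elementary observation: since $\partial_t^\nu\big(\int_0^t\b(\sigma)\,d\sigma\big)\big|_{t=0}$ equals $\partial_t^{\nu-1}\b(0)$ for $\nu\geq1$ and $0$ for $\nu=0$, the Leibniz rule applied at $t=0$ to a product, or to one of the bilinear maps $\mathbf D^{\Lop},\mathbb F^{\per},\mathbf K^{\Lop}$, of two time-integrals $f(t)=\int_0^t u$, $g(t)=\int_0^t v$ produces only interior terms
\[
	\partial_t^\nu(fg)(0)=\sum_{k=1}^{\nu-1}\binom{\nu}{k}\,\partial_t^{k-1}u(0)\,\partial_t^{\nu-k-1}v(0),
\]
the extreme terms $k=0,\nu$ vanishing because $\int_0^t=0$ at $t=0$. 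Thus one unit of $t$-differentiation is removed from each factor at the cost of one integration: this is exactly the \emph{regularization by integration in time}.

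For $L$, which is one integration composed with a bounded first-order operator in $y$, we have $\partial_t^\nu\big(L\int_0^t\b\big)(0)=\mathbf v^{\Lop}\cdot\nabla_y\,\partial_t^{\nu-1}\b(0)$. We drop from an intermediate analyticity level $s$ down to $s'$ via Lemma \ref{lemma ana dy dtheta Y s}, then insert the $E_\rho$-bound for $\b$ at level $s$ and order $\nu-1$, namely $\norme{\partial_t^{\nu-1}\b(0)}_{Y_s}\leq\normetriple{\b}_{E_\rho}(1-s)^{-\nu}\,\nu!\,\mathfrak C_{\nu-1}\,(4\rho)^{-(\nu-1)}$. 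Choosing $s$ with $1-s=\tfrac{\nu}{\nu+1}(1-s')$ gives $\tfrac{1}{(s-s')(1-s)^\nu}\leq\tfrac{C(\nu+1)}{(1-s')^{\nu+1}}$; this linear loss in $\nu$ is absorbed into $(\nu+1)!\,\mathfrak C_{\nu-1}=(\nu+1)!\,\mathfrak C_\nu\cdot\tfrac{\mathfrak C_{\nu-1}}{\mathfrak C_\nu}$ with $\mathfrak C_{\nu-1}/\mathfrak C_\nu\leq1$, and the discrepancy between $(4\rho)^{-(\nu-1)}$ and the required $(4\rho)^{-\nu}$ leaves a factor $4\rho$, whence the single power of $\rho$ in \eqref{eq ana est LBK E rho:L}; since $L$ acts termwise in $n$, the $\E_\rho^2$-estimate follows at once.

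For $\mathbf D^{\Lop}$ and $\mathbf K^{\Lop}$ one writes out the Leibniz sum, estimates each summand by combining the Banach-algebra property of $Y_s$ (Lemma \ref{lemma Ys Banach algebra}) at the intermediate level with the $\partial_\Theta$ level-drop from $s$ to $s'$ (Lemma \ref{lemma ana dy dtheta Y s}), inserts the $E_\rho$-bounds on both factors, and then uses $\binom{\nu}{k}k!(\nu-k)!=\nu!$ together with the Catalan convolution identity \eqref{eq ana relation catalan}, $\sum_{i+j=\nu-2}\mathfrak C_i\mathfrak C_j=\mathfrak C_{\nu-1}$, to collapse the $k$-sum to $\nu!\,\mathfrak C_{\nu-1}$. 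Now two integrations shift the Catalan index by $\nu-2$, producing $(4\rho)^{-(\nu-2)}$, which against $(4\rho)^{-\nu}$ leaves $(4\rho)^2$, i.e. the $\rho^2$ in \eqref{eq ana est LBK E rho:B} and \eqref{eq ana est LBK E rho:K}. For $\mathbb F^{\per}$ the argument is identical except that the $\partial_\Theta$'s are handled by the semilinearity estimate \eqref{eq ana hyp toy model F per} (Proposition \ref{prop F per semilin}), so no level drop is needed; the two integrations still give $\rho^2$, proving \eqref{eq ana est LBK E rho:F}. Passing to the $\E_\rho^2$-norm is termwise for $\mathbf D^{\Lop}$ and $\mathbb F^{\per}$ (the first factor sits at $n=1$ and is bounded by the full $\E_\rho$-norm), while for $\mathbf K^{\Lop}$ it requires the discrete convolution bound — uniform boundedness in $n$ of $\sum_{k=1}^{n-1}\prodscalbis{n}^{2d^*}\prodscalbis{k}^{-2d^*}\prodscalbis{n-k}^{-2d^*}$ — exactly as in the proof of Lemma \ref{lemma ana1 est LBK Y s}. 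Throughout, the constants $|\mathbf v^{\Lop}_\zeta|,|D^{\Lop}_\zeta|,|\mathbf w_\zeta|^2,|K^{\Lop}_\zeta|\leq\gamma_0$ of \eqref{eq ana hyp toy model v D bord} account for the $\gamma_0$ factors.

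The main obstacle is the $\nu$-dependent optimization of the intermediate analyticity radius $s$: one must verify that the loss $\tfrac{1}{(s-s')(1-s)^\nu}$ produced by the level drop is at most a fixed multiple of $(\nu+1)\,(1-s')^{-(\nu+1)}$, and then that this linear-in-$\nu$ loss is precisely what the Catalan ratio $\mathfrak C_\nu/\mathfrak C_{\nu-1}$ and the factorial bookkeeping can swallow, so that only a harmless fixed multiple of $\rho$ (resp. $\rho^2$) times $\gamma_0$ survives. The remaining verifications — the elementary $t$-Leibniz identity above, the Catalan convolution, and the discrete convolution in $n$ — are routine.
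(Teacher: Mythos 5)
Your proposal is correct and follows essentially the same route as the paper's proof: the identity $\partial_t^{\nu}\big(\int_0^t\b\big)(0)=\partial_t^{\nu-1}\b(0)$ and the vanishing of the extreme Leibniz terms implement the regularization-by-integration-in-time; the intermediate level you call $s$ with $1-s=\tfrac{\nu}{\nu+1}(1-s')$ is exactly the paper's $s_\nu=s+\tfrac{1}{\nu+1}(1-s)$ (with your $(s',s)$ corresponding to the paper's $(s,s_\nu)$), and the relations \eqref{eq ana relations sn} produce precisely the bound $(s-s')^{-1}(1-s)^{-\nu}\leq e(\nu+1)(1-s')^{-\nu-1}$ you state; the $\nu$-sum is collapsed via $\binom{\nu}{\mu}\mu!(\nu-\mu)!=\nu!$ and the Catalan convolution \eqref{eq ana relation catalan}; the semilinearity \eqref{eq ana hyp toy model F per} spares the $\partial_\Theta$ level-drop for $\mathbb F^{\per}$; the powers of $\rho$ count the integrations; and the $\E_\rho^2$-summation for $\mathbf K^{\Lop}$ uses the same Cauchy--Schwarz plus $\prodscalbis{\cdot}^{2d^*}$ discrete-convolution bound as in Lemma \ref{lemma ana1 est LBK Y s}. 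No meaningful deviation from the paper's argument; the bookkeeping and constant accounting all check out.
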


\begin{proof}
	First  note that since $ \b,\mathbf{c} $ are in $ \E_{\rho}^2 $, functions which we wish to estimate are in $ \big(\tilde{E}_{\rho}^{\mathbb{N}^*}\big)^2 $. In all this proof, we denote
	\begin{equation*}
		\b:=(\b_{\phi},\b_{\psi}):=\big(b_{\phi}^n,b_{\psi}^n\big)_{n\geq 1},\qquad \mathbf{c}:=(\mathbf{c}_{\phi},\mathbf{c}_{\psi}):=\big(c_{\phi}^n,c_{\psi}^n\big)_{n\geq 1}.
	\end{equation*}
	For $ \nu\geq 1 $ and $ s\in(0,1) $, we define $ s_{\nu}:=s+\inv{\nu+1} (1-s) $ which is such that $ s_{\nu}>s $ and satisfy, for $ \nu\geq 1 $,
	\begin{equation}\label{eq ana relations sn}
		\frac{1-s}{s_{\nu}-s}=\nu+1\qquad \text{and}\qquad \frac{1-s}{1-s_{\nu}}=1+\inv{\nu}.
	\end{equation}
	
	We proceed with the proof of estimate \eqref{eq ana est LBK E rho:L} dealing with function $ \mathbf{V}:=L\int_0^t\b(\sigma)\,d\sigma $, and we denote $ \mathbf{V}:=(\mathbf{V}_{\phi},\mathbf{V}_{\psi}):=\big(V_{\phi}^n,V_{\psi}^n\big)_{n\geq 1} $. According to Definition \ref{def E rho}, the aim is to estimate, for $ s\in(0,1) $, $ \nu\geq 0 $, $ \zeta=\phi,\psi $ and $ n\geq 1 $, the $ Y_s $-norm of $ \partial_t^{\nu} \,V_{\zeta}^n(0) $. Fix $ s\in(0,1) $, $ \zeta=\phi,\psi $ and $ n\geq 1 $, and recall that 
	\begin{equation*}
		V_{\zeta}^n(t)=\mathbf{v}^{\Lop}_{\zeta}\cdot\nabla_y\int_0^tb_{\zeta}^n(\sigma)\,d\sigma.
	\end{equation*}
	Therefore we have $ V_{\zeta}^n(0)=0 $ and, for $ \nu\geq 1 $,
	\begin{equation*}
		\partial_t^{\nu}V_{\zeta}^n(0)=\mathbf{v}^{\Lop}_{\zeta}\cdot\nabla_y\,\partial_t^{\nu-1}b_{\zeta}^n(0),
	\end{equation*}
	so, for $ s\in(0,1) $, using \eqref{eq ana est dt dtheta Ys} with $ 0<s<s_{\nu}\leq 1 $, estimate \eqref{eq ana hyp toy model v D int} and definition \eqref{eq ana est norme a} of $ E_{\rho} $-norm,
	\begin{align*}
		\norme{\partial_t^{\nu}\,V_{\zeta}^n(0)}_{Y_s}&\leq \gamma_0\norme{\nabla_y\,\partial_t^{\nu-1}b_{\zeta}^n(0)}_{Y_s}\leq \frac{C\,\gamma_0}{s_{\nu}-s}\norme{\partial_t^{\nu-1}b_{\zeta}^n(0)}_{Y_{s_{\nu}}}\\
		&\leq 	\frac{C\,\gamma_0}{s_{\nu}-s}\frac{1}{(1-s_{\nu})^{\nu}}\frac{\nu!\,\mathfrak{C}_{\nu-1}}{(4\rho)^{\nu-1}}\normetriple{b_{\zeta}^n}_{E_{\rho}}.
	\end{align*}
	Therefore, using relations \eqref{eq ana relations sn},
	\begin{align*}
		\norme{\partial_t^{\nu}\,V_{\zeta}^n(0)}_{Y_s}&\leq 	C\,\gamma_0\,\normetriple{b_{\zeta}^n}_{E_{\rho}}\, \inv{(1-s)^{\nu+1}}\frac{(\nu+1)\,\nu!\,\mathfrak{C}_{\nu-1}}{(4\rho)^{\nu-1}}\left(1+\inv{\nu}\right)^{\nu}\\
		&\leq C\,\rho\,\gamma_0\,\normetriple{b_{\zeta}^n}_{E_{\rho}}\,\frac{(\nu+1)!\,\mathfrak{C}_{\nu}}{(1-s)^{\nu+1}\,(4\rho)^{\nu}},
	\end{align*}
	so that $ \normetriple{V^n_{\zeta}}_{E_{\rho}}\leq C\,\rho\,\gamma_0\,\normetriple{b_{\zeta}^n}_{E_{\rho}} $. Since this estimate is independent of $ n,\zeta $, multiplying it by $ e^{2\rho n}\prodscalbis{n}^{2d^*} $ and summing over $ n\geq 1 $ leads to the analogous one for $ \E_{\rho}^2 $, which reads as \eqref{eq ana est LBK E rho:L}.
	
	For $ \mathbf{V}:=\partial_{\Theta}\,\mathbf{D}^{\Lop}\Big(\int_0^t\b(\sigma)\,d\sigma,\int_0^t\mathbf{c}(\sigma)\,d\sigma\Big) $ which we decompose in $ \big(\tilde{E}_{\rho}^{\mathbb{N}^*}\big)^2 $ as  $ \mathbf{V}=:\linebreak(\mathbf{V}_{\phi},\mathbf{V}_{\psi})=:\big(V_{\phi}^n,V_{\psi}^n\big)_{n\geq 1} $, with, for $ n\geq 1 $ and $ \zeta=\phi,\psi $,
	\begin{equation*}
		V_{\zeta}^n:=\partial_{\Theta}\,D^{\Lop}_{\zeta}\int_0^tb_{\zeta}^1(\sigma)\,d\sigma\cdot\int_0^tc_{\zeta}^n(\sigma)\,d\sigma,
	\end{equation*}
	for $ n\geq 1 $ and $ \zeta=\phi,\psi $, we compute $ V_{\zeta}^n(0)=\partial_tV_{\zeta}^n(0)=0 $, and for $ \nu\geq 2 $,
	\begin{equation*}
		\partial_t^{\nu}\,V_{\zeta}^n(0)=D^{\Lop}_{\zeta}\,\partial_{\Theta}\sum_{\mu=1}^{\nu-1}\binom{\nu}{\mu}\partial_t^{\mu-1}b^1_{\zeta}(0)\,\partial_t^{\nu-\mu-1}c^n_{\zeta}(0).
	\end{equation*}
	Therefore, for $ s\in(0,1) $, using \eqref{eq ana est dt dtheta Ys} with $ 0<s<s_{\nu}\leq 1 $ and estimates \eqref{eq ana hyp toy model v D int} and \eqref{eq ana est norme a}, we have
	\begin{align*}
		\norme{\partial_t^{\nu}V_{\zeta}^n(0)}_{Y_s}&\leq \frac{C\,\gamma_0}{s_{\nu}-s}\sum_{\mu=1}^{\nu-1}\binom{\nu}{\mu}\norme{\partial_t^{\mu-1}b^1_{\zeta}(0)}_{Y_{s_{\nu}}}\norme{\partial_t^{\nu-\mu-1}c^n_{\zeta}(0)}_{Y_{s_{\nu}}}\\
		&\leq \frac{C\,\gamma_0}{s_{\nu}-s} \frac{\normetriple{b^1_{\zeta}}_{E_{\rho}}\normetriple{c^n_{\zeta}}_{E_{\rho}}}{(4\rho)^{\nu-2}}\sum_{\mu=1}^{\nu-1}\binom{\nu}{\mu}\frac{\mu!\,\mathfrak{C}_{\mu-1}}{(1-s_{\nu})^{\mu}}\frac{(\nu-\mu)!\,\mathfrak{C}_{\nu-\mu-1}}{(1-s_{\nu})^{\nu-\mu}}.
	\end{align*}
	Again with relations \eqref{eq ana relations sn} we get
	\begin{align*}\nonumber
		\norme{\partial_t^{\nu}V_{\zeta}^n(0)}_{Y_s}&\leq 	\frac{C\,\gamma_0\,(\nu+1)}{1-s}\frac{\normetriple{b^1_{\zeta}}_{E_{\rho}}\normetriple{c^n_{\zeta}}_{E_{\rho}}}{(4\rho)^{\nu-2}}\frac{\nu!}{(1-s)^\nu}\sum_{\mu=1}^{\nu-1}\mathfrak{C}_{\mu-1}\,\mathfrak{C}_{\nu-\mu-1}\left(1+\inv{\nu}\right)^{\nu}\\
		&\leq C\,\rho^2\,\gamma_0\,e\,\normetriple{b^1_{\zeta}}_{E_{\rho}}\normetriple{c^n_{\zeta}}_{E_{\rho}}\frac{(\nu+1)!}{(1-s)^{\nu+1}\,(4\rho)^{\nu}}\sum_{\mu=1}^{\nu-1}\mathfrak{C}_{\mu-1}\,\mathfrak{C}_{\nu-\mu-1}.\label{eq ana preuve est LBK E rho:inter1}
	\end{align*}
	We can conclude using \eqref{eq ana relation catalan}:
	\begin{equation*}
		\sum_{\mu=1}^{\nu-1}\mathfrak{C}_{\mu-1}\,\mathfrak{C}_{\nu-\mu-1}=\sum_{\mu=0}^{\nu-2}\mathfrak{C}_{\mu}\,\mathfrak{C}_{\nu-2-\mu}=\mathfrak{C}_{\nu-1}\leq \mathfrak{C}_{\nu}.
	\end{equation*}
Last inequality follows from relation \eqref{eq ana relation catalan} and $ \mathfrak{C}_0=1 $.
	
	The proof of estimate \eqref{eq ana est LBK E rho:F} for $ \mathbf{V}:=\mathbb{F}^{\per}\Big(\partial_{\Theta}\int_0^t\b(\sigma)\,d\sigma,\partial_{\Theta}\int_0^t\mathbf{c}(\sigma)\,d\sigma\Big) $ follows the same argument as the previous one, but it is simpler since, according to \eqref{eq ana hyp toy model F per}, $ \mathbb{F}^{\per}_{\zeta}(\partial_{\Theta}\,\a,\partial_{\Theta}\,\a) $ acts as a semilinear term in $ \a $. It is therefore omitted.

	Finally, we take interest into $ \mathbf{V}:=\mathbf{K}^{\Lop}\Big(\int_0^t\b(\sigma)\,d\sigma,\int_0^t\mathbf{c}(\sigma)\,d\sigma\Big) $ which we decompose in $ \big(\tilde{E}_{\rho}^{\mathbb{N}^*}\big)^2 $ as  $ \mathbf{V}=:(\mathbf{V}_{\phi},\mathbf{V}_{\psi})=:\big(V_{\phi}^n,V_{\psi}^n\big)_{n\geq 1} $, with, for $ n\geq 1 $ and $ \zeta=\phi,\psi $,
	\begin{equation*}
		V_{\zeta}^n:=K^{\Lop}_{\zeta}\sum_{k=1}^{n-1}\partial_{\Theta}\big(b^k_{\phi},c^{n-k}_{\psi}\big)
	\end{equation*}
	For $ n\geq 1 $ and $ \zeta=\phi,\psi $, we compute $ V_{\zeta}^n(0)=\partial_tV_{\zeta}^n(0)=0 $, and for $ \nu\geq 2 $,
	\begin{equation*}
		\partial_t^{\nu}V_{\zeta}^n(0)=K^{\Lop}_{\zeta}\sum_{k=1}^{n-1}\partial_{\Theta}\sum_{\mu=1}^{\nu-1}\binom{\nu}{\mu}\partial_t^{\mu-1}b^k_{\phi}(0)\,\partial_t^{\nu-\mu-1}c^{n-k}_{\psi}(0),
	\end{equation*}
	so, for $ s\in(0,1) $, using \eqref{eq ana hyp toy model v D bord} and \eqref{eq ana est dt dtheta Ys} with $ 0<s<s_{\nu}\leq 1 $ and then \eqref{eq ana est norme a},
	\begin{align*}
		\norme{\partial_t^{\nu}V_{\zeta}^n(0)}_{Y_s}&\leq \frac{C\,\gamma_0}{s_{\nu}-s}\sum_{k=1}^{n-1}\sum_{\mu=1}^{\nu-1}\binom{\nu}{\mu}\norme{\partial_t^{\mu-1}b^k_{\phi}(0)}_{Y_{s_{\nu}}}\norme{\partial_t^{\nu-\mu-1}c^{n-k}_{\psi}(0)}_{Y_{s_{\nu}}}\\
		&\leq \frac{C\,\gamma_0}{s_{\nu}-s} \inv{(4\rho)^{\nu-2}}\sum_{k=1}^{n-1}\normetriple{b^k_{\phi}}_{E_{\rho}}\normetriple{c^{n-k}_{\psi}}_{E_{\rho}}\\
		&\quad\sum_{\mu=1}^{\nu-1}\binom{\nu}{\mu}\frac{\mu!\,\mathfrak{C}_{\mu-1}}{(1-s_{\nu})^{\mu}}\frac{(\nu-\mu)!\,\mathfrak{C}_{\nu-\mu-1}}{(1-s_{\nu})^{\nu-\mu}}\\
		&\leq C\,\rho^2\,\gamma_0\,\frac{(\nu+1)!\,\mathfrak{C}_{\nu}}{(1-s)^{\nu+1}(4\rho)^{\nu}}\sum_{k=1}^{n-1}\normetriple{b^k_{\phi}}_{E_{\rho}}\normetriple{c^{n-k}_{\psi}}_{E_{\rho}},
	\end{align*}
using \eqref{eq ana relation catalan} as before. Therefore we get
\begin{equation*}
	\normetriple{V^n_{\zeta}}_{E_{\rho}}\leq C\,\rho^2\,\gamma_0\sum_{k=1}^{n-1}\normetriple{b^k_{\phi}}_{E_{\rho}}\normetriple{c^{n-k}_{\psi}}_{E_{\rho}},
\end{equation*}
so that, multiplying by $ e^{2\,\rho\, n}\prodscalbis{n}^{2d^*} $ and summing over $ n\geq 1 $, we obtain
\begin{align*}
	\sum_{\zeta=\phi,\psi}\sum_{n\geq 1}&e^{2\,\rho\, n}\prodscalbis{n}^{2d^*}\normetriple{V^n_{\zeta}}_{E_{\rho}}^2\\[-10pt]
		&\leq C^2\,\rho^4\,\gamma_0^2\sum_{\zeta=\phi,\psi}\sum_{n\geq 1}e^{2\,\rho\, n}\prodscalbis{n}^{2d^*}\left(\sum_{k=1}^{n-1}\normetriple{b^k_{\phi}}_{E_{\rho}}\normetriple{c^{n-k}_{\psi}}_{E_{\rho}}\right)^2\\
		&\leq C^2\,\rho^4\,\gamma_0^2\sum_{n\geq 1}\left(\sum_{k=1}^{n-1}\frac{\prodscalbis{n}^{2d^*}}{\prodscalbis{k}^{2d^*}\prodscalbis{n-k}^{2d^*}}\right)\\
		&\quad\sum_{k=1}^{n-1}e^{2\,\rho\, k}\prodscalbis{k}^{2d^*}\normetriple{b^k_{\phi}}_{E_{\rho}}^2e^{2\,\rho\, (n-k)}\prodscalbis{n-k}^{2d^*}\normetriple{c^{n-k}_{\psi}}_{E_{\rho}}^2,
\end{align*}
using Cauchy-Schwarz inequality. Therefore, since $ \sum_{k=1}^{n-1}\frac{\prodscalbis{n}^{2d^*}}{\prodscalbis{k}^{2d^*}\prodscalbis{n-k}^{2d^*}} $ is bounded uniformly with respect to $ n\geq 1 $, we get
\begin{equation*}
	\normetriple{\mathbf{V}}^2_{\E^2_{\rho}}=\sum_{\zeta=\phi,\psi}\sum_{n\geq 1}e^{2\,\rho\, n}\prodscalbis{n}^{2d^*}\normetriple{V^n_{\zeta}}_{E_{\rho}}^2\leq C^2\,\rho^4\,\gamma_0^2\, \normetriple{\b}^2_{\E^2_{\rho}}\normetriple{\mathbf{c}}^2_{\E^2_{\rho}},
\end{equation*}
which is the sought inequality \eqref{eq ana est LBK E rho:K}.
\end{proof}

We are now in place to prove existence for \eqref{eq ana b = F(b)} (which is equivalent to \eqref{eq ana dt a}) in the space $ \E_{\rho}^2 $, of analytic functions with respect to $ (t,y,\Theta) $. We follow here the method of \cite{Ukai2001CauchyKovalevskaya,Morisse2020Elliptic}.

\begin{proposition}\label{prop existence bord}
	For every $ M_0>0 $, there exists $ \rho\in(0,1) $ such that for every $ \mathbf{H} $ in $ \E^2_{\rho} $ satisfying $ \normetriple{\mathbf{H}}_{\E^2_{\rho}}<M_0 $, equation \eqref{eq ana b = F(b)} admits a unique solution $ \b $ in $ \E_{\rho}^2 $.
\end{proposition}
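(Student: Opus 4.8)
The plan is to solve the fixed point equation \eqref{eq ana b = F(b)} by the Banach fixed point theorem in a closed ball of the Banach space $\E_{\rho}^2$, for a radius $\rho\in(0,1)$ chosen small enough in terms of $M_0$ (so the admissible analyticity radius of the solution will shrink as the forcing grows). Writing $\mathcal{T}$ for the operator sending $\b\in\E_{\rho}^2$ to the right-hand side of \eqref{eq ana b = F(b)},
\begin{multline*}
	\mathcal{T}(\b)(t):=L\int_0^t\b(\sigma)\,d\sigma-\partial_{\Theta}\,\mathbf{D}^{\Lop}\Big(\int_0^t\b(\sigma)\,d\sigma,\int_0^t\b(\sigma)\,d\sigma\Big)-\mathbb{F}^{\per}\Big(\partial_{\Theta}\int_0^t\b(\sigma)\,d\sigma,\partial_{\Theta}\int_0^t\b(\sigma)\,d\sigma\Big)\\
	+\mathbf{K}^{\Lop}\Big(\int_0^t\b(\sigma)\,d\sigma,\int_0^t\b(\sigma)\,d\sigma\Big)+\mathbf{H}(t),
\end{multline*}
I would first record that, by the estimates \eqref{eq ana est LBK E rho:L}--\eqref{eq ana est LBK E rho:K} of Lemma \ref{lemma ana est LBK E rho} together with $\mathbf{H}\in\E_{\rho}^2$, each term on the right-hand side belongs to $(\tilde{E}_{\rho}^{\mathbb{N}^*})^2$ with controlled $\E_{\rho}^2$-norm, so that $\mathcal{T}$ is a well-defined map from $\E_{\rho}^2$ to itself satisfying
\begin{equation*}
	\normetriple{\mathcal{T}(\b)}_{\E_{\rho}^2}\leq C\,\rho\,\gamma_0\,\normetriple{\b}_{\E_{\rho}^2}+3\,C\,\rho^2\,\gamma_0\,\normetriple{\b}_{\E_{\rho}^2}^2+\normetriple{\mathbf{H}}_{\E_{\rho}^2}.
\end{equation*}

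Next I would look for the solution in the closed ball $\overline{B}:=\ensemble{\b\in\E_{\rho}^2\,\middle|\,\normetriple{\b}_{\E_{\rho}^2}\leq 2M_0}$. From the inequality above and $\normetriple{\mathbf{H}}_{\E_{\rho}^2}<M_0$ one gets $\normetriple{\mathcal{T}(\b)}_{\E_{\rho}^2}\leq M_0\big(1+2C\rho\gamma_0+12C\rho^2\gamma_0 M_0\big)$ for $\b\in\overline{B}$, so $\mathcal{T}$ stabilizes $\overline{B}$ as soon as $2C\rho\gamma_0+12C\rho^2\gamma_0 M_0\leq 1$. For the contraction estimate I would exploit the bilinearity of $\mathbf{D}^{\Lop}$, $\mathbb{F}^{\per}$ and $\mathbf{K}^{\Lop}$ and the linearity of the operator $f\mapsto\int_0^t f(\sigma)\,d\sigma$ and of $\partial_{\Theta}$: for $\b_1,\b_2\in\overline{B}$, writing $A_i:=\int_0^t\b_i(\sigma)\,d\sigma$ and using $\mathbf{D}^{\Lop}(A_1,A_1)-\mathbf{D}^{\Lop}(A_2,A_2)=\mathbf{D}^{\Lop}(A_1,A_1-A_2)+\mathbf{D}^{\Lop}(A_1-A_2,A_2)$ (and likewise for the other two quadratic terms), Lemma \ref{lemma ana est LBK E rho} gives
\begin{equation*}
	\normetriple{\mathcal{T}(\b_1)-\mathcal{T}(\b_2)}_{\E_{\rho}^2}\leq\big(C\,\rho\,\gamma_0+12\,C\,\rho^2\,\gamma_0\,M_0\big)\,\normetriple{\b_1-\b_2}_{\E_{\rho}^2}.
\end{equation*}
Both conditions are met by taking $\rho$ small enough, depending only on $M_0$, $\gamma_0$ and the constant $C$ of Lemma \ref{lemma ana est LBK E rho}.

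The Banach fixed point theorem then yields a fixed point $\b\in\overline{B}$ of $\mathcal{T}$, unique in $\overline{B}$, i.e.\ a solution of \eqref{eq ana b = F(b)}; setting $\a(t):=\int_0^t\b(\sigma)\,d\sigma$, which again lies in $\E_{\rho}^2$ (integration in time being bounded on $\E_{\rho}^2$ up to a factor $C\rho$, by the same computation as in the proof of Lemma \ref{lemma ana est LBK E rho}), produces a solution of \eqref{eq ana dt a}, hence of system \eqref{eq ana a}--\eqref{eq ana a init}, analytic with respect to all the variables $(t,y,\Theta)$. The main obstacle here is not analytic but organizational: the regularization-by-integration-in-time estimates of Lemma \ref{lemma ana est LBK E rho} already carry all the hard work, and what remains is to check that a single parameter $\rho$ can be chosen so as to enforce simultaneously the self-mapping property on $\overline{B}$ and the contraction estimate — which is precisely what ties the admissible radius $\rho$ of the solution to the size $M_0$ of the forcing term $\mathbf{H}$.
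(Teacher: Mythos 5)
Your proof is correct and takes essentially the same route as the paper's: define the right-hand side of \eqref{eq ana b = F(b)} as an operator, invoke the regularization-by-integration estimates of Lemma \ref{lemma ana est LBK E rho} to show it is a self-map and a contraction on the closed ball of radius $2M_0$ in $\E_\rho^2$ for $\rho$ small enough depending on $M_0$, $\gamma_0$ and the constant $C$ of the lemma, and conclude by the Banach fixed-point theorem. The only differences are cosmetic (you track the constants a bit more explicitly, and you add the observation that $\a=\int_0^t\b$ is again in $\E_\rho^2$, which the paper does not make at this stage).
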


\begin{proof}
	In all this proof, for $ R>0 $, $ B_{\rho}(0,R) $ denotes the closed ball of $ \E_{\rho}^2 $ centered at 0 and of radius $ R $. For $ \b $ and $ \mathbf{H} $ in $ \E_{\rho}^2 $, we denote
	\begin{multline*}
		F\big(\mathbf{H},\b\big):=L\int_0^t\b(\sigma)\,d\sigma-\partial_{\Theta}\,\mathbf{D}^{\Lop}\Big(\int_0^t\b(\sigma)\,d\sigma,\int_0^t\b(\sigma)\,d\sigma\Big)\\
		\quad-\mathbb{F}^{\per}\Big(\partial_{\Theta}\int_0^t\b(\sigma)\,d\sigma,\partial_{\Theta}\int_0^t\b(\sigma)\,d\sigma\Big)
		+\mathbf{K}^{\Lop}\Big(\int_0^t\b(\sigma)\,d\sigma,\int_0^t\b(\sigma)\,d\sigma\Big)+\mathbf{H},
	\end{multline*}
so that solving \eqref{eq ana b = F(b)} amounts to find a fixed point of $ F\big(\mathbf{H},.\big): \b \mapsto F\big(\mathbf{H},\b\big) $. Therefore, we will prove that there exist $ R>0 $ and $ \rho\in(0,1) $ such that for every $ \mathbf{H} $ in $ \E^2_{\rho} $ satisfying $ \normetriple{\mathbf{H}}_{\E^2_{\rho}}<M_0 $, the map $ F\big(\mathbf{H},.\big) $ is a contraction from the complete space $ B_{\rho}(0,R) $ to itself.

Consider $ M_0>0 $ and $ \mathbf{H} $ in $ \E^2_{\rho} $ such that $ \normetriple{\mathbf{H}}_{\E^2_{\rho}}<M_0 $. First we need to show that there exist $ \rho\in(0,1) $ and $ R>0 $ such that $ F\big(\mathbf{H},.\big) $ maps $ B_{\rho}(0,R) $ to itself. 
Lemma \ref{lemma ana est LBK E rho} asserts that $ F\big(\mathbf{H},.\big) $ is well defined from $ \E_{\rho}^2 $ to itself and that it satisfies, for $ \b $ in $ \E_{\rho}^2 $,
\begin{equation*}
	\normetriple{F\big(\mathbf{H},\b\big)}_{\E_{\rho}^2}\leq C\,\rho\,\gamma_0\, \normetriple{\b}_{\E_{\rho}^2}+C\,\rho^2\,\gamma_0\, \normetriple{\b}_{\E_{\rho}^2}^2+M_0,
\end{equation*}
for a new positive constant $ C>0 $ independent on $ \rho $, $ \mathbf{H} $, $ M_0 $ and $ \b $.
Therefore, setting $ R:=2M_0$, for $ 0<\rho<C(\gamma_0,M_0) $, with
\begin{equation*}
	C(\gamma_0,M_0):=\min\left(\big[4\,C\,\gamma_0\big]^{-1},\big[8\,C\,\gamma_0\,M_0\big]^{-1/2}\right),
\end{equation*}
the application $ F\big(\mathbf{H},.\big) $ maps the ball $ B_{\rho}(0,R) $ to itself. 

Now we need to show that this map is a contraction, for $ \rho<C(M_0,\gamma_0) $ small enough. We compute, for $ 0<\rho<C(M_0,\gamma_0)  $ and for $ \b,\mathbf{c} $ in $ B_{\rho}(0,R) $, 
\begin{align*}
	F\big(&\mathbf{H},\b\big)-F\big(\mathbf{H},\mathbf{c}\big)=\\\nonumber
	&L\int_0^t\big(\b-\mathbf{c}\big)(\sigma)\,d\sigma-\partial_{\Theta}\,\mathbf{D}^{\Lop}\left(\int_0^t\b(\sigma)\,d\sigma,\int_0^t\big(\b-\mathbf{c}\big)(\sigma)\,d\sigma\right)\\\nonumber
	&-\partial_{\Theta}\,\mathbf{D}^{\Lop}\left(\int_0^t\big(\b-\mathbf{c}\big)(\sigma)\,d\sigma,\int_0^t\mathbf{c}(\sigma)\,d\sigma\right)-\mathbb{F}^{\per}\left(\partial_{\Theta}\int_0^t\b(\sigma)\,d\sigma,\partial_{\Theta}\int_0^t\big(\b-\mathbf{c}\big)(\sigma)\,d\sigma\right)\\\nonumber
	&-\mathbb{F}^{\per}\left(\partial_{\Theta}\int_0^t\big(\b-\mathbf{c}\big)(\sigma)\,d\sigma,\partial_{\Theta}\int_0^t\mathbf{c}(\sigma)\,d\sigma\right)+\mathbf{K}^{\Lop}\left(\int_0^t\b(\sigma)\,d\sigma,\int_0^t\big(\b-\mathbf{c}\big)(\sigma)\,d\sigma\right)\\\nonumber
	&+\mathbf{K}^{\Lop}\left(\int_0^t\big(\b-\mathbf{c}\big)(\sigma)\,d\sigma,\int_0^t\mathbf{c}(\sigma)\,d\sigma\right).
\end{align*}
Therefore using estimates of Lemma \ref{lemma ana est LBK E rho} and the fact that $ \b,\mathbf{c} $ are in $ B_{\rho}(0,R) $, we get,
\begin{align*}	
	\normetriple{F\big(\mathbf{H},\b\big)-F\big(\mathbf{H},\mathbf{c}\big)}_{\E_{\rho}^2}&\leq C\,\rho\,\gamma_0\Big(1+\rho\,\big(\normetriple{\b}_{\E_{\rho}^2}+\normetriple{\mathbf{c}}_{\E_{\rho}^2}\big)\Big)\normetriple{\b-\mathbf{c}}_{\E_{\rho}^2}\\[5pt]
	&\leq C\,\rho\,\gamma_0\,\big(1+\rho\,R\big)\normetriple{\b-\mathbf{c}}_{\E_{\rho}^2},
\end{align*}
up to changing the constant $ C>0 $ in every line. Thus, for $ \rho<\tilde{C}(\gamma_0,M_0) $, with
\begin{equation*}
	\tilde{C}(\gamma_0,M_0):=\min\Big(\big[4\,\gamma_0\,M_0\big]^{-1},\big[8\,\,\gamma_0\,M_0^2\big]^{-1/2},
	\big[2\,C\,\gamma_0\,\big(1+2\,M_0\big)\big]^{-1}\Big),
\end{equation*}
the map $ F\big(\mathbf{H},.\big) $ is a contraction from $ B_{\rho}(0,R) $ to itself.

Since $ B_{\rho}(0,R) $ is a closed subspace of the Banach space $ \E^2_{\rho} $, the Banach fixed-point theorem gives a unique solution to \eqref{eq ana b = F(b)}.
\end{proof}

\subsubsection{A Cauchy-Kovalevskaya theorem for incoming interior equations}

The aim is now to prove existence of solution to \eqref{eq ana sigma}-\eqref{eq ana bord sigma} with the Cauchy-Kovalev\-skaya type Theorem \ref{theorem Cauchy-Kovalevskaya} using the chain of Banach spaces $ \big(\X_r\big)_{r\in(0,1)} $.

We start by writing system \eqref{eq ana sigma}-\eqref{eq ana bord sigma} in a form suited to apply Theorem \ref{theorem Cauchy-Kovalevskaya}.
Up to multiplying \eqref{eq ana sigma} by a nonzero constant (which is the $ x_d $-component of $ -\mathbf{v}_{\zeta,j} $), system \eqref{eq ana sigma}-\eqref{eq ana bord sigma} can be written as
\begin{equation}\label{eq ana Sigma inhom}
	\begin{cases}
		\partial_{x_d}\boldsymbol{\sigma}=L\,\boldsymbol{\sigma}-\partial_{\Theta}\,\mathbf{D}\big(\boldsymbol{\sigma},\boldsymbol{\sigma}\big)-\partial_{\Theta}\,\mathbb{J}\big(\boldsymbol{\sigma},\boldsymbol{\sigma}\big)+\partial_{\Theta}\,\mathbf{K}\big(\boldsymbol{\sigma},\boldsymbol{\sigma}\big)\\[5pt]
		\boldsymbol{\sigma}_{|x_d=0}=\tilde{\a} 
		+\mathbf{g},
	\end{cases}
\end{equation}
where $ \boldsymbol{\sigma}:=\big(\boldsymbol{\sigma}_{\phi,1},\boldsymbol{\sigma}_{\phi,3},\boldsymbol{\sigma}_{\psi,1},\boldsymbol{\sigma}_{\psi,3},\boldsymbol{\sigma}_{\nu,1},\boldsymbol{\sigma}_{\nu,3}\big):=\big(\sigma_{\phi,1}^n,\sigma_{\phi,3}^n,\sigma_{\psi,1}^n,\sigma_{\psi,3}^n,\sigma_{\nu,1}^n,\sigma_{\nu,3}^n\big)_{n\geq 1} $, function $ \a $ is the solution to \eqref{eq ana a}-\eqref{eq ana a init}, function $ \tilde{\a} $ is defined from $ \a $ by $ \tilde{\a}:=\big((e_{\phi,1}\cdot r_{\phi,1})\,\a_{\phi},(e_{\phi,3}\cdot r_{\phi,3})\,\a_{\phi},(e_{\psi,1}\cdot r_{\psi,1})\,\a_{\psi},(e_{\psi,3}\cdot r_{\psi,3})\,\a_{\psi},0,0\big) $, boundary term $ \mathbf{g} $ is defined as  $ \mathbf{g}:=\big(\mathbf{g}_{\phi,1},\dots,\mathbf{g}_{\nu,3}\big) $, and, if $ \boldsymbol{\tau}:=\big(\tau_{\phi,1}^n,\dots,\tau_{\nu,3}^n\big)_{n\geq 1} $, 
\begin{align*}
	L\,\boldsymbol{\sigma}&:=\big(\mathbf{v}_{\phi,1}\cdot\nabla_{t,y}\,\boldsymbol{\sigma}_{\phi,1},\dots,\mathbf{v}_{\nu,3}\cdot\nabla_{t,y}\,\boldsymbol{\sigma}_{\nu,3}\big),\\[5pt]
	\mathbf{D} \big(\boldsymbol{\sigma},\boldsymbol{\tau}\big)&:=\big(D_{\phi,1}\,\sigma_{\phi,1}^n\,\tau_{\phi,1}^1,\dots,D_{\nu,3}\,\sigma_{\nu,3}^n\,\tau_{\nu,3}^1\big)_{n\geq 1},\\
	\mathbb{J}\big(\boldsymbol{\sigma},\boldsymbol{\tau}\big)&:=\left(\sum_{\zeta_1,\zeta_2\in\ensemble{\phi,\psi,\nu}}\,\sum_{j_1,j_2\in\ensemble{1,3}}\mathbb{J}^{\zeta_2,j_2}_{\zeta_1,j_1}\big[\sigma^1_{\zeta_1,j_1},\tau^n_{\zeta_2,j_2}\big]\right)_{\substack{\zeta=\phi,\psi,\nu\\j=1,3\\n\geq 1}},\\[0pt]
	\mathbf{K}\big(\boldsymbol{\sigma},\boldsymbol{\tau}\big)&:=\left(K_{\zeta,j}\sum_{\zeta_1,\zeta_2\in\ensemble{\phi,\psi,\nu}}\,\sum_{j_1,j_2\in\ensemble{1,3}}\,\sum_{k=1}^{n-1}\sigma^k_{\zeta_1,j_1}\,\sigma^{n-k}_{\zeta_2,j_2}\right)_{\substack{\zeta=\phi,\psi,\nu\\j=1,3\\n\geq 1}},
\end{align*}
with new\footnote{Due to the fact that we multiplied equation \eqref{eq ana sigma} by a nonzero coefficient to obtain a propagation equation in the normal variable.} $ \mathbf{v}_{\zeta,j} $, $ D_{\zeta,j} $, $ K_{\zeta,j} $ and $ \mathbb{J}_{\zeta_1,j_1}^{\zeta_2,j_2} $, satisfying the same assumptions as the old ones \eqref{eq ana hyp toy model v D int} and \eqref{eq ana hyp toy model J}. Note that there exists a constant $ C>0 $ such that for any $ \a $ in $ \X_r^2 $, we have $ \normetriple{\tilde{\a}}_{\X_r^6}\leq C \normetriple{\a}_{\X_r^2} $.

For $ N_0>0 $, denote by $ \tilde{F} $ the function of $ [-N_0,N_0]\times\X_r^6 $ defined by, for $ |x_d|\leq N_0 $ and $ \boldsymbol{\sigma}\in\X_r^6 $,
\begin{equation*}
	\tilde{F}(x_d,\boldsymbol{\sigma}):=L\,\boldsymbol{\sigma}-\partial_{\Theta}\,\mathbf{D}\big(\boldsymbol{\sigma},\boldsymbol{\sigma}\big)-\partial_{\Theta}\,\mathbb{J}\big(\boldsymbol{\sigma},\boldsymbol{\sigma}\big)+\partial_{\Theta}\,\mathbf{K}\big(\boldsymbol{\sigma},\boldsymbol{\sigma}\big),
\end{equation*}
and set $ \boldsymbol{\sigma}^0:=\tilde{\a}+\mathbf{g} $.
Now system \eqref{eq ana Sigma inhom} is equivalent to the following one, with $ \boldsymbol{\tau}:=\boldsymbol{\sigma}-\boldsymbol{\sigma}^0 $,
\begin{equation}\label{eq ana Tau hom}
	\begin{cases}
		\boldsymbol{\tau}'(x_d)=\tilde{F}\big(x_d,\boldsymbol{\tau}(x_d)+\boldsymbol{\sigma}^0\big)\\
		\boldsymbol{\tau}(0)=0,
	\end{cases}
\end{equation}
which is, with $ F\big(x_d,\boldsymbol{\tau}(x_d)\big):=\tilde{F}\big(x_d,\boldsymbol{\tau}(x_d)+\boldsymbol{\sigma}^0\big) $ for $ |x_d|<N_0 $, in the right form to apply Theorem \ref{theorem Cauchy-Kovalevskaya}. Note that the operator $ F $ actually does not depend on $ x_d $, so all suprema in $ x_d $ in assumptions of Theorem \ref{theorem Cauchy-Kovalevskaya} may be removed below when verifying the assumptions of this theorem on our particular problem \eqref{eq ana Tau hom}. We will therefore omit to indicate the dependency in $ x_d $ of $ F $ and simply write $ F(\boldsymbol{\tau}) $. It remains to check the assumption of Theorem \ref{theorem Cauchy-Kovalevskaya} to obtain existence of solutions to \eqref{eq ana Tau hom}. The key estimates to do so are the following ones.

\begin{lemma}\label{lemma ana est LBR Xr}
	There exists $ C>0 $ such that for $ 0\leq r'<r\leq1 $, for $ \boldsymbol{\sigma},\boldsymbol{\tau} $ in $ \X_r^6 $, the following estimates hold
	\begin{subequations}\label{eq ana est LBR Xr}
		\begin{align}\label{eq ana est LBR Xr:L}
		\normetriple{L\,\boldsymbol{\sigma}}_{\X^6_{r'}}&\leq \frac{C\,\gamma_0}{r-r'}\,\normetriple{\boldsymbol{\sigma}}_{\X_r^6}\\[5pt]\label{eq ana est LBR Xr:B}
		\normetriple{\mathbf{D}\big(\boldsymbol{\sigma},\boldsymbol{\tau}\big)}_{\X^6_r}&\leq C\gamma_0 \normetriple{\boldsymbol{\sigma}}_{\X_r^6}\normetriple{\boldsymbol{\tau}}_{\X_r^6}\\[5pt]\label{eq ana est LBR Xr:R}
		\normetriple{\mathbb{J}\big(\boldsymbol{\sigma},\boldsymbol{\tau}\big)}_{\X^6_r}&\leq C\gamma_0 \normetriple{\boldsymbol{\sigma}}_{\X_r^6}\normetriple{\boldsymbol{\tau}}_{\X_r^6}\\[5pt]\label{eq ana est LBR Xr:etoile}
		\normetriple{\mathbf{K}\big(\boldsymbol{\sigma},\boldsymbol{\tau}\big)}_{\X^6_r}&\leq  C\gamma_0\normetriple{\boldsymbol{\sigma}}_{\X_r^6}\normetriple{\boldsymbol{\tau}}_{\X_r^6}.
		\end{align}
	\end{subequations}
\end{lemma}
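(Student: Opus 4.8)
\textbf{Proof proposal for Lemma \ref{lemma ana est LBR Xr}.} The plan is to reduce all four estimates to the two structural properties of the spaces $X_r$ already established, namely the derivation property of Lemma \ref{lemma ana dy dtheta Xr} and the Banach algebra property of Lemma \ref{lemma Xr Banach algebra}, together with the pointwise bounds \eqref{eq ana hyp toy model v D int} and \eqref{eq ana hyp toy model J} on the coefficients and bilinear operators. As in the proofs of Lemmas \ref{lemma Erho dans Xr}, \ref{lemma ana dy dtheta Xr} and \ref{lemma Xr Banach algebra}, I would first prove each estimate componentwise, that is for a single component indexed by $(\zeta,j)$ and a single $n$, with constants independent of $n$, and then multiply by $e^{2rn}\prodscalbis{n}^{2d^*}$ and sum over $n\geq 1$ (and the finite set of indices $(\zeta,j)$) to recover the $\X_r^6$-norm; for the quadratic terms this last step uses Cauchy--Schwarz and the uniform bound of $\sum_{k=1}^{n-1}\frac{\prodscalbis{n}^{2d^*}}{\prodscalbis{k}^{2d^*}\prodscalbis{n-k}^{2d^*}}$ exactly as in the proof of Lemma \ref{lemma ana1 est LBK Y s}, so I would simply cite that argument rather than repeat it.

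For \eqref{eq ana est LBR Xr:L}: each component of $L\,\boldsymbol{\sigma}$ is $\mathbf{v}_{\zeta,j}\cdot\nabla_{t,y}\,\sigma^n_{\zeta,j}$, a linear combination of $\leq d$ first-order derivatives with coefficients bounded by $\gamma_0$ in \eqref{eq ana hyp toy model v D int}; applying Lemma \ref{lemma ana dy dtheta Xr} to each $\partial_{t,y,\Theta}^{e_j}\sigma^n_{\zeta,j}$ gives $\normetriple{\mathbf{v}_{\zeta,j}\cdot\nabla_{t,y}\,\sigma^n_{\zeta,j}}_{X_{r'}}\leq \frac{C\gamma_0}{r-r'}\normetriple{\sigma^n_{\zeta,j}}_{X_r}$, and summing yields the factor $1/(r-r')$ as claimed. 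For \eqref{eq ana est LBR Xr:B}: the component is $D_{\zeta,j}\,\sigma^n_{\zeta,j}\,\tau^1_{\zeta,j}$; by \eqref{eq ana hyp toy model v D int} and the algebra property (Lemma \ref{lemma Xr Banach algebra}) this is bounded by $C\gamma_0\normetriple{\sigma^n_{\zeta,j}}_{X_r}\normetriple{\tau^1_{\zeta,j}}_{X_r}\leq C\gamma_0\normetriple{\sigma^n_{\zeta,j}}_{X_r}\normetriple{\boldsymbol{\tau}}_{\X_r^6}$, then sum; note there is no $1/(r-r')$ loss here since no derivative is applied (the $\partial_\Theta$ in the full operator $\partial_\Theta\mathbf{D}$ of \eqref{eq ana Sigma inhom} is accounted for separately when verifying the hypotheses of Theorem \ref{theorem Cauchy-Kovalevskaya}). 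Estimate \eqref{eq ana est LBR Xr:R} is just \eqref{eq ana hyp toy model J} — which is exactly the content of Lemma \ref{lemma J est} — applied componentwise to each of the finitely many $\mathbb{J}^{\zeta_2,j_2}_{\zeta_1,j_1}[\sigma^1_{\zeta_1,j_1},\tau^n_{\zeta_2,j_2}]$, bounding each by $\gamma_0\normetriple{\sigma^1_{\zeta_1,j_1}}_{X_r}\normetriple{\tau^n_{\zeta_2,j_2}}_{X_r}$ and summing over the finite index set and over $n$. Finally \eqref{eq ana est LBR Xr:etoile}: the component is $K_{\zeta,j}\sum_{\zeta_1,\zeta_2,j_1,j_2}\sum_{k=1}^{n-1}\sigma^k_{\zeta_1,j_1}\,\sigma^{n-k}_{\zeta_2,j_2}$; bound $|K_{\zeta,j}|\leq\gamma_0$, apply the algebra property to each product $\sigma^k_{\zeta_1,j_1}\,\sigma^{n-k}_{\zeta_2,j_2}$, and then the convolution-sum argument of Lemma \ref{lemma ana1 est LBK Y s}\eqref{eq ana1 est LBK Y s:K} (Cauchy--Schwarz plus the uniform bound on $\sum_k \prodscalbis{n}^{2d^*}/(\prodscalbis{k}^{2d^*}\prodscalbis{n-k}^{2d^*})$) closes the estimate.

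There is no genuine obstacle here: the lemma is the $\X_r$-analogue of Lemma \ref{lemma ana1 est LBK Y s}, and every ingredient is already available. The only point requiring a little care is bookkeeping the finitely many indices $(\zeta,j)$ and $(\zeta_1,\zeta_2,j_1,j_2)$ so that the constants remain uniform in $r$ — which is immediate since these index sets are finite and the bounds \eqref{eq ana hyp toy model v D int}, \eqref{eq ana hyp toy model J} are uniform — and checking that the product-space norm $\normetriple{\cdot}_{\X_r^6}$ (the sum of the six component norms squared) dominates each individual $\normetriple{\boldsymbol{\sigma}_{\zeta,j}}_{\X_r}$, which is trivial. Hence the proof is essentially a transcription of the corresponding steps from the first simplified model, and I would present it at that level of detail, citing Lemmas \ref{lemma ana dy dtheta Xr}, \ref{lemma Xr Banach algebra}, \ref{lemma J est} and the estimate \eqref{eq ana1 est LBK Y s:K} for the repeated technical computations.
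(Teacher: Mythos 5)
Your proof is correct and follows essentially the same route as the paper's, which is even terser: it simply cites Lemma \ref{lemma ana dy dtheta Xr} with \eqref{eq ana hyp toy model v D int} for \eqref{eq ana est LBR Xr:L}, the algebra property (Lemma \ref{lemma Xr Banach algebra}) together with \eqref{eq ana hyp toy model v D int} and \eqref{eq ana hyp toy model J} for \eqref{eq ana est LBR Xr:B} and \eqref{eq ana est LBR Xr:R}, and the convolution-sum argument from the proof of the $\X_r$ algebra property for \eqref{eq ana est LBR Xr:etoile}. You supply a bit more bookkeeping (componentwise reduction, then weighted summation with Cauchy--Schwarz as in Lemma \ref{lemma ana1 est LBK Y s}) but the ingredients and structure are identical.
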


\begin{proof}
	Estimate \eqref{eq ana est LBR Xr:L} follows directly from Lemma \ref{lemma ana dy dtheta Xr} and assumption \eqref{eq ana hyp toy model v D int} on $ \mathbf{v}_{\zeta,j} $, for $ \zeta=\phi,\psi,\nu $, and $ j=1,3 $. As for them, estimates \eqref{eq ana est LBR Xr:B} and \eqref{eq ana est LBR Xr:R} rely on the algebra property of $ X_r $ and assumptions \eqref{eq ana hyp toy model v D int} and \eqref{eq ana hyp toy model J} on $ D_{\zeta,j} $ and $ \mathbb{J}^{\zeta_2,j_2}_{\zeta_1,j_1} $. Finally, estimate \eqref{eq ana est LBR Xr:etoile} is proven using assumption \eqref{eq ana hyp toy model v D int} on $ K_{\zeta,j} $ for $ \zeta=\phi,\psi,\nu $, and $ j=1,3 $, and the same arguments used to prove algebra property of $ \X_r $.
\end{proof}

The main result of this part is the following one, which, along with Proposition \ref{prop existence bord}, will prove Theorem \ref{theorem existence ana}.

\begin{proposition}\label{prop existence eq int}
	Consider $ 0<r_1<1 $, and $ \a $ a solution to system \eqref{eq ana a}-\eqref{eq ana a init} given in $ \X_{r_1}^2 $. Then, for every $ M_1>0 $, the following existence and uniqueness result holds: there exists $ \delta>0 $ such that for every $ \mathbf{g} $ in $ \X^6_{r_1} $ satisfying $ \normetriple{\mathbf{g}}_{\X^6_{r_1}}<M_1 $, system \eqref{eq ana Tau hom} admits a unique solution in $ \mathcal{C}^1\big((-\delta(r_1-r),\delta(r_1-r)),\X_r^6\big) $ for each $ r\in(0,r_1) $.
\end{proposition}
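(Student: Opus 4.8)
The plan is to apply the abstract Cauchy--Kovalevskaya Theorem~\ref{theorem Cauchy-Kovalevskaya} to the reformulated system \eqref{eq ana Tau hom} in the scale of Banach spaces $\big(\X_r^6\big)_{r\in(0,r_1)}$, exactly as was done for the first simplified model in the proof of Proposition~\ref{prop WP bord CK}. First I would fix $r_1\in(0,1)$ and the solution $\a\in\X_{r_1}^2$ of the boundary system (whose existence comes from Proposition~\ref{prop existence bord} combined with Lemma~\ref{lemma Erho dans Xr}, after choosing $\rho$ appropriately), so that $\tilde\a\in\X_{r_1}^6$ with $\normetriple{\tilde\a}_{\X_{r_1}^6}\le C\normetriple{\a}_{\X_{r_1}^2}$. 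Together with the hypothesis $\normetriple{\mathbf g}_{\X_{r_1}^6}<M_1$, this yields that $\boldsymbol\sigma^0:=\tilde\a+\mathbf g$ is a fixed element of $\X_{r_1}^6$, hence of every $\X_r^6$ with $r<r_1$ (the spaces are nested and decreasing), with norm controlled by $C(M_1+\normetriple{\a}_{\X_{r_1}^2})=:M'$.

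Next I would verify the two structural hypotheses \eqref{eq ana hyp F 1} and \eqref{eq ana hyp F 2} of Theorem~\ref{theorem Cauchy-Kovalevskaya} for the map $F(\boldsymbol\tau)=\tilde F(\boldsymbol\tau+\boldsymbol\sigma^0)=L(\boldsymbol\tau+\boldsymbol\sigma^0)-\partial_\Theta\mathbf D(\boldsymbol\tau+\boldsymbol\sigma^0,\boldsymbol\tau+\boldsymbol\sigma^0)-\partial_\Theta\mathbb J(\boldsymbol\tau+\boldsymbol\sigma^0,\boldsymbol\tau+\boldsymbol\sigma^0)+\partial_\Theta\mathbf K(\boldsymbol\tau+\boldsymbol\sigma^0,\boldsymbol\tau+\boldsymbol\sigma^0)$. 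For \eqref{eq ana hyp F 2}, since $F(0)=L\,\boldsymbol\sigma^0-\partial_\Theta\mathbf D(\boldsymbol\sigma^0,\boldsymbol\sigma^0)-\partial_\Theta\mathbb J(\boldsymbol\sigma^0,\boldsymbol\sigma^0)+\partial_\Theta\mathbf K(\boldsymbol\sigma^0,\boldsymbol\sigma^0)$, I combine the derivation estimate of Lemma~\ref{lemma ana dy dtheta Xr} (for the $\partial_\Theta$ and the $\nabla_{t,y}$ inside $L$) with the algebra estimates \eqref{eq ana est LBR Xr:B}, \eqref{eq ana est LBR Xr:R}, \eqref{eq ana est LBR Xr:etoile} of Lemma~\ref{lemma ana est LBR Xr}; this gives, for $r_0\le r<r_1$,
\begin{equation*}
	\normetriple{F(0)}_{\X_r^6}\le \frac{C}{r_1-r}\Big(M'+M'^2\Big)=:\frac{M}{r_1-r}.
\end{equation*}
For the Lipschitz bound \eqref{eq ana hyp F 1}, I write, for $\boldsymbol\tau,\boldsymbol\tau'\in\X_r^6$ with norms $\le R$, the difference $F(\boldsymbol\tau)-F(\boldsymbol\tau')$ in the bilinear-telescoping form used in \eqref{eq ana1 inter1}, namely
\begin{align*}
	F(\boldsymbol\tau)-F(\boldsymbol\tau')&=L(\boldsymbol\tau-\boldsymbol\tau')-\partial_\Theta\mathbf D\big(\boldsymbol\tau+\boldsymbol\sigma^0,\boldsymbol\tau-\boldsymbol\tau'\big)-\partial_\Theta\mathbf D\big(\boldsymbol\tau-\boldsymbol\tau',\boldsymbol\tau'+\boldsymbol\sigma^0\big)\\
	&\quad-\partial_\Theta\mathbb J\big(\boldsymbol\tau+\boldsymbol\sigma^0,\boldsymbol\tau-\boldsymbol\tau'\big)-\partial_\Theta\mathbb J\big(\boldsymbol\tau-\boldsymbol\tau',\boldsymbol\tau'+\boldsymbol\sigma^0\big)\\
	&\quad+\partial_\Theta\mathbf K\big(\boldsymbol\tau+\boldsymbol\sigma^0,\boldsymbol\tau-\boldsymbol\tau'\big)+\partial_\Theta\mathbf K\big(\boldsymbol\tau-\boldsymbol\tau',\boldsymbol\tau'+\boldsymbol\sigma^0\big),
\end{align*}
and again apply Lemmas~\ref{lemma ana dy dtheta Xr} and~\ref{lemma ana est LBR Xr} to obtain
\begin{equation*}
	\normetriple{F(\boldsymbol\tau)-F(\boldsymbol\tau')}_{\X_{r'}^6}\le \frac{C\,\gamma_0\big(1+R+M'\big)}{r-r'}\,\normetriple{\boldsymbol\tau-\boldsymbol\tau'}_{\X_r^6},
\end{equation*}
which is precisely \eqref{eq ana hyp F 1} with constant $C\,\gamma_0(1+R+M')$; the same computation with $R$ in place of $\boldsymbol\tau'=0$ and a crude bound gives continuity of $F$ from $\{u\in\X_r^6:\normetriple{u}_{\X_r^6}<R\}$ to $\X_{r'}^6$. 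Since $F$ does not depend on $x_d$, the suprema over $|x_d|\le T$ in the hypotheses are trivial. Theorem~\ref{theorem Cauchy-Kovalevskaya} then produces $\delta\in(0,N_0)$ and a unique $\boldsymbol\tau\in\mathcal C^1\big((-\delta(r_1-r),\delta(r_1-r)),\X_r^6\big)$ for every $r\in(0,r_1)$ solving \eqref{eq ana Tau hom}, and setting $\boldsymbol\sigma:=\boldsymbol\tau+\boldsymbol\sigma^0$ gives the claimed solution to \eqref{eq ana Sigma inhom}, hence to \eqref{eq ana sigma}-\eqref{eq ana bord sigma}; uniqueness in the same class follows from uniqueness in Theorem~\ref{theorem Cauchy-Kovalevskaya} and the fact that $\boldsymbol\sigma^0$ is fixed.

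The only genuinely delicate point is bookkeeping rather than analysis: one must make sure the constant $R$ (which is free in Theorem~\ref{theorem Cauchy-Kovalevskaya}) and the radius $r_1$ are chosen so that $M'=C(M_1+\normetriple{\a}_{\X_{r_1}^2})$ is finite, which requires that the solution $\a$ of the boundary system actually lands in some $\X_{r_1}^2$ with $r_1<1$; this is exactly the content of Lemma~\ref{lemma Erho dans Xr} applied to the $\E_\rho^2$-solution of Proposition~\ref{prop existence bord}, and it forces $r_1$ to be taken smaller than $\min(\rho,2/3)$. I expect the main (mild) obstacle to be this compatibility of radii between the two sub-problems --- boundary equations solved in $\E_\rho$, interior equations solved in $\X_r$ --- and checking that all the estimates of Lemma~\ref{lemma ana est LBR Xr} indeed hold with constants independent of $r$ so that the division by $r-r'$ in \eqref{eq ana hyp F 1}-\eqref{eq ana hyp F 2} is the only source of singularity; once the functional-analytic groundwork of Lemmas~\ref{lemma ana dy dtheta Xr}, \ref{lemma Xr Banach algebra}, \ref{lemma J est} and~\ref{lemma ana est LBR Xr} is in place, the verification of the hypotheses of Theorem~\ref{theorem Cauchy-Kovalevskaya} is routine and Theorem~\ref{theorem existence ana} follows by combining Proposition~\ref{prop existence bord}, Lemma~\ref{lemma Erho dans Xr}, and Proposition~\ref{prop existence eq int}.
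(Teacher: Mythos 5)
Your proposal is correct and follows essentially the same route as the paper's proof: applying Theorem \ref{theorem Cauchy-Kovalevskaya} to the scale $(\X_r^6)_{0<r\le r_1}$, checking hypothesis \eqref{eq ana hyp F 2} via the expression for $F(0)$ together with Lemmas \ref{lemma ana dy dtheta Xr} and \ref{lemma ana est LBR Xr}, and checking hypothesis \eqref{eq ana hyp F 1} via a bilinear telescoping of the quadratic terms (your telescoping uses $\mathbf D(\boldsymbol\tau+\boldsymbol\sigma^0,\boldsymbol\tau-\boldsymbol\tau')+\mathbf D(\boldsymbol\tau-\boldsymbol\tau',\boldsymbol\tau'+\boldsymbol\sigma^0)$ while the paper expands fully into four terms, but the bounds are identical). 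The additional remarks about how the fixed $\a\in\X_{r_1}^2$ is obtained from Proposition~\ref{prop existence bord} and Lemma~\ref{lemma Erho dans Xr} are not needed for this proposition (the statement already assumes $\a$ given) but are harmless and correctly anticipate the assembly of Theorem \ref{theorem existence ana}.
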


\begin{proof}
The aim is to apply Theorem \ref{theorem Cauchy-Kovalevskaya} with the scale of Banach spaces $ \big(\X^6_r\big)_{0<r\leq r_1} $. Fix now a constant $ M_1>0 $ as well as\footnote{Constant $ R $ takes part only in the proof, and can be chosen arbitrarily large.} $ R>0 $, and consider $ \mathbf{g} $ in $ \X_{r_1}^6 $ such that $ \normetriple{\mathbf{g}}_{\X_{r_1}^6}<M_1 $. We will now verify assumptions \eqref{eq ana hyp F 1} and \eqref{eq ana hyp F 2}. We compute that for $ \boldsymbol{\tau} $ in $ \X_{r_1}^6 $, we have
\begin{align*}
	F\big(\boldsymbol{\tau}\big)=&L\,\boldsymbol{\tau}+L\,\boldsymbol{\sigma^0}-\partial_{\Theta}\,\mathbf{D}\big(\boldsymbol{\tau},\boldsymbol{\tau}\big)-\partial_{\Theta}\,\mathbf{D}\big(\boldsymbol{\sigma^0},\boldsymbol{\tau}\big)-\partial_{\Theta}\,\mathbf{D}\big(\boldsymbol{\tau},\boldsymbol{\sigma^0}\big)-\partial_{\Theta}\,\mathbf{D}\big(\boldsymbol{\sigma^0},\boldsymbol{\sigma^0}\big)\\
	&-\partial_{\Theta}\,\mathbb{J}\big(\boldsymbol{\tau},\boldsymbol{\tau}\big)-\partial_{\Theta}\,\mathbb{J}\big(\boldsymbol{\tau},\boldsymbol{\sigma^0}\big)-\partial_{\Theta}\,\mathbb{J}\big(\boldsymbol{\sigma^0},\boldsymbol{\tau}\big)-\partial_{\Theta}\,\mathbb{J}\big(\boldsymbol{\sigma^0},\boldsymbol{\sigma^0}\big)\\
	&+\partial_{\Theta}\,\mathbf{K}\big(\boldsymbol{\tau},\boldsymbol{\tau}\big)+\partial_{\Theta}\,\mathbf{K}\big(\boldsymbol{\tau},\boldsymbol{\sigma^0}\big)+\partial_{\Theta}\,\mathbf{K}\big(\boldsymbol{\sigma^0},\boldsymbol{\tau}\big)+\partial_{\Theta}\,\mathbf{K}\big(\boldsymbol{\sigma^0},\boldsymbol{\sigma^0}\big).
\end{align*}
Therefore,
\begin{equation*}
	F(0)=L\,\boldsymbol{\sigma^0}-\partial_{\Theta}\,\mathbf{D}\big(\boldsymbol{\sigma^0},\boldsymbol{\sigma^0}\big)-\partial_{\Theta}\,\mathbb{J}\big(\boldsymbol{\sigma^0},\boldsymbol{\sigma^0}\big)+\partial_{\Theta}\,\mathbf{K}\big(\boldsymbol{\sigma^0},\boldsymbol{\sigma^0}\big),
\end{equation*}
so, using Lemmas \ref{lemma ana dy dtheta Xr} and \ref{lemma ana est LBR Xr}, we get that there exists a constant $ C>0 $ such that for all $ 0<r'<r<r_1 $,
\begin{equation*}
	\normetriple{F(0)}_{\X_{r'}^6}\leq \frac{C\,\gamma_0}{r-r'} \Big(\normetriple{\boldsymbol{\sigma^0}}_{\X_r^6}+\normetriple{\boldsymbol{\sigma^0}}_{\X_r^6}^2\Big)\leq \frac{C\,\gamma_0}{r-r'} \Big(\normetriple{\boldsymbol{\sigma^0}}_{\X_r^6}+\normetriple{\boldsymbol{\sigma^0}}_{\X_r^6}^2\Big),
\end{equation*}
and then, using the fact that $ \normetriple{\tilde{\a}}_{\X_r^6}\leq C \normetriple{\a}_{\X_r^2} $ and $ \normetriple{\mathbf{g}}_{\X_{r_1}^6}<M_1 $, 
\begin{equation*}
	\normetriple{F(0)}_{\X_{r'}^6}\leq \frac{C\,\gamma_0}{r-r'}\Big(\normetriple{\a}_{\X_r^2}+\normetriple{\a}^2_{\X_r^2}+M_1+M_1^2\Big),
\end{equation*}
so assumption \eqref{eq ana hyp F 2} is satisfied with $ M:=C\,\gamma_0\big(\normetriple{a}_{\X_r^2}+\normetriple{a}^2_{\X_r^2}+M_1+M_1^2\big) $.

On the other hand, we have, for $ \boldsymbol{\tau},\boldsymbol{\omega} $ in $ \X_r^6 $,
\begin{align*}
	F\big(\boldsymbol{\tau}\big)-F\big(\boldsymbol{\omega}\big)&=L\,\big(\boldsymbol{\tau}-\boldsymbol{\omega}\big)-\partial_{\Theta}\,\mathbf{D}\big(\boldsymbol{\tau}-\boldsymbol{\omega},\boldsymbol{\tau}\big)-\partial_{\Theta}\,\mathbf{D}\big(\boldsymbol{\omega},\boldsymbol{\tau}-\boldsymbol{\omega}\big)-\partial_{\Theta}\,\mathbf{D}\big(\boldsymbol{\sigma^0},\boldsymbol{\tau}-\boldsymbol{\omega}\big)\\
	&\quad-\partial_{\Theta}\,\mathbf{D}\big(\boldsymbol{\tau}-\boldsymbol{\omega},\boldsymbol{\sigma^0}\big)-\partial_{\Theta}\,\mathbb{J}\big(\boldsymbol{\tau}-\boldsymbol{\omega},\boldsymbol{\tau}\big)-\partial_{\Theta}\,\mathbb{J}\big(\boldsymbol{\omega},\boldsymbol{\tau}-\boldsymbol{\omega}\big)-\partial_{\Theta}\,\mathbb{J}\big(\boldsymbol{\sigma^0},\boldsymbol{\tau}-\boldsymbol{\omega}\big)\\
	&\quad-\partial_{\Theta}\,\mathbb{J}\big(\boldsymbol{\tau}-\boldsymbol{\omega},\boldsymbol{\sigma^0}\big)
	+\partial_{\Theta}\,\mathbf{K}\big(\boldsymbol{\tau}-\boldsymbol{\omega},\boldsymbol{\tau}\big)
	+\partial_{\Theta}\,\mathbf{K}\big(\boldsymbol{\omega},\boldsymbol{\tau}-\boldsymbol{\omega}\big) +\partial_{\Theta}\,\mathbf{K}\big(\boldsymbol{\sigma^0},\boldsymbol{\tau}-\boldsymbol{\omega}\big)\\
	&\quad
	+\partial_{\Theta}\,\mathbf{K}\big(\boldsymbol{\tau}-\boldsymbol{\omega},\boldsymbol{\sigma^0}\big).
\end{align*}
Therefore, using Lemmas \ref{lemma ana dy dtheta Xr} and \ref{lemma ana est LBR Xr}, we get that there exists $ C>0 $ such that for all $ 0<r'<r<r_1 $, for each $ \boldsymbol{\tau},\boldsymbol{\omega} $ in $ \ensemble{\boldsymbol{\sigma}\in \X_r^6\middle| \normetriple{\boldsymbol{\sigma}}_{\X_r^6}<R} $,
\begin{align*}
	\normetriple{F\big(\boldsymbol{\tau}\big)-F\big(\boldsymbol{\omega}\big)}_{\X_{r'}^6}&\leq C\,\gamma_0\Big(1+\normetriple{\boldsymbol{\tau}}_{\X_r^6}+\normetriple{\boldsymbol{\omega}}_{\X_r^6}+\normetriple{\boldsymbol{\sigma}^0}_{\X_r^6}\Big)\frac{\normetriple{\boldsymbol{\tau}-\boldsymbol{\omega}}_{\X_r^6}}{r-r'}\\
	&\leq \frac{C\,\gamma_0\,\big(1+R+\normetriple{\a}_{\X_r^2}+M_1\big)}{r-r'}\,\normetriple{\boldsymbol{\tau}-\boldsymbol{\omega}}_{\X_r^6}.
\end{align*}
This estimate asserts that both the continuity property of $ F $ and assumption \eqref{eq ana hyp F 1} with $ C:=C\,\gamma_0\,\big(1+R+\normetriple{\a}_{\X_r^2}+M_1\big) $ are satisfied. We can therefore apply Theorem \ref{theorem Cauchy-Kovalevskaya} which gives the sought result.
\end{proof}

Proof of Theorem \ref{theorem existence ana} is  now straightforward. Fix two positive constants $ M_0 $ and $ M_1 $.  Proposition \ref{prop existence bord} asserts the existence of $ \rho_1\in(0,1) $ such that for all $ \mathbf{H} $ in $ \E_1^2 $ satisfying $ \normetriple{\mathbf{H}}_{\E_1^2}<M_0 $, there exists a solution $ \a $ in $ \E_{\rho_1}^2 $ to system \eqref{eq ana a}-\eqref{eq ana a init}. Then, Lemma \ref{lemma Erho dans Xr} ensures that there exists $ r_1 $ (depending only on $ \rho_1 $), such that the solution $ \a $ to \eqref{eq ana a}-\eqref{eq ana a init} is in $ \X_{r_1} $. 
Proposition \ref{prop existence eq int} gives the existence of $ \delta>0 $ such that for every $ \mathbf{g} $ in $ \X_{r_1}^6 $ satisfying $ \normetriple{\mathbf{g}}_{\X^6_{r_1}}<M_1 $, there exists a solution $ \boldsymbol{\sigma} $ in $ \mathcal{C}^1\big((-\delta(r_1-r),\delta(r_1-r)),\X_r^6\big) $, for each $ r\in(0,r_1) $, to system \eqref{eq ana sigma}-\eqref{eq ana bord sigma}. This is precisely the statement of Theorem \ref{theorem existence ana}.

\bigskip

To make the simplified model \eqref{eq ana sigma},  \eqref{eq ana bord sigma}, \eqref{eq ana a} and \eqref{eq ana a init} more complicated and closer to the general system  \eqref{eq systeme moyenne}, \eqref{eq systeme sigma hors cas part}, \eqref{eq systeme sigma}, \eqref{eq systeme sigma bord} and \eqref{eq systeme a}, several aspects could be incorporated in the former one. Outgoing equations associated with boundary frequencies $ \phi $, $ \psi $ and $ \nu $ could be integrated in interior equations \eqref{eq ana sigma}. It raises mainly an issue of functional framework, as we solved incoming equations \eqref{eq ana sigma} as propagation equations in the normal variable, which is not a framework suited for outgoing equations. Then it would be possible to incorporate traces of outgoing profiles in boundary conditions \eqref{eq ana bord sigma} and boundary evolution equations \eqref{eq ana a}. For that we would need trace estimates for the chosen functional framework. 
In a more distant perspective, we could integrate profiles associated with boundary frequencies different from $ \phi $, $ \psi $ and $ \nu $ in interior equations \eqref{eq ana sigma} and boundary equations \eqref{eq ana a}, which would require a total change of the functional framework, since we would have to work with almost-periodic functions. We could also consider derivatives of order higher than one in source terms of theses equations \eqref{eq ana sigma} and \eqref{eq ana a}

\section{Instability}

This section is devoted to the proof of instability. More precisely, the aim is to show that the perturbation $ H $ in \eqref{eq systeme 1} interferes at a leading order in the asymptotic expansion \eqref{eq ansatz}. This is not the case in general, where the perturbation $ \epsilon^3\,h^{\epsilon} $ only interferes at order $ \epsilon^2 $ and higher, see \cite{MajdaArtola1988Mixed}. As the perturbation $ \epsilon^3\,h^{\epsilon} $ of $ \epsilon^2\,g^{\epsilon} $ in \eqref{eq systeme 1} is small, we will work with the linearized system of system \eqref{eq systeme moyenne}, \eqref{eq systeme sigma hors cas part}, \eqref{eq systeme sigma}, \eqref{eq systeme sigma bord} and \eqref{eq systeme a},  around the particular solution when the perturbation is zero. To simplify even more the computations we will prove instability on simplified models of the linearized system. The first part of the section focuses on deriving the linearized system for the profiles. 

\subsection{Linearization around a particular solution}

If the perturbation $ H $ is uniformly zero in \eqref{eq systeme 1}, then we are brought back to the case of \cite{CoulombelWilliams2017Mach}, and the solution obtained in the mentioned work is thus a solution to our cascade of equations \eqref{eq systeme moyenne}, \eqref{eq systeme sigma hors cas part}, \eqref{eq systeme sigma}, \eqref{eq systeme sigma bord} and \eqref{eq systeme a} in this particular case. Therefore, according to \cite{CoulombelWilliams2017Mach}, we have the following result.

\begin{proposition}[{\cite[Theorem 1.10]{CoulombelWilliams2017Mach}}]\label{prop exist sol part}
	Let $ T_0>0 $, and consider $ G $ in $ \mathcal{C}^{\infty}\big((-\infty,T_0],\linebreak H^{\infty}(\R^{d-1}\times\T)\big) $, zero for negative times $ t $, and $ H\equiv 0 $. Then there exists $ T\in(0,T_0] $ and unique sequences of functions $ \big(\bar{U}_n^*\big)_{n\geq 0} $, and $ \big(\bar{\sigma}^n_{\zeta,j,\lambda}\big)_{n\geq 0} $ for $ \zeta=\phi,\psi,\nu $ and $ j=1,2,3 $ in  $ \mathcal{C}^{\infty}\big((-\infty,T_0], H^{\infty}(\R^{d-1}\times\R_+\times\T)\big) $ and sequences $ \big(\bar{a}_{\zeta,\lambda}^n \big)_{n\geq 1} $ for $ \zeta=\phi,\psi $ in $ \mathcal{C}^{\infty}\big((-\infty,T_0], \linebreak H^{\infty}(\R^{d-1}\times\T)\big) $, solution of the cascade of equations \eqref{eq systeme moyenne}, \eqref{eq systeme sigma hors cas part}, \eqref{eq systeme sigma}, \eqref{eq systeme sigma bord} and \eqref{eq systeme a}.
\end{proposition}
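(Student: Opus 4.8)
The statement is essentially a citation result: when $H \equiv 0$, system \eqref{eq systeme 1} reduces \emph{exactly} to the system studied in \cite{CoulombelWilliams2017Mach}, and the cascade of equations \eqref{eq systeme moyenne}, \eqref{eq systeme sigma hors cas part}, \eqref{eq systeme sigma}, \eqref{eq systeme sigma bord} and \eqref{eq systeme a} is the cascade of equations satisfied by the profiles of the associated geometric optics expansion. So the plan is first to check carefully that the reduction is legitimate, and then to invoke \cite[Theorem 1.10]{CoulombelWilliams2017Mach}.

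The first step is to verify that setting $H \equiv 0$ kills all the resonance-induced couplings that distinguish our cascade from the one of \cite{CoulombelWilliams2017Mach}. Recall that the boundary forcing $\epsilon^3 h^\epsilon$ enters the cascade \eqref{eq cascade bord} only through the boundary condition $B(U_3)_{|x_d,\chi_d=0} = H(z',\theta_2)$ on $U_3$, so with $H\equiv 0$ every boundary term involving $H$ vanishes; in particular the $\indicatrice_{n=3}\,H_\lambda$ and $\indicatrice_{n=3}\,H_0$ terms in \eqref{eq systeme a}, \eqref{eq systeme moyenne}, \eqref{eq systeme sigma bord psi} disappear. One then observes that, with no source at frequency $\psi$, the profiles $\sigma^n_{\psi,j,\lambda}$ are \emph{not} forced to be zero — the phase $\psi \in \Upsilon$ is still present through $G$ (indeed $G$ oscillates at $\phi$, but both $\phi$ and $\psi$ are in $\F_b$, and one should check which phases are actually excited). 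The cleanest way to see the equivalence is to note that \cite{CoulombelWilliams2017Mach} treats a single boundary phase $\phi$ with $G$-forcing and derives a cascade of transport/amplitude equations of exactly the shape \eqref{eq systeme moyenne}--\eqref{eq systeme a} restricted to the frequencies lifted from $\phi$ (and $\nu$); adding the unforced phase $\psi$ with the extra resonances \eqref{eq hyp res phi psi nu} does not create new forcing when $H\equiv 0$, so the subsystem for the $\phi$- and $\nu$-profiles is closed and identical to that of \cite{CoulombelWilliams2017Mach}, and the $\psi$-profiles then satisfy linear transport equations driven by the already-constructed $\phi,\nu$-profiles through the resonant terms $J^{\zeta_1,j_1}_{\zeta_2,j_2}$, $\Gamma^\psi$, which are solvable by the same arguments (inversion of $B$ off $\Upsilon$, amplification on $\Upsilon$, Lax lemmas \ref{lemme Lax} and \ref{lemme Lax bord}).

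The second step is to quote \cite[Theorem 1.10]{CoulombelWilliams2017Mach} verbatim for the $\phi,\nu$-part: under Assumptions \ref{hypothese bord non caract}, \ref{hypothese stricte hyp}, \ref{hypothese weak lopatinskii}, \ref{hypothese pas de glancing}, \ref{hypothese ensemble frequences} (which subsume the hypotheses of that paper for the single phase $\phi$) and the small divisors Assumption \ref{hyp petits diviseurs}, there is $T \in (0,T_0]$ and a unique sequence of smooth profiles solving the cascade, with the claimed regularity $\mathcal{C}^\infty\big((-\infty,T_0],H^\infty\big)$ in the appropriate variables. Then one extends to the $\psi$-profiles: since the system for them is, for each order $n$, a finite collection of linear scalar transport equations (with the Burgers-type and resonant terms now involving only the known $\phi,\nu$-data plus lower-order $\psi$-data), existence, uniqueness and smoothness follow by induction on $n$ from the standard theory of constant-coefficient transport equations on the half-space with boundary conditions \eqref{eq systeme sigma bord psi} — using that $\psi \in \Upsilon$ is hyperbolic, so the amplification mechanism of \cite[Section 2]{CoulombelWilliams2017Mach} (or of \cite{CoulombelGues2010Geometric}) applies to close the boundary equation \eqref{eq systeme a psi} for $a^n_\psi$. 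The initial conditions \eqref{eq systeme sigma init}, \eqref{eq systeme init a phi psi} and the compatibility/over-determination condition \eqref{eq prof princ cond psi 2 zero} are satisfied because they are satisfied for the solution of \cite{CoulombelWilliams2017Mach}, which corresponds precisely to the case $H\equiv 0$.

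The main obstacle I anticipate is purely bookkeeping: making sure that the notational dictionary between our cascade and that of \cite{CoulombelWilliams2017Mach} is exact — in particular that the term $i\lambda\, b_\phi \cdot B\,(U^{n+1,\osc}_{\phi,2,\lambda})_{|x_d,\chi_d=0}$ on the right-hand side of \eqref{eq systeme a phi} is indeed zero (or harmless) in the unperturbed case, as asserted in the footnote following \eqref{eq 1cor eq evol a phi}, so that the system genuinely decouples order by order and the induction of \cite{CoulombelWilliams2017Mach} goes through. Once that is checked, the proposition is immediate. Formally:

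\begin{proof}
	When $H \equiv 0$, system \eqref{eq systeme 1} coincides with the boundary value problem studied in \cite{CoulombelWilliams2017Mach}, and by the derivation of Section 2 the profiles of the geometric optics expansion \eqref{eq ansatz} satisfy the cascade \eqref{eq systeme moyenne}, \eqref{eq systeme sigma hors cas part}, \eqref{eq systeme sigma}, \eqref{eq systeme sigma bord}, \eqref{eq systeme a} with all terms proportional to $H$ set to zero. The subsystem for the profiles associated with the boundary frequencies $\phi$ and $\nu$ is then exactly the cascade of \cite{CoulombelWilliams2017Mach}, for which \cite[Theorem 1.10]{CoulombelWilliams2017Mach} provides, under Assumptions \ref{hypothese bord non caract}--\ref{hyp petits diviseurs}, a time $T \in (0,T_0]$ and unique sequences $\big(\bar U^*_n\big)_{n\geq 0}$, $\big(\bar\sigma^n_{\zeta,j,\lambda}\big)$ for $\zeta = \phi,\nu$ of the stated regularity, satisfying moreover the compatibility condition \eqref{eq prof princ cond psi 2 zero}. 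For $\zeta = \psi$, equations \eqref{eq systeme sigma}, \eqref{eq systeme sigma bord psi}, \eqref{eq systeme a psi} form, for each $n \geq 1$, a finite system of linear constant-coefficient scalar transport equations on $\Omega_T$ whose source terms involve only the already-constructed profiles of frequencies $\phi,\nu$ and the $\psi$-profiles of orders $< n$, with the boundary amplitude $a^n_{\psi,\lambda}$ governed by the transport equation \eqref{eq systeme a psi} on $\omega_T$ (the vector field $X^{\Lop}_\psi$ being tangential by Lemma \ref{lemme Lax bord}); since $\psi \in \Upsilon \subset \mathcal{H}$, the amplification analysis applies verbatim, and an induction on $n$ together with the standard theory of transport equations on the half-space yields unique solutions $\big(\bar\sigma^n_{\psi,j,\lambda}\big)_{n\geq 0}$ and $\big(\bar a^n_{\psi,\lambda}\big)_{n\geq 1}$ in $\mathcal{C}^\infty\big((-\infty,T_0],H^\infty\big)$. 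This gives the announced family of profiles solving the full cascade.
\end{proof}
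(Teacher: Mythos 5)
Your high-level strategy — reduce to \cite{CoulombelWilliams2017Mach} — is the right one, and it is in fact what the paper does, but the way you carry out the reduction is wrong, and the mistake is not just bookkeeping. You claim that "the subsystem for the $\phi$- and $\nu$-profiles is closed and identical to that of \cite{CoulombelWilliams2017Mach}, and the $\psi$-profiles then satisfy linear transport equations driven by the already-constructed $\phi,\nu$-profiles." Both halves of this are false. First, the $\nu$-equations are \emph{not} closed with only the $\phi$-profiles: by Assumption \ref{hypothese ensemble frequences}, the set $\mathcal{R}(\nu,2)$ contains $(\phi,\psi,1,1)$ and $(\phi,\psi,3,2)$, so the resonant source terms in \eqref{eq systeme sigma evol} for $\zeta=\nu$ are products $\sigma_{\phi,\cdot}\,\sigma_{\psi,\cdot}$ that involve the $\psi$-profiles. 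Second, \cite{CoulombelWilliams2017Mach} treats a single nonresonant boundary phase $\phi$; its cascade contains no $\nu$-profiles at all, so there is no way for a "$\phi,\nu$-subsystem" to be "identical" to theirs.

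What the paper actually observes — and what you explicitly contradict when you write that the $\sigma^n_{\psi,j,\lambda}$ "are not forced to be zero" — is that when $H\equiv 0$ the solution has \emph{all} profiles at frequencies $\zeta\neq\phi$ identically zero, as recorded in \eqref{eq insta sigma a bar zero}: $\bar\sigma^n_{\zeta,j,\lambda}=0$ for $\zeta\neq\phi$ and $\bar a^n_{\psi,\lambda}=0$. Once one postulates $\psi$- and $\nu$-profiles zero, every resonant coupling vanishes (because each of the two resonances \eqref{eq hyp res phi psi nu} involves at least one $\psi$-frequency and one $\nu$-frequency), the term $i\lambda\,b_\phi\cdot B\,(U^{n+1,\osc}_{\phi,2,\lambda})_{|x_d,\chi_d=0}$ vanishes as well (the footnote to \eqref{eq 1cor eq evol a phi}: $\phi_2$ occurs in no resonance), and the $\phi$-equations alone reduce verbatim to the cascade of \cite{CoulombelWilliams2017Mach}. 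That is the reduction \cite[Theorem 1.10]{CoulombelWilliams2017Mach} is then applied to. Your proof would need to be rewritten around this observation; as written, the induction you describe for the $\psi$-profiles starts from a false premise about the structure of the system.
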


Note that Theorem \ref{theorem existence ana} constitute a version of Proposition \ref{prop exist sol part} in the case where $ H $ is possibly nonzero, but only on a simplified model of system \eqref{eq systeme moyenne}, \eqref{eq systeme sigma hors cas part}, \eqref{eq systeme sigma}, \eqref{eq systeme sigma bord} and \eqref{eq systeme a}, and with a different functional framework.
Note also that since $ H $ is zero, we have, for $ n\geq 1 $, $ \lambda\in\Z^* $, for the solution of Proposition \ref{prop exist sol part},
\begin{equation}\label{eq insta sigma a bar zero}
	\bar{\sigma}^n_{\zeta,j,\lambda}=0 \quad \mbox{for $ \zeta\neq \phi $ and $ j\in\mathcal{C}(\zeta) $,}\qquad \mbox{and}\qquad
	\bar{a}^n_{\psi,\lambda}=0.
\end{equation}

The aim of this part is to derive the linearization of system \eqref{eq systeme moyenne}, \eqref{eq systeme sigma hors cas part}, \eqref{eq systeme sigma}, \eqref{eq systeme sigma bord} and \eqref{eq systeme a} around the particular solution of Proposition \ref{prop exist sol part}. Schematically, in order to study the general problem of the form $ \mathcal{F}(u)=(G,H,0,\dots) $, we linearize this problem around the particular solution $ \bar{u} $ of $ \mathcal{F}(\bar{u})=(G,0,0,\dots) $ to obtain the linearized problem $ d\mathcal{F}(\bar{u})\cdot u =(0,H,0,\dots) $. We will also simplify the linearized system during its derivation, since for some profiles it is easy to show that they are zero. 

We only detail the linearized equations for the order we are interested in, which are first and second orders, and only for profiles of interest, that is, $ \sigma^n_{\psi,1,\lambda} $, $ \sigma^n_{\psi,2,\lambda} $, $ \sigma^n_{\nu,2,\lambda} $ and $ a^n_{\psi,\lambda} $, for $ \lambda\in\Z^* $ and $ n=1,2 $. Here, opposite to the formulation of \eqref{eq systeme sigma},  we write down each equation separately, as they are now different since $ \bar{\sigma}_{\zeta,j,\lambda}^n $ is zero for $ \zeta\neq \phi $. We also adopt a new color code for these equations. 

For the leading profile, starting from equations \eqref{eq evolution sigma 1}, we get, for the phases $ \psi_1 $, $ \psi_2 $ and $ \nu_2 $, and for $ \lambda\in\Z^* $,
\begin{subequations}\label{eq linearise eq prof princ}
	\begin{align}\label{eq linearise eq prof princ psi1}
		\color{altblue}X_{\psi,1}\,\sigma^1_{\psi,1,\lambda}+\indicatrice_{\lambda=k\lambda_{\psi}}\,J^{\phi,1}_{\nu,2}\,ik\,\bar{\sigma}^1_{\phi,1,-k\lambda_{\phi}}\,\sigma^1_{\nu,2,-k}&\color{altblue}=0,\\[5pt]\label{eq linearise eq prof princ psi2}
		\color{altpink}X_{\psi,2}\,\sigma^1_{\psi,2,\lambda}+\indicatrice_{\lambda=k\lambda_{\psi}}\,J^{\phi,3}_{\nu,2}\,ik\,\bar{\sigma}^1_{\phi,3,-k\lambda_{\phi}}\,\sigma^1_{\nu,2,-k}&\color{altpink}=0,\\[5pt]\label{eq linearise eq prof princ nu2}
		\color{altpurple}X_{\nu,2}\,\sigma^1_{\nu,2,\lambda}
		+J^{\phi,1}_{\psi,1}\,i\lambda\,\bar{\sigma}^1_{\phi,1,-\lambda\lambda_{\phi}}\,\sigma^1_{\psi,1,-\lambda\lambda_{\psi}}+J^{\phi,3}_{\psi,2}\,i\lambda\,\bar{\sigma}^1_{\phi,3,-\lambda\lambda_{\phi}}\,\sigma^1_{\psi,2,-\lambda\lambda_{\psi}}&\color{altpurple}=0.
	\end{align}
\end{subequations}
In equations \eqref{eq linearise eq prof princ psi1} and \eqref{eq linearise eq prof princ psi2}, if $ \lambda\notin \lambda_{\psi}\Z $, no resonance happens, but if $ \lambda=k\lambda_{\psi} $ for some $ k\in\Z^* $, then, for example for the phase $ \psi_1 $, the resonance $ k\lambda_{\phi}\,\phi_1+k\lambda_{\psi}\,\psi_1+k\,\nu_2=0 $ occurs. This explains the presence of factors $ \indicatrice_{\lambda=k\lambda_{\psi}} $ in equations \eqref{eq linearise eq prof princ psi1} and \eqref{eq linearise eq prof princ psi2}.
We also have, for $ j=1,3 $ and $ \lambda\in\Z^* $, the transport equation
\begin{equation}\label{eq linearise eq prof princ phi}
		X_{\phi,j}\,\sigma^1_{\phi,j,\lambda}+D_{\phi,j}\sum_{\lambda_1+\lambda_2=\lambda}i\lambda_2\,\bar{\sigma}^1_{\phi,j,\lambda_1}\,\sigma^1_{\phi,j,\lambda_2}=0.
\end{equation}

As for them, the linearized equations for boundary terms $ a^1_{\phi} $ and $ a^1_{\psi} $ read
\begin{equation}\label{eq linearise eq bord prof princ phi}
X^{\Lop}_{\phi}\,a^1_{\phi,\lambda}
	+v_{\phi}\,\sum_{\lambda_1+\lambda_2=\lambda}i\lambda_2\,\bar{a}^1_{\phi,\lambda_1}\,a^1_{\phi,\lambda_2}+\lambda\sum_{\lambda_1+\lambda_2=\lambda}\gamma_{\phi}(\lambda_1,\lambda_2)\,\big(\bar{a}^1_{\phi,\lambda_1}\,a^1_{\phi,\lambda_2}+a^1_{\phi,\lambda_1}\,\bar{a}^1_{\phi,\lambda_2}\big)=0,
\end{equation}
and
\begin{equation}\label{eq linearise eq bord prof princ psi}
\color{altorange2}X^{\Lop}_{\psi}\,a^1_{\psi,\lambda}
+\indicatrice_{\lambda=k\lambda_{\psi}}\,\Gamma^{\psi}\,ik\,\big(\sigma^1_{\nu,2,k}\big)_{|x_d=0}\,\bar{a}^1_{\phi,-\lambda_{\phi}k}=i\lambda\,b_{\psi}\cdot B\,\big(U^2_{\psi,2,\lambda}\big)_{|x_d,\chi_d=0}.
\end{equation}
There is no term in $ G $ in equation \eqref{eq linearise eq bord prof princ phi} since we linearized around the solution given by Proposition \ref{prop exist sol part} corresponding to the source term $ H=0 $ and we study the influence of a small source term $ H $ on the leading amplitudes $ \sigma^1 $.

Equations for the boundary phase $ \phi $ are decoupled from the others, they can therefore be solved, using for example \cite[Theorem 1.10]{CoulombelWilliams2017Mach}. From \eqref{eq linearise eq bord prof princ phi}, along with the initial condition $ \big(a^1_{\phi,\lambda}\big)_{|t\leq 0}=0 $, we obtain $ a^1_{\phi,\lambda}=0 $ for $ \lambda\in\Z^* $. Using boundary condition \eqref{eq prof princ bord phi} as well as initial condition $ \big(\sigma^1_{\phi,j,\lambda}\big)_{|t\leq 0}=0 $, we also get $ \sigma^1_{\phi,j,\lambda}=0 $ for $ j=1,3 $ and $ \lambda\in\Z^* $. Summing up, we have
\begin{equation}\label{eq linearise sigma a phi zero}
	\sigma_{\phi,\lambda,j}^1=0,\quad a^1_{\phi,\lambda}=0,\quad \forall\lambda\in\Z^*,\forall j=1,3.
\end{equation}
The other frequencies $ \psi_1 $, $ \psi_2 $ and $ \nu_2 $ are totally coupled through equations \eqref{eq linearise eq prof princ} and \eqref{eq linearise eq bord prof princ psi}, and we need to determine the function $ a^1_{\psi} $ on the boundary and thus the outgoing amplitude $ \sigma_{\psi,2,\lambda}^2 $.

\bigskip

In the same way as for the leading profile, for the first corrector, starting from \eqref{eq 1cor evol sigma phi psi nu}, we get, for the phases $ \psi_1 $, $ \psi_2 $ and $ \nu_2 $,
\begin{subequations}\label{eq linearise evol psi nu}
	\begin{align}
		\color{altblue} X_{\psi,1}\,\sigma^2_{\psi,1,\lambda}+\indicatrice_{\lambda=k\lambda_{\psi}}\,J^{\phi,1}_{\nu,2}\,ik\,\big(\bar{\sigma}^1_{\phi,1,-k\lambda_{\phi}}\,\sigma^2_{\nu,2,-k}+\bar{\sigma}^2_{\phi,1,-k\lambda_{\phi}}\,\sigma^1_{\nu,2,-k}\big)&\color{altblue}\\\nonumber&\color{altblue}\hspace{-40pt}=\mbox{\emph{terms in}}\,\big(U_1,(I-P)\,U_2\big),\\
		\color{altpink} X_{\psi,2}\,\sigma^2_{\psi,2,\lambda}+\indicatrice_{\lambda=k\lambda_{\psi}}J^{\phi,3}_{\nu,2}\,ik\,\big(\bar{\sigma}^1_{\phi,3,-k\lambda_{\phi}}\,\sigma^2_{\nu,2,-k}+\bar{\sigma}^2_{\phi,3,-k\lambda_{\phi}}\,\sigma^1_{\nu,2,-k}\big)&
		\\\nonumber&\color{altpink}\hspace{-40pt}=\mbox{\emph{terms in}}\,\big(U_1,(I-P)\,U_2\big),\\
		\color{altpurple}X_{\nu,2}\,\sigma^2_{\nu,2,\lambda}+J^{\phi,1}_{\psi,1}\,i\lambda\,\big(\bar{\sigma}^1_{\phi,1,-\lambda\lambda_{\phi}}\,\sigma^2_{\psi,1,-\lambda\lambda_{\psi}}+\bar{\sigma}^2_{\phi,1,-\lambda\lambda_{\phi}}\,\sigma^1_{\psi,1,-\lambda\lambda_{\psi}}\big)&
		\\\nonumber\color{altpurple}&\color{altpurple}\hspace{-289pt}+J^{\phi,3}_{\psi,2}\,i\lambda\,\big(\bar{\sigma}^1_{\phi,3,-\lambda\lambda_{\phi}}\,\sigma^2_{\psi,2,-\lambda\lambda_{\psi}}+\bar{\sigma}^2_{\phi,3,-\lambda\lambda_{\phi}}\,\sigma^1_{\psi,2,-\lambda\lambda_{\psi}}\big)=\mbox{\emph{terms in}}\,\big(U_1,(I-P)\,U_2\big),
	\end{align}
\end{subequations}
where $ \mbox{\emph{terms in}}\,\big(U_1,(I-P)\,U_2\big) $ refer to quadratic terms in $ U_1 $ or the nonpolarized parts of $ U_2 $, both of them for frequencies $ \zeta_j $, with $ \zeta=\phi,\psi,\nu $ and $ j=1,3 $, terms which therefore will be zero if the corresponding profiles are zero. Equations on other profiles $ \sigma^2_{\zeta,j,\lambda} $, for $ \zeta\in\F_{b}\privede{\psi,\nu} $ and $ (\zeta,j)=(\psi,3),(\nu,1),(\nu,3) $, are not of interest so we do not write them.

For the boundary term $ a^2_{\psi} $, we have, according to \eqref{eq 2cor eq evol a psi},
\begin{align}\label{eq linearise eq bord 1cor}
	&\quad\color{altorange2}X^{\Lop}_{\psi}\,a^2_{\psi,\lambda}
	+\tilde{X}^{\Lop}_{\psi}\,\big(\sigma_{\psi,2,\lambda}^2\big)_{|x_d=0}\\\nonumber
	&\quad\color{altorange2}+\indicatrice_{\lambda=k\lambda_{\psi}}\,\Gamma^{\psi}\,ik\,\big\lbrace\big(\sigma_{\nu,2,k}^1\big)_{|x_d=0}\,\bar{a}^2_{\phi,-\lambda_{\phi}k}+\big(\sigma_{\nu,2,k}^2\big)_{|x_d=0}\,\bar{a}^1_{\phi,-\lambda_{\phi}k}\big\rbrace\\\nonumber
	&\color{altorange2}=i\lambda\,b_{\psi}\cdot B\,\big(U^{3,\osc}_{\psi,2,\lambda}\big)_{|x_d,\chi_d=0}-i\lambda\,b_{\psi}\cdot H_{\lambda}\\\nonumber
	&\quad\color{altorange2}+\partial_{z,\theta}\,\mbox{\emph{terms in}}\,\big(U_1,(I-P)\,U_2,(P\,U_2)_{\zeta\neq \phi,\psi,\nu},U_2^*\big)_{|x_d,\chi_d=0}.
\end{align}
Again, here, equations for $ \psi_1 $, $ \psi_2 $ and $ \nu_2 $ are coupled. As the coupling is difficult to handle, especially with the term $ i\lambda\,b_{\psi}\cdot B\,\big(U^{3,\osc}_{\psi,2,\lambda}\big)_{|x_d,\chi_d=0} $, we will simplify equations \eqref{eq linearise evol psi nu} and \eqref{eq linearise eq bord 1cor} to reduce the coupling, in order to study instability. 

We have obtained the system \eqref{eq linearise eq prof princ}, \eqref{eq linearise eq bord prof princ psi}, \eqref{eq linearise evol psi nu} and \eqref{eq linearise eq bord 1cor}, which is the linearization of system of equations \eqref{eq systeme sigma} and \eqref{eq systeme a}, around the particular solution of \eqref{eq systeme moyenne}, \eqref{eq systeme sigma hors cas part}, \eqref{eq systeme sigma}, \eqref{eq systeme sigma bord} and \eqref{eq systeme a} of Proposition \ref{prop exist sol part} for which the boundary term $ H $ is zero. 

\subsection{Instability on simplified models}

The aim of this section is to show that the system \eqref{eq systeme 1} considered in this article is unstable, namely that a small perturbation $ H $ in the boundary term may interfere up to the leading order. More precisely we prove that there exists a boundary term $ H $ such that, for simplified models of the linearized system \eqref{eq linearise eq prof princ}, \eqref{eq linearise eq bord prof princ psi}, \eqref{eq linearise evol psi nu} and \eqref{eq linearise eq bord 1cor}, the leading perturbations $ \sigma^1_{\psi,j,\lambda} $ and $ \sigma^1_{\nu,j,\lambda} $ are not all zero. 
For this purpose, we argue by contradiction and assume that for every boundary term $ H $, all amplitudes $ \sigma^1_{\psi,j,\lambda} $ and $ \sigma^1_{\nu,j,\lambda} $, for $ j=1,3 $ and $ \lambda\in\Z^* $ are zero. Then we seek for a contradiction. In particular, according to \eqref{eq prof princ bord psi}, it implies that $ a^1_{\psi,\lambda}=0 $ for all $ \lambda\in\Z^* $.

Recall that we have shown above that for the linearized system, profiles $ \sigma^1_{\phi,j,\lambda} $ for $ j=1,3 $ and $ \lambda\in\Z^* $ are zero. Therefore, all leading profiles of frequencies $ \zeta_j $, $ \zeta=\phi,\psi,\nu $ and $ j=1,3 $ are zero. Furthermore, according to formula \eqref{eq 1cor nonpola part U 2 zeta} giving the nonpolarized parts of the first corrector, the nonpolarized parts of $ U_2 $ for frequencies $ \zeta_j $, $ \zeta=\phi,\psi,\nu $ and $ j=1,3 $ are consequently also zero. 
We can also show in a similar manner, that the mean value $ U_2^* $ and the polarized parts of frequencies different from $ \zeta_j $, $ \zeta=\phi,\psi,\nu $ and $ j=1,3 $, are also zero. 

Therefore, equation \eqref{eq linearise eq bord prof princ psi} now reads
\begin{equation}\label{eq insta eq trace psi 2 = 0}
	\big(\sigma^2_{\psi,2,\lambda}\big)_{|x_d=0}=0,
\end{equation}
since $ U^2_{\psi,2,\lambda} $ is polarized, $ a^1_{\psi,\lambda} $ is zero for $ \lambda\in\Z^* $ and the scalar $ b_{\psi}\cdot B\,r_{\psi,2} $ is nonzero. Equation \eqref{eq insta eq trace psi 2 = 0} is the condition which we wish to contradict.

The general linearized equations \eqref{eq linearise eq prof princ}, \eqref{eq linearise eq bord prof princ psi}, \eqref{eq linearise evol psi nu} and \eqref{eq linearise eq bord 1cor} being too difficult to handle at this stage of comprehension, two simplified models are investigated. 

\subsubsection{First simplified model}

We focus first on a very simple simplified model, for which computations can be easily followed through the end, and reflect the general idea of the instability mechanism. In equations \eqref{eq linearise evol psi nu}, most of the resonant terms, which couple the equations, are removed. We also use that both leading profiles and nonpolarized parts of $ U_2 $ of frequencies $ \zeta_j $ for $ \zeta=\phi,\psi,\nu $ and $ j=1,3 $ are zero. We retain at the end, for phases $ \psi_1 $, $ \psi_2 $ and $ \nu_2 $, the incoming evolution equation
\begin{subequations}\label{eq insta eq 1cor BB1}
	\begin{align}
		\color{altblue}\label{eq insta eq 1cor BB1 psi 1} X_{\psi,1}\,\sigma^2_{\psi,1,\lambda}&\color{altblue}=0,
		\intertext{and the two outgoing evolution equations with resonance terms}
		\color{altpink}\label{eq insta eq 1cor BB1 psi2} X_{\psi,2}\,\sigma^2_{\psi,2,\lambda}+\indicatrice_{\lambda=k\lambda_{\psi}}\,J^{\phi,3}_{\nu,2}\,ik\,\bar{\sigma}^1_{\phi,3,-k\lambda_{\phi}}\,\sigma^2_{\nu,2,-k}&\color{altpink}=0,\\[5pt]
		\color{altpurple}\label{eq insta eq 1cor BB1 nu2} X_{\nu,2}\,\sigma^2_{\nu,2,\lambda}+J^{\phi,1}_{\psi,1}\,i\lambda\,\bar{\sigma}^1_{\phi,1,-\lambda\lambda_{\phi}}\,\sigma^2_{\psi,1,-\lambda\lambda_{\psi}}&\color{altpurple}=0.
	\end{align}
\end{subequations}
As for the boundary amplitudes $ a^2_{\psi,\lambda} $ for $ \lambda\in\Z^* $, we remove all traces of first or second profile, and, as usual, $ i\lambda\,b_{\psi}\cdot B\,\big(U^{3,\osc}_{\psi,2,\lambda}\big)_{|x_d,\chi_d=0} $, and we use that terms of $ \partial_{z,\theta}\,\mbox{\emph{terms in}}\,\big(U_1,(I-P)\,U_2,(P\,U_2)_{\zeta\neq \phi,\psi,\nu},U_2^*\big)_{|x_d,\chi_d=0} $ are zero to retain the simple forced transport equation
\begin{equation}\label{eq insta eq 1cor apsi evol BB1}
	\color{altorange2}X^{\Lop}_{\psi}\,a^2_{\psi,\lambda}
	=-i\lambda\,b_{\psi}\cdot H_{\lambda}.
\end{equation}
According to above remarks,  boundary condition \eqref{eq 1cor trace psi} for the incoming amplitude $ \sigma_{\psi,1,\lambda}^2 $ now reads
\begin{equation}\label{eq insta eq 1cor apsi bord BB1}
	\color{altorange2}\big(\sigma_{\psi,1,\lambda}^2\big)_{|x_d=0}\,r_{\psi,1}=a^2_{\psi,\lambda}\,e_{\psi,1}.
\end{equation}
Although system \eqref{eq insta eq 1cor BB1}, \eqref{eq insta eq 1cor apsi evol BB1} and \eqref{eq insta eq 1cor apsi bord BB1} is coupled, it is in an upper  triangular form, so it can be solved using explicit formulas, since we are in presence of transport equations with constant coefficients. This is made precise now, with the proof of the following result.

%

\begin{theorem}\label{th il existe H BBmod 1}
	There exists a boundary term $ H $ in $ L^2\big((-\infty,T]_t\times\R^{d-1}_y\times\T_{\theta_2}\big) $ such that, if the sequence $ (\sigma^2_{\psi,1,\lambda},\sigma^2_{\psi,2,\lambda},\sigma^2_{\nu,2,\lambda})_{\lambda\in\Z^*} $ of tuples of $ \mathcal{C}\big(\R^+_{x_d},L^2\big((-\infty,T]_t\times\R^{d-1}_y\big)\big) $ and the sequence $ (a^2_{\psi,\lambda})_{\lambda\in\Z^*} $ of $ L^2\big((-\infty,T]_t\times\R^{d-1}_y\big) $ are solutions to the system \eqref{eq insta eq 1cor BB1}, \eqref{eq insta eq 1cor apsi evol BB1} and \eqref{eq insta eq 1cor apsi bord BB1}, then the trace $ \big(\sigma^2_{\psi,2,\lambda_{\psi}}\big)_{|x_d=0} $ is nonzero.
\end{theorem}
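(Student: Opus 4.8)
The plan is to construct $H$ explicitly so that the solution obtained by solving the upper-triangular system \eqref{eq insta eq 1cor BB1}, \eqref{eq insta eq 1cor apsi evol BB1}, \eqref{eq insta eq 1cor apsi bord BB1} by the method of characteristics produces a nonzero trace $\big(\sigma^2_{\psi,2,\lambda_{\psi}}\big)_{|x_d=0}$, thereby contradicting \eqref{eq insta eq 1cor2 trace psi 2 = 0} and establishing the instability.

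\textit{Step 1: solve from the top of the triangular system.} First I would choose $\lambda = \lambda_\psi$ (so $k=1$ in the indicator functions) and work with the single harmonic $\lambda_\psi$; all frequencies appearing are then $\psi_1$, $\psi_2$, $\nu_2$ lifted with coefficient $\lambda_\psi$, together with $\bar\sigma^1_{\phi,1,-\lambda_\phi}$ and $\bar\sigma^1_{\phi,3,-\lambda_\phi}$ coming from the reference solution of Proposition \ref{prop exist sol part}. Equation \eqref{eq insta eq 1cor apsi evol BB1} is the transport equation $X^{\Lop}_\psi\,a^2_{\psi,\lambda_\psi} = -i\lambda_\psi\, b_\psi\cdot H_{\lambda_\psi}$ with $X^{\Lop}_\psi$ a tangential constant-coefficient field (Lemma \ref{lemme Lax bord}); with the initial condition $\big(a^2_{\psi,\lambda_\psi}\big)_{|t\le 0}=0$ it is solved by integrating $H$ along the characteristics of $X^{\Lop}_\psi$ in $\omega_T$. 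Then \eqref{eq insta eq 1cor apsi bord BB1} gives the trace of $\sigma^2_{\psi,1,\lambda_\psi}$ at $x_d=0$ (up to the nonzero factor $e_{\psi,1}\cdot r_{\psi,1}$, which I should check is nonzero — it is, since $e_{\psi,1}\in\Span r_{\psi,1}\setminus\{0\}$ by \eqref{eq decomp e zeta}), and since $\psi_1$ is incoming, \eqref{eq insta eq 1cor BB1 psi 1} together with this boundary trace and the zero initial condition determines $\sigma^2_{\psi,1,\lambda_\psi}$ in $\Omega_T$ by transport along $X_{\psi,1}$.

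\textit{Step 2: propagate through the two resonant equations.} Next, \eqref{eq insta eq 1cor BB1 nu2} is an \emph{outgoing} transport equation for $\sigma^2_{\nu,2,\lambda_\psi}$ with a source term proportional to $\bar\sigma^1_{\phi,1,-\lambda_\phi}\,\sigma^2_{\psi,1,-\lambda_\psi}$, so with the zero initial condition it is solved by integrating along $X_{\nu,2}$; then \eqref{eq insta eq 1cor BB1 psi2} is again an outgoing transport equation whose source is proportional to $\bar\sigma^1_{\phi,3,-\lambda_\phi}\,\sigma^2_{\nu,2,-\lambda_\psi}$, giving $\sigma^2_{\psi,2,\lambda_\psi}$, whose restriction to $x_d=0$ is what we must make nonzero. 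Tracking the explicit formulas, $\big(\sigma^2_{\psi,2,\lambda_\psi}\big)_{|x_d=0}$ is, schematically, a double iterated integral (along the three characteristic directions) of the product
\[
  J^{\phi,3}_{\nu,2}\,J^{\phi,1}_{\psi,1}\,\bar\sigma^1_{\phi,3}\,\bar\sigma^1_{\phi,1}\,(e_{\psi,1}\cdot r_{\psi,1})\,\big(\text{antiderivative of } b_\psi\cdot H_{\lambda_\psi}\big).
\]
The resonance coefficients $J^{\phi,3}_{\nu,2}$ and $J^{\phi,1}_{\psi,1}$ are nonzero (this is part of the content of the resonance Assumption \ref{hypothese ensemble frequences}; if needed I would add a genericity hypothesis ensuring $J^{\phi,3}_{\nu,2}J^{\phi,1}_{\psi,1}\neq0$), and $\bar\sigma^1_{\phi,1,-\lambda_\phi}$, $\bar\sigma^1_{\phi,3,-\lambda_\phi}$ are not identically zero for a generic choice of $G$ (by the nondegeneracy in Proposition \ref{prop exist sol part}, again possibly strengthened to a genericity statement on $G$).

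\textit{Step 3: choosing $H$.} With $G$ fixed so that $\bar\sigma^1_{\phi,1}\bar\sigma^1_{\phi,3}$ is nonzero on some open subset $\mathcal{U}\subset\Omega_T$ touching $\{x_d=0\}$, I pick $H$ supported near the preimage of $\mathcal{U}$ under the composed characteristic flow, with $b_\psi\cdot H_{\lambda_\psi}$ a fixed-sign bump so that the iterated integral does not cancel; all other harmonics of $H$ can be taken zero. One then needs only to check that the resulting $\sigma^2$'s and $a^2$ lie in the stated $L^2$ spaces, which follows from $G,H\in L^2$, the boundedness of the characteristic flows on the finite time slab $(-\infty,T]$, and the fact that $\bar\sigma^1_{\phi,j}\in\mathcal{C}^\infty$ hence bounded on compacts. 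The main obstacle is the bookkeeping that no sign cancellation occurs when integrating the bump through the three successive transports: concretely, one must verify that the composed characteristic map $(t,y)\mapsto$ (boundary point) is a local diffeomorphism (equivalently, the three group velocities $\mathbf v_{\psi,1}$, $\mathbf v_{\nu,2}$, $\mathbf v_{\psi,2}$ together with the tangential field $X^{\Lop}_\psi$ span enough directions), so that a bump in $H$ produces a genuinely localized, non-averaged-out contribution to $\big(\sigma^2_{\psi,2,\lambda_\psi}\big)_{|x_d=0}$. Once this geometric transversality is in hand, positivity of the kernel gives the nonvanishing trace and the contradiction is complete.
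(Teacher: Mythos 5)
Your overall strategy matches the paper exactly: solve the upper-triangular system \eqref{eq insta eq 1cor BB1}, \eqref{eq insta eq 1cor apsi evol BB1}, \eqref{eq insta eq 1cor apsi bord BB1} by characteristics, obtain an explicit triple iterated integral for $\big(\sigma^2_{\psi,2,\lambda_\psi}\big)_{|x_d=0}$ (this is formula \eqref{eq insta trace sigma 2 cas part} in the paper), and then choose $G$ and $H$ so that the integral cannot vanish. Steps 1 and 2 are essentially the paper's computation.

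However, Step 3 has a genuine gap, and the repair you propose (transversality / local diffeomorphism of the composed characteristic flow) attacks the wrong issue. The integrand in \eqref{eq insta trace sigma 2 cas part} contains the complex factors $\bar\sigma^1_{\phi,3,-\lambda_\phi}$ and $\bar\sigma^1_{\phi,1,\lambda_\phi}$, whose phase can vary arbitrarily in $(t,y,x_d)$; choosing $b_\psi\cdot H_{\lambda_\psi}$ to be a fixed-sign bump does nothing to prevent cancellation coming from this phase variation, and neither does showing that the composed flow is a local diffeomorphism (that would only tell you a localized bump stays localized, not that contributions of opposite complex phase do not annihilate). This is the step you flag as ``the main obstacle,'' and the obstacle is real, but not of geometric nature. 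The paper resolves it differently and more elementarily: it constructs $G$ with only $G_{\pm\lambda_\phi}\ne 0$, real and non-negative on a large box, then uses the structure of the Burgers-type transport equations \eqref{eq insta eq transport a bar phi lambda} and \eqref{eq insta eq transport sigma bar phi lambda}, together with the zero initial condition, to show that $\bar a^1_{\phi,\pm\lambda_\phi}$ and then $\bar\sigma^1_{\phi,1,\lambda_\phi}$, $\bar\sigma^1_{\phi,3,-\lambda_\phi}$ are real, non-negative, and bounded below by a positive constant on a suitable subbox; finally it chooses $b_\psi\cdot H_{\lambda_\psi}$ pure imaginary with sign matched to $\sign(J^{\phi,3}_{\nu,2}J^{\phi,1}_{\psi,1}\lambda_\psi)$. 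With all factors of one sign, the triple integral is manifestly bounded below by $\big|J^{\phi,3}_{\nu,2}J^{\phi,1}_{\psi,1}\lambda_\psi\big|A^2/4$ on the relevant region, and no transversality statement is needed. You should replace your diffeomorphism bookkeeping with this reality/non-negativity propagation, or find an equivalent way to control the phase of $\bar\sigma^1_{\phi,1}\bar\sigma^1_{\phi,3}$. (Your remark that one needs $J^{\phi,3}_{\nu,2}J^{\phi,1}_{\psi,1}\ne0$ is correct and is also used implicitly in the paper's final estimate.)
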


\begin{proof}
	We consider any boundary term  $ H $ in $ L^2\big((-\infty,T]_t\times\R^{d-1}_y\times\T_{\theta_2}\big) $, and we look for an expression of the trace $ \big(\sigma^2_{\psi,2,\lambda}\big)_{|x_d=0} $ of the associated solution of \eqref{eq insta eq 1cor BB1}, \eqref{eq insta eq 1cor apsi evol BB1} and \eqref{eq insta eq 1cor apsi bord BB1}.
	
First of all, the transport equation \eqref{eq insta eq 1cor apsi evol BB1} on the boundary $ \ensemble{x_d=0} $ can be solved to find
\begin{equation*}
	a^2_{\psi,\lambda}(t,y)=\int_0^t -i\lambda\,b_{\psi}\cdot H_{\lambda}\big(s,y-\mathbf{v}^{\Lop}_{\psi}\,(t-s)\big)\,ds,
\end{equation*}
recalling the notation\footnote{Without lost of generality, we have set $ \beta_{\psi}=1 $ in Lemma \ref{lemme Lax bord}, to simplify the equations.}
\begin{equation*}
	X^{\Lop}_{\psi}=\partial_t+\mathbf{v}^{\Lop}_{\psi}\cdot\nabla_y.
\end{equation*}

According to boundary condition \eqref{eq insta eq 1cor apsi bord BB1},
it follows, with notation\footnote{\label{note1}We assumed here that, with notation of Definition \ref{def sortant rentrant alpha X alpha}, $ -1/\partial_{\xi}\tau_k(\eta,\xi) $ is equal to 1.}
\begin{equation*}
	X_{\psi,1}=\partial_t-\mathbf{v}_{\psi,1}\cdot\nabla_x=:\partial_t-\mathbf{v}'_{\psi,1}\cdot\nabla_y+\partial_{x_d},
\end{equation*}
using the incoming transport equation \eqref{eq insta eq 1cor BB1 psi 1},
\begin{equation*}
	\sigma_{\psi,1,\lambda}^2(t,y,x_d)=
	-\indicatrice_{x_d\leq t}\int_0^{t-x_d} i\lambda\,p_{\psi,1}\,b_{\psi}\cdot H_{\lambda}\Big(s,y+x_d\,\big(\mathbf{v}^{\Lop}_{\psi}+\mathbf{v}_{\psi,1}'\big)-\mathbf{v}^{\Lop}_{\psi}\,(t-s)\Big)\,ds,
\end{equation*}
with a coefficient $ p_{\psi,1}\in\R $ such that $ e_{\psi,1}=p_{\psi,1}\,r_{\psi,1} $. To simplify notation, the coefficient $ p_{\psi,1} $ will be omitted in the following. 
Thus, with notation
\footnote{See footnote \ref{note1}.} 
\begin{equation*}
	X_{\nu,2}=\partial_t-\mathbf{v}_{\nu,2}\cdot\nabla_x=:\partial_t-\mathbf{v}'_{\nu,2}\cdot\nabla_y-\partial_{x_d},
\end{equation*}
according to the outgoing transport equation \eqref{eq insta eq 1cor BB1 nu2}, we have
\begin{align*}
	\sigma^2_{\nu,2,\lambda}(t,y,x_d)&=-\int_0^t J^{\phi,1}_{\psi,1}\,i\lambda\,\bar{\sigma}^1_{\phi,1,-\lambda\lambda_{\phi}}(s,x+\mathbf{v}_{\nu,2}(t-s))\,\indicatrice_{x_d\leq2s- t}\\
	&\quad\times\int_0^{2s-x_d-t}	i\lambda\lambda_{\psi}\,b_{\psi}\cdot H_{-\lambda\lambda_{\psi}}\Big(\tau, y+\mathbf{v}_{\nu,2}'\,(t-s)\\
	&\quad+(x_d+t-s)\,\big(\mathbf{v}^{\Lop}_{\psi}+\mathbf{v}_{\psi,1}'\big)-\mathbf{v}^{\Lop}_{\psi}\,(s-\tau)\Big)\,d\tau\,ds.
\end{align*}
In the same way, with similar notation, the outgoing transport equation \eqref{eq insta eq 1cor BB1 psi2} leads to
\begin{align*}
	\sigma^2_{\psi,2,\lambda}(t,y,x_d)&=\int_0^t \indicatrice_{\lambda=k\lambda_{\psi}}\,J^{\phi,3}_{\nu,2}\,ik\,\bar{\sigma}^1_{\phi,3,-k\lambda_{\phi}}\big(s,x+\mathbf{v}_{\psi,2}(t-s)\big)\\
	&\quad\times\int_0^s J^{\phi,1}_{\psi,1}\,ik\,\bar{\sigma}^1_{\phi,1,k\lambda_{\phi}}\big(\tau,x+\mathbf{v}_{\psi,2}(t-s)+\mathbf{v}_{\nu,2}(s-\tau)\big)\,\indicatrice_{x_d\leq 2\tau-t}\\
	&\quad\times\int_0^{2\tau-x_d-t}
	ik\lambda_{\psi}\,b_{\psi}\cdot H_{k\lambda_{\psi}}\Big(\sigma,y+\mathbf{v}_{\nu,2}'\,(s-\tau)+\mathbf{v}_{\psi,2}'\,(t-s)
	\\
	&\qquad+(x_d+t-\tau)\,\big(\mathbf{v}^{\Lop}_{\psi}+\mathbf{v}_{\psi,1}'\big)-\mathbf{v}^{\Lop}_{\psi}\,(\tau-\sigma)\Big)\,d\sigma\,d\tau\,ds.
\end{align*}
The trace of $ \sigma^2_{\psi,2,\lambda} $ on the boundary $ \ensemble{x_d=0} $ is therefore given by
\begin{align}\label{eq insta trace sigma 2 lambda}
	\sigma^2_{\psi,2,\lambda}(t,y,0)&=-i\int_0^t\int_0^s\int_0^{2\tau-t} \indicatrice_{\lambda=k\lambda_{\psi}}\,J^{\phi,3}_{\nu,2}\,J^{\phi,1}_{\psi,1}\,k^3\,\lambda_{\psi}\,\bar{\sigma}^1_{\phi,3,-k\lambda_{\phi}}\big(s,y+\mathbf{v}_{\psi,2}(t-s)\big)\,\\\nonumber
	&\quad \times\bar{\sigma}^1_{\phi,1,k\lambda_{\phi}}\big(\tau,y+\mathbf{v}_{\psi,2}(t-s)+\mathbf{v}_{\nu,2}(s-\tau)\big)\\\nonumber
	&\quad
	\times b_{\psi}\cdot H_{k\lambda_{\psi}}\Big(\sigma,y+\mathbf{v}_{\nu,2}'\,(s-\tau)+\mathbf{v}_{\psi,2}'(t-s)
	\\\nonumber
	&\qquad+(t-\tau)\,\big(\mathbf{v}^{\Lop}_{\psi}+\mathbf{v}_{\psi,1}'\big)-\mathbf{v}^{\Lop}_{\psi}\,(\tau-\sigma)\Big)\,d\sigma\,d\tau\,ds.
\end{align}

We justify now why there is a choice of a boundary term $ H $ such that this trace is nonzero. We take interest into the trace $ \big(\sigma^2_{\psi,2,\lambda_{\psi}}\big)_{|x_d=0} $, which is given by formula \eqref{eq insta trace sigma 2 lambda} with $ \lambda=\lambda_{\psi} $ and therefore $ k=1 $, namely,
\begin{align}\label{eq insta trace sigma 2 cas part}
	\sigma^2_{\psi,2,\lambda_{\psi}}(t,y,0)&=-i\int_0^t\int_0^s\int_0^{2\tau-t} J^{\phi,3}_{\nu,2}\,J^{\phi,1}_{\psi,1}\,\lambda_{\psi}\,\bar{\sigma}^1_{\phi,3,-\lambda_{\phi}}\big(s,y+\mathbf{v}_{\psi,2}(t-s)\big)\,\\\nonumber
	&\quad \times\bar{\sigma}^1_{\phi,1,\lambda_{\phi}}\big(\tau,y+\mathbf{v}_{\psi,2}(t-s)+\mathbf{v}_{\nu,2}(s-\tau)\big)\\\nonumber
	&\quad
	\times b_{\psi}\cdot H_{\lambda_{\psi}}\Big(\sigma,y+\mathbf{v}_{\nu,2}'\,(s-\tau)+\mathbf{v}_{\psi,2}'(t-s)
	\\\nonumber
	&\qquad+(t-\tau)\,\big(\mathbf{v}^{\Lop}_{\psi}+\mathbf{v}_{\psi,1}'\big)-\mathbf{v}^{\Lop}_{\psi}\,(\tau-\sigma)\Big)\,d\sigma\,d\tau\,ds.
\end{align}
We start by constructing $ \bar{\sigma}^1_{\phi,1,\lambda_{\phi}} $ and $ \bar{\sigma}^1_{\phi,3,-\lambda_{\phi}} $ suited for our purpose. It is proven in \cite[section 2.2]{CoulombelWilliams2017Mach} that $ \bar{a}^1_{\phi,\lambda} $, solution to equation \eqref{eq systeme a phi} in the particular case where $ H $ is zero, is the solution to the following equation, for $ \lambda\in\Z^* $
\begin{equation}\label{eq insta eq transport a bar phi lambda}
	X^{\Lop}_{\phi}\,\bar{a}^1_{\phi,\lambda}
	+D^{\Lop}_{\phi}\sum_{\lambda_1+\lambda_2=\lambda}i\lambda_2\,\bar{a}^1_{\phi,\lambda_1}\,\bar{a}^1_{\phi,\lambda_2}+i\lambda\sum_{\lambda_1+\lambda_3=\lambda}\gamma_{\phi}(\lambda_1,\lambda_3)\,\bar{a}^1_{\phi,\lambda_1}\,\bar{a}^1_{\phi,\lambda_3}=-i\lambda\,b_{\phi}\cdot G_{\lambda}.
\end{equation}
We set $ G_{\lambda}=0 $ for $ \lambda\in\Z\privede{\lambda_{\phi},-\lambda_{\phi}} $ and $ G_{\lambda_{\phi}} $, $ G_{-\lambda_{\phi}} $ real, non-negative, and equal to one on the set $ [1/2,2]_{t}\times [-(h+2)\mathbf{V},(h+2)\mathbf{V}]^{d-1}_y $, where we have denoted $ \mathbf{V}:=\big|\mathbf{v}^{\Lop}_{\phi}\big| $ and with $ h\geq 1 $. Solving the transport equation \eqref{eq insta eq transport a bar phi lambda}, we get $ \bar{a}^1_{\phi,\lambda}=0 $  for $ \lambda\in\Z\privede{\lambda_{\phi},-\lambda_{\phi}} $ and $ \bar{a}^1_{\phi,\lambda_{\phi}} $, $ \bar{a}^1_{\phi,-\lambda_{\phi}} $ real, non-negative, and greater than $ 1/2 $ on the set $ [1,2]_{t}\times [-(h+1)\mathbf{V},(h+1)\mathbf{V}]^{d-1}_y $. Now we know that, according to the condition \eqref{eq insta sigma a bar zero} on profiles $ \bar{\sigma}^1_{\zeta,j,\lambda} $ for $ \zeta=\psi,\nu $, $ j=1,2,3 $ and $ \lambda\in\Z^* $, there are no resonance terms in the evolution equation \eqref{eq evolution sigma 1 phi psi nu} for $ \bar{\sigma}^1_{\phi,j,\lambda} $ for $ j=1,3 $ and $ \lambda\in\Z^* $, so theses profiles $ \bar{\sigma}^1_{\phi,j,\lambda} $ for $ j=1,3 $ and $ \lambda\in\Z^* $ satisfy the following incoming transport equation
\begin{subequations}\label{eq insta eq transport sigma bar phi lambda}
	\begin{equation}	X_{\phi,j}\,\bar{\sigma}^1_{\phi,j,\lambda}+D_{\phi,j}\sum_{\lambda_1+\lambda_2=\lambda}i\lambda_2\,\bar{\sigma}^1_{\phi,j,\lambda_1}\,\bar{\sigma}^1_{\phi,j,\lambda_2}=0,
\end{equation}
with the following boundary condition \eqref{eq prof princ bord phi}
\begin{equation}
	\big(\bar{\sigma}_{\phi,j,\lambda}^1\big)_{|x_d=0}\,r_{\phi,j}=\bar{a}_{\phi,\lambda}^1\,e_{\phi,j}.
\end{equation}
\end{subequations}
Solving system \eqref{eq insta eq transport sigma bar phi lambda} seen as a transport propagation equation in the normal direction, with notation\footnote{See footnote \ref{note1}.} 
\begin{equation*}
	X_{\phi,j}=:\partial_t-\mathbf{v}'_{\phi,j}\cdot\nabla_y+\partial_{x_d},
\end{equation*}
we get $ \bar{\sigma}^1_{\phi,j,\lambda}=0 $ for $ j=1,3 $ and $ \lambda\in\Z\privede{\lambda_{\phi},-\lambda_{\phi}} $, and that $ \bar{\sigma}^1_{\phi,1,\lambda_{\phi}} $, $ \bar{\sigma}^1_{\phi,3,-\lambda_{\phi}} $ are real, non-negative, and larger than $ A/2 $ on the set $ [1+\mathbf{V}/(2\mathbf{w}),2+\mathbf{V}/(2\mathbf{W})] _t\times[-h\mathbf{V},h\mathbf{V}]_y\times[0,\mathbf{V}/\mathbf{W}]_{x_d} $, where $ A:=\min\big(|e_{\phi,1}\cdot r_{\phi,1}|,|e_{\phi,3}\cdot r_{\phi,3}|\big) $, $ \mathbf{w}:=\min\big(|\mathbf{v}'_{\phi,1}|,|\mathbf{v}'_{\phi,3}|\big) $ and $ \mathbf{W}:=\max\big(|\mathbf{v}'_{\phi,1}|,|\mathbf{v}'_{\phi,3}|\big) $.

Now that $ \bar{\sigma}^1_{\phi,1,\lambda_{\phi}} $ and $ \bar{\sigma}^1_{\phi,3,-\lambda_{\phi}} $ have been constructed appropriately, we make precise a suitable choice of boundary term $ H $. We denote $ \mathbf{t}:=1+\mathbf{V}/(2\mathbf{w}) $ and $ \mathbf{T}:=2+\mathbf{V}/(2\mathbf{W}) $, and we set the integer $ h $ such that
\begin{equation*}
	h\geq \frac{8(\mathbf{T}-\mathbf{t})}{\mathbf{V}}\,\max \Big(\big|\mathbf{v}'_{\psi,2}\big|,\big|\mathbf{v}'_{\nu,2}\big|,\big|\mathbf{v}'_{\psi,1}\big|,\big|\mathbf{v}^{\Lop}_{\psi}\big|\Big).
\end{equation*}
We also take the boundary term $ H $ such that $ b_{\psi}\cdot H_{\lambda_{\psi}} $ is pure imaginary, and such that its imaginary part is of the sign of $ J^{\phi,3}_{\nu,2}\,J^{\phi,1}_{\psi,1}\,\lambda_{\psi} $ and of modulus one on $ [\mathbf{t},\mathbf{T}]_t\times[-h\mathbf{V},h\mathbf{V}]_y $, namely,
\begin{equation*}
	b_{\psi}\cdot H_{\lambda_{\psi}}=i\,\sign\big(J^{\phi,3}_{\nu,2}\,J^{\phi,1}_{\psi,1}\,\lambda_{\psi}\big)\quad \text{on}\quad [\mathbf{t},\mathbf{T}]_t\times[-h\mathbf{V},h\mathbf{V}]_y.
\end{equation*}
Then we note that, according to \eqref{eq insta trace sigma 2 cas part}, the trace $ \big(\sigma^2_{\psi,2,\lambda_{\psi}}\big)_{|x_d=0} $ satisfies, for $ t\in[\mathbf{t},\mathbf{T}] $ and $ y\in[-h\mathbf{V}/2,h\mathbf{V}/2] $,
\begin{align*}
	\sigma^2_{\psi,2,\lambda_{\psi}}(t,y,0)&\geq -i\int_{\mathbf{t}}^t\int_{\mathbf{t}}^s\int_{\mathbf{t}}^{2\tau-t} J^{\phi,3}_{\nu,2}\,J^{\phi,1}_{\psi,1}\,\lambda_{\psi}\,\bar{\sigma}^1_{\phi,3,-\lambda_{\phi}}\big(s,y+\mathbf{v}_{\psi,2}(t-s)\big)\,\\\nonumber
	&\quad \times\bar{\sigma}^1_{\phi,1,\lambda_{\phi}}\big(\tau,y+\mathbf{v}_{\psi,2}(t-s)+\mathbf{v}_{\nu,2}(s-\tau)\big)\\\nonumber
	&\quad
	\times b_{\psi}\cdot H_{\lambda_{\psi}}\Big(\sigma,y+\mathbf{v}_{\nu,2}'\,(s-\tau)+\mathbf{v}_{\psi,2}'(t-s)
	\\\nonumber
	&\qquad+(t-\tau)\,\big(\mathbf{v}^{\Lop}_{\psi}+\mathbf{v}_{\psi,1}'\big)-\mathbf{v}^{\Lop}_{\psi}\,(\tau-\sigma)\Big)\,d\sigma\,d\tau\,ds\\
	&\geq \big|J^{\phi,3}_{\nu,2}\,J^{\phi,1}_{\psi,1}\,\lambda_{\psi}\big|\frac{A^2}{4},
\end{align*}
since, for $ \mathbf{t}\leq \sigma\leq 2\tau-t  $, $ \mathbf{t}\leq \tau \leq s \leq t \leq \mathbf{T} $ and $ y\in[-h\mathbf{V}/2,h\mathbf{V}/2] $, according to the assumption on $ h $, we have $ \sigma,\tau,s\in[\mathbf{t},\mathbf{T}] $ and 
\begin{align*}
	y+\mathbf{v}_{\psi,2}(t-s)&\in [-h\mathbf{V},h\mathbf{V}]_y\times[0,\mathbf{V}/\mathbf{W}]_{x_d} ,\\
	y+\mathbf{v}_{\psi,2}(t-s)+\mathbf{v}_{\nu,2}(s-\tau)&\in [-h\mathbf{V},h\mathbf{V}]_y\times[0,\mathbf{V}/\mathbf{W}]_{x_d} ,\\
	y+\mathbf{v}_{\nu,2}'\,(s-\tau)+\mathbf{v}_{\psi,2}'(t-s)&\\
	+(t-\tau)\,\big(\mathbf{v}^{\Lop}_{\psi}+\mathbf{v}_{\psi,1}'\big)-\mathbf{v}^{\Lop}_{\psi}\,(\tau-\sigma)&\in [-h\mathbf{V},h\mathbf{V}]_y,
\end{align*}
because $ \mathbf{T}-\mathbf{t}\leq \mathbf{V}/2\mathbf{W} $, up to shrinking $ \mathbf{w} $.

It concludes the proof of Theorem \ref{th il existe H BBmod 1} since we proved that there exists a choice of $ H $ such that the trace $ \big(\sigma^2_{\psi,2,\lambda_{\psi}}\big)_{|x_d=0} $ is nonzero.
\end{proof}

Theorem \ref{th il existe H BBmod 1}, stating that the trace $ \big(\sigma^2_{\psi,2,\lambda_{\psi}}\big)_{|x_d=0} $ is nonzero, contradicts the condition \eqref{eq insta eq trace psi 2 = 0}, so instability is proven. Indeed, we have assumed that for all boundary terms $ H $, the associated amplitudes $ \sigma^1_{\psi,j,\lambda} $ and $ \sigma^1_{\nu,j,\lambda} $ for $ j=1,3 $ and $ \lambda\in\Z^* $ are zero, and, for the simplified model equations  \eqref{eq insta eq 1cor BB1}, \eqref{eq insta eq 1cor apsi evol BB1} and \eqref{eq insta eq 1cor apsi bord BB1}, we found a contradiction. Therefore, for this simplified model, there exists a boundary term $ H $ such that the leading profiles $ \sigma^1_{\psi,j,\lambda} $ and $ \sigma^1_{\nu,j,\lambda} $  for $ j=1,3 $ and $ \lambda\in\Z^* $ are not all zero. It proves that the boundary term $ H $ may interfere at the leading order, which constitutes an instability. 

\subsubsection{Second simplified model}

The second simplified model that we shall consider features additional resonance coupling terms which add difficulties. This time we keep the formulation of simplified models of section \ref{section existence} and multiply equations \eqref{eq linearise evol psi nu} and \eqref{eq linearise eq bord 1cor} by $ e^{i\lambda\Theta} $, to obtain, for the phases $ \psi_1 $, $ \psi_2 $ and $ \nu_2 $, the incoming evolution equation with a resonance term
\begin{subequations}\label{eq insta eq 1cor BB2}
	\begin{align}
		\color{altblue}\label{eq insta eq 1cor BB2 psi 1} X_{\psi,1}\,\sigma^2_{\psi,1}+\partial_{\Theta}\,\mathbb{J}^{\phi,1}_{\nu,2}\big(\bar{\sigma}^1_{\phi,1},\sigma^2_{\nu,2}\big)&\color{altblue}=0,
		\intertext{and the two outgoing evolution equations with resonance terms}
		\color{altpink}\label{eq insta eq 1cor BB2 psi2} X_{\psi,2}\,\sigma^2_{\psi,2}+\partial_{\Theta}\,\mathbb{J}^{\phi,3}_{\nu,2}\big(\bar{\sigma}^1_{\phi,3},\sigma^2_{\nu,2}\big)&\color{altpink}=0,\\[5pt]
		\color{altpurple}\label{eq insta eq 1cor BB2 nu2} X_{\nu,2}\,\sigma^2_{\nu,2}+\partial_{\Theta}\,\mathbb{J}^{\phi,1}_{\psi,1}\big(\bar{\sigma}^1_{\phi,1},\sigma^2_{\psi,1}\big)+\partial_{\Theta}\,\mathbb{J}^{\phi,3}_{\psi,2}\big(\bar{\sigma}^1_{\phi,3},\sigma^2_{\psi,2}\big)&\color{altpurple}=0,
	\end{align}
\end{subequations}
where operators $ \mathbb{J}_{\zeta_2,j_2}^{\zeta_1,j_1} $ have been defined in \eqref{eq ana def J}, and 
equations on the boundary are kept as in the first simplified model \eqref{eq insta eq 1cor apsi evol BB1} and \eqref{eq insta eq 1cor apsi bord BB1}, namely,
	\begin{align}\label{eq insta eq 1cor apsi evol BB2}
		\color{altorange2}X^{\Lop}_{\psi}\,a^2_{\psi}
		&\color{altorange2}=-b_{\psi}\cdot \partial_{\Theta}\,H,\\[5pt]\label{eq insta eq 1cor apsi bord BB2}
		\color{altorange2}\big(\sigma_{\psi,1}^2\big)_{|x_d=0}\,r_{\psi,1}&\color{altorange2}=a^2_{\psi}\,e_{\psi,1}.
	\end{align}
Note that this time, the simplified model features all resonance terms of the general equations.

The obtained system is no longer triangular since additional resonance terms in \eqref{eq insta eq 1cor BB2} relatively to \eqref{eq insta eq 1cor BB1} couple each equation with the others, so we cannot solve it as a sequence of transport equations as before. We use a perturbation method and solve equations \eqref{eq insta eq 1cor BB2} with a fixed point theorem.
We start by solving \eqref{eq insta eq 1cor apsi evol BB2} as a transport equation, and then we deduce, using the incoming transport equations \eqref{eq insta eq 1cor BB2 psi 1} and boundary condition \eqref{eq insta eq 1cor apsi bord BB2}, an expression of $ \sigma_{\psi,1}^2 $ depending on $ \sigma^2_{\nu,2} $. We use this expression in \eqref{eq insta eq 1cor BB2 nu2} to obtain an equation in $ \sigma^2_{\nu,2} $ with a source term depending only on $ \sigma^2_{\psi,2} $. This equation is solved with a fixed point method, using that the source term depending on $ \sigma^2_{\nu,2} $ is ``small'', in a convenient topology, comparing to the transport term, and we get an expression of $ \sigma^2_{\nu,2} $ depending on $ \sigma^2_{\psi,2} $. This expression is finally used in \eqref{eq insta eq 1cor BB2 psi2} which is solved with the same fixed point method.
The result is the following.

\begin{theorem}\label{th il existe H BBmod 2}
	There exists a boundary term $ H $ in $ \mathcal{C}\big([0,T]_t,H^{\infty}(\R^{d-1}_y\times\T_{\theta_2})\big) $ such that, if $ \sigma^2_{\psi,1}, \sigma^2_{\psi,2},\sigma^2_{\nu,2} $ in $ \mathcal{C}^1\big([0,T]_t,H^{\infty}(\R^{d-1}_y\times\R^+_{x_d}\times\T_{\Theta})\big) $ and $ a^2_{\psi} $ in $ \mathcal{C}^1\big([0,T]_t,H^{\infty}(\R^{d-1}_y\times\T_{\Theta})\big) $ are solutions to the system \eqref{eq insta eq 1cor BB2}, \eqref{eq insta eq 1cor apsi evol BB2} and \eqref{eq insta eq 1cor apsi bord BB2}, then the trace $ \big(\sigma^2_{\psi,2}\big)_{|x_d=0} $ is nonzero.
\end{theorem}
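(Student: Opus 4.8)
The plan is to exhibit a suitable $H$ by following the cascade sketched after the statement, solving the coupled system \eqref{eq insta eq 1cor BB2}--\eqref{eq insta eq 1cor apsi bord BB2} through a sequence of fixed point arguments, and then controlling the trace $\big(\sigma^2_{\psi,2}\big)_{|x_d=0}$ perturbatively against the explicit expression already obtained for the first simplified model.

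First I would introduce a small parameter $\epsilon_0>0$ and take $G=\epsilon_0\,G_0$, $H=\epsilon_0\,H_0$, where $G_0\in\mathcal{C}^{\infty}\big((-\infty,T_0],H^{\infty}(\R^{d-1}\times\T)\big)$ and $H_0\in\mathcal{C}\big([0,T],H^{\infty}(\R^{d-1}\times\T)\big)$ are smooth versions of the data used in the proof of Theorem \ref{th il existe H BBmod 1}: $G_0$ nonnegative and $\geq 1$ on the box $[1/2,2]_t\times[-(h+2)\mathbf{V},(h+2)\mathbf{V}]^{d-1}_y$, and $b_{\psi}\cdot H_{0,\lambda_{\psi}}$ purely imaginary with imaginary part of the sign of $J^{\phi,3}_{\nu,2}\,J^{\phi,1}_{\psi,1}\,\lambda_{\psi}$ and $\geq 1/2$ on $[\mathbf{t},\mathbf{T}]_t\times[-h\mathbf{V},h\mathbf{V}]^{d-1}_y$. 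Since the equations \eqref{eq insta eq transport a bar phi lambda}--\eqref{eq insta eq transport sigma bar phi lambda} governing the background $\bar a^1_{\phi,\lambda},\bar\sigma^1_{\phi,j,\lambda}$ are quasilinear with quadratic nonlinearities, for $\epsilon_0$ small the background solution of Proposition \ref{prop exist sol part} satisfies $\bar\sigma^1_{\phi,j}=O(\epsilon_0)$ together with the lower bounds $\bar\sigma^1_{\phi,1,\lambda_{\phi}},\bar\sigma^1_{\phi,3,-\lambda_{\phi}}\geq c\,\epsilon_0$ on a fixed box, exactly as in the proof of Theorem \ref{th il existe H BBmod 1}.

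Next I would run the cascade. Work in the Banach scale $\mathcal{C}\big([0,T],H^{s}(\R^{d-1}_y\times\R^+_{x_d}\times\T_{\Theta})\big)$ ($s$ large) for the interior profiles and $\mathcal{C}\big([0,T],H^{s}(\R^{d-1}_y\times\T_{\Theta})\big)$ for the boundary ones; the resulting solutions lie in $H^{\infty}$ and are $\mathcal{C}^1$ in time by uniqueness in each $H^s$ and by the equations themselves. The key elementary observation is that, because $\bar\sigma^1_{\phi,j}$ is fixed and smooth, each operator $\tau\mapsto\partial_{\Theta}\,\mathbb{J}^{\zeta_2,j_2}_{\zeta_1,j_1}\big[\bar\sigma^1_{\phi,j},\tau\big]$ is bounded on every $H^s$ \emph{without loss of derivatives} — the $\Theta$-frequency factor $i\lambda$ multiplies both Fourier coefficients and is absorbed in the rapid decay of those of $\bar\sigma^1_{\phi,j}$ — with operator norm $O(\epsilon_0)$. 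Then: (i) solve the transport equation \eqref{eq insta eq 1cor apsi evol BB2} for $a^2_{\psi}$ explicitly; (ii) from the incoming equation \eqref{eq insta eq 1cor BB2 psi 1} with boundary condition \eqref{eq insta eq 1cor apsi bord BB2} express $\sigma^2_{\psi,1}$ affinely in $\sigma^2_{\nu,2}$, its $\sigma^2_{\nu,2}$-part carrying one factor $O(\epsilon_0)$; (iii) inserting this into \eqref{eq insta eq 1cor BB2 nu2} yields $\sigma^2_{\nu,2}=T(\sigma^2_{\nu,2})+(\text{affine in }\sigma^2_{\psi,2})$, where $T$ is a composition of two transport solves and two $\partial_{\Theta}\mathbb{J}$'s and hence has norm $O(\epsilon_0^2)$; the Banach fixed point theorem gives $\sigma^2_{\nu,2}$ affinely in $\sigma^2_{\psi,2}$; (iv) inserting this into the outgoing equation \eqref{eq insta eq 1cor BB2 psi2} gives a fixed point problem for $\sigma^2_{\psi,2}$ with contraction constant $O(\epsilon_0^2)$, solved the same way. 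The trace of $\sigma^2_{\psi,2}$ at $x_d=0$ needed below is obtained from a standard trace estimate for the transport solves, or by keeping $x_d$ in a bounded range.

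Finally, comparing term by term the Neumann expansions of these fixed points with the solution of the first simplified model \eqref{eq insta eq 1cor BB1}--\eqref{eq insta eq 1cor apsi bord BB1} for the same data, one sees that every coupling term distinguishing the two systems — the source $\partial_{\Theta}\mathbb{J}^{\phi,1}_{\nu,2}(\bar\sigma^1_{\phi,1},\sigma^2_{\nu,2})$ in the $\psi_1$-equation and the extra $\partial_{\Theta}\mathbb{J}^{\phi,3}_{\psi,2}(\bar\sigma^1_{\phi,3},\sigma^2_{\psi,2})$ in the $\nu_2$-equation — carries at least two further factors $O(\epsilon_0)$, so that $\big(\sigma^2_{\psi,2}\big)_{|x_d=0}$ for the second model equals its first-model analogue up to $O(\epsilon_0^{5})$. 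Redoing (or quoting from the proof of Theorem \ref{th il existe H BBmod 1}) the explicit triple-integral computation \eqref{eq insta trace sigma 2 cas part} with the scaled data gives $\big(\sigma^2_{\psi,2,\lambda_{\psi}}\big)_{|x_d=0}\geq c'\,\epsilon_0^{3}$ on a fixed box, whence $\big(\sigma^2_{\psi,2}\big)_{|x_d=0}\geq c'\,\epsilon_0^{3}-O(\epsilon_0^{5})>0$ for $\epsilon_0$ small enough, and $H=\epsilon_0\,H_0$ is the required boundary term. The main obstacle is step (iii)--(iv): finding a functional framework in which the four-fold composition defining $T$ is a genuine contraction. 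The delicate points are the absence of a small time (one needs $t\gtrsim 1$ to reach the support of the background, so smallness must come from scaling the forcing, not from $T$), the derivative loss hidden in $\partial_{\Theta}\mathbb{J}$ — circumvented by exploiting that the first argument is the \emph{fixed} smooth profile $\bar\sigma^1_{\phi,j}$ — and the trace control for the outgoing profiles; alternatively one could work in the analytic scale $\big(\X_r\big)_{r\in(0,1)}$ of Section \ref{section existence} and invoke the Cauchy--Kovalevskaya machinery of Theorem \ref{theorem Cauchy-Kovalevskaya}.
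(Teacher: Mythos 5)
Your proposal is correct in its essentials and its cascade structure — solve \eqref{eq insta eq 1cor apsi evol BB2} for $a^2_{\psi}$, use \eqref{eq insta eq 1cor BB2 psi 1}--\eqref{eq insta eq 1cor apsi bord BB2} to express $\sigma^2_{\psi,1}$ in terms of $\sigma^2_{\nu,2}$, close \eqref{eq insta eq 1cor BB2 nu2} by a fixed point, then \eqref{eq insta eq 1cor BB2 psi2} by a second fixed point, and finally compare the trace perturbatively with the first simplified model — matches the paper's. You also correctly identify the two technical ingredients: the no-loss-of-derivatives boundedness of $u\mapsto\partial_{\Theta}\mathbb{J}^{\zeta_2,j_2}_{\zeta_1,j_1}[\bar\sigma^1_{\phi,j},u]$ on $\mathcal{C}([0,T],H^s)$ (this is exactly Lemma \ref{lemma insta J per bilin } in the paper), and the fact that the estimates persist for traces.

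Where you diverge genuinely from the paper is the \emph{source of smallness} for the contraction. You worry that no small time is available because the triple-integral positivity requires $t\gtrsim 1$, and you therefore introduce a parameter $\epsilon_0$ and scale both data $G=\epsilon_0 G_0$, $H=\epsilon_0 H_0$; the background profiles then scale like $\epsilon_0$, the coupling operators have operator norm $O(\epsilon_0)$, the four-fold composition defining the fixed-point map has norm $O(\epsilon_0^2)$, the first-model leading contribution to the trace is $\sim\epsilon_0^3$, and the second-model corrections are $O(\epsilon_0^5)$, so for $\epsilon_0$ small the trace is nonzero on a fixed box. The paper instead takes the data as given and obtains the contraction by shrinking the time $T$, then extends the solution (and the boundedness estimate for $\Psi$) to the full interval by linearity of the equation, and finally closes the argument not by Neumann comparison but by the estimate $\mathbf{C}\leq(1+CT^2)\norme{(\sigma^2_{\psi,2})_{|x_d=0}}_{\mathcal{C}([0,T],L^2)}$, which bounds the trace from below by the first-model-style quantity $\mathbf{C}>0$ divided by a uniform constant. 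Your version buys a somewhat cleaner conclusion — the perturbative gap $O(\epsilon_0^5)$ vs. $O(\epsilon_0^3)$ is quantitative and does not require the paper's extension-by-linearity step nor tracking of the constant $1+CT^2$ — at the price of replacing a universal data construction by a one-parameter family. The paper's version needs no such scaling of $G$ (the nonlinear background from Proposition \ref{prop exist sol part} is used as is), but it does lean on the fact that the linear Volterra structure allows the locally obtained fixed point to propagate in time. Both arguments are sound; yours would need, in a polished write-up, a lemma showing $\bar\sigma^1_{\phi,j}=O(\epsilon_0)$ with the matching lower bound $\geq c\,\epsilon_0$ on the fixed box when $G=\epsilon_0 G_0$, which the paper's route avoids.
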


\begin{proof}
Similarly as for the first simplified model, from equation \eqref{eq insta eq 1cor apsi evol BB2}, reusing previous notation, we get
\begin{equation*}
	a^2_{\psi}(t,y)=-\int_0^t b_{\psi}\cdot \partial_{\Theta}\,H_{\lambda}\big(s,y-\mathbf{v}^{\Lop}_{\psi}\,(t-s)\big)\,ds.
\end{equation*}
Then system \eqref{eq insta eq 1cor BB2 psi 1}, \eqref{eq insta eq 1cor apsi bord BB2} seen as a transport equation with a source term depending on $ \sigma^2_{\nu,2} $, leads to
\begin{align*}
	\sigma_{\psi,1}^2(t,y,x_d)&=
	-\indicatrice_{x_d\leq t}\int_0^{t-x_d}b_{\psi}\cdot\partial_{\Theta}\, H\Big(s,y+x_d\,\big(\mathbf{v}^{\Lop}_{\psi}+\mathbf{v}_{\psi,1}'\big)-\mathbf{v}^{\Lop}_{\psi}\,(t-s)\Big)\,ds\\
	&\quad+\int_{\max(0,t-x_d)}^t\partial_{\Theta}\,\mathbb{J}^{\phi,1}_{\nu,2}\big(\bar{\sigma}^1_{\phi,1},\sigma^2_{\nu,2}\big)\big(s,x+\mathbf{v}_{\psi,1}\,(t-s)\big)\,ds.
\end{align*}
Therefore, equation \eqref{eq insta eq 1cor BB2 nu2} now reads
\begin{subequations}\label{eq insta eq 1cor nu2bis BB2}
\begin{align} \label{eq insta eq 1cor nu2bis BB2 1}
	&\color{altpurple}\quad X_{\nu,2}\,\sigma^2_{\nu,2}+\partial_{\Theta}\,\mathbb{J}^{\phi,3}_{\psi,2}\big(\bar{\sigma}^1_{\phi,3},\sigma^2_{\psi,2}\big)\\\label{eq insta eq 1cor nu2bis BB2 1 2}
	&\color{altpurple}-\partial_{\Theta}\,\mathbb{J}^{\phi,1}_{\psi,1}\left[\bar{\sigma}^1_{\phi,1},\indicatrice_{x_d\leq t}\int_0^{t-x_d}b_{\psi}\cdot\partial_{\Theta}\, H\Big(s,y+x_d\,\big(\mathbf{v}^{\Lop}_{\psi}+\mathbf{v}_{\psi,1}'\big)-\mathbf{v}^{\Lop}_{\psi}\,(t-s)\Big)\,ds\right]\\\label{eq insta eq 1cor nu2bis BB2 1 3}
	&\color{altpurple}+\partial_{\Theta}\,\mathbb{J}^{\phi,1}_{\psi,1}\left[\bar{\sigma}^1_{\phi,1},\int_{\max(0,t-x_d)}^t\partial_{\Theta}\,\mathbb{J}^{\phi,1}_{\nu,2}\big(\bar{\sigma}^1_{\phi,1},\sigma^2_{\nu,2}\big)\big(s,x+\mathbf{v}_{\psi,1}\,(t-s)\big)\,ds\right]=0.
\end{align}
\end{subequations}
This is a transport equation but with a perturbation term \eqref{eq insta eq 1cor nu2bis BB2 1 3} depending on the unknowns $ \sigma^2_{\nu,2,\lambda} $. It is solved using the following result. For $ s\in[0,+\infty) $, we denote by $ H^{s} $ the Sobolev space $ H^{s}(\R^{d-1}_y\times\R^+_{x_d}\times\T_{\Theta}) $ of regularity $ s $, and $ H^{\infty}:=\bigcap_{s\geq 0}H^s $.
%

\begin{lemma}\label{lemma solu Xu+ int u =0}
 	There exists $ T>0 $ such that, for any function $ f $ in $ \mathcal{C}([0,T],H^{\infty}) $, the transport equation
	\begin{subequations}\label{eq insta eq type A + epsilon B}
	\begin{align}\nonumber
	X_{\nu,2}\,\sigma^2_{\nu,2}&\\\label{eq insta eq type A + epsilon B evol}
	+\partial_{\Theta}\,\mathbb{J}^{\phi,1}_{\psi,1}\Big[\bar{\sigma}^1_{\phi,1},\int_{\max(0,t-x_d)}^t\partial_{\Theta}\,\mathbb{J}^{\phi,1}_{\nu,2}\big(\bar{\sigma}^1_{\phi,1},\sigma^2_{\nu,2}\big)\big(s,x+\mathbf{v}_{\psi,1}\,(t-s)\big)\,ds\Big]&=f(t,x),\\\label{eq insta eq type A + epsilon B init}
		\big(\sigma^2_{\nu,2}\big)_{|t=0}&=0,
	\end{align}
	\end{subequations}
admits a unique solution $ \sigma^2_{\nu,2} $ in $ \mathcal{C}^1([0,T],H^{\infty}) $. 

If, for $ f $ in $ \mathcal{C}([0,T],H^{\infty}) $ we denote by $ \Psi f $ the solution $ \sigma^2_{\nu,2} $ of \eqref{eq insta eq type A + epsilon B} in $ \mathcal{C}^1([0,T],H^{\infty}) $, then, for any $ s\geq 0 $, 
there exists $ C_s>0 $ such that for $ f $ in $ \mathcal{C}([0,T],H^{s}) $, we have
\begin{equation}\label{eq insta est Psi f}
	\norme{\Psi f}_{\mathcal{C}([0,T_s],H^s)}\leq  C_s\,T\norme{f}_{\mathcal{C}([0,T_s],H^s)}.
\end{equation}
\end{lemma}

Before proving Lemma \ref{lemma solu Xu+ int u =0}, we prove the following preliminary result, asserting that the operators $ u\mapsto \partial_{\Theta}\,\mathbb{J}^{\phi,j}_{\zeta,k}\big[\bar{\sigma}^1_{\phi,j},u\big] $  for $ (j,\zeta,k)\in\ensemble{(1,\psi,1),(1,\nu,2),(3,\nu,2),(3,\psi,2)} $ are bounded from $ \mathcal{C}([0,T],H^s) $ to itself, for $ s\in[0,+\infty) $.

\begin{lemma}\label{lemma insta J per bilin }
	For $ s\in[0,+\infty) $ and $ \bar{\sigma}^1_{\phi,1}\in\mathcal{C}([0,T],H^{\infty}) $, there exists $ C_s>0 $ such that for $ u $ in $ \mathcal{C}([0,T],H^s) $, functions $ \partial_{\Theta}\,\mathbb{J}^{\phi,j}_{\zeta,k}\big[\bar{\sigma}^1_{\phi,j},u\big] $  for $ (j,\zeta,k)\in\ensemble{(1,\psi,1),(1,\nu,2),(3,\nu,2),(3,\psi,2)} $ belong to $ \mathcal{C}([0,T],H^s) $ and satisfy 
		\begin{equation}\label{eq insta est J per bilin}
		\norme{\partial_{\Theta}\,\mathbb{J}^{\phi,j}_{\zeta,k}\big[\bar{\sigma}^1_{\phi,1},u\big]}_{\mathcal{C}([0,T],H^s)}\leq C_s \norme{u}_{\mathcal{C}([0,T],H^s)}.
	\end{equation}
\end{lemma}

\begin{proof}
	We make the proof for the operator $ u\mapsto \partial_{\Theta}\,\mathbb{J}^{\phi,1}_{\psi,1}\big[\bar{\sigma}^1_{\phi,1},u\big] $, namely, $ (j,\zeta,k)=(1,\psi,1) $. According to the expression \eqref{eq ana def J} of the operator $ \mathbb{J}^{\phi,1}_{\psi,1} $, we want to estimate, for $ t\in[0,T] $, the following quantity:
	\begin{equation*}
		\norme{\partial_{\Theta}\,\mathbb{J}^{\phi,1}_{\psi,1}\big[\bar{\sigma}^1_{\phi,1},u\big](t)}_{H^s}=\norme{(x,\Theta)\mapsto J_{\zeta_1,j_1}^{\zeta_2,j_2}\sum_{\lambda\in\Z^*}\lambda\,\bar{\sigma}^1_{\phi,1,\lambda}(t,x)\,u_{\lambda}(t,x)\,e^{i\lambda\Theta}}_{H^s}.
	\end{equation*}
Using the same proof as the one of estimate \eqref{eq ana inter 5}, we get
\begin{align*}
	&\norme{\partial_{\Theta}\,\mathbb{J}^{\phi,1}_{\psi,1}\big[\bar{\sigma}^1_{\phi,1},u\big](t)}_{H^s}\\
	&\qquad\leq\norme{(x,\Theta)\mapsto J_{\zeta_1,j_1}^{\zeta_2,j_2}\sum_{\lambda\in\Z^*}\lambda\,\bar{\sigma}^1_{\phi,1,\lambda}(t,x)\,e^{i\lambda\Theta}}_{H^s}\norme{(x,\Theta)\mapsto \sum_{\lambda\in\Z^*}u_{\lambda}(t,x)\,e^{i\lambda\Theta}}_{H^s}\\
	&\qquad\leq C \norme{\bar{\sigma}^1_{\phi,1}(t)}_{H^{s+1}}\norme{u(t)}_{H^s},
\end{align*}
which, taking the supremum in $ t\in[0,T] $, leads to the sought estimate \eqref{eq insta est J per bilin}, since $ \bar{\sigma}^1_{\phi,1} $ is in $ \mathcal{C}([0,T],H^{s+1}) $ according to Proposition \ref{prop exist sol part}. This is sufficient since we do not seek here for a tame estimate. 
\end{proof}


\begin{proof}[Proof \emph{(Lemma \ref{lemma solu Xu+ int u =0})}]
	From now on we fix an integer $ s\geq 0 $, and the aim is to use the Banach fixed point theorem in the Banach space $ \mathcal{C}([0,T],H^s) $. For $ v $ in $ \mathcal{C}([0,T],H^s) $, we denote by $ \Phi\,v $ the solution in $ \mathcal{C}([0,T],H^s) $ of 
	\begin{equation*}
		\begin{cases}
			X_{\nu,2}\,u+v=0,\\
			u_{|t=0}=0,
		\end{cases}
	\end{equation*}
which is therefore given by
\begin{equation*}\label{eq insta expr Phi}
	\Phi\,v(t,x,\Theta)=-\int_0^tv\big(s,x+\mathbf{v}_{\nu,2}\,(t-s),\Theta\big)\,ds.
\end{equation*}
Note that $ \Phi $ is continuous from $ \mathcal{C}([0,T],H^s) $ to itself, and satisfies  \begin{equation}\label{eq insta est Phi}
	\norme{\Phi\,v}_{\mathcal{C}([0,T],H^s)}\leq T \norme{v}_{\mathcal{C}([0,T],H^s)}.
\end{equation} 
Now, if $ \sigma^2_{\nu,2} $ is a solution to \eqref{eq insta eq type A + epsilon B}, it is in this notation a fixed point of the map $ \mathbf{F} $, where, for $ w $ in $ \mathcal{C}([0,T],H^{s}) $, $ \mathbf{F}(w) $ is defined by 
\begin{align*}
		\mathbf{F} (w) : (t,x,\Theta) 
		\mapsto
		\Phi\,\partial_{\Theta}\,\mathbb{J}^{\phi,1}_{\psi,1}\Big[\bar{\sigma}^1_{\phi,1},
		\int_{\max(0,t-x_d)}^t\partial_{\Theta}\,\mathbb{J}^{\phi,1}_{\nu,2}\big(\bar{\sigma}^1_{\phi,1},w\big)\big(s,x+\mathbf{v}_{\psi,1}\,(t-s),\Theta\big)\,ds\Big](t,x,\Theta)\\
		-\Phi\,f(t,x,\Theta).
\end{align*}
We derive now an estimate on the difference $ \mathbf{F}(w) - \mathbf{F}(w') $ for $ w,w' $ in $ \mathcal{C}([0,T],H^s) $. By linearity of the operators, we have
\begin{equation*}
	\mathbf{F}(w) - \mathbf{F}(w') =\Phi\,\partial_{\Theta}\,\mathbb{J}^{\phi,1}_{\psi,1}\Big[\bar{\sigma}^1_{\phi,1},\int_{\max(0,t-x_d)}^t\partial_{\Theta}\,\mathbb{J}^{\phi,1}_{\nu,2}\big(\bar{\sigma}^1_{\phi,1},w-w'\big)\big(s,x+\mathbf{v}_{\psi,1}\,(t-s),\Theta\big)\,ds\Big].
\end{equation*}
Therefore, according to estimates \eqref{eq insta est Phi} and \eqref{eq insta est J per bilin}, we have
\begin{align*}
	&\norme{\mathbf{F}(w) - \mathbf{F}(w')}_{\mathcal{C}([0,T],H^s)}\\
	&\qquad\leq C_s\,T\,\norme{\int_{\max(0,t-x_d)}^t\partial_{\Theta}\,\mathbb{J}^{\phi,1}_{\nu,2}\big(\bar{\sigma}^1_{\phi,1},w-w'\big)\big(s,x+\mathbf{v}_{\psi,1}\,(t-s),\Theta\big)\,ds}_{\mathcal{C}([0,T],H^s)}\\
	&\qquad\leq C_s\,T^2\,\norme{\partial_{\Theta}\,\mathbb{J}^{\phi,1}_{\nu,2}\big(\bar{\sigma}^1_{\phi,1},w-w'\big)}_{\mathcal{C}([0,T],H^s)}\\
	&\qquad \leq C_s^2\,T^2\norme{w-w'}_{\mathcal{C}([0,T],H^s)}.
\end{align*}
Therefore, for $ T>0 $ small enough, $ \mathbf{F} $ is a contraction of $ \mathcal{C}([0,T],H^s) $, and the Banach fixed point theorem therefore gives a unique solution $ \Psi f $ in $ \mathcal{C}([0,T],H^s) $. By linearity  of system \eqref{eq insta eq type A + epsilon B}, the solution $ \Psi f $ may be extended to any time interval, so the time of existence $ T $ does not depend on the regularity $ s\in[0,+\infty) $. Finally, using equation \eqref{eq insta eq type A + epsilon B evol}, we obtain
\begin{multline}\label{eq insta expr dt Psi}
	\partial_t\,\Psi f(t,x,\Theta)=f(t,x,\Theta)+\mathbf{v}_{\nu,2}\cdot\nabla_x\,\Psi f(t,x,\Theta)\\
	-\partial_{\Theta}\,\mathbb{J}^{\phi,1}_{\psi,1}\Big[\bar{\sigma}^1_{\phi,1},\int_{\max(0,t-x_d)}^t\partial_{\Theta}\,\mathbb{J}^{\phi,1}_{\nu,2}\big(\bar{\sigma}^1_{\phi,1},\Psi f\big)\big(s,x+\mathbf{v}_{\psi,1}\,(t-s),\Theta\big)\,ds\Big](t,x,\Theta)
\end{multline}
so $ \partial_t \Psi f $ belongs to $ \mathcal{C}([0,T],H^{\infty}) $ and therefore $ \Psi f $ is actually in $ \mathcal{C}^1([0,T],H^{\infty}) $.
We have proven the first part of Lemma \ref{lemma solu Xu+ int u =0}.

The interest is now made on the boundedness of $ \Psi $.
We have, since $ \Psi f = \mathbf{F}(\Psi f) $,
\begin{align*}
	\Psi f(t,x,\Theta)
	=\Phi\,\partial_{\Theta}\,\mathbb{J}^{\phi,1}_{\psi,1}\Big[\bar{\sigma}^1_{\phi,1},\int_{\max(0,t-x_d)}^t\partial_{\Theta}\,\mathbb{J}^{\phi,1}_{\nu,2}\big(\bar{\sigma}^1_{\phi,1},\Psi f\big)\big(s,x+\mathbf{v}_{\psi,1}\,(t-s),\Theta\big)\,ds\Big](t,x,\Theta)\\
	-\Phi\,f(t,x,\Theta),
\end{align*}
and therefore, using estimates \eqref{eq insta est Phi} and \eqref{eq insta est J per bilin}, we have, for $ s\in[0,+\infty) $,
\begin{align*}
\norme{\Psi f}_{\mathcal{C}([0,T],H^s)}&\leq C_s\,T\,\norme{\int_{\max(0,t-x_d)}^t\partial_{\Theta}\,\mathbb{J}^{\phi,1}_{\nu,2}\big(\bar{\sigma}^1_{\phi,1},\Psi f\big)\big(s,x+\mathbf{v}_{\psi,1}\,(t-s),\Theta\big)\,ds}_{\mathcal{C}([0,T],H^s)}\\
&\quad+T\norme{f}_{\mathcal{C}([0,T],H^s)}\\
&\leq C_s^2\, T^2\norme{\Psi f}_{\mathcal{C}([0,T],H^s)}+T\norme{f}_{\mathcal{C}([0,T],H^s)}.
\end{align*}
Thus, for $ T $ small enough (depending on $ s\in[0,+\infty) $), we have
\begin{equation}\label{eq insta inter 1}
	\norme{\Psi f}_{\mathcal{C}([0,T],H^s)}\leq C\,T\norme{f}_{\mathcal{C}([0,T],H^s)}.
\end{equation}
Once again, by linearity of system \eqref{eq insta eq type A + epsilon B}, the estimate \eqref{eq insta inter 1} is propagated to the whole interval $ [0,T] $, which concludes the proof.
\end{proof}

Returning to \eqref{eq insta eq 1cor nu2bis BB2} and using Lemma \ref{lemma solu Xu+ int u =0}, by linearity, the solution $ \sigma^2_{\nu,2} $ of equation \eqref{eq insta eq 1cor nu2bis BB2} reads
 \begin{multline}\label{eq insta expr sigma nu 2}
 	\sigma^2_{\nu,2}=\Psi\,\partial_{\Theta}\,\mathbb{J}^{\phi,1}_{\psi,1}\left(\bar{\sigma}^1_{\phi,1},\indicatrice_{x_d\leq t}\int_0^{t-x_d}b_{\psi}\cdot\partial_{\Theta}\, H\Big(s,y+x_d\,\big(\mathbf{v}^{\Lop}_{\psi}+\mathbf{v}_{\psi,1}'\big)-\mathbf{v}^{\Lop}_{\psi}\,(t-s),\Theta\Big)\,ds\right)\\[5pt]
 	\quad-\Psi\,\partial_{\Theta}\,\mathbb{J}^{\phi,3}_{\psi,2}\big(\bar{\sigma}^1_{\phi,3},\sigma^2_{\psi,2}\big).
 \end{multline}

\bigskip

We proceed now with equation \eqref{eq insta eq 1cor BB2 psi2} which now reads, according to the expression \eqref{eq insta expr sigma nu 2} of $ \sigma^2_{\nu,2} $,
\begin{align}\label{eq insta eq type A + epsilon B 2}
		&\color{altpink} \quad X_{\psi,2}\,\sigma^2_{\psi,2}-\partial_{\Theta}\,\mathbb{J}^{\phi,3}_{\nu,2}\Big[\bar{\sigma}^1_{\phi,3},\Psi\,\partial_{\Theta}\,\mathbb{J}^{\phi,3}_{\psi,2}\big(\bar{\sigma}^1_{\phi,3},\sigma^2_{\psi,2}\big)\Big]\\\nonumber
	&\color{altpink} 	=-\partial_{\Theta}\,\mathbb{J}^{\phi,3}_{\nu,2}\left[\bar{\sigma}^1_{\phi,3},\Psi\,\partial_{\Theta}\,\mathbb{J}^{\phi,1}_{\psi,1}\left(\bar{\sigma}^1_{\phi,1},\vphantom{\int_0^{t-x_d} }\right.\right.\\\nonumber
	&\color{altpink} \hspace{50pt}\left.\left.\indicatrice_{x_d\leq t}\int_0^{t-x_d}b_{\psi}\cdot\partial_{\Theta}\, H\Big(s,y+x_d\,\big(\mathbf{v}^{\Lop}_{\psi}+\mathbf{v}_{\psi,1}'\big)-\mathbf{v}^{\Lop}_{\psi}\,(t-s)\Big)\,ds\right)\right].
\end{align}
This equation is solved using the same method as the one of Lemma \ref{lemma solu Xu+ int u =0}.
For $ v $ in $ \mathcal{C}([0,T],H^{\infty}) $, we still denote by $ \Phi\,v $ the solution in $ \mathcal{C}([0,T],H^{\infty}) $ of 
\begin{equation*}
	\begin{cases}
		X_{\psi,2}\,u+v=0,\\
		u_{|t=0}=0,
	\end{cases}
\end{equation*} 
and we recall that it satisfies, for $ s\in[0,+\infty) $, 
\begin{equation}\label{eq insta est Phi 2}
	\norme{\Phi\,v}_{\mathcal{C}([0,T],H^s)}\leq T \norme{v}_{\mathcal{C}([0,T],H^s)}.
\end{equation}
Now, $ \sigma^2_{\psi,2} $ is a solution to \eqref{eq insta eq type A + epsilon B 2} if and only if it is  a fixed point of the map $ \mathbf{F} $ of $ \mathcal{C}([0,T],H^{\infty}) $ given by 
\begin{multline*}
	\mathbf{F} : w \mapsto
	-\Phi\,\partial_{\Theta}\,\mathbb{J}^{\phi,3}_{\nu,2}\Big[\bar{\sigma}^1_{\phi,3},\Psi\,\partial_{\Theta}\,\mathbb{J}^{\phi,3}_{\psi,2}\big(\bar{\sigma}^1_{\phi,3},w\big)\Big]
		+\Phi\,\partial_{\Theta}\,\mathbb{J}^{\phi,3}_{\nu,2}\left[\bar{\sigma}^1_{\phi,3},\Psi\,\partial_{\Theta}\,\mathbb{J}^{\phi,1}_{\psi,1}\left(\bar{\sigma}^1_{\phi,1},\vphantom{\int_0^{t-x_d} }\right.\right.\\\nonumber
	\left.\left.\indicatrice_{x_d\leq t}\int_0^{t-x_d}b_{\psi}\cdot\partial_{\Theta}\, H\Big(s,y+x_d\,\big(\mathbf{v}^{\Lop}_{\psi}+\mathbf{v}_{\psi,1}'\big)-\mathbf{v}^{\Lop}_{\psi}\,(t-s),\Theta\Big)\,ds\right)\right].
\end{multline*}
For $ w,w' $ in $ \mathcal{C}([0,T],H^{\infty}) $, the difference $ \mathbf{F}(w)-\mathbf{F}(w') $ is given by
\begin{equation*}
	\mathbf{F}(w)-\mathbf{F}(w')=\Phi\,\partial_{\Theta}\,\mathbb{J}^{\phi,3}_{\nu,2}\Big[\bar{\sigma}^1_{\phi,3},\Psi\,\partial_{\Theta}\,\mathbb{J}^{\phi,3}_{\psi,2}\big(\bar{\sigma}^1_{\phi,3},w'-w\big)\Big].
\end{equation*}
Therefore, for $ s\in[0,+\infty) $, according to estimates \eqref{eq insta est J per bilin} and \eqref{eq insta est Psi f}, we have
\begin{align*}
	\norme{\mathbf{F}(w)-\mathbf{F}(w')}_{\mathcal{C}([0,T],H^s)}&\leq C_s\,T\norme{\Psi\,\partial_{\Theta}\,\mathbb{J}^{\phi,3}_{\psi,2}\big(\bar{\sigma}^1_{\phi,3},w'-w\big)}_{\mathcal{C}([0,T],H^s)}\\
	&\leq C_s \, T^2 \norme{w-w'}_{\mathcal{C}([0,T],H^s)},
\end{align*}
so, for $ T>0 $ small enough, $ \mathbf{F} $ is a contraction of $ \mathcal{C}([0,T],H^{s}) $.
The Banach fixed point theorem therefore gives a unique solution $ \sigma^2_{\psi,2} $ to \eqref{eq insta eq type A + epsilon B 2} in $ \mathcal{C}([0,T],H^{s}) $ for $ s\in[0,+\infty) $, that reads, 
\begin{align*}
	\sigma^2_{\psi,2}(t,x)&=
	\Phi\,\mathbb{J}^{\phi,3}_{\nu,2}\Big[\bar{\sigma}^1_{\phi,3},\Psi\,\partial_{\Theta}\,\mathbb{J}^{\phi,3}_{\psi,2}\big(\bar{\sigma}^1_{\phi,3},\sigma^2_{\psi,2}\big)\Big]\\
	&-\Phi\,\mathbb{J}^{\phi,3}_{\nu,2}\left[\bar{\sigma}^1_{\phi,3},\Psi\,\partial_{\Theta}\,\mathbb{J}^{\phi,1}_{\psi,1}\left(\bar{\sigma}^1_{\phi,1},\vphantom{\int_0^{t-x_d} }\right.\right.\\\nonumber
	&\hspace{50pt}\left.\left.\indicatrice_{x_d\leq t}\int_0^{t-x_d}b_{\psi}\cdot\partial_{\Theta}\, H\Big(s,y+x_d\,\big(\mathbf{v}^{\Lop}_{\psi}+\mathbf{v}_{\psi,1}'\big)-\mathbf{v}^{\Lop}_{\psi}\,(t-s),\Theta\Big)\,ds\right)\right].
\end{align*}
Therefore, according to the expression of $ \Phi $ and taking the trace in $ x_d=0 $, we have
\begin{align}\label{eq insta inter 2}
	&\quad\big(\sigma^2_{\psi,2}\big)_{|x_d=0}(t,y)-
	\Phi\,\mathbb{J}^{\phi,3}_{\nu,2}\Big[\bar{\sigma}^1_{\phi,3},\Psi\,\partial_{\Theta}\,\mathbb{J}^{\phi,3}_{\psi,2}\big(\bar{\sigma}^1_{\phi,3},\sigma^2_{\psi,2}\big)\Big]_{|x_d=0}(t,y)\\\nonumber
	&=\int_0^t\partial_{\Theta}\,\mathbb{J}^{\phi,3}_{\nu,2}\left[\bar{\sigma}^1_{\phi,3},\Psi\,\partial_{\Theta}\,\mathbb{J}^{\phi,1}_{\psi,1}\left(\bar{\sigma}^1_{\phi,1},\vphantom{\int_0^{t-x_d} }\int_0^{t}b_{\psi}\cdot\partial_{\Theta}\, H\Big(s,y-\mathbf{v}^{\Lop}_{\psi}\,(t-s)\Big)\,ds\right)\right]\\\nonumber
	&\quad\big(s,y+\mathbf{v}_{\psi,2}\,(t-s)\big)\,ds.
\end{align}
Using similar arguments as the one of Theorem \ref{th il existe H BBmod 1}, we can construct profiles $ \bar{\sigma}^1_{\phi,1} $ and $ \bar{\sigma}^1_{\phi,3} $ and choose a boundary term $ H $ such that the right-hand side of equation \eqref{eq insta inter 2} is nonzero. Therefore, we obtain
\begin{equation}
	\mathbf{C}:=\norme{\big(\sigma^2_{\psi,2}\big)_{|x_d=0}-
		\Phi\,\mathbb{J}^{\phi,3}_{\nu,2}\Big[\bar{\sigma}^1_{\phi,3},\Psi\,\partial_{\Theta}\,\mathbb{J}^{\phi,3}_{\psi,2}\big(\bar{\sigma}^1_{\phi,3},\sigma^2_{\psi,2}\big)\Big]_{|x_d=0}}_{\mathcal{C}([0,T],L^2(\R^{d-1}\times\T))}>0.
\end{equation}
Now note that, using the exact same arguments as the one used to prove estimates \eqref{eq insta est Psi f}, \eqref{eq insta est J per bilin} and \eqref{eq insta est Phi 2}, we can prove that theses estimates \eqref{eq insta est Psi f}, \eqref{eq insta est J per bilin} and \eqref{eq insta est Phi 2} still hold for the traces, in $ \mathcal{C}([0,T],L^2(\R^{d-1}\times\T)) $. Therefore, we get
\begin{align*}
	\mathbf{C}&\leq \norme{\big(\sigma^2_{\psi,2}\big)_{|x_d=0}}_{\mathcal{C}([0,T],L^2)}+\norme{\Phi\,\mathbb{J}^{\phi,3}_{\nu,2}\Big[\bar{\sigma}^1_{\phi,3},\Psi\,\partial_{\Theta}\,\mathbb{J}^{\phi,3}_{\psi,2}\big(\bar{\sigma}^1_{\phi,3},\sigma^2_{\psi,2}\big)\Big]_{|x_d=0}}_{\mathcal{C}([0,T],L^2)}\\
	&\leq \norme{\big(\sigma^2_{\psi,2}\big)_{|x_d=0}}_{\mathcal{C}([0,T],L^2)}+C\,T\norme{\Psi\,\partial_{\Theta}\,\mathbb{J}^{\phi,3}_{\psi,2}\big(\bar{\sigma}^1_{\phi,3},\sigma^2_{\psi,2}\big)_{|x_d=0}}_{\mathcal{C}([0,T],L^2)}\\
	&\leq \norme{\big(\sigma^2_{\psi,2}\big)_{|x_d=0}}_{\mathcal{C}([0,T],L^2)}+C\,T^2\norme{\big(\sigma^2_{\psi,2}\big)_{|x_d=0}}_{\mathcal{C}([0,T],L^2)}\\
	&= \big(1+C\,T^2\big)\norme{\big(\sigma^2_{\psi,2}\big)_{|x_d=0}}_{\mathcal{C}([0,T],L^2)}.
\end{align*}
In conclusion, we obtain that the norm $ \norme{\big(\sigma^2_{\psi,2}\big)_{|x_d=0}}_{\mathcal{C}([0,T],L^2(\R^{d-1)\times\T}))} $ is positive, so, for $ T $ sufficiently small, the trace $ \big(\sigma^2_{\psi,2}\big)_{|x_d=0} $ is nonzero, which is the sought result, concluding the proof.
\end{proof}

In the same manner as for the first simplified model, Theorem \ref{th il existe H BBmod 2} proves that an instability is created for the simplified model \eqref{eq insta eq 1cor BB2}, \eqref{eq insta eq 1cor apsi evol BB2} and \eqref{eq insta eq 1cor apsi bord BB2}.

\bigskip

Once again, as in section \ref{section existence}, it is conceivable to consider a more complex simplified model than  \eqref{eq insta eq 1cor BB2}, \eqref{eq insta eq 1cor apsi evol BB2} and \eqref{eq insta eq 1cor apsi bord BB2}, by integrating, in the equations \eqref{eq insta eq 1cor BB2}, coupling terms with profiles $ \sigma_{\zeta,j} $ with $ \zeta\neq \phi,\psi,\nu $. What seems to be a further step is to add, in equation \eqref{eq insta eq 1cor apsi evol BB2}, terms involving the traces of interior profiles. Among these terms, the one that seems to raise the most difficult issue is $ i\lambda\,b_{\psi}\cdot B\,\big(U^{3,\osc}_{\psi,2,\lambda}\big)_{|x_d,\chi_d=0} $, since it couples equation \eqref{eq insta eq 1cor apsi evol BB2} on the first corrector with the second corrector $ U^3 $.

\section{The example of gas dynamics}\label{section Euler}

We study here the example of three dimensional compressible isentropic Euler equations. The aim is to determine whether or not the configuration of frequencies considered in this work can happen for this system. For $ \mathcal{C}^1 $ solutions, away from vacuum, the equations read
\begin{equation}\label{eq Euler systeme}
	\left\lbrace \begin{array}{lr}
		\partial_tV^{\epsilon}+A_1(V^{\epsilon})\,\partial_1V^{\epsilon}+A_2(V^{\epsilon})\,\partial_2V^{\epsilon}+A_3(V^{\epsilon})\,\partial_3V^{\epsilon}=0&\qquad \mbox{in } \Omega_T, \\[5pt]
		B\,V^{\epsilon}_{|x_3=0}=\epsilon^2\, g^{\epsilon}+\epsilon^M\,h^{\epsilon}&\qquad \mbox{on } \omega_T,  \\[10pt]
		V^{\epsilon}_{|t=0}=0,&
	\end{array}
	\right.
\end{equation}
with $ V^{\epsilon}=(v^{\epsilon},\mathbf{u}^{\epsilon})\in\R^*_+\times\R^3 $, where $ v^{\epsilon}\in\R^*_+ $ represents the fluid volume, and $ \mathbf{u}^{\epsilon}\in\R^3 $ its velocity, and where the functions $ A_j $, $ j=1,2,3 $ are defined on $ \R^*_+\times\R^3 $ as
\begin{equation}
	A_j(V):=\left(\begin{array}{cc}
		\mathbf{u}_j & -v \,^te_j\\[5pt]
		-c(v)^2/v\,e_j  & \mathbf{u}_j\,I_3 
	\end{array}\right)\in\mathcal{M}_4(\R),
\end{equation}
where $ e_j $ is the $ j $-th vector of the canonical basis,
with $ c(v)>0 $ representing the sound velocity in the fluid, depending on its volume $ v $. We study here a perturbation of this system around the equilibrium $ V_0:=(v_0,0,0,u_0) $ with $ v_0>0 $ a fixed volume and $ (0,0,u_0) $ an incoming subsonic velocity, that is, such that $ 0<u_0<c(v_0) $. We denote by $ c_0:=c(v_0) $ the sound velocity in a fluid of the fixed volume $ v_0 $.

In order to study the possibility of existence of a configuration of frequencies satisfying Assumption \ref{hypothese ensemble frequences}, we need to determine a matrix $ B $ satisfying Assumption \ref{hypothese weak lopatinskii}. For which we need to know the dimension of the stable subspace $ E_-(\zeta) $, and construct a basis of it.

\bigskip

Although it will not be used in this part, we derive the expression of various quantities related to hyperbolicity of the Euler system. For $ (\eta,\xi)\in\R^2\times\R $, the matrix $ A(\eta,\xi):=\eta_1\,A_1(V_0)+\eta_2\,A_2(V_0)+\xi\,A_3(V_0) $ associated with the system \eqref{eq Euler systeme} is given by
\begin{equation*}
	A(\eta,\xi)=\left(\begin{array}{ccc}
		u_0 \,\xi & -v_0 \,^t\eta & -v_0\,\xi\\[5pt]
		-c_0^2/v_0\,\eta   & u_0 \,\xi\,I_2  &0 \\[5pt]
		-c_0^2/v_0\,\xi  &  0 & u_0 \,\xi
	\end{array}\right),
\end{equation*}
so the polynomial $ p $ defined as $ 
	p(\tau,\eta,\xi):=\det\big(\tau\,I_4+A(\eta,\xi)\big) $ reads
\begin{equation*}
	p(\tau,\eta,\xi)=(\tau+\xi\,u_0)^2\big((\tau+\xi\,u_0)^2-c_0^2(|\eta|^2+\xi^2)\big).
\end{equation*}
Thus the matrix $ A(\eta,\xi) $ admits a double eigenvalue $ -\tau_2(\eta,\xi) $ and two simple eigenvalues $ -\tau_1(\eta,\xi) $ and $ -\tau_3(\eta,\xi) $ given by
\begin{equation*}
	\tau_1(\eta,\xi)=-\xi\,u_0-c_0\sqrt{|\eta|^2+\xi^2},\quad \tau_2(\eta,\xi)=-\xi\,u_0,\quad \tau_3(\eta,\xi)=-\xi\,u_0+c_0\sqrt{|\eta|^2+\xi^2}.
\end{equation*}
Note that since  $ -\tau_2(\eta,\xi) $ is a double eigenvalue, the Euler system is not strictly hyperbolic, but hyperbolic with constant multiplicity. Despite this difference with Assumption \ref{hypothese stricte hyp}, we study this system since Assumption \ref{hypothese stricte hyp} seems to be a technical assumption.

Now to determine the expression of the stable subspace $ E_-(\zeta) $, we need to study the eigenvalues of the matrix 
\begin{equation*}
	\mathcal{A}(\tau,\eta):=-i\,A_3(V_0)^{-1}\Big(\tau I+\eta_1\, A_1(V_0)+\eta_2\,A_2(V_0)\Big).
\end{equation*} 
We determine that in this case, the hyperbolic region\footnote{That is, the region where $ \mathcal{A}(\tau,\eta) $ has only pure imaginary eigenvalues and is diagonalizable.} is given by 
\begin{equation*}
	 \mathcal{H}:=\ensemble{\big(\tau,\eta\big)\,\middle|\,|\tau|>\sqrt{c_0^2-u_0^2}\,|\eta|}.
\end{equation*}
Then, for $ (\tau,\eta) $ in $ \mathcal{H} $, the eigenvalues of $ \mathcal{A}(\tau,\eta) $ are given by
\begin{subequations}\label{eq ex Euler v.p. hyp}
\begin{align}
	i\,\xi_1(\tau,\eta)&:=i\,\frac{\tau\, u_0-\sign(\tau)\,c_0\,\sqrt{\tau^2-|\eta|^2\,(c_0^2-u_0^2)}}{c_0^2-u_0^2},\\
	i\,\xi_2(\tau,\eta)&:=i\,\frac{\tau \,u_0+\sign(\tau)\,c_0\,\sqrt{\tau^2-|\eta|^2\,(c_0^2-u_0^2)}}{c_0^2-u_0^2},\\\label{eq freq def xi 3}
	i\,\xi_3(\tau,\eta)&:=i\,\frac{-\tau}{u_0},
\end{align}
\end{subequations}
where $ \sign(x):=x/|x| $ for $ x\neq0 $. The eigenvalue $ i\,\xi_3 $ is double, when the two others are simple. We determine that, if we denote $ \alpha_j(\tau,\eta):=\big(\tau,\eta,\xi_j(\tau,\eta)\big) $, the frequency $ \alpha_2(\tau,\eta) $ is outgoing when frequencies $ \alpha_1(\tau,\eta) $ and $ \alpha_3(\tau,\eta) $ are incoming. Since $ i\,\xi_3(\tau,\eta) $ is a double eigenvalue, the dimension $ p $ of the stable subspace $ E_-(\zeta) $ is therefore equal to $ 3 $. This could also have been determined by the number of positive eigenvalues of $ A_3(V_0) $.

The interest is now made on a basis for $ E_-(\zeta) $, which, according to \eqref{eq decomp E_-(zeta)}, can be constructed with eigenvectors of $ \mathcal{A}(\zeta) $ associated with incoming frequencies. We determine that the eigenvectors associated with eigenvalues $ i\,\xi_1(\zeta) $ and $ i\,\xi_3(\zeta) $ are respectively given by
\begin{equation*}
	\lambda\begin{pmatrix}
		|(\eta,\xi_2(\tau,\eta))|\,v_0 \\ 
		c_0 \,\eta \\
		c_0\,\xi_2(\tau,\eta)
	\end{pmatrix},\quad \begin{pmatrix}
		0 \\ \mathbf{a}
	\end{pmatrix},
\end{equation*}
with $ \lambda\in\R $ and where $ \mathbf{a} $ is any vector satisfying $ \mathbf{a}\cdot\big(\eta,\xi_3(\tau,\eta)\big)=0 $. For $ \mathbf{a} $ we can choose for example the two linearly independent vectors $ (\tau\,\eta,|\eta|^2\,u_0) $ and $ (c_0\,\eta_2,-c_0\,\eta_1,0) $ to obtain the following basis of the stable subspace $ E_-(\zeta) $:
\begin{equation*}
r_1(\zeta):=	\begin{pmatrix}
	|(\eta,\xi_2(\tau,\eta))|\,v_0 \\ 
	c_0 \,\eta \\
	c_0\,\xi_2(\tau,\eta)
\end{pmatrix},\quad r^1_3(\zeta):=\begin{pmatrix}
0 \\ \tau\,\eta \\ |\eta|^2\,u_0
\end{pmatrix},\quad r^2_3(\zeta):=\begin{pmatrix}
0 \\ c_0\,\eta_2 \\ -c_0\,\eta_1 \\ 0
\end{pmatrix},
\end{equation*}

We look now for a matrix $ B $, of size $ 3\times 4 $, satisfying the weak Kreiss-Lopatinskii condition \ref{hypothese weak lopatinskii}. More precisely, we want a matrix $ B $ such that $ \ker B \cap E_-(\zeta) $ is nonzero on the specific frequency $ \tau=c_0\,|\eta| $. Note that here we make a restrictive choice, about the locus where $ \ker B \cap E_-(\zeta) $ should be nontrivial. This choice is made since it makes the following computations easier. Since every quantity is homogeneous of degree 1, we can make the computations for $ |\eta|=1 $. For $ \tau=c_0\,|\eta| $ we have $ \xi_2(\tau,\eta)=0 $,
so, denoting $ \eta=(\cos\theta,\sin\theta) $, basis $ \ensemble{r_1(\zeta),r_3^1(\zeta),r_3^2(\zeta)} $ reads
\begin{equation*}
	r_1(\zeta)=	\begin{pmatrix}
		v_0 \\ 
		c_0 \,\cos\theta\\
		c_0 \,\sin\theta\\
		0
	\end{pmatrix},\quad r^1_3(\zeta)=\begin{pmatrix}
		0 \\  c_0 \,\cos\theta\\
		c_0 \,\sin\theta
		\\ u_0
	\end{pmatrix},\quad r^2_3(\zeta)=\begin{pmatrix}
	0 \\  c_0 \,\sin\theta\\
	-c_0 \,\cos\theta\\0
\end{pmatrix}.
\end{equation*}
The condition that $ \ker B \cap E_-(\zeta) $ is trivial is equivalent to the three vectors $ B\,r_1(\zeta), B\,r_3^1(\zeta),\linebreak B\,r_3^2(\zeta) $ being linearly dependent. To study this condition, we write $ B $ in column as
\begin{equation*}
	B=\begin{pmatrix}
	b_1 & b_2 & b_3 & b_4
	\end{pmatrix},
\end{equation*}
and, since $ B $ has to be of rank 3, we can assume that column $ b_4 $ is a linear combination of the three linearly independent vectors $ b_1,b_2,b_3 $ which we chose to be the canonical basis of $ \R^3 $. We write $ 	b_4=\mu_1\,b_1+\mu_2\,b_2+\mu_3\,b_3 $, with $ \mu_1,\mu_2,\mu_3\in\R $. In this notation, the linear dependence of $ B\,r_1(\zeta), B\,r_3^1(\zeta), B\,r_3^2(\zeta) $ is equivalent to 
\begin{equation*}
	v_0\,c_0^2=\mu_1\,u_0\,c_0^2\qquad \mbox{and}\qquad \mu_2\,v_0\,c_0\,\cos\theta=\mu_3\,v_0\,c_0\,\sin\theta,\quad \forall\theta\in\T,
\end{equation*}
so $ \mu_1=v_0/u_0 $ and $ \mu_2=\mu_3=0 $. Multiplying $ B $ by a nonzero constant we obtain 
\begin{equation*}
	B=\begin{pmatrix}
		u_0& 0 & 0 & v_0 \\
		0 & u_0 & 0 & 0\\
		0 & 0 & u_0 & 0
	\end{pmatrix},
\end{equation*}
which gives an example of a matrix $ B $ for which $ \ker B \cap E_-(\zeta) $ is nonzero, and actually of dimension $ 1 $, on $ \tau=c_0\,|\eta| $.

We investigate now if $ \ker B \cap E_-(\zeta) $ is nontrivial only on $ \tau=c_0\,|\eta| $. At this purpose we introduce a practical tool, the Lopatinskii determinant (see \cite[section 4.2.2]{BenzoniSerre2007Multi}), denoted by $ \Delta(\sigma,\eta) $ for $ (\sigma,\eta)\in\Xi $. It is a scalar function such that its zeros are exactly the frequencies for which $ \ker B \cap E_-(\zeta) $ is nontrivial. Its construction can be found in \cite[section 4.6.1]{BenzoniSerre2007Multi}. If we write $ E_-(\sigma,\eta) $ as\footnote{Notation $ \,^{\perp} $ refers to the orthogonal complement relatively to the complex scalar product.}
\begin{equation*}
	E_-(\sigma,\eta)=\ell(\sigma,\eta)^{\perp},
\end{equation*} 
the Lopatinskii determinant is given by the following block determinant:
\begin{equation*}
	\Delta(\sigma,\eta)=\begin{vmatrix}
		B \\ \ell (\sigma,\eta)
	\end{vmatrix}.
\end{equation*}
Calculations made in \cite[section 14.3.1]{BenzoniSerre2007Multi} show that, for $ (\sigma,\eta)\in\Xi $, we can choose 
\begin{equation*}
	\ell(\sigma,\eta):=\big(a,-v_0\,u_0\,^{t}\eta,v_0\,\sigma\big)
\end{equation*}
with
\begin{equation*}
	a:=u_0\,\sigma-\xi_-\,(c_0^2-u_0^2),
\end{equation*}
$ \xi_- $ being the root of negative real part of the following dispersion relation 
\begin{equation*}
	(\sigma+u_0\,\xi)^2-c_0^2\,(\xi^2+|\eta|^2)=0.
\end{equation*} 
Thus the Lopatinskii determinant is given by
\begin{equation*}
	\Delta(\sigma,\eta):=\begin{vmatrix}
		u_0& 0 & 0 & v_0 \\
		0 & v_0 & 0 & 0\\
		0 & 0 & v_0 & 0\\
		a & -v_0\,u_0\,\eta_1 & -v_0\,u_0\,\eta_2 & v_0\,\sigma
	\end{vmatrix}=v_0^3\big[u_0\,\sigma-a\big]=v_0^3\,\xi_-(c_0^2-u_0^2).
\end{equation*}
It is zero if and only if $ \xi_- $ is zero, and this is the case only when $ \sigma $ is real (i.e. for $ (\sigma,\eta)=(\tau,\eta)\in\Xi_0 $) and $ \tau=c_0\,|\eta| $. Therefore $ \ker B \cap E_-(\zeta) $ is nontrivial only on $ \tau=c_0\,|\eta| $, and thus matrix $ B $ satisfies Assumption \ref{hypothese weak lopatinskii}, with 
\begin{equation*}
	\Upsilon:=\ensemble{(\tau,\eta)\,\middle|\,\tau=c_0\,|\eta|}.
\end{equation*}

\bigskip

Now that we have determined a boundary condition $ B $ suited for our problem, we take interest into oscillations.
Thus we consider two hyperbolic frequencies $ \phi $ and $ \psi $ on the boundary which will satisfy our assumptions. First, according to Assumption \ref{hypothese ensemble frequences}, frequencies $ \phi $ and $ \psi $ must be zeros of the Lopatinskii determinant, thus satisfy $ \tau=c_0|\eta| $. If we still take $ |\eta|=1 $, it leads to consider
\begin{equation*}
	\phi:=(c_0,\cos\theta_{\phi},\sin\theta_{\phi})\quad \text{and}\quad \psi:=(c_0,\cos\theta_{\psi},\sin\theta_{\psi}),
\end{equation*}
with $ \theta_{\phi},\theta_{\psi}\in[0,2\pi) $. An immediate computation then gives
\begin{equation*}
	\xi_1(\phi)=\xi_1(\psi)=0,\quad \xi_2(\phi)=\xi_2(\psi)=\frac{2\,M}{1-M^2},\quad \xi_3(\phi)=\xi_3(\psi)=-\inv{M},
\end{equation*}
with $ M:=u_0/c_0\in(0,1) $ being the Mach number.
Therefore, in order to have no resonances between frequencies lifted from $ \phi $ and no resonances between frequencies lifted from $ \psi $, it is sufficient to assume $ M^2 $ irrational.

We now look for a boundary frequency $ \nu:=-\lambda_{\phi}\,\phi-\lambda_{\psi}\,\psi $ with $ \lambda_{\phi},\lambda_{\psi}\in
\Z^* $, which satisfies Assumption \ref{hypothese ensemble frequences}. Frequency $ \nu $ reads
\begin{equation*}
	\nu=\big(-c_0(\lambda_{\phi}+\lambda_{\psi}),-\lambda_{\phi}\cos\theta_{\phi}-\lambda_{\psi}\cos\theta_{\psi},-\lambda_{\phi}\sin\theta_{\phi}-\lambda_{\psi}\sin\theta_{\psi}\big).
\end{equation*}
First we determine in which case $ \nu $ is not in $ \Upsilon $. If we denote $ \nu=(\tau,\eta) $, we have
\begin{equation*}
	\tau^2=c_0^2(\lambda_{\phi}+\lambda_{\psi})^2\quad \text{and}\quad c_0^2\,|\eta|^2=c_0^2(\lambda_{\phi}^2+\lambda_{\psi}^2)+2c_0^2\lambda_{\phi}\lambda_{\psi}\cos(\theta_{\phi}-\theta_{\psi}),
\end{equation*}
so, according to the description of $ \Upsilon $, frequency $ \nu $ is not in $ \Upsilon $ if and only if $ \theta_{\phi}\neq\theta_{\psi} $. Generalizing this to any frequency $ \zeta=\lambda_1\,\phi+\lambda_2\,\psi\in\F_{b}\privede{0} $ asserts that $ \F_b\cap\Upsilon=\ensemble{\phi,-\phi,\psi,-\psi} $ as required by Assumption \ref{hypothese ensemble frequences}. This assumption also demands $ \nu $ to be in the hyperbolic region. We have, if we still denote $ \nu=(\tau,\eta) $,
\begin{equation*}
	\tau^2-|\eta|^2(c_0^2-u_0^2)=u_0^2(\lambda_{\phi}+\lambda_{\psi})^2+2\lambda_{\phi}\lambda_{\psi}(c_0^2-u_0^2)\big[1-\cos(\theta_{\phi}-\theta_{\psi}))\big],
\end{equation*}
so the hyperbolicity condition $ \tau^2-|\eta|^2(c_0^2-u_0^2)>0 $ reads
\begin{equation}\label{eq Euler hyp cond}
	M^2(\lambda_{\phi}+\lambda_{\psi})^2+2\lambda_{\phi}\lambda_{\psi}(1-M^2)\big[1-\cos(\theta_{\phi}-\theta_{\psi}))\big]>0,
\end{equation}
which is satisfied for example when $ \lambda_{\phi} $ and $ \lambda_{\psi} $ are positive.

We take interest now in resonance assumptions \eqref{eq hyp res phi psi nu 1} and \eqref{eq hyp res phi psi nu 2}. We compute
\begin{align*}
	\xi_1(\nu)&=\frac{-M(\lambda_{\phi}+\lambda_{\psi})}{1-M^2}\\
	&\quad+\frac{\sign(\lambda_{\phi}+\lambda_{\psi})\sqrt{2\lambda_{\phi}\lambda_{\psi}\big[1-\cos(\theta_{\phi}-\theta_{\psi})\big]+M^2\big[\lambda_{\phi}^2+\lambda_{\psi}^2+2\lambda_{\phi}\lambda_{\psi}\cos(\theta_{\phi}-\theta_{\psi})\big]}}{1-M^2}\\[5pt]
	\xi_2(\nu)&=\frac{-M(\lambda_{\phi}+\lambda_{\psi})}{1-M^2}\\
	&\quad-\frac{\sign(\lambda_{\phi}+\lambda_{\psi})\sqrt{2\lambda_{\phi}\lambda_{\psi}\big[1-\cos(\theta_{\phi}-\theta_{\psi})\big]+M^2\big[\lambda_{\phi}^2+\lambda_{\psi}^2+2\lambda_{\phi}\lambda_{\psi}\cos(\theta_{\phi}-\theta_{\psi})\big]}}{1-M^2}\\[5pt]
	\xi_3(\nu)&=\frac{\lambda_{\phi}+\lambda_{\psi}}{M}.
\end{align*}
Recalling Remark \ref{remark numbering phi psi} about the numbering of frequencies, we need to check the four possibilities for the couple of resonance, namely, 
\begin{subequations}
	\begin{align}\label{eq Euler res 1}
	\lambda_{\phi}\,\phi_1+\lambda_{\psi}\,\psi_1+\nu_2=0 & \quad \text{and} \quad \lambda_{\phi}\,\phi_3+\lambda_{\psi}\,\psi_2+\nu_2=0,\\\label{eq Euler res 2}
	\lambda_{\phi}\,\phi_1+\lambda_{\psi}\,\psi_3+\nu_2=0 & \quad \text{and} \quad \lambda_{\phi}\,\phi_3+\lambda_{\psi}\,\psi_2+\nu_2=0,\\\label{eq Euler res 3}
	\lambda_{\phi}\,\phi_3+\lambda_{\psi}\,\psi_1+\nu_2=0 & \quad \text{and} \quad \lambda_{\phi}\,\phi_1+\lambda_{\psi}\,\psi_2+\nu_2=0,\\\label{eq Euler res 4}
	\lambda_{\phi}\,\phi_3+\lambda_{\psi}\,\psi_1+\nu_2=0 & \quad \text{and} \quad \lambda_{\phi}\,\phi_1+\lambda_{\psi}\,\psi_2+\nu_2=0.
\end{align}
\end{subequations}
\begin{itemize}[leftmargin=15pt]
\item Since $ \xi_1(\phi)=\xi_1(\psi)=0 $, relation $	\lambda_{\phi}\,\phi_1+\lambda_{\psi}\,\psi_1+\nu_2=0 $ implies that $ \xi_2(\nu)=0 $, and therefore $ \lambda_{\phi}+\lambda_{\psi}=0 $ which is impossible, since it contradicts condition \eqref{eq Euler hyp cond}.
\item We determine that $	\lambda_{\phi}\,\phi_1+\lambda_{\psi}\,\psi_3+\nu_2=0 $ is equivalent to 
\begin{equation*}
	2M^2\lambda_{\phi}\lambda_{\psi}\big[2-\cos(\theta_{\phi}-\theta_{\psi})\big]+M^4\big[\lambda_{\psi}^2+2\lambda_{\phi}\lambda_{\psi}\cos(\theta_{\phi}-\theta_{\psi})\big]-\lambda_{\psi}^2=0
\end{equation*}
and
\begin{equation}\label{eq Euler inter 1}
	(\lambda_{\phi}+\lambda_{\psi})(\lambda_{\phi}M^2-\lambda_{\psi})\geq 0.
\end{equation}
The corresponding second resonance is $ \lambda_{\phi}\,\phi_3+\lambda_{\psi}\,\psi_2+\nu_2=0 $, which is equivalent to 
\begin{equation*}
	2M^2\lambda_{\phi}\lambda_{\psi}\big[2-\cos(\theta_{\phi}-\theta_{\psi})\big]+M^4\big[\lambda_{\phi}^2+2\lambda_{\phi}\lambda_{\psi}\cos(\theta_{\phi}-\theta_{\psi})\big]-\lambda_{\phi}^2=0
\end{equation*}
and
\begin{equation}\label{eq Euler inter 2}
	(\lambda_{\phi}+\lambda_{\psi})(\lambda_{\psi} M^2-\lambda_{\phi})\geq 0.
\end{equation}
Now conditions \eqref{eq Euler inter 1} and \eqref{eq Euler inter 2} are incompatible, so the configuration of resonances \eqref{eq Euler res 1} is impossible.
\item The case of \eqref{eq Euler res 3} is analogous, and is not detailed here.
\item Finally, if for the first resonance we have $	\lambda_{\phi}\,\phi_3+\lambda_{\psi}\,\psi_3+\nu_2=0 $, then the second one must be $	\lambda_{\phi}\,\phi_1+\lambda_{\psi}\,\psi_2+\nu_2=0 $, which is equivalent to 
\begin{equation*}
	\big[1-\cos(\theta_{\phi}-\theta_{\psi})\big]+M^2\big[1+\cos(\theta_{\phi}-\theta_{\psi})\big]=0\quad \mbox{and}\quad (\lambda_{\phi}+\lambda_{\psi})(\lambda_{\psi}-\lambda_{\phi})\geq 0.
\end{equation*}
First equation rewrites
\begin{equation*}
	\cos(\theta_{\phi}-\theta_{\psi})=\frac{1+M^2}{1-M^2},
\end{equation*}
which cannot be satisfied by $ M^2\in(0,1) $. Thus the fourth possibility \eqref{eq Euler res 4} is also impossible. 
\end{itemize}
Therefore, in this case, a situation like the one described in Assumption \ref{hypothese ensemble frequences} \textbf{cannot happen}.

To conclude as for the Euler system, we need to have a discussion about where we have made a choice which puts us in a particular case. The above analysis about frequencies $ \phi $, $ \psi $ and $ \nu $ does not depend on $ B $, but only on the location of cancellation of the Lopatinskii determinant. Thus the only restrictive choice we made is to choose this location as $ \tau=c_0\,|\eta| $. Therefore, for the compressible isentropic Euler equations in space dimension 3, in this particular case, the configuration of frequencies considered in this work which leads to an instability cannot happen.

We have considered here the Euler system in space dimension 3, since, in space dimension 2, the condition $ \tau=c\,|\eta| $ leads to $ \tau=\pm c\,\eta $, preventing to obtain a transverse oscillation. We could also consider the shock problem for the Euler equations, which is the original problem of Majda and Rosales in \cite{MajdaRosales1983Machstem,MajdaRosales1984Machstem}.

\bibliography{Bibliographie.bib}
\bibliographystyle{alpha}
\end{document}